\documentclass[12pt]{article}

\usepackage[hmargin=1.5cm,vmargin=2cm]{geometry}
\usepackage{mathrsfs}

\usepackage{multirow}
\usepackage{t1enc}
\usepackage{pict2e}
\usepackage[latin1]{inputenc}
\usepackage[english]{babel}

\usepackage{float}
\usepackage{amsmath,amsthm, amssymb}
\usepackage{enumerate}
\usepackage{enumitem}
\usepackage{amsfonts}
\usepackage{latexsym,lineno}
\usepackage{listings}
\usepackage{hyperref}

\usepackage{authblk}

\usepackage{tikz}
\usetikzlibrary{calc,decorations.pathmorphing,decorations.text,fixedpointarithmetic}

\theoremstyle{plain}
\newtheorem{thm}{Thm}[section]

\newtheorem{claim}[thm]{Claim}
\newtheorem{theorem}[thm]{Theorem}
\newtheorem{lemma}[thm]{Lemma}
\newtheorem{corollary}[thm]{Corollary}
\newtheorem{proposition}[thm]{Proposition}
\newtheorem{conjecture}[thm]{Conjecture}
\newtheorem{problem}[thm]{Problem}

\newtheorem{definition}[thm]{Definition}

\newtheorem{example}[thm]{Example}

\newenvironment{proof*}{\emph{Proof of the claim:}}{\hfill$\Diamond$}

\newcommand{\floor}[1]{\lfloor #1 \rfloor}

\renewcommand{\pod}[1]{\allowbreak\mathchoice
	{\if@display \mkern 0mu\else \mkern 0mu\fi (#1)}
	{\if@display \mkern 0mu\else \mkern 0mu\fi (#1)}
	{\mkern 1mu(\mathrm{mod}\mkern 4mu #1)}
	{\mkern 0mu(#1)}
}

\usepackage{color}

\newcommand{\overbar}[1]{\mkern 1.5mu\overline{\mkern-1.5mu#1\mkern-1.5mu}\mkern 1.5mu}
\newcommand{\spto}{\ensuremath{\stackrel{s.p.}{\longrightarrow}}}
\newcommand{\swto}{\ensuremath{\stackrel{\tiny switch}{\longrightarrow}}}

\title{Circular chromatic number of signed graphs}

\begin{document}
	\baselineskip 0.65cm

	\author[1]{Reza Naserasr}
	\author[1]{Zhouningxin Wang}
	\author[2]{Xuding Zhu}
	
	\affil[1]{Universit\'{e} de Paris, CNRS, IRIF, F-75006, Paris, France.  
 \newline	 E-mail: \{reza, wangzhou4\}@irif.fr.}
	\affil[2]{Departments of Mathematics, Zhejiang Normal University, Jinhua, 321004, China. E-mail: xdzhu@zjnu.edu.cn. }

\maketitle

\begin{abstract}
A signed graph is a pair $(G, \sigma)$, where $G$ is a graph and $\sigma: E(G) \to \{+, -\}$ is a signature which assigns to each edge of $G$ a sign.  
Various notions of coloring of signed graphs have been studied. In this paper, we extend circular coloring of graphs to signed graphs. Given a signed graph $(G, \sigma)$  a circular $r$-coloring of $(G, \sigma)$ is an assignment $\psi$ of points of a circle of circumference $r$ to the vertices of $G$ such that for every edge $e=uv$ of $G$, if $\sigma(e)=+$, then $\psi(u)$ and $\psi(v)$ have distance at least $1$, and if $\sigma(e)=-$, then $\psi(v)$ and the antipodal of $\psi(u)$ have distance at least $1$. The circular chromatic number $\chi_c(G, \sigma)$ of a signed graph $(G, \sigma)$ is the infimum of those $r$ for which $(G, \sigma)$ admits a circular $r$-coloring. For a graph $G$, we define the signed circular chromatic number of $G$ to be $\max\{\chi_c(G, \sigma): \sigma \text{ is a signature of $G$}\}$. 

We study basic properties of circular coloring of signed graphs and develop tools for calculating $\chi_c(G, \sigma)$. We explore the relation between the circular chromatic number and the signed circular chromatic number of graphs, and present bounds for the signed circular chromatic number of some families of graphs. 
In particular,  we determine the supremum of the signed circular chromatic number of $k$-chromatic graphs of large girth, of simple bipartite planar graphs, $d$-degenerate graphs, simple outerplanar graphs and series-parallel graphs. We construct a signed planar simple graph whose circular chromatic number is $4+\frac{2}{3}$. This is based and improves on a signed graph built by Kardos and Narboni as a counterexample to a conjecture of M\'{a}\v{c}ajov\'{a}, Raspaud, and \v{S}koviera. 
\end{abstract}

\section{Introduction}\label{Introduction}

Assume $r \ge 1$ is a real number. We denote by  $C^r$ the circle of circumference $r$, obtained from the interval $[0,r]$ by identifying $0$ and $r$. Points in $C^r$ are real numbers from $[0, r)$. 
For two points $x, y$ on $C^r$, the {\rm distance} between $x$ and $y$ on $C^r$, denoted by  $d_{\pod{r}}(x,y)$, is the length of the shorter arc of $C^r$ connecting $x$ and $y$. 
Given two real numbers $a$ and $b$, the interval $[a,b]$ on $C^r$ is a closed interval of $C^r$ in clockwise orientation of the circle whose first point is $a\pmod r$ and whose end point is $b\pmod{r}$. For example if $r >4$, then $[4,1] = \{t: 4 \le t < r, \text{ or } 0 \le t \le 1\}$. Intervals $[a,b)$, $(a,b]$ and $(a,b)$ are defined similarly. The length of the interval $[a,b]$ is denoted by $\ell([a,b])$. Thus  $d_{\pod{r}}(x,y) = \min \{\ell( [x,y]), \ell([y,x] )\}$.

Given a graph $G$, a \emph{circular $r$-coloring} of $G$ is a mapping $f: V(G) \to C^r$  such that for any edge $uv \in E(G)$, $d_{\pod{r}}(f(u), f(v)) \ge 1$.  The {\em circular chromatic number} of $G$ is defined as $$\chi_c(G) = \inf\{r: G \text{ admits a circular $r$-coloring}\}.$$

The concept of circular coloring of graphs was introduced by Vince in 1988 in \cite{V88}, where a  different definition was given and the parameter was called the ``star chromatic number''.
The term ``circular chromatic number'' was coined in \cite{Z01} and the above definition was given in \cite{Zhu92}. One important feature of the circular chromatic number is that for any graph $G$, $\chi(G)-1 < \chi_c(G) \le \chi(G)$ and hence $\chi(G) = \lceil \chi_c(G) \rceil$. In this sense, the invariant $\chi_c(G)$ is a refinement of $\chi(G)$ and it contains more information about the structure of $G$. The circular chromatic number of graphs has been studied extensively in the literature, and the reader is referred to \cite{Z01, Z06} for  surveys on this subject.

A \emph{signed graph} is a graph $G=(V, E)$ (allowing loops and multi-edges) together with an assignment $\sigma: E \rightarrow \{+, -\}$, denoted $(G, \sigma)$. An edge with sign $-$ is a \emph{negative edge} and an edge with sign $+$ is a \emph{positive edge}. If $(G, \sigma)$ is a signed graph in which all the edges are positive (respectively, negative), then $(G, \sigma)$ is denoted as $(G, +)$ (respectively, $(G, -)$). When the signature is clear from the context, we may omit the signature and denote the signed graph by $\tilde{G}$.

 In this paper, we extend the concept of circular coloring of graphs to signed graphs. We remark that an extension of circular coloring to signed graphs was also introduced in \cite{KS18}. However, the extension defined in this paper is different. The difference between these two extensions is further discussed in Section \ref{sec:Remarks}.

For each point $x$ on $C^r$, the unique point of distance $\frac{r}{2}$ from $x$ is called the {\em antipodal} of $x$ and is denoted by $\bar{x}$. Given a set $A$ of points on $C^r$, the {\em antipodal} of $A$, denotes by $\bar{A}$, is the set of antipodals of points in $A$.

\begin{definition}
	Given a signed graph $(G, \sigma)$ with no positive loop and a real number $r$, a \emph{circular $r$-coloring} of $(G, \sigma)$ is a mapping $f: V(G) \to C^r$ such that for each positive edge $e=uv$ of $(G, \sigma)$,  $$d_{\pod{r}}(f(u),  f(v)) \ge 1,$$ and for each negative edge $e=uv$ of $(G, \sigma)$, $$d_{\pod{r}}(f(u),  \overbar{f(v)}) \ge 1.$$
	 The circular chromatic number of $(G, \sigma)$ is defined as 
	$$\chi_c(G, \sigma) = \inf \{r \ge 1: G \text{ admits a circular $r$-coloring}\}.$$
\end{definition}

Note that if $e=uv$ is a negative edge, the condition $d_{\pod{r}}(f(u), \overbar{f(v)}) \ge 1$ is equivalent to $d_{\pod{r}}(f(u), f(v)) \le \frac{r}{2}-1$. This definition can be equivalently viewed as an assignment $\varphi$ of intervals of length $1$ (whose center is determined by $f$) to the vertices such that for a positive edge $uv$, the intervals $\varphi(u)$ and $\varphi(v)$ do not intersect and for a negative edge $uv$, the intervals $\overbar{\varphi(u)}$ and $\varphi(v)$ do not intersect. 

Observe that if $(G, \sigma)$ has no edge, then $\chi_c(G, \sigma)=1$, and if $(G, \sigma)$ has an edge, either positive or negative, then $(G, \sigma)$ is not circular $r$-colorable for $r < 2$. As graphs with no edge are not interesting, in the remainder of the paper, we always assume that $r \ge 2$.

It follows from the definition that for any graph $G$,  $\chi_c(G, +)=\chi_c(G)$. So the circular chromatic number of a signed graph is indeed a generalization of the circular chromatic number of a graph.  

\begin{definition}
\label{def-signedchic}
For a simple graph $G$, the {\em signed circular chromatic number} $\chi_c^s(G)$ of $G$ is defined as 
$$\chi_c^s(G) = \max \{\chi_c(G, \sigma): \sigma \text{ is a signature of $G$}\}.$$
\end{definition}

The circular chromatic number of a graph is a refinement of its chromatic number: for any positive integer $k$, a graph $G$ is circular $k$-colorable if and only if $G$ is $k$-colorable. The same is also true for the chromatic number of signed graphs defined based on the notion of $0$-free coloring define by Zaslavsky \cite{Za82}.

\begin{definition}
	Given a signed graph $(G, \sigma)$ and a positive integer $k$. A \emph{$0$-free $2k$-coloring} of $(G, \sigma)$ is a mapping $f: V(G) \to \{\pm 1, \pm 2, \ldots, \pm k\}$ such that for any edge $e=uv$ of $(G, \sigma)$, $f(u) \ne \sigma(e) f(v)$. 
\end{definition}

\begin{proposition}
	Assume $(G, \sigma)$ is a signed graph and $k$ is a positive integer. Then $(G, \sigma)$ is $0$-free $2k$-colorable if and only if $(G, \sigma)$ is circular $2k$-colorable.
\end{proposition}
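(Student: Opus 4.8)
The plan is to set up explicit translations in both directions via the natural identification of the color set $\{\pm 1, \ldots, \pm k\}$ with the set $\{0,1,\ldots,2k-1\}$ of $2k$ equally spaced points of $C^{2k}$, chosen so that negating a color corresponds to taking the antipodal of a point. This mirrors the classical proof that $\chi_c(G)\le k$ iff $\chi(G)\le k$, with the extra feature that the ``$-$'' sign is carried by the antipodal map. Concretely, I would fix the bijection $\alpha\colon\{0,1,\ldots,2k-1\}\to\{\pm 1,\ldots,\pm k\}$ with $\alpha(i)=i+1$ for $0\le i\le k-1$ and $\alpha(i)=k-1-i$ for $k\le i\le 2k-1$; a one-line check gives $\alpha(i+k)=-\alpha(i)$ for all $i$, indices read modulo $2k$. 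Equivalently, under $\alpha^{-1}$ the colors $c$ and $-c$ go to two antipodal points of $C^{2k}$, and in $C^{2k}$ two distinct points of $\{0,\ldots,2k-1\}$ are at distance $\ge 1$.

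For the ``if'' direction, given a $0$-free $2k$-coloring $f$ of $(G,\sigma)$, I set $\psi(v)=\alpha^{-1}(f(v))$, an integer point of $C^{2k}$. On a positive edge $uv$ we have $f(u)\ne f(v)$, so $\psi(u),\psi(v)$ are distinct integer points and $d_{\pod{2k}}(\psi(u),\psi(v))\ge 1$. On a negative edge $uv$ we have $f(u)\ne -f(v)$; since $\overbar{\psi(v)}=\alpha^{-1}(-f(v))$, the points $\psi(u)$ and $\overbar{\psi(v)}$ are distinct integer points, so $d_{\pod{2k}}(\psi(u),\overbar{\psi(v)})\ge 1$. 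Hence $\psi$ is a circular $2k$-coloring.

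For the ``only if'' direction, given a circular $2k$-coloring $f\colon V(G)\to C^{2k}=[0,2k)$, I put $g(v)=\floor{f(v)}\in\{0,\ldots,2k-1\}$, regarded as an element of $\mathbb{Z}_{2k}$. The key discretization fact is that $d_{\pod{2k}}(x,y)\ge 1$ implies $\floor{x}\ne\floor{y}$: if $\floor{x}=\floor{y}=i$ then $x,y\in[i,i+1)$, so the arc from $x$ to $y$ inside this interval has length $<1$, and since $2k\ge 2$ this is the shorter arc, forcing $d_{\pod{2k}}(x,y)<1$. Consequently, a positive edge $uv$ yields $g(u)\ne g(v)$, and a negative edge $uv$ yields $g(u)\ne\floor{\,\overbar{f(v)}\,}$; checking both cases $f(v)\in[0,k)$ and $f(v)\in[k,2k)$ shows $\floor{\,\overbar{f(v)}\,}\equiv g(v)+k\pmod{2k}$, so $g(u)\ne g(v)+k$ in $\mathbb{Z}_{2k}$. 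Finally, putting $h(v)=\alpha(g(v))$ and using $\alpha(i+k)=-\alpha(i)$, a positive edge gives $h(u)\ne h(v)$ and a negative edge gives $h(u)\ne -h(v)$, i.e. $h(u)\ne\sigma(e)h(v)$ for every edge $e=uv$; thus $h$ is a $0$-free $2k$-coloring.

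The proof is largely bookkeeping once the bijection $\alpha$ is in hand. The only place demanding care is the modular arithmetic for $\overbar{f(v)}$: when $f(v)\in[k,2k)$ we have $\overbar{f(v)}=f(v)-k$ as a real number in $[0,k)$, and one must confirm that its floor is nonetheless $\equiv g(v)+k\pmod{2k}$, so that the single relabeling $\alpha$ works uniformly for all vertices. I expect this to be the main (minor) obstacle; the rest follows directly from the definitions.
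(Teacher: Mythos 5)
Your proof is correct and takes essentially the same route as the paper's: both rest on an explicit bijection between $\{\pm 1,\ldots,\pm k\}$ and the integer points of $C^{2k}$ that intertwines negation with the antipodal map, and then translate the $0$-free constraints into circular-distance constraints edge by edge. Your write-up is in fact slightly more complete, because you make the ``circular $\Rightarrow$ $0$-free'' direction explicit via the floor map and the observation that $d_{\pod{2k}}(x,y)\ge 1$ forces $\lfloor x\rfloor\ne\lfloor y\rfloor$, whereas the paper only exhibits the correspondence on integer-valued colorings and leaves that discretization implicit.
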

\begin{proof}
	Assume $f: V(G) \to \{\pm 1, \pm 2, \ldots, \pm k\}$ is any mapping.  Let 
	\[
	g(v) = \begin{cases} f(v)-1, &\text{ if $f(v) \in \{1,2,\ldots, k\}$} \cr
	f(v)+k-1, &\text{ if $f(v) \in \{-1,-2,\ldots, -k\}$}. \cr
	\end{cases}
	\]
	It is straightforward to verify that $g$ is a circular $2k$-coloring of $(G, \sigma)$ if and only if $f$ is a $0$-free $2k$-coloring of $(G, \sigma)$.
\end{proof}

The number of colors used in the 0-free coloring is always even. There have been several attempts to introduce an analogue coloring which uses an odd number of colors. The term 0-free indeed identifies this coloring from a similar coloring where 0 is added to the set of colors and the set of vertices colored with 0 induces an independent set. To be precise, a  $(2k+1)$-coloring of a signed graph uses colors $\{0,\pm 1, \ldots, \pm k\}$, and the constraint is still the same: for any edge $e=uv$ of $G$, $f(u) \ne \sigma(e) f(v)$. In a $(2k+1)$-coloring of a signed graph, the color $0$ is different from the other colors. The antipodal of $0$ is $0$ itself. The set of vertices of color $0$ is an independent set of $G$, and for every other color $i$, vertices colored by color $i$ may be joined by negative edges. In some sense, circular coloring of signed graph provides a more natural generalization of 0-free coloring to colorings of signed graphs with an odd number of colors, where the colors are symmetric.  

In this paper, we shall study basic properties of circular coloring of signed graphs. We shall explore the relation between the circular chromatic number and the signed circular chromatic number of graphs, and prove that for any graph $G$, $\chi_c(G) \le \chi_c^s(G) \le 2 \chi_c(G)$. We prove that the upper bound is tight even when restricted to graphs of arbitrary large girth or bipartite planar graphs. Furthermore, we construct a signed planar simple graph whose circular chromatic number is $4+\frac{2}{3}$.   M\'{a}\v{c}ajov\'{a}, Raspaud, and \v{S}koviera \cite{MRS16} conjectured that every signed planar simple graph is $4$-colorable. By Proposition \ref{prop-0free}, this is equivalent to say that  $\chi_c^s(G) \le 4$ for every planar graph.  Kardos and Narboni \cite{KN20} refuted this conjecture by constructing a non-4-colorable signed planar graph. Our construction improves on the example of  Kardos and Narboni. Thus we show that the supremum of the signed chromatic number of planar graphs is between $4+\frac 23$ and $6$. The exact value remains an open problem.

\section{Equivalent definitions}

There are several equivalent definitions of the circular chromatic number of graphs. Some of these definitions are also extended naturally to signed graphs.

Note that for $s, t \in [0,r)$,  $d_{\pod{r}}(s,t) = \min \{|s-t|, r-|s-t|\}.$ So a circular $r$-coloring of a graph can be defined as follows, which is sometimes more convenient.

\begin{definition}
	A \emph{circular $r$-coloring} of a signed graph $(G, \sigma)$ is a mapping $f: V(G) \to [0,r)$ such that for each positive edge $uv$, $$1 \le |f(u)-f(v)| \le r-1$$ and for each negative edge $uv$, $$\text{ either } |f(u)-f(v)| \le \frac{r}{2} -1 \text{ or } |f(u)-f(v)| \ge \frac{r}{2} +1.$$
\end{definition}

If $r$ is a rational number, then in a circular $r$-coloring of a signed graph $(G, \sigma)$, it suffices to use a finite set of colors from the interval $[0,r)$. We may assume that $r=\frac{p}{q}$, where $p$ is even and subject to this condition $\frac{p}{q}$ is in its simplest form. For $i \in \{0,1,\ldots, p-1\}$, let $I_i$ be the half open, half closed interval $[\frac{i}{q}, \frac{i+1}{q})$ of $[0, r)$. Then $\cup_{i=0}^{p-1}I_i$ is a partition of $[0,r)$. Assume $f: V(G) \to [0,r)$ is a circular $r$-coloring of a signed graph $(G, \sigma)$. Then for each vertex $v$ of $G$, let $g(v)=\frac{i}{q}$ if and only if $f(v) \in  I_i$. If $e=uv$ is a positive edge, then $1 \le |f(u)- f(v)| \le \frac{p}{q} -1$. This implies that $1 - \frac{1}{q} < |g(u)-g(v)| < \frac{p}{q} -1+ \frac{1}{q}$. Since $q|g(u)-g(v)|$ is an integer, we conclude that $1 \le |g(u)-g(v)| \le \frac{p}{q} -1$. If $e=uv$ is a negative edge, then either $|g(u)-g(v)| <  \frac{p}{2} - 1 + \frac{1}{q}$ or $|g(u)-g(v)| > \frac{p}{2} + 1 - \frac{1}{q}$. Since $p$ is even, $\frac{p}{2}$ is an integer. As $q|g(u)-g(v)|$ is an integer, we conclude that either $|g(u)-g(v)| \le  \frac{p}{2} - 1$ or $|g(u)-g(v)| \ge \frac{p}{2} + 1$.
It is crucial that $p$ be an even integer. For otherwise $\frac{p}{2}$ is not an integer, and we cannot conclude that $|g(u)-g(v)| \le  \frac{p}{2} - 1$ or $|g(u)-g(v)| \ge \frac{p}{2} + 1$. Indeed, if $p$ is odd, then the set $\{0,\frac{1}{q},\ldots, \frac{p-1}{q}\}$ is not closed under taking antipodal points. 

The above observation leads to the following equivalent definition of the circular chromatic number of signed graphs. For $i,j \in \{0,1,\ldots, p-1\}$, the modulo-$p$ distance between $i$ and $j$ is $$d_{\pod{p}}(i,j) = \min \{|i-j|, p-|i-j|\}.$$ Given an even integer $p$, the antipodal color of $x \in \{0,1,\ldots, p-1\}$ is $\bar{x} = x+ \frac{p}{2} \pmod{p}$. 

\begin{definition}
	Assume $p$ is an even integer and $q \le p/2$ is a positive integer. A \emph{$(p, q)$-coloring} of a signed graph $(G, \sigma)$ is a mapping $f: V(G) \to \{0,1,\ldots, p-1\}$ such that for any positive edge $uv$, 
	$$d_{\pod{p}}(f(u), f(v)) \ge q ,$$
	and for any negative edge $uv$,
	$$d_{\pod{p}}(f(u), \overbar{f(v)}) \ge q.$$ 
	The \emph{circular chromatic number} of $(G, \sigma)$ is
	$$\chi_c(G, \sigma) = \inf\{\dfrac{p}{q}: p \text{ is an even integer and $(G, \sigma)$ has a $(p, q)$-coloring}\}.$$
\end{definition}

Note that $d_{\pod{p}}(i, j) \ge q$ is equivalent to $$q \le |i-j| \le p-q.$$

A {\em homomorphism} of a graph $G$ to a graph $H$ is a mapping $f: V(G) \to V(H)$ such that for every edge $uv$ of $G$, $f(u)f(v)$ is an edge of $H$. It is well-known and easy to see that a graph $G$ is $k$-colorable if and only if $G$ admits a homomorphism to $K_k$, the complete graph on $k$ vertices. Similarly, circular chromatic number of graphs are also defined through graph homomorphism. For integers $p \ge 2q >0$, the {\em circular clique} $K_{p;q}$ has vertex set $[p] = \{0,1,\ldots, p-1\}$ and edge set $\{ij: q \le |i-j| \le p-q\}$. Then a circular $\frac{p}{q}$-coloring of a graph $G$ is equivalent to a homomorphism of $G$ to $K_{p;q}$. Circular chromatic number of signed graphs can also be defined through homomorphisms.

\begin{definition}
	 An \emph{edge-sign preserving homomorphism} of a signed graph $(G, \sigma)$ to a signed graph $(H, \pi)$ is a mapping $f: V(G) \to V(H)$ such that for every positive (respectively, negative) edge $uv$ of $(G, \sigma)$, $f(u)f(v)$ is a positive (respectively, negative) edge of $(H, \pi)$.
\end{definition}

We write $(G, \sigma) \spto (H, \pi)$ if there exists an edge-sign preserving homomorphism of $(G, \sigma)$ to $(H, \pi)$.

For integers $p \ge 2q > 0$ such that $p$ is even, the {\em signed circular clique} $K_{p;q}^s$ has vertex set $[p] = \{0,1,\ldots, p-1\}$, in which $ij$ is a positive edge if and only if  $q \le |i-j| \le p-q$ and $ij$ is a negative edge if and only if either $|i-j| \le \frac{p}{2}-q$ or $|i-j| \ge \frac{p}{2}+q$. If $q=1$, then $K_{p;1}^s$ is also written as $K_p^s$. 

Note that in $K_{p;q}^s$, each vertex $i$ is incident to a negative loop. When $\frac{p}{q}\geq 4$, there are parallel edges of different signs. Furthermore, the subgraph induced by all the positive edges of $K^s_{p;q}$ is the circular clique $K_{p;q}$, which is known to be of circular chromatic number $\frac{p}{q}$, thus we have $\chi_c(K^s_{p;q})=\frac{p}{q}$.

The following lemma gives another equivalent definition of the circular chromatic number of a signed graph. 

\begin{lemma}
\label{lem-homo}
	Assume $(G, \sigma)$ is a signed graph, $p$ is a positive even integer, $q$ is a positive integer and $p \ge 2q$. Then $(G, \sigma)$ has a $(p,q)$-coloring if and only if $(G, \sigma) \spto K_{p;q}^s$. Hence the  circular chromatic number of $(G, \sigma)$ is $$\chi_c(G, \sigma) = \inf\{\dfrac{p}{q}: (G, \sigma) \spto  K_{p;q}^s\}.$$
\end{lemma}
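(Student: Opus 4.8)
The plan is to prove the two directions of the equivalence by directly matching the inequalities in the definition of a $(p,q)$-coloring with the adjacency rules that define the signed circular clique $K_{p;q}^s$; the displayed formula for $\chi_c(G,\sigma)$ then drops out of the $(p,q)$-coloring definition of $\chi_c$ recalled just above the lemma.

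The one point that is not purely a matter of rewriting is the translation of the negative-edge condition. I would first check that for $i,j \in \{0,1,\ldots,p-1\}$ one has
\[
d_{\pod{p}}(i, \bar{j}) = \tfrac p2 - d_{\pod{p}}(i,j),
\]
since passing from $j$ to its antipodal $\bar j$ changes the circular distance to any point $i$ by exactly $\tfrac p2$ (up to wrap-around), and $d_{\pod{p}}(i,j) \le \tfrac p2$. Hence, using $q \le \tfrac p2$ and $d_{\pod{p}}(i,j) = \min\{|i-j|,\, p-|i-j|\}$, the inequality $d_{\pod{p}}(i, \bar j) \ge q$ is equivalent to $d_{\pod{p}}(i,j) \le \tfrac p2 - q$, i.e.\ to ``$|i-j| \le \tfrac p2 - q$ or $|i-j| \ge \tfrac p2 + q$''. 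This is exactly the rule by which $K_{p;q}^s$ has a negative edge joining $i$ and $j$; and by definition $K_{p;q}^s$ has a positive edge joining $i$ and $j$ precisely when $d_{\pod{p}}(i,j) \ge q$.

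With these two observations the equivalence is a direct unwinding. A $(p,q)$-coloring is a map $f\colon V(G) \to \{0,\ldots,p-1\} = V(K_{p;q}^s)$ such that every positive edge $uv$ satisfies $d_{\pod{p}}(f(u),f(v)) \ge q$ and every negative edge $uv$ satisfies $d_{\pod{p}}(f(u), \overbar{f(v)}) \ge q$; by the previous paragraph this says precisely that $f$ sends every positive edge of $(G,\sigma)$ to a positive edge of $K_{p;q}^s$ and every negative edge to a negative edge, i.e.\ that $f$ is an edge-sign preserving homomorphism, so $(G,\sigma) \spto K_{p;q}^s$. Reading the same equivalences in the other direction gives the converse. (No constraint is placed on pairs non-adjacent in $G$; and the negative loop at every vertex of $K_{p;q}^s$ is harmless, since a negative edge $uv$ with $f(u)=f(v)$ is permitted in a $(p,q)$-coloring because $d_{\pod{p}}(f(u), \overbar{f(v)}) = \tfrac p2 \ge q$.) Finally, for the stated formula I would combine this equivalence with the definition $\chi_c(G,\sigma) = \inf\{\frac pq : p \text{ even},\ p \ge 2q,\ (G,\sigma) \text{ has a } (p,q)\text{-coloring}\}$: since $K_{p;q}^s$ is defined exactly for even $p$ with $p \ge 2q$, the set of ratios occurring there coincides with $\{\frac pq : (G,\sigma) \spto K_{p;q}^s\}$.

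I do not expect a genuine obstacle in this proof; the only step needing care is the identity for $d_{\pod{p}}(i,\bar j)$ in the second paragraph and verifying that its boundary cases behave as claimed (in particular $q = \tfrac p2$, where a negative edge may map to a loop).
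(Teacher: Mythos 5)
Your proof is correct and follows the intended route: the paper states this lemma without a written proof, treating it as a direct unwinding of the definitions of a $(p,q)$-coloring and of the signed circular clique $K_{p;q}^s$, and your argument carefully fills in exactly what is implicit there. The key step you isolate — the identity $d_{\pod{p}}(i,\bar{j}) = \tfrac{p}{2} - d_{\pod{p}}(i,j)$ (valid since $p$ is even), together with the remark that the negative loops of $K_{p;q}^s$ accommodate a negative edge $uv$ with $f(u)=f(v)$ because $d_{\pod{p}}(f(u),\overbar{f(v)})=\tfrac p2\ge q$ — is precisely what makes the two adjacency conditions match the $(p,q)$-coloring constraints, and the displayed formula then follows by substituting this equivalence into the definition of $\chi_c$ recalled just above the lemma.
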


As homomorphism relation is transitive, we have the following lemma.

\begin{lemma}\label{lem:No-HomByCircularChromatic}
If $(G,\sigma)\spto (H, \pi)$, then we have $\chi_c(G,\sigma) \leq \chi_c(H, \pi)$.
\end{lemma}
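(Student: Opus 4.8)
The plan is to derive this immediately from the homomorphism characterization of the circular chromatic number established in Lemma~\ref{lem-homo}, together with the transitivity of the edge-sign preserving homomorphism relation $\spto$ (already noted in the text just before the statement).

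First I would fix an edge-sign preserving homomorphism $\varphi \colon (G,\sigma) \spto (H,\pi)$. Then, for any pair of integers $p, q$ with $p$ even and $p \ge 2q > 0$ for which $(H,\pi) \spto K_{p;q}^s$, I would compose the witnessing homomorphism $(H,\pi)\spto K^s_{p;q}$ with $\varphi$ to obtain an edge-sign preserving homomorphism $(G,\sigma) \spto K_{p;q}^s$. This step uses only that a composition of two edge-sign preserving homomorphisms is again one, which is immediate from the definition: a positive edge of $(G,\sigma)$ is mapped to a positive edge of $(H,\pi)$ and then to a positive edge of $K^s_{p;q}$, and likewise for negative edges. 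By Lemma~\ref{lem-homo}, every such ratio $\frac{p}{q}$ therefore satisfies $\chi_c(G,\sigma) \le \frac{p}{q}$.

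Next I would pass to the infimum. Writing $S_H = \{\frac{p}{q} : (H,\pi) \spto K^s_{p;q}\}$ and $S_G = \{\frac{p}{q} : (G,\sigma) \spto K^s_{p;q}\}$, the previous paragraph shows $S_H \subseteq S_G$, hence $\inf S_G \le \inf S_H$; by Lemma~\ref{lem-homo} applied to both sides this is exactly $\chi_c(G,\sigma) \le \chi_c(H,\pi)$. If one wishes to avoid circular cliques altogether, an equivalent direct argument works: given a circular $r$-coloring $f$ of $(H,\pi)$, the composite $f \circ \varphi$ is a circular $r$-coloring of $(G,\sigma)$, since a positive (resp.\ negative) edge $uv$ of $G$ is sent by $\varphi$ to a positive (resp.\ negative) edge of $(H,\pi)$, and the distance condition satisfied by $f$ on that edge is precisely the condition required of $f\circ\varphi$ on $uv$; taking the infimum over $r$ gives the claim.

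I do not expect any real obstacle: the statement is a formal consequence of transitivity and of the homomorphism reformulation of $\chi_c$. The only subtlety worth a remark is the degenerate case where one of the sets of admissible ratios could be empty (for instance if $(H,\pi)$ carried a positive loop and thus admitted no circular coloring at all), in which case $\chi_c(H,\pi) = +\infty$ and the inequality holds vacuously; under the standing assumption that signed graphs have no positive loop this situation does not occur, so the argument above is complete.
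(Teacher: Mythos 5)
Your argument is correct and matches the paper's approach: the paper simply notes that the edge-sign preserving homomorphism relation is transitive and invokes the homomorphism characterization of $\chi_c$ from Lemma~\ref{lem-homo}, which is exactly your main line of reasoning (with the direct coloring-composition remark as a harmless equivalent variant).
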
 

For a real number $r \ge 2$, we can also define $K_r^s$ be the infinite graph with vertex set $[0,r)$, in which $xy$ is a positive edge if $1 \le |x-y| \le r-1$ and $xy$ is a negative edge if either $|x-y| \le \frac{r}{2} -1$ or $|x-y| \ge \frac{r}{2} +1$. 
Then it follows from the definition that a signed graph $(G, \sigma)$ is circular $r$-colorable if and only if $(G,\sigma)$ admits an edge-sign preserving homomorphism to $K_r^s$. If $r=\frac{p}{q}$ is a rational and $p$ is an even integer, then it follows from the definition that $K_{p;q}^s$ is a subgraph of $K_r^s$. On the other hand, it follows from Lemma \ref{lem-homo} that $K_r^s$ admits an edge-sign preserving homomorphism  to $K_{p;q}^s$.
Note that if $r' \ge r$ then $f: [0,r) \to [0, r')$ defined as $f(x) = \frac{r'x}{r}$ is an edge-sign preserving homomorphism  of $K_r^s $ to $ K_{r'}^s$.  

\begin{lemma}
\label{lem-rr'}
Given even positive integers $p, p'$, if $\frac{p}{q} \le \frac{p'}{q'}$, then $K_{p;q}^s \spto K_{p';q'}^s$.
\end{lemma}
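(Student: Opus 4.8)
The plan is to realise the desired homomorphism as a composition, passing through the two ``continuous'' signed circular cliques $K_r^s$ and $K_{r'}^s$, where $r=\frac pq$ and $r'=\frac{p'}{q'}$, so that $2\le r\le r'$ and $p,p'$ are even. Concretely, I would establish the chain
\[
K_{p;q}^s \spto K_r^s \spto K_{r'}^s \spto K_{p';q'}^s,
\]
and then conclude, since the relation $\spto$ is transitive (as used just before Lemma~\ref{lem:No-HomByCircularChromatic}).

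For the first arrow I would simply note that, because $p$ is even, $K_{p;q}^s$ is a subgraph of $K_r^s$ (observed in the paragraph preceding the statement), so the identity on $\{0,1,\dots,p-1\}$ is an edge-sign preserving homomorphism. For the middle arrow I would invoke the scaling map $f\colon [0,r)\to[0,r')$, $f(x)=\frac{r'x}{r}$, which was already checked to be an edge-sign preserving homomorphism of $K_r^s$ to $K_{r'}^s$ whenever $r\le r'$. For the last arrow I would use that $K_{r'}^s$ is trivially circular $r'$-colorable via the identity, together with $r'=\frac{p'}{q'}$ and $p'$ even; then by Lemma~\ref{lem-homo} (equivalently, by the discretisation that rounds each colour in $[0,r')$ down to the nearest multiple of $\frac1{q'}$) there is an edge-sign preserving homomorphism $K_{r'}^s\spto K_{p';q'}^s$, exactly as recorded in the paragraph following Lemma~\ref{lem-homo}.

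There is essentially no obstacle beyond bookkeeping: the one spot that deserves attention is that the third arrow genuinely needs $p'$ to be even, so that $\{0,\tfrac1{q'},\dots,\tfrac{p'-1}{q'}\}$ is closed under taking antipodes and the rounding step preserves the negative-edge constraint. If a self-contained proof were preferred, one could instead write the composite out explicitly as $i\mapsto\lfloor \tfrac{ip'}{p}\rfloor$ from $\{0,\dots,p-1\}$ to $\{0,\dots,p'-1\}$ and check directly, using $qp'\ge q'p$, that it carries positive edges to positive edges and negative edges to negative edges; but this only repeats the inequality estimates already carried out in the discretisation argument, so the composition route is both shorter and more transparent.
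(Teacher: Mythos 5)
Your proof is correct and matches the paper's own argument exactly: the paper also composes $K_{p;q}^s \spto K_r^s \spto K_{r'}^s \spto K_{p';q'}^s$ using the subgraph inclusion, the scaling map, and Lemma~\ref{lem-homo}. You have simply spelled out each arrow in more detail than the paper's one-line proof.
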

\begin{proof}
	Let $r=\frac{p}{q}, r'=\frac{p'}{q'}$. Then $K_{p;q}^s \spto K_r^s \spto K_{r'}^s \spto K_{p';q'}^s$. 
\end{proof}

Assume $(G, \sigma)$ is a signed graph.
A \emph{switching at vertex $v$} is to switch the signs of edges which are incident to $v$.
A \emph{switching at a set $A\subset V(G)$} is to switch at each vertex in $A$. That is equivalent to switching the signs of all edges in the edge-cut $E(A, V(G)\setminus A)$.  A signed graph $(G, \sigma)$ is a \emph{switching} of $(G, \sigma')$ if it is obtained from $(G, \sigma')$ by a sequence of switchings. We say $(G, \sigma)$ is \emph{switching equivalent} to $(G, \sigma')$ if $(G, \sigma)$ is a switching of $(G, \sigma')$. It is easily observed that given a graph $G$, the relation ``switching equivalent'' is an equivalence class on the set of all signatures on $G$.

It was observed in \cite{Z82} that if $(G, \sigma)$ admits a 0-free $2k$-coloring then every switching equivalent signed graph $(G, \sigma')$ admits such a coloring: If $c$ is  a 0-free $2k$-coloring of $(G, \sigma)$, then after a switching at a vertex $v$ one may change the color of $v$ from $c(v)$ to $-c(v)$ to preserve the property of being a 0-free $2k$-coloring. The same argument applies to circular $r$-coloring.

\begin{proposition}
	\label{prop-switch}
	Assume $(G, \sigma)$ and $(G, \sigma')$ are switching equivalent, say $(G, \sigma')$ is obtained from $(G, \sigma)$ by switching at a set $A$. Then every circular $r$-coloring of $(G, \sigma)$ corresponds to a circular $r$-coloring of $(G, \sigma').$ In particular, $\chi_c(G,\sigma')=\chi_c(G,\sigma)$. 
\end{proposition}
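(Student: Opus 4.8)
The plan is to exhibit an explicit correspondence between circular $r$-colorings of $(G,\sigma)$ and of $(G,\sigma')$, obtained by applying the antipodal map on the switched set. Given a circular $r$-coloring $f$ of $(G,\sigma)$, I would define $f'\colon V(G)\to C^r$ by $f'(v)=\overbar{f(v)}$ for $v\in A$ and $f'(v)=f(v)$ for $v\notin A$, and claim that $f'$ is a circular $r$-coloring of $(G,\sigma')$. The two elementary properties of the antipodal map on $C^r$ that drive everything are that it is an involution, $\overbar{\overbar{x}}=x$, and that it is an isometry, $d_{\pod{r}}(\overbar{x},\overbar{y})=d_{\pod{r}}(x,y)$.

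The verification is then done edge by edge, splitting into cases by how many endpoints of the edge lie in $A$. If $uv$ has both endpoints in $A$ or neither, then $\sigma'(uv)=\sigma(uv)$ and the ordered pair of colors under $f'$ is either $(f(u),f(v))$ or $(\overbar{f(u)},\overbar{f(v)})$; in both cases the relevant quantity — $d_{\pod{r}}(f'(u),f'(v))$ for a positive edge, $d_{\pod{r}}(f'(u),\overbar{f'(v)})$ for a negative edge — equals the corresponding quantity for $f$ on $(G,\sigma)$, by the isometry property together with the involution property in the negative case, so the constraint is inherited. If exactly one endpoint, say $u$, lies in $A$, then $\sigma'(uv)=-\sigma(uv)$; when $\sigma(uv)=+$ the new (negative) constraint $d_{\pod{r}}(f'(u),\overbar{f'(v)})\ge 1$ reads $d_{\pod{r}}(\overbar{f(u)},\overbar{f(v)})\ge 1$, which is exactly the old constraint $d_{\pod{r}}(f(u),f(v))\ge 1$; when $\sigma(uv)=-$ the new (positive) constraint $d_{\pod{r}}(f'(u),f'(v))\ge 1$ reads $d_{\pod{r}}(\overbar{f(u)},f(v))\ge 1=d_{\pod{r}}(f(u),\overbar{f(v)})\ge 1$, again the old constraint. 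So in every case $f'$ respects the constraints of $(G,\sigma')$.

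Finally, since switching at $A$ is an involution (switching at $A$ a second time recovers $\sigma$) and the assignment $f\mapsto f'$ is its own inverse under this involution, the correspondence is a bijection between the circular $r$-colorings of the two signed graphs. Hence $(G,\sigma)$ admits a circular $r$-coloring if and only if $(G,\sigma')$ does, and taking the infimum over admissible $r$ yields $\chi_c(G,\sigma')=\chi_c(G,\sigma)$. For a general switching equivalence given by a sequence of switchings, one iterates the argument, or observes that any such sequence has the same net effect as a single switching at the symmetric difference of the sets used. I do not anticipate a real obstacle; the only point requiring mild care is bookkeeping the antipodal and involution identities correctly in the mixed case, and noting that the argument never needs $r$ to be rational or $f$ to use finitely many colors.
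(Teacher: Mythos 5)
Your proof is correct and takes essentially the same approach as the paper: shift the color of each switched vertex by half the circumference, i.e.\ apply the antipodal map. The only superficial difference is that the paper writes this in the discrete $(p,q)$-coloring formulation (adding $\frac{p}{2}$ modulo $p$), while you work directly on $C^r$ for arbitrary real $r$; the underlying verification is identical.
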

\begin{proof}
	Assume $f$ is a $(p,q)$-coloring of $(G, \sigma)$ and $(G, \sigma')$ is obtained from $(G, \sigma)$ by switching at a set $A$. Let $g: V(G) \to \{0,1,\ldots, p-1\}$ be defined as 
	\[
	g(v) = \begin{cases}
	 f(v), & \text{ if $v \in V(G)-A$}, \cr
	 f(v) + \frac{p}{2}, &\text { if $v \in A$}. \cr
	 \end{cases}
	 \]
	 Here the addition $f(v) + p/2$ is carried out modulo $p$, so that  $f(v) + p/2 \in \{0,1,\ldots, p-1\}$.   
	 It is easy to verify that $g$ is a $(p,q)$-coloring of $(G, \sigma')$.
\end{proof}

Assume $(G, \sigma)$ is a signed graph and $c$ is a $(p, q)$-coloring of $(G, \sigma)$ (where $p$ is even and subject to this condition $\frac{p}{q}$ is in its simplest form). Let $A=\{v: c(v) \ge \frac{p}{2}\}$ and let $(G, \sigma')$ be obtained from $(G, \sigma)$ by switching at $A$. It follows from the proof of Proposition \ref{prop-switch} that there is a $(p, q)$-coloring $c'$ of $(G, \sigma')$ such that $c'(v) \le \frac{p}{2}-1$ for each vertex $v$. 
 Let $\hat{K}_{p;q}^s$ be the signed subgraph of $K_{p;q}^s$ induced by vertices $\{0,1,\ldots, \frac{p}{2}-1\}$. 
 
 \begin{definition}
 	\label{def-signhomo}
 	Assume $(G, \sigma)$ and $(H, \pi)$ are signed graphs. If there is a signed graph $(G, \sigma')$ which is switching equivalent to $(G, \sigma)$ and admits an edge-sign preserving homomorphism  to $(H,\pi)$, then we say  $(G, \sigma)$ admits a {\em switching homomorphism} to $(H, \pi)$.  We write $(G, \sigma) \swto (H,\pi)$ if $(G, \sigma)$ admits a switching homomorphism to $(H,\pi)$.  
 \end{definition}
 
Then we have the following lemma, which can be viewed as another definition of circular chromatic number of signed graphs.

 \begin{lemma}
 	\label{lem-pq2}
 	Assume $(G, \sigma)$ is a signed graph. Then 
 	\[\chi_c(G, \sigma) = \inf\{ \dfrac{p}{q} : p \text{ is even and $(G, \sigma) \swto \hat{K}_{p;q}^s$ } \}.\]
 \end{lemma}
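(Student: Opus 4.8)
The plan is to establish the identity by proving the two inequalities between $\chi_c(G,\sigma)$ and $R := \inf\{\tfrac{p}{q} : p \text{ is even and } (G,\sigma)\swto \hat{K}_{p;q}^s\}$ separately, using the homomorphism formulation of Lemma \ref{lem-homo} together with the switching invariance of Proposition \ref{prop-switch}.

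First I would show $\chi_c(G,\sigma)\le R$. Suppose $(G,\sigma)\swto \hat{K}_{p;q}^s$ for some even $p$ and positive integer $q\le p/2$. By Definition \ref{def-signhomo} there is a signature $\sigma'$ switching equivalent to $\sigma$ with $(G,\sigma')\spto \hat{K}_{p;q}^s$. Since $\hat{K}_{p;q}^s$ is by construction an induced signed subgraph of $K_{p;q}^s$, the inclusion map is an edge-sign preserving homomorphism, so by transitivity $(G,\sigma')\spto K_{p;q}^s$; hence, by Lemma \ref{lem:No-HomByCircularChromatic} and $\chi_c(K_{p;q}^s)=\tfrac{p}{q}$, we get $\chi_c(G,\sigma')\le \tfrac{p}{q}$. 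By Proposition \ref{prop-switch}, $\chi_c(G,\sigma)=\chi_c(G,\sigma')\le \tfrac{p}{q}$, and taking the infimum over all such pairs $(p,q)$ gives $\chi_c(G,\sigma)\le R$.

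For the reverse inequality $R\le \chi_c(G,\sigma)$, fix any real $r>\chi_c(G,\sigma)$. By the $(p,q)$-coloring definition of the circular chromatic number there are an even integer $p$ and a positive integer $q\le p/2$ with $\tfrac{p}{q}<r$ such that $(G,\sigma)$ admits a $(p,q)$-coloring $c$. Set $A=\{v\in V(G): c(v)\ge p/2\}$ and let $\sigma'$ be the signature obtained from $\sigma$ by switching at $A$. Exactly as in the proof of Proposition \ref{prop-switch}, the map $c'$ given by $c'(v)=c(v)$ if $v\notin A$ and $c'(v)=c(v)-p/2$ if $v\in A$ is a $(p,q)$-coloring of $(G,\sigma')$; since $p$ is even, the choice of $A$ forces $c'(v)\in\{0,1,\dots,p/2-1\}$ for every vertex $v$, so $c'$ is in fact an edge-sign preserving homomorphism of $(G,\sigma')$ to $\hat{K}_{p;q}^s$. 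Therefore $(G,\sigma)\swto \hat{K}_{p;q}^s$ and $R\le \tfrac{p}{q}<r$. Since $r>\chi_c(G,\sigma)$ was arbitrary, $R\le\chi_c(G,\sigma)$, and combining the two bounds proves the lemma.

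I expect the only delicate point to be the reduction in the second part: one must check that restricting the codomain from $\{0,\dots,p-1\}$ to $\{0,\dots,p/2-1\}$ is legitimate, which relies on $p$ being even (so that adding $p/2$ permutes the colour set and carries the ``upper half'' bijectively onto the ``lower half''), and on the fact --- already encapsulated in Proposition \ref{prop-switch} --- that this colour shift is precisely the effect of switching at $A$, hence preserves every positive and negative edge constraint. Once this is in place, the argument is a routine combination of the transitivity of $\spto$, the inclusion $\hat{K}_{p;q}^s\hookrightarrow K_{p;q}^s$, and the switching invariance of $\chi_c$.
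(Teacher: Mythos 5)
Your proof is correct and follows exactly the route the paper sketches: the paper leaves Lemma \ref{lem-pq2} unproved but lays out the key step (switching at $A=\{v:c(v)\ge p/2\}$ to push a $(p,q)$-coloring into the colour set $\{0,\dots,p/2-1\}$, hence into $\hat{K}^s_{p;q}$) in the paragraph immediately preceding it, and relies on the inclusion $\hat{K}^s_{p;q}\hookrightarrow K^s_{p;q}$ together with Proposition \ref{prop-switch} for the other direction. You have simply written out both inequalities explicitly, with the same ingredients and the same reasoning.
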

 
 Thus, in particular, Lemma~\ref{lem:No-HomByCircularChromatic} and Lemma~\ref{lem-rr'} can be restated with a switching homomorphism in place of edge-sign preserving homomorphism.
 
 Note that in the graph $\hat{K}_{p}^s$, every pair of distinct vertices are joined by a positive edge and a negative edge, and moreover, each vertex $i$ is incident to a negative loop. Thus we have the following result.
 
 \begin{proposition}
 	\label{prop-0free}
 	A signed graph $(G, \sigma)$ is $(2k,1)$-colorable (equivalently $0$-free $2k$-colorable) if and only if there is a set $A$ of vertices such that after switching at $A$, the result is a signed graph whose positive edges induce a $k$-colorable graph.
 \end{proposition}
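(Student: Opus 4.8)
The plan is to combine the switching-homomorphism characterization of Lemma~\ref{lem-pq2} with the structural description of $\hat{K}_{2k}^s$ that was recorded just before the statement. First I would recall (from the paragraph preceding Lemma~\ref{lem-pq2}) that $(G,\sigma)$ is $(2k,1)$-colorable if and only if $(G,\sigma)\swto\hat{K}_{2k}^s$: from a $(2k,1)$-coloring $c$ one switches at $A=\{v:c(v)\ge k\}$ to obtain a switching-equivalent $(G,\sigma')$ whose induced $(2k,1)$-coloring uses only colors in $\{0,\dots,k-1\}$, i.e.\ an edge-sign preserving homomorphism $(G,\sigma')\spto\hat{K}_{2k}^s$; conversely, any such homomorphism composed with the inclusion $\hat{K}_{2k}^s\subseteq K_{2k}^s$ gives a $(2k,1)$-coloring of $(G,\sigma')$, and hence of $(G,\sigma)$ by Proposition~\ref{prop-switch}.

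Next I would use the observation already made before the statement: in $\hat{K}_{2k}^s$ every two distinct vertices are joined by both a positive and a negative edge, and every vertex carries a negative loop. (If one prefers to check this directly, for $i\ne j$ in $\{0,\dots,k-1\}$ one has $d_{\pod{2k}}(i,j)=|i-j|\ge 1$, so $ij$ is positive, and $d_{\pod{2k}}(i,\overbar{j})=\min\{|i-j-k|,\,2k-|i-j-k|\}\ge 1$, so $ij$ is also negative; and $d_{\pod{2k}}(i,\overbar{i})=k\ge 1$ gives the negative loop.) Consequently a map $f\colon V(G)\to\{0,\dots,k-1\}$ is an edge-sign preserving homomorphism of $(G,\sigma')$ to $\hat{K}_{2k}^s$ precisely when $f(u)\ne f(v)$ for every positive edge $uv$ of $\sigma'$, while negative edges and negative loops impose no condition. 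In other words, $(G,\sigma')\spto\hat{K}_{2k}^s$ if and only if the spanning subgraph of $G$ consisting of the positive edges of $\sigma'$ is properly $k$-colorable.

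Chaining these equivalences yields the claim: $(G,\sigma)$ is $(2k,1)$-colorable iff some $(G,\sigma')$ switching-equivalent to $(G,\sigma)$ satisfies $(G,\sigma')\spto\hat{K}_{2k}^s$ iff some such $(G,\sigma')$ has its positive edges inducing a $k$-colorable graph, where the witnessing set $A$ is simply the switching set that produces $\sigma'$ from $\sigma$. I do not expect a genuine obstacle here; the only points requiring care are to invoke Proposition~\ref{prop-switch} in the correct direction so that the $(2k,1)$-coloring of $(G,\sigma')$ transfers back to the original $(G,\sigma)$, and to note that the negative loops present at every vertex of $\hat{K}_{2k}^s$ are harmless precisely because the definition of circular coloring forbids only positive loops.
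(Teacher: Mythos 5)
Your proposal is correct and follows exactly the route the paper intends: it combines the observation (stated just before the proposition) that in $\hat{K}_{2k}^s$ every two distinct vertices are joined by both a positive and a negative edge and every vertex carries a negative loop, with the switching-homomorphism characterization established in the paragraph preceding Lemma~\ref{lem-pq2}, and correctly invokes Proposition~\ref{prop-switch} to transfer colorings across switchings. The structural check of $\hat{K}_{2k}^s$ is accurate, so there is nothing to add.
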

 
In the study of circular coloring of signed graphs, switching-equivalent signed graphs are viewed as the same signed graph. 
The problem as which signed graphs are equivalent was first studied by Zaslavsky \cite{Z82}. We define the \emph{sign of a cycle} (respectively, a closed walk) in $(G, \sigma)$ to be  the product of the signs of the edges of the cycle (respectively, the closed walk). 
One may observe that a switching does not change the sign of a cycle of $(G, \sigma)$. A result of Zaslavsky, fundamental in the study of signed graphs, shows that a switching equivalent class to which $(G,\sigma)$ belongs to is determined by signs of all cycles of $(G, \sigma)$.

\begin{theorem}\emph{\cite{Z82}}\label{prop:EquivalenceOfSignatures}
	Two signed graphs $(G, \sigma_1)$ and $(G, \sigma_2)$ are switching equivalent if and only if they have the same set of negative cycles.
\end{theorem}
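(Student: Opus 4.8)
The plan is to prove the ``only if'' direction first (which also explains why the statement is meaningful), and then put an arbitrary signature into a normal form with respect to a spanning tree for the converse.

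\textbf{Easy direction.} First I would check that a single switching at a vertex $v$ does not change the sign of any cycle $C$: if $v \notin V(C)$ then no edge of $C$ is affected, and if $v \in V(C)$ then exactly two edges of $C$ are incident with $v$ and both of their signs are flipped, so the product of the signs along $C$ is unchanged (loops and digons are handled identically, counting incidences with multiplicity). Iterating over a sequence of switchings shows that switching-equivalent signatures have the same set of negative cycles; in particular the set of negative cycles is a switching invariant, which is what makes the equivalence claim plausible.

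\textbf{Converse: reduction and spanning-tree normal form.} For the ``if'' direction I would first reduce to $G$ connected, since switchings, cycles, and the statement itself all decompose over connected components. Fix a spanning tree $T$ of $G$. The key lemma is that any signature of $G$ can be switched so that every edge of $T$ becomes positive: rooting $T$ at a vertex $r$, let $\varepsilon(v)$ be the product of the signs (under the given signature) of the edges on the $r$--$v$ path in $T$, with $\varepsilon(r)=+$, and switch at the set $A=\{v:\varepsilon(v)=-\}$. A short computation shows that for a tree edge $uv$ with $u$ the parent of $v$ one has $\sigma(uv)=\varepsilon(u)\varepsilon(v)$, so exactly one of $u,v$ lies in $A$ precisely when $uv$ was negative; hence after switching at $A$ every edge of $T$ is positive. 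Applying this to both $\sigma_1$ and $\sigma_2$ yields switching-equivalent signatures $\sigma_1'$ and $\sigma_2'$, each making $T$ all-positive, and by the easy direction $(G,\sigma_1')$ and $(G,\sigma_2')$ still have the same set of negative cycles.

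\textbf{Finishing via fundamental cycles.} It remains to show $\sigma_1'=\sigma_2'$. They agree on $T$, all edges being positive. For an edge $e=uv\notin T$ let $C_e$ be the fundamental cycle of $e$ with respect to $T$ (for a loop, $C_e$ is $e$ itself; for a parallel edge, $C_e$ is a digon). Every edge of $C_e$ other than $e$ belongs to $T$ and is thus positive under both $\sigma_1'$ and $\sigma_2'$, so the sign of $C_e$ equals $\sigma_i'(e)$ for $i=1,2$. Since the two signatures have the same negative cycles, $C_e$ is negative under $\sigma_1'$ iff it is under $\sigma_2'$, i.e. $\sigma_1'(e)=\sigma_2'(e)$; as $e$ was an arbitrary non-tree edge we get $\sigma_1'=\sigma_2'$. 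Therefore $(G,\sigma_1)$ and $(G,\sigma_2)$ are both switching equivalent to this common signature, hence to each other.

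\textbf{Where the work is, and an alternative.} The only genuinely delicate point is the spanning-tree normal-form lemma together with the bookkeeping of which endpoints of a tree edge land in $A$; everything else is formal. An alternative packaging is purely linear-algebraic over $\mathbb{F}_2$: identify signatures with vectors in $\mathbb{F}_2^{E(G)}$, so that switching at $A$ adds the incidence vector of the edge cut $\delta(A)$, the set of reachable changes is exactly the cut space of $G$, the sign of a cycle $C$ is the inner product of the signature with the incidence vector $\mathbf{1}_C$, and the cut space is the orthogonal complement of the cycle space. Then $\sigma_1$ and $\sigma_2$ differ by a cut iff $\sigma_1-\sigma_2$ annihilates the cycle space iff it annihilates every $\mathbf{1}_C$, and since the fundamental cycles of $T$ form a basis of the cycle space this is equivalent to $\sigma_1$ and $\sigma_2$ assigning the same sign to every cycle.
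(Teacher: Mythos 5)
Your proof is correct. The paper does not include its own proof of this theorem --- it is cited directly from Zaslavsky \cite{Z82} --- so there is no in-paper argument to compare against. Both of your packagings (the spanning-tree normal form with fundamental cycles, and the $\mathbb{F}_2$ linear-algebra version via the orthogonality of the cut space and cycle space) are standard proofs of this result, and the details you give, including the careful handling of loops and digons via incidence multiplicity and the bookkeeping $\sigma(uv)=\varepsilon(u)\varepsilon(v)$ for tree edges, are accurate.
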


Thus we have the following proposition (see~\cite{NSZ20} for more details).

\begin{proposition}
	\label{prop-homocycle}
	A signed graph $(G, \sigma)$ admits a switching homomorphism to a signed graph $(H, \pi)$ if and only if there is a homomorphism $f$ from $G$ to $H$ such that for every closed walk $W$ of $(G, \sigma)$, $W$ and $f(W)$ have the same sign. 
\end{proposition}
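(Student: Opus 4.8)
The plan is to prove the two implications separately. The ``only if'' direction is a short computation; the ``if'' direction is where Zaslavsky's theorem (Theorem~\ref{prop:EquivalenceOfSignatures}) enters, via a pullback signature. I will use throughout that switching does not change the sign of any closed walk: this is recorded for cycles in the text above, and it extends to closed walks since the sign of a closed walk is the product of the signs of the cycles in any edge-decomposition of it (here a loop at a vertex $w$ is unaffected by switching at $w$, as it has two endpoints there). I will also regard a homomorphism $f$ of $G$ to $H$ as carrying data on vertices and on edges, so that for a closed walk $W = v_0 e_1 v_1 \cdots e_n v_0$ of $G$, the image $f(W) = f(v_0)\, f(e_1)\, f(v_1) \cdots f(e_n)\, f(v_0)$ is a well-defined closed walk of $H$.

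For the forward direction, assume $(G,\sigma) \swto (H,\pi)$. By Definition~\ref{def-signhomo} there is a signature $\sigma'$ switching equivalent to $\sigma$ together with an edge-sign preserving homomorphism $f$ of $(G,\sigma')$ to $(H,\pi)$. Given an arbitrary closed walk $W$ of $G$, I would argue as follows: since $f$ maps each edge to an edge of the same $\pi$-sign as its $\sigma'$-sign, the sign of $f(W)$ in $(H,\pi)$ equals the sign of $W$ in $(G,\sigma')$; and since switching preserves closed-walk signs, the sign of $W$ in $(G,\sigma')$ equals the sign of $W$ in $(G,\sigma)$. Composing these two equalities proves that $W$ and $f(W)$ have the same sign.

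For the converse, assume $f$ is a homomorphism of $G$ to $H$ such that $W$ and $f(W)$ have the same sign for every closed walk $W$ of $(G,\sigma)$. The idea is to pull the signature of $H$ back along $f$: set $\sigma'(e) := \pi(f(e))$ for every edge $e$ of $G$. Then $f$ is, by construction, an edge-sign preserving homomorphism of $(G,\sigma')$ to $(H,\pi)$, so it remains only to show that $\sigma'$ is switching equivalent to $\sigma$. By Theorem~\ref{prop:EquivalenceOfSignatures} this amounts to checking that $(G,\sigma)$ and $(G,\sigma')$ have the same negative cycles. For a cycle $C$ of $G$, regarded as a closed walk, its $\sigma'$-sign is $\prod_{e \in C}\pi(f(e))$, which is exactly the sign of $f(C)$ in $(H,\pi)$, and by hypothesis this equals the $\sigma$-sign of $C$; hence $C$ is negative in $(G,\sigma)$ if and only if it is negative in $(G,\sigma')$. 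Thus the two signatures have the same negative cycles, so they are switching equivalent by Zaslavsky's theorem, and therefore $(G,\sigma) \swto (H,\pi)$.

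I do not anticipate a genuine obstacle: once the pullback construction and Zaslavsky's theorem are in place, the proof is essentially bookkeeping. The one point that requires care is the multigraph setting — the signed circular cliques $K^s_{p;q}$ that are the targets of interest do have parallel edges of opposite sign and negative loops — so ``$f(W)$'' truly depends on the choice of image edges, and a homomorphism must be recorded as a map on edges as well as on vertices in order for the pullback $\sigma'$ to be well defined. It is also worth noting that the hypothesis may be relaxed from ``every closed walk'' to ``every cycle'': the converse direction above uses only the cycle case, and then the forward direction applied to the resulting switching homomorphism upgrades the conclusion back to all closed walks, so the two formulations are equivalent.
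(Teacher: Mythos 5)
Your proof is correct, and it fills in an argument that the paper itself does not give: the paper states Proposition~\ref{prop-homocycle} immediately after Zaslavsky's Theorem~\ref{prop:EquivalenceOfSignatures} and simply refers the reader to \cite{NSZ20}. Your route---reduce the ``only if'' direction to the invariance of closed-walk signs under switching, and prove the ``if'' direction by pulling back $\pi$ along $f$ to get $\sigma' = \pi\circ f$ and invoking Zaslavsky's theorem on the cycle space to conclude $\sigma'\equiv\sigma$---is exactly the argument the paper is setting up and the one that appears in the cited reference.

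Two of your side remarks are worth keeping as they address real subtleties. First, you are right that $f$ must be recorded on edges as well as on vertices for $f(W)$ and the pullback $\sigma'$ to be well defined; the paper's own follow-up lemma (the unnumbered one right after Proposition~\ref{prop-homocycle}) restates the criterion explicitly in terms of a map on vertices and edges, confirming that this is the intended reading. Second, your observation that ``every closed walk'' can be weakened to ``every cycle'' in the hypothesis is correct and is a slight sharpening of the statement: your converse argument only uses cycles, and the forward direction then recovers the closed-walk condition for free. The one step you wave at a bit loosely is the extension of sign-invariance under switching from cycles to general closed walks via an ``edge-decomposition into cycles''; this does work (a closed walk is an Eulerian circuit of the multigraph of its edge traversals, hence decomposes into cycles, where a back-and-forth traversal of a single edge counts as a positive digon), but the more direct parity argument---each visit of the walk to the switched vertex $v$ uses an even number of non-loop edge-ends at $v$, so the total number of sign flips along the walk is even---is cleaner and avoids the decomposition altogether.
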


The following lemma follows from Theorem~\ref{prop:EquivalenceOfSignatures}.

\begin{lemma}
A signed graph $(G, \sigma)$ admits a switching homomorphism to $(H, \pi)$ if and only if there is a mapping of vertices and edges of $(G, \sigma)$ to the vertices and edges of $(H, \pi)$ which preserves adjacencies, incidences, and signs of closed walks. 
\end{lemma}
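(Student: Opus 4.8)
The plan is to obtain both directions directly from Definition~\ref{def-signhomo} together with Zaslavsky's theorem (Theorem~\ref{prop:EquivalenceOfSignatures}); in effect this lemma is just Proposition~\ref{prop-homocycle} with the notation ``$f(W)$'' — which is ambiguous when $H$ has parallel edges — replaced by an honest map defined on edges as well as vertices. Throughout I work with a mapping $\phi=(\phi_V,\phi_E)$, sending $V(G)$ to $V(H)$ and $E(G)$ to $E(H)$, that \emph{preserves incidences} in the sense that every edge $e=uv$ of $G$ is sent to an edge $\phi_E(e)$ of $H$ with endpoints $\phi_V(u),\phi_V(v)$ (a loop at $u$ going to a loop at $\phi_V(u)$). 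Such a $\phi$ automatically makes $\phi_V$ a homomorphism of $G$ to $H$, and for any closed walk $W=v_0e_1v_1\cdots e_\ell v_0$ in $G$ the image $\phi(W):=\phi_V(v_0)\,\phi_E(e_1)\,\phi_V(v_1)\cdots\phi_E(e_\ell)\,\phi_V(v_0)$ is a well-defined closed walk in $H$, whose sign is $\prod_{i=1}^{\ell}\pi(\phi_E(e_i))$. I also use the fact, already noted in the excerpt, that a switching leaves the sign of every closed walk unchanged.

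For the forward direction, assume $(G,\sigma)\swto(H,\pi)$. By Definition~\ref{def-signhomo} there is a signature $\sigma'$ switching equivalent to $\sigma$ and an edge-sign preserving homomorphism of $(G,\sigma')$ to $(H,\pi)$. Turn it into a map on edges by choosing, for each edge $e=uv$ of $G$, an edge $\phi_E(e)$ of $H$ with endpoints $\phi_V(u),\phi_V(v)$ and with $\pi(\phi_E(e))=\sigma'(e)$; such an edge exists by the definition of edge-sign preserving homomorphism, and loops are handled the same way. By construction $\phi$ preserves adjacencies and incidences and preserves the sign of every edge with respect to $(G,\sigma')$, hence the sign of every closed walk with respect to $(G,\sigma')$. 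Since $\sigma$ and $\sigma'$ differ by a switching, every closed walk of $G$ has the same sign under $\sigma$ and under $\sigma'$, so $\phi$ also preserves signs of closed walks with respect to $(G,\sigma)$. This is the desired mapping.

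For the converse, let $\phi=(\phi_V,\phi_E)$ be a mapping of vertices and edges preserving adjacencies, incidences, and signs of closed walks with respect to $(G,\sigma)$ and $(H,\pi)$. Define a signature $\sigma'$ on $G$ by $\sigma'(e):=\pi(\phi_E(e))$ for each edge $e$. Then, edge by edge, $\phi$ is an edge-sign preserving homomorphism of $(G,\sigma')$ to $(H,\pi)$. It remains to see that $\sigma'$ is switching equivalent to $\sigma$; by Theorem~\ref{prop:EquivalenceOfSignatures} it suffices to show $(G,\sigma)$ and $(G,\sigma')$ have the same set of negative cycles, and in fact I show they assign the same sign to every closed walk $W=v_0e_1v_1\cdots e_\ell v_0$:
\[
\sigma'(W)=\prod_{i=1}^{\ell}\sigma'(e_i)=\prod_{i=1}^{\ell}\pi(\phi_E(e_i))=\pi(\phi(W))=\sigma(W),
\]
the last equality because $\phi$ preserves signs of closed walks. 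Hence $\sigma$ and $\sigma'$ are switching equivalent, and the edge-sign preserving homomorphism $\phi\colon(G,\sigma')\to(H,\pi)$ witnesses $(G,\sigma)\swto(H,\pi)$.

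I do not anticipate a real obstacle: the mathematical content is entirely ``switching equivalence $=$ identical closed-walk signs'', i.e.\ Zaslavsky's theorem. The only points needing care are bookkeeping ones — (i) keeping track of the edge map rather than just the vertex map, so that $\phi(W)$ and its sign are well defined when $H$ has parallel edges (this is exactly why the statement is phrased with a map on edges), and (ii) treating loops, which are closed walks of length one and are therefore covered by the same sign-preservation condition.
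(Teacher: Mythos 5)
Your proof is correct and takes the same approach the paper intends: the paper gives no explicit proof, merely noting that the lemma follows from Theorem~\ref{prop:EquivalenceOfSignatures} (Zaslavsky), and your argument fleshes out exactly that — in the forward direction extending the vertex map of an edge-sign preserving homomorphism to an edge map of the correct signs, and in the converse direction pulling back $\pi$ along $\phi_E$ to get a signature $\sigma'$ and invoking Zaslavsky to see that $\sigma'$ is switching equivalent to $\sigma$ because the two agree on all closed-walk (hence all cycle) signs. The bookkeeping about parallel edges and loops is handled properly.
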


For a non-zero integer $\ell$, we denote by $C_{\ell}$ the cycle of length $|\ell|$ whose sign agrees with the sign of $\ell$. So for example $C_{-4}$ is a negative cycle of length $4$. Observe that the signed graph $\hat{K}_{4k;2k-1}$ is obtained from $C_{-2k}$ by adding a negative loop at each vertex. Note that adding negative loops to a signed graph or deleting them does not affect its circular chromatic number. So we may ignore negative loops in $(G, \sigma)$. However, as a target of switching homomorphism, negative loops are important, because we can map two vertices connected by a negative edge to a same vertex $v$, provided $v$ is incident to a negative loop.

\section{Some basic properties}
 
Assume $(G, \sigma)$ is a signed graph and $\phi: V(G) \to [0,r)$ is a circular $r$-coloring of $(G, \sigma)$. The \emph{partial orientation $D=D_{\phi}(G, \sigma)$} of $G$ with respect to a circular $r$-coloring $\phi$ is defined as follows: $(u,v)$ is an arc of $D$ if and only if one of the following holds: 
\begin{itemize}
	\item  $uv$ is a positive edge and $(\phi(v)-\phi(u)) \pmod{r} = 1$. 
	\item  $uv$ is a negative edge and $(\overbar{\phi(v)}-\phi(u)) \pmod{r} = 1$. 
\end{itemize}
  
 \begin{definition}
 	Assume $(G, \sigma)$ is a signed graph and $\phi$ is a circular $r$-coloring of $(G, \sigma)$. Arcs in $D_\phi(G, \sigma)$ are called {\em tight arcs} of $(G, \sigma)$ with respect to $\phi$. A directed path (respectively, a directed cycle) in $D_\phi(G, \sigma)$ is called a {\em tight path} (respectively, a {\em tight cycle}) with respect to $\phi$.  
 \end{definition}
 
 \begin{lemma}\label{lem1}
 	Let $(G, \sigma)$ be a signed graph and let $\phi$ be a circular $r$-coloring of $(G, \sigma)$. If $D_{\phi}(G, \sigma)$ is acyclic, then there exists an $r_0\lneqq r$ such that $(G, \sigma)$ admits an $r_0$-circular coloring.
 \end{lemma}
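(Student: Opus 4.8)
The plan is to start from the given circular $r$-coloring $\phi$ of $(G,\sigma)$, perturb it slightly on $C^r$ so that \emph{every} edge constraint becomes a strict inequality, and then shrink the circle by a small factor to obtain a circular $r_0$-coloring with $r_0<r$. This mirrors the classical argument for ordinary circular coloring, and the acyclicity of $D_\phi(G,\sigma)$ is exactly what makes the perturbation possible.

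First I would deal with the degenerate cases. If $G$ has no edge then $\chi_c(G,\sigma)=1$ and there is nothing to prove; if $G$ has an edge then, since $r\ge 2$, the value $r=2$ would force both arcs of that edge into $D_\phi$, i.e.\ a directed $2$-cycle, contradicting acyclicity, so we may assume $r>2$. Next, since $D_\phi(G,\sigma)$ is a finite acyclic digraph, for each vertex $v$ I would let $\ell(v)$ be the maximum number of arcs on a tight path ending at $v$. This is a well-defined integer with $0\le \ell(v)\le |V(G)|-1$, and its key property is that whenever $(u,v)$ is a tight arc one has $\ell(v)\ge \ell(u)+1$. (If $D_\phi$ contained a directed cycle no such function could exist; this is the only place the hypothesis is used.)

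Then, for a small parameter $\delta>0$ to be fixed, I would set $\phi'(v)=\phi(v)+\delta\,\ell(v)\pmod r$ and check that $\phi'$ is a circular $r$-coloring with a uniform positive slack. For a tight arc $(u,v)$ — say $uv$ is positive with $\phi(v)-\phi(u)\equiv 1$, the negative case being identical after passing to the antipodal — the new difference is $\phi'(v)-\phi'(u)\equiv 1+\delta(\ell(v)-\ell(u))$, which for $\delta$ small lies strictly inside the corridor $(1,r-1)$, so $d_{\pod{r}}(\phi'(u),\phi'(v))=1+\delta(\ell(v)-\ell(u))\ge 1+\delta>1$. For an edge that is \emph{not} tight, its defining inequality already held with strictly positive slack, since a constraint satisfied with equality always produces a tight arc; and because $\phi'$ moves each vertex by at most $\delta(|V(G)|-1)$, choosing $\delta$ small keeps these inequalities strict as well. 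Letting $\eta>0$ be the minimum slack over the finitely many edges, $\phi'$ is a circular $r$-coloring in which every positive edge $uv$ has $d_{\pod{r}}(\phi'(u),\phi'(v))\ge 1+\eta$ and every negative edge has $d_{\pod{r}}(\phi'(u),\overbar{\phi'(v)})\ge 1+\eta$.

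Finally I would rescale. Multiplying all colors by a factor $\lambda\in(0,1)$ sends $C^r$ to $C^{\lambda r}$, scales every pairwise distance by $\lambda$, and commutes with the antipodal map, so $v\mapsto\lambda\,\phi'(v)$ is a circular $\lambda r$-coloring as soon as $\lambda(1+\eta)\ge 1$. Taking $\lambda=\tfrac{1}{1+\eta'}$ with $\eta'\in(0,\eta]$ small enough that also $\lambda r\ge 2$ yields $r_0:=\lambda r\in[2,r)$ together with a circular $r_0$-coloring of $(G,\sigma)$, which is exactly the conclusion. The step I expect to demand the most care is the modular bookkeeping in the perturbation: one must ensure the shifted differences neither wrap around $C^r$ nor cross the threshold $r-1$ on positive edges, which is why one first secures $r>2$ and then picks $\delta$ (and $\eta'$) sufficiently small in terms of $r$ and $|V(G)|$.
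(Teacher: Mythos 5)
Your proof is correct, and it takes a genuinely different route from the paper's. The paper uses an extremal argument: among all circular $r$-colorings $\phi$ with $D_\phi(G,\sigma)$ acyclic, it chooses one minimizing the number of tight arcs, then shows this minimizer has in fact \emph{no} tight arcs --- by locating a sink $u$ with an incoming arc, shifting $\phi(u)$ forward by a small $\epsilon$ to delete the incoming arcs of $u$ without creating new ones, and contradicting minimality. Only then does it extract a uniform slack and rescale. You instead perform the perturbation in one shot, using the longest-tight-path function $\ell$ as a potential on the acyclic digraph and moving every vertex simultaneously by $\delta\,\ell(v)$: since $\ell(v)\ge \ell(u)+1$ along any tight arc $(u,v)$, every tight constraint opens up by at least $\delta$, while the already-slack constraints are disturbed by only $O(\delta\,|V(G)|)$, so a small enough $\delta$ produces uniform positive slack in a single step. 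This is essentially a constructive version of what the paper accomplishes by minimality; your version avoids the existence-of-a-minimizer step and gives an explicit perturbation. Your preliminary observation that one may assume $r>2$ (since at $r=2$ any edge would produce a directed $2$-cycle of tight arcs, contradicting acyclicity) is also a clean way of securing the modular bookkeeping in the rescaling, a point the paper leaves implicit. I see no gap in your argument.
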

 
 \begin{proof}
 	For a given signed graph $(G, \sigma)$ and a circular $r$-coloring $\phi$ of $(G, \sigma)$, suppose that $D_{\phi}{(G, \sigma)}$ is acyclic. Moreover, we assume among all such $\phi$, $D_{\phi}{(G, \sigma)}$ has minimum number of arcs. First we show that $D_{\phi}{(G, \sigma)}$ has no arc.  
 	Otherwise, since $D_{\phi}{(G, \sigma)}$ is acyclic, $D_{\phi}{(G, \sigma)}$ has an arc $(v,u)$ such that $u$ is a sink. Thus for every positive edge $uw$, $(\phi(w) - \phi(u) )  \pmod{r} > 1$ and for every negative edge $uw$, $(\overbar{ \phi(w)} - \phi(u) )  \pmod{r} > 1$. As $G$ is finite, there exists an $\epsilon > 0$ such that for every positive edge $uw$ of $(G, \sigma)$, 	$(\phi(w) - \phi(u) )  \pmod{r} > 1+ \epsilon$ and for every negative edge $uw$, $(\overbar{ \phi(w)} - \phi(u) ) \pmod{r} > 1+ \epsilon$.
 	   
 	   Let $\psi(x) = \phi(x)$ for $x \ne u$ and $\psi(u) = \phi(u)+ \epsilon$. Then $\psi$ is a circular $r$-coloring of $(G, \sigma)$ and $D_{\psi}{(G, \sigma)}$ is a sub digraph of $D_{\phi}{(G, \sigma)}$, in which $(v,u)$ is not an arc. So $D_{\psi}{(G, \sigma)}$ is acyclic and has fewer arcs than $D_{\phi}{(G, \sigma)}$, contrary to our choice of $\phi$.
 	   
 	  As $D_{\phi}{(G, \sigma)}$ has no arc, it follows from the definition that there exists $\epsilon > 0$ such that for any positive edge $uv$,   $$1+\epsilon \le |\phi(u)-\phi(v)| \le r-(1+\epsilon)$$ and for any negative edge $uv$,   $$1+\epsilon \le |\overbar{\phi(u)}-\phi(v)| \le r-(1+\epsilon).$$
 	  Let $r_0 =\frac{r}{1+\epsilon}$ and let $\psi: V(G) \to [0,r')$ be defined as $\psi(v) = \frac{\phi(v)}{1+\epsilon}$. Then $\psi$ is an $r_0$-circular coloring of $ (G, \sigma)$.
 \end{proof}

 \begin{corollary}\label{coro:TightCycle}
 	If $\chi_c(G, \sigma)=r$, then every circular $r$-coloring $\phi$ of $(G, \sigma)$ has a tight cycle.
 \end{corollary}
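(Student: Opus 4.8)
The plan is to derive the statement immediately from Lemma~\ref{lem1} by contraposition. Suppose $\phi$ is a circular $r$-coloring of $(G,\sigma)$ that has \emph{no} tight cycle. By the definition of a tight cycle as a directed cycle in $D_\phi(G,\sigma)$, this says precisely that the partial orientation $D_\phi(G,\sigma)$ contains no directed cycle, i.e. $D_\phi(G,\sigma)$ is acyclic. That is exactly the hypothesis of Lemma~\ref{lem1}.

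Applying Lemma~\ref{lem1}, we obtain a real number $r_0$ with $r_0 \lneqq r$ such that $(G,\sigma)$ admits a circular $r_0$-coloring. Since $\chi_c(G,\sigma)$ is defined as the infimum of those $r'$ for which $(G,\sigma)$ has a circular $r'$-coloring, it follows that $\chi_c(G,\sigma) \le r_0 < r$, contradicting the assumption $\chi_c(G,\sigma) = r$. Hence no circular $r$-coloring of $(G,\sigma)$ can be without a tight cycle, i.e. every circular $r$-coloring of $(G,\sigma)$ has a tight cycle.

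There is essentially no obstacle here: the corollary is a direct consequence of Lemma~\ref{lem1}, and the only point worth stating explicitly is that Lemma~\ref{lem1} delivers a \emph{strict} decrease $r_0 \lneqq r$, which is what makes the contradiction with $\chi_c(G,\sigma)=r$ go through. One should also note that the hypothesis of the corollary hands us an actual circular $r$-coloring $\phi$, so the (possibly unattained) infimum in the definition of $\chi_c$ causes no trouble.
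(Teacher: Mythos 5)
Your proof is correct and is precisely the argument the paper intends: the corollary is stated as an immediate consequence of Lemma~\ref{lem1}, and your contrapositive application of that lemma (no tight cycle $\Rightarrow$ $D_\phi$ acyclic $\Rightarrow$ a circular $r_0$-coloring with $r_0<r$ $\Rightarrow$ $\chi_c(G,\sigma)<r$) is the same route. Nothing further is needed.
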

 
 The converse of Corollary \ref{coro:TightCycle} is also true.
 
\begin{lemma}\label{lem:Circular-TightCycle}
 	Given a signed graph $(G, \sigma)$, $\chi_c(G, \sigma)=r$ if and only if $(G, \sigma)$ is circular $r$-colorable and every circular $r$-coloring $\phi$ of $(G, \sigma)$, has a tight cycle.
\end{lemma}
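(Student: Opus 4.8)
The plan is to prove the two implications separately. For the ``only if'' direction the only content beyond Corollary~\ref{coro:TightCycle} is that the infimum defining $\chi_c(G,\sigma)$ is attained; for the ``if'' direction the idea is that a circular $r'$-coloring with $r'<r$ rescales to a circular $r$-coloring having no tight arc at all.

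\textbf{The ``only if'' direction.} Assume $\chi_c(G,\sigma)=r$. By Corollary~\ref{coro:TightCycle} every circular $r$-coloring of $(G,\sigma)$ has a tight cycle, so it remains only to produce one circular $r$-coloring, i.e.\ to show the infimum is attained. I would do this by a compactness argument: by definition of the infimum there are reals $r_n\to r$ and circular $r_n$-colorings $\phi_n:V(G)\to[0,r_n)$. All $\phi_n$ take values in a common compact interval, so a subsequence converges pointwise to some $\phi:V(G)\to[0,r]$, which we view as a map into $C^r$ under the identification $r\equiv 0$. Since $d_{\pod{s}}(x,y)=\min(|x-y|,s-|x-y|)$ is continuous in $(s,x,y)$ --- and $d_{\pod{s}}(x,\overbar{y})=\frac{s}{2}-d_{\pod{s}}(x,y)$, so the negative-edge constraint is continuous too --- passing to the limit in the inequalities that define a circular $r_n$-coloring shows that $\phi$ is a circular $r$-coloring of $(G,\sigma)$.

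\textbf{The ``if'' direction.} Assume $(G,\sigma)$ is circular $r$-colorable, hence $\chi_c(G,\sigma)\le r$, and that every circular $r$-coloring of $(G,\sigma)$ has a tight cycle; I must rule out $\chi_c(G,\sigma)<r$. First note $(G,\sigma)$ has an edge, since an edgeless signed graph has a circular $r$-coloring with no arcs, so $\chi_c(G,\sigma)\ge 2$. Suppose for contradiction $\chi_c(G,\sigma)<r$ and pick $r'$ with $2\le r'<r$ for which $(G,\sigma)$ has a circular $r'$-coloring $\phi'$. Set $\phi(v)=\frac{r}{r'}\phi'(v)\in[0,r)$. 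Under the map $x\mapsto\frac{r}{r'}x$ all distances and antipodals on $C^{r'}$ are multiplied by $\frac{r}{r'}\ge 1$, so $\phi$ is a circular $r$-coloring. It has no tight arc: if $(u,v)$ were one, then (for a positive edge) $\phi(v)-\phi(u)\equiv 1\pmod r$, equivalently $\phi'(v)-\phi'(u)\equiv\frac{r'}{r}\pmod{r'}$, so $d_{\pod{r'}}(\phi'(u),\phi'(v))\le\frac{r'}{r}<1$, contradicting that $\phi'$ is a circular $r'$-coloring; the negative-edge case is identical with $\overbar{\phi'(v)}$ in place of $\phi'(v)$. Hence $\phi$ is a circular $r$-coloring with no tight cycle, contradicting the hypothesis, and therefore $\chi_c(G,\sigma)=r$.

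\textbf{Main obstacle.} The delicate step is attaining the infimum in the first direction: one must verify that the circular metric $d_{\pod{s}}$ varies continuously with the circumference $s$ and take care of the identification $r\equiv 0$ in the pointwise limit. Everything else reduces to the scaling homomorphisms between the graphs $K_r^s$ and to Corollary~\ref{coro:TightCycle}.
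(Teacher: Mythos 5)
Your proof is correct. The ``if'' direction is precisely the paper's argument: rescale a circular $r'$-coloring with $r'<r$ by the factor $\frac{r}{r'}$ so that every constraint holds with strict inequality, $D_\phi(G,\sigma)$ has no arcs, and hence no tight cycle exists. Where you go further is the ``only if'' direction: the paper cites only Corollary~\ref{coro:TightCycle}, which gives the tight-cycle half of the conjunction but does not give that $(G,\sigma)$ is circular $r$-colorable at all when $\chi_c(G,\sigma)=r$, i.e.\ that the infimum defining $\chi_c$ is attained. In the paper, attainment is established only later via Proposition~\ref{prop-st} and Corollary~\ref{coro-min}, and Proposition~\ref{prop-st} itself cites the present lemma, so a self-contained argument is genuinely useful here. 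Your compactness argument supplies it cleanly: since $V(G)$ is finite, circular $r_n$-colorings with $r_n\downarrow r$ lie in a compact set, a pointwise convergent subsequence exists, and the constraints pass to the limit by joint continuity of $d_{\pod{s}}(x,y)=\min(|x-y|,\,s-|x-y|)$ in $(s,x,y)$, with $d_{\pod{s}}(x,\overbar{y})=\tfrac{s}{2}-d_{\pod{s}}(x,y)$ handling the negative edges, and the boundary identification $r\equiv 0$ causing no trouble after a short check. So your route is the paper's with a small but real logical gap closed.
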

 
 \begin{proof}
 	One direction is proved in Corollary~\ref{coro:TightCycle}. It remains to show that if $\chi_c(G, \sigma) < r$, then there is a circular $r$-coloring $\phi$ of $(G, \sigma)$ such that   $D_{\phi}(G, \sigma)$ is acyclic.
 	
 	 Assume $\chi_c(G, \sigma) = r' < r$. Let $\psi: V(G) \to [0,r)$ be a circular $r'$-coloring of $(G, \sigma)$. Let $\phi(v) =\frac{r}{r'} \psi(v) $. Then it is easy to verify that $\phi$ is a circular $r$-coloring of $(G, \sigma)$ and $D_{\phi}(G, \sigma)$ contains no arc (and hence is acyclic).   
 \end{proof}

 \begin{proposition}
 \label{prop-st}
 	Any signed graph $(G, \sigma)$ which is not a forest has a cycle with $s$ positive edges and $t$ negative edges such that $\chi_c(G, \sigma)= \frac{2(s+t)}{2a+t}$ for some non-negative integer $a$. 
 \end{proposition}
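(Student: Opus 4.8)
The plan is to fix $r=\chi_c(G,\sigma)$, take a circular $r$-colouring attaining it, extract a tight cycle, first \emph{switch} so that this cycle uses at most one negative edge, and then read $r$ off from the way the colouring ``winds'' around the cycle.

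Since $(G,\sigma)$ is not a forest it has a cycle, hence an edge, so $r\ge 2$. (When $r=2$ the statement follows directly: then $(G,\sigma)$ is $0$-free $2$-colourable, any cycle of $G$ has an even number $s$ of positive edges together with some number $t$ of negative edges, and $\tfrac{2(s+t)}{2(s/2)+t}=2$. So assume $r>2$, which also guarantees that every tight cycle is an honest cycle of $G$.) By Lemma~\ref{lem:Circular-TightCycle} there is a circular $r$-colouring $\phi$ of $(G,\sigma)$, and by Corollary~\ref{coro:TightCycle} the digraph $D_\phi(G,\sigma)$ has a tight cycle $C=v_0\to v_1\to\dots\to v_{\ell-1}\to v_0$. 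Regarded as a signed graph on its own $C$ has a single independent cycle, so it is switching equivalent to a cycle with $0$ negative edges if its sign is $+$ and to one with exactly $1$ negative edge if its sign is $-$; realise this by switching $(G,\sigma)$ at the appropriate $A\subseteq V(C)$. Exactly as in Proposition~\ref{prop-switch}, replacing $\phi(v)$ by $\phi(v)+\tfrac r2$ for each $v\in A$ produces a circular $r$-colouring of the switched signed graph, and a short case analysis (according to which endpoints of an edge lie in $A$) shows that the arc set of the orientation digraph is unchanged, so $C$ is still a tight cycle. As switching equivalent signed graphs are regarded as the same, I may thus assume that $C$ has $t$ negative and $s=\ell-t$ positive edges with $t\in\{0,1\}$.

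The heart of the argument is a winding computation along $C$. Walking along $C$, maintain a ``spin'' equal to the parity of the number of negative arcs traversed so far, and set $\psi_i:=\phi(v_i)$ when the spin at $v_i$ is even and $\psi_i:=\overbar{\phi(v_i)}$ when it is odd, so $\psi_0=\phi(v_0)$. Using the defining relations of tight arcs ($\phi(v_{j+1})-\phi(v_j)\equiv 1$ for a tight positive arc, $\overbar{\phi(v_{j+1})}-\phi(v_j)\equiv 1$ for a tight negative arc) together with $\overbar{x}=x+\tfrac r2$, one checks in each of the at most four cases that $\psi_{j+1}\equiv\psi_j+1\pmod r$; hence $\psi_\ell\equiv\psi_0+\ell\pmod r$. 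On the other hand $v_\ell=v_0$, so $\psi_\ell=\phi(v_0)=\psi_0$ when $t$ is even and $\psi_\ell=\overbar{\phi(v_0)}=\psi_0+\tfrac r2$ when $t$ is odd. Comparing: if $t=0$ then $\ell\equiv 0\pmod r$, so $\ell=ar$ for a positive integer $a$ and $r=\tfrac{\ell}{a}=\tfrac{2(s+t)}{2a+t}$; if $t=1$ then $\ell\equiv\tfrac r2\pmod r$, so $\ell=(a+\tfrac12)r$ for a non-negative integer $a$ and $r=\tfrac{2\ell}{2a+1}=\tfrac{2(s+t)}{2a+t}$. In both cases $a$ is a non-negative integer, which is the assertion.

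The step I expect to need the most care is the switching reduction: one must check both that the recoloured map remains a circular $r$-colouring of the new signed graph and that $C$ survives as a tight cycle in the new orientation digraph — a short but genuinely case-by-case verification — and the argument relies on switching equivalent signed graphs being identified. If the signature were instead held fixed, the same winding computation still yields $\chi_c(G,\sigma)=\tfrac{2(s+t)}{2a+t}$ for the tight cycle itself, with $s,t$ its own edge counts, but only with $a$ an integer satisfying $2a+t\ge 1$; the extra information that $a$ can be taken $\ge 0$ is precisely what the preliminary switching provides.
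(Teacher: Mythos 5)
Your argument follows the paper's method exactly: extract a tight cycle from an optimal circular colouring (Lemma~\ref{lem:Circular-TightCycle}) and compute the winding of the colours around it; your spin variable $\psi$ is simply a convenient bookkeeping device for the paper's per-edge displacements ($+1$ for a tight positive arc, $1-\tfrac{r}{2}$ for a tight negative one), and both yield $s-(\tfrac{r}{2}-1)t = ra$. The one genuine difference is your preliminary switching to force $t\in\{0,1\}$, which the paper omits: the paper computes directly with the cycle's own counts $s,t$ in $\sigma$ and concludes with ``for some integer $a$,'' without ever arguing that $a\geq 0$. Your switching step is precisely what makes $a\geq 0$ automatic (since $2a+t$ is then a positive integer with $t\in\{0,1\}$). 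But, as your closing paragraph correctly notices, the switching changes $s$ and $t$ to the counts in a switching-equivalent signature $\sigma'$ rather than in $\sigma$ itself, so the non-negativity you obtain is for $\sigma'$'s counts, not the original ones. Read literally, the proposition's $s,t$ are the cycle's counts in $(G,\sigma)$, and for those counts the tight-cycle computation can genuinely produce a negative $a$; so neither your argument nor the paper's pins down $a\geq 0$ for the original signature — you are the one who flags this, and you are more careful about it than the paper. Both arguments do establish the formula with an integer $a$ satisfying $2a+t\geq 1$, which is all that Corollary~\ref{coro-min} actually needs.
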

 
 \begin{proof} 
 Assume $\chi_c(G, \sigma)=r$ and $\psi: V(G) \to [0,r)$ is a circular $r$-coloring of $(G, \sigma)$. By Lemma~\ref{lem:Circular-TightCycle}, $D_{\psi}{(G, \sigma)}$ contains a directed cycle $B$.  Assume $B$  consists of $s$ positive edges and $t$ negative edges. We view the colors as the points of a circle $C^r$ of circumference $r$, which is obtained from the interval $[0,r]$ by identifying $0$ and $r$. Assume $B=(v_1,v_2, \ldots, v_{s+t})$. If $v_iv_{i+1}$ is a positive edge, then traversing from the colors of $v_i$, one unit along the clockwise direction of $C^r$, we arrive at the color of $v_{i+1}$. 
 If $v_iv_{i+1}$ is a negative edge, then from the color of $v_i$, by first traversing $\frac{r}{2}$  unit along the anti-clockwise direction of $C^r$ then traversing along the clockwise direction a unit distance, we arrive at the color of $v_{i+1}$.
Therefore, directed cycle $B$ represents a total traverse along the circle $C^r$ distance $s-(\frac{r}{2}-1)\cdot t$, at end of which one must come back to the starting color. So  $$s-(\dfrac{r}{2}-1)\times t=r\times a$$ for some integer $a$. Hence \[r=\dfrac{2(s+t)}{2a+t}\]\label{formula:TightCycle}
\end{proof}

Since $s+t \le |V(G)|$,  and $r \ge 2$, given the number of vertices of $G$, there is a finite number of candidates for the circular chromatic number of $(G, \sigma)$. Thus we have the following corollary.
 
 \begin{corollary}
 	\label{coro-min}
 	 Assume $(G, \sigma)$ is a signed graph on $n$ vertices. Then $\chi_c(G, \sigma) = \frac{p}{q}$ for some $p \le 2n$. In particular, the infimum in the definition of $\chi_c(G, \sigma)$ can be replaced by minimum.
 \end{corollary}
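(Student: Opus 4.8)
The plan is to split into two cases according to whether $(G,\sigma)$ is a forest. In the non-forest case the statement is essentially Proposition~\ref{prop-st}: that proposition produces a cycle of $(G,\sigma)$ with $s$ positive and $t$ negative edges, together with a non-negative integer $a$, such that $\chi_c(G,\sigma)=\frac{2(s+t)}{2a+t}$; since a cycle in $G$ uses at most $n$ edges we get $s+t\le n$, so $\chi_c(G,\sigma)=\frac{p}{q}$ with $p=2(s+t)\le 2n$ even and $q=2a+t\ge 1$ (here $q>0$ because $\chi_c(G,\sigma)$ is finite, which forces $a\ge 1$ when $t=0$). The one point not delivered outright by Proposition~\ref{prop-st} — whose proof already presupposes a circular $\chi_c(G,\sigma)$-coloring — is the last clause, that the infimum defining $\chi_c$ is attained; I would establish this first.

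For attainment I would argue by compactness. Identifying $C^{r}$ with the quotient of the compact interval $[0,r]$ makes the finite product $(C^{r})^{V(G)}$ a compact metric space, and for a fixed $r$ the set of circular $r$-colorings of $(G,\sigma)$ is closed in it, since each constraint $d_{\pod{r}}(f(u),f(v))\ge 1$ (positive edge) and $d_{\pod{r}}(f(u),\overbar{f(v)})\ge 1$ (negative edge) is a closed condition. Let $r^{*}=\chi_c(G,\sigma)$ and choose reals $r_k\to r^{*}$ for which $(G,\sigma)$ is circular $r_k$-colorable, say via $\phi_k$. The rescaled maps $\psi_k(v)=\frac{r^{*}}{r_k}\,\phi_k(v)\in C^{r^{*}}$ then satisfy $d_{\pod{r^{*}}}(\psi_k(u),\psi_k(v))\ge \frac{r^{*}}{r_k}$ on positive edges and $d_{\pod{r^{*}}}(\psi_k(u),\overbar{\psi_k(v)})\ge \frac{r^{*}}{r_k}$ on negative edges. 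Passing to a convergent subsequence $\psi_{k_j}\to\psi$ and using continuity of $d_{\pod{r^{*}}}$ and of $x\mapsto\overbar{x}$ together with $\frac{r^{*}}{r_{k_j}}\to 1$, the limit $\psi$ is a circular $r^{*}$-coloring. Hence $(G,\sigma)$ is circular $\chi_c(G,\sigma)$-colorable and the infimum is a minimum. (Alternatively one may simply invoke Lemma~\ref{lem:Circular-TightCycle}, whose statement already asserts circular $r$-colorability whenever $\chi_c(G,\sigma)=r$.)

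With attainment in hand I would finish as follows. If $(G,\sigma)$ is not a forest, apply Proposition~\ref{prop-st} to a circular $\chi_c(G,\sigma)$-coloring to obtain $\chi_c(G,\sigma)=\frac{p}{q}$ with $p=2(s+t)\le 2n$, as above. If $(G,\sigma)$ is a forest it has no cycle, so all its cycles are vacuously positive; by Theorem~\ref{prop:EquivalenceOfSignatures} it is switching equivalent to $(G,+)$, hence by Proposition~\ref{prop-switch} $\chi_c(G,\sigma)=\chi_c(G,+)=\chi_c(G)$, which equals $2=\frac{2}{1}$ when $G$ has an edge (a proper $2$-coloring, available since forests are bipartite, is a circular $2$-coloring) and $1=\frac{2}{2}$ otherwise; in both cases the numerator is $2\le 2n$ and the coloring is explicit. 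Finally, to see that the infimum may equally be replaced by a minimum in the $(p,q)$-coloring and homomorphism formulations, write $\chi_c(G,\sigma)=\frac{p_0}{q_0}$ in the normalized form in which $p_0$ is even and $\frac{p_0}{q_0}$ is subject to that in lowest terms; comparing with $\chi_c(G,\sigma)=\frac{2(s+t)}{2a+t}$ gives $p_0\le 2(s+t)\le 2n$, and feeding a circular $\frac{p_0}{q_0}$-coloring into the rounding argument stated just before the definition of a $(p,q)$-coloring produces a $(p_0,q_0)$-coloring, equivalently (Lemma~\ref{lem-homo}) an edge-sign preserving homomorphism $(G,\sigma)\spto K_{p_0;q_0}^{s}$. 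The main obstacle is exactly the attainment of the infimum; once that is secured, the bound $p\le 2n$ is just bookkeeping on top of Proposition~\ref{prop-st}, and the forest case is routine.
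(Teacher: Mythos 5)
Your proof is correct, and it follows the same core strategy the paper intends: apply Proposition~\ref{prop-st} to obtain $\chi_c(G,\sigma)=\frac{2(s+t)}{2a+t}$ with $s+t\le n$, hence $p\le 2n$. Where you go beyond the paper is in noticing a genuine logical issue. The paper offers only a one-sentence remark before the corollary (``\ldots there is a finite number of candidates\ldots Thus\ldots''), but Proposition~\ref{prop-st} itself opens with ``Assume $\chi_c(G,\sigma)=r$ and $\psi$ is a circular $r$-coloring,'' which already presupposes the infimum is attained; so the ``in particular'' clause cannot be deduced from that proposition alone without a separate attainment argument. Your rescaling-plus-compactness proof that the set of circular $r$-colorings is closed and non-empty for $r=\chi_c(G,\sigma)$ is a clean way to close that gap (it also repairs the forward implication of Lemma~\ref{lem:Circular-TightCycle}, whose stated justification via Corollary~\ref{coro:TightCycle} does not actually deliver colorability). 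You also correctly observe that the forest case is not covered by Proposition~\ref{prop-st} and must be disposed of separately, which the paper does not mention; your treatment via Theorem~\ref{prop:EquivalenceOfSignatures} and switching to $(G,+)$ is right. In short, same approach, but with two real gaps filled rather than elided.
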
 
 
 It also follows from Corollary \ref{coro-min} that there is an algorithm that determines  the circular circular chromatic number of a finite signed graph. Of course, determining the circular chromatic number of a signed graph is at least as hard as determining the chromatic number of a graph, and, hence, the problem is NP-hard and, unless P=NP, there is no feasible algorithm for the problem. 
 Nevertheless, it is easy to determine whether a signed graph $(G, \sigma)$ has circular chromatic number $2$.
 
 \begin{proposition}\label{prop:Xc>=2}
 	Any  signed graph $(G, \sigma)$ with at least one edge has $\chi_c(G, \sigma)\geq 2$, and $\chi_c(G,\sigma)=2$ if and only if $(G, \sigma)$ is switching equivalent to $(G, -)$.
 \end{proposition}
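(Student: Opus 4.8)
The plan is to prove the two assertions separately; the engine behind both is the extreme rigidity of colorings on the circle $C^2$ of circumference $2$: on $C^2$ any two points are at distance at most $1$, with distance exactly $1$ precisely when they are antipodal. For the bound $\chi_c(G,\sigma)\ge 2$, observe that if $r<2$ then $d_{\pod r}(x,y)\le r/2<1$ for all $x,y\in C^r$, so neither a positive edge constraint $d_{\pod r}(f(u),f(v))\ge 1$ nor a negative edge constraint $d_{\pod r}(f(u),\overbar{f(v)})\ge 1$ can be met; hence $(G,\sigma)$ admits no circular $r$-coloring for $r<2$, as already noted in the introduction, and since $G$ has an edge, $\chi_c(G,\sigma)\ge 2$.

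For the ``if'' direction, suppose $(G,\sigma)$ is switching equivalent to $(G,-)$. By Proposition~\ref{prop-switch}, $\chi_c(G,\sigma)=\chi_c(G,-)$, so it suffices to show $\chi_c(G,-)=2$. The constant map $f$ sending every vertex to a single point $x_0\in C^2$ is a circular $2$-coloring of $(G,-)$, since the only constraint, for each (negative) edge $uv$, reads $d_{\pod 2}(f(u),\overbar{f(v)})=d_{\pod 2}(x_0,\overbar{x_0})=1\ge 1$. Thus $\chi_c(G,-)\le 2$, and combined with the first paragraph (again using that $G$ has an edge) this gives $\chi_c(G,-)=2$.

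For the ``only if'' direction — the main step — suppose $\chi_c(G,\sigma)=2$. By Corollary~\ref{coro-min} the infimum in the definition is attained, so $(G,\sigma)$ has a $(p,q)$-coloring with $p/q=2$; composing with the homomorphism $K^s_{p;q}\spto K^s_{2;1}$ furnished by Lemma~\ref{lem-rr'} (both ratios being $2$) and using Lemma~\ref{lem-homo}, one obtains a $(2,1)$-coloring $f\colon V(G)\to\{0,1\}$ (alternatively, $g(v)=\lfloor f'(v)/q\rfloor$ turns any $(2q,q)$-coloring $f'$ into such a coloring directly). Since $d_{\pod 2}(i,j)\le 1$ with equality iff $\{i,j\}=\{0,1\}$ and $\overbar 0=1,\ \overbar 1=0$, the constraints of $f$ read: $f(u)\ne f(v)$ on every positive edge, and $f(u)=f(v)$ on every negative edge. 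Let $A=f^{-1}(1)$ and let $(G,\sigma')$ be the result of switching at $A$, which flips exactly the signs of the edges in the cut $E(A,V(G)\setminus A)$. Every positive edge of $(G,\sigma)$ joins the two colour classes, hence lies in the cut and becomes negative; every negative edge stays within one colour class, hence is not in the cut and remains negative. Therefore $\sigma'\equiv-$, so $(G,\sigma)$ is switching equivalent to $(G,-)$, as required.

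I do not anticipate a genuine obstacle here; the only points needing care are extracting a bona fide $(2,1)$-coloring rather than merely some $(2q,q)$-coloring, which Corollary~\ref{coro-min} together with Lemma~\ref{lem-rr'} (or the explicit quotient $g$) handles, and the routine switching bookkeeping once the colour classes are named. As an alternative route for the ``only if'' direction one may instead invoke Lemma~\ref{lem-pq2}: $\chi_c(G,\sigma)=2$ gives $(G,\sigma)\swto\hat K^s_{2;1}$, which is a single vertex carrying a negative loop, so by Proposition~\ref{prop-homocycle} every closed walk of $(G,\sigma)$ has sign $(-1)^{\text{length}}$ — exactly the cycle signs of $(G,-)$ — whence Theorem~\ref{prop:EquivalenceOfSignatures} yields the switching equivalence.
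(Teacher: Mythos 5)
The paper states Proposition~\ref{prop:Xc>=2} without an explicit proof, so there is no proof of record to compare against; your argument is correct and supplies exactly the expected details. The lower bound and the ``if'' direction are the easy parts you note, and your ``only if'' direction is sound: Corollary~\ref{coro-min} gives that the infimum is attained, the quotient $g(v)=\lfloor f'(v)/q\rfloor$ produces a genuine $(2,1)$-coloring whose positive/negative edge constraints translate to ``different class''/``same class'', and switching at one class flips precisely the positive edges, yielding $(G,-)$; your alternative route via $\hat K^s_{2;1}$, Proposition~\ref{prop-homocycle} and Theorem~\ref{prop:EquivalenceOfSignatures} is an equally valid use of the paper's homomorphism machinery.
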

 
 Assume $\chi_c(G, \sigma) = r$ and $f: V(G) \to [0,r)$ is a circular $r$-coloring of $(G, \sigma)$. Let $A = \{v: f(v) \ge \frac{r}{2}\}$. Let $(G, \sigma')$ be obtained from $(G, \sigma)$ by switching at $A$. Then 
 \[
 g(v) = \begin{cases} f(v), & \text {if $f(v) < \frac{r}{2}$} \cr
 f(v)-\frac{r}{2}, &\text{ if $f(v) \ge \frac{r}{2}$} \cr
 \end{cases}
 \]
 is a circular $r$-coloring of $(G, \sigma')$. A tight cycle $B=(v_1, v_2, \ldots, v_l)$ with respect to $f$ is also a tight cycle with respect to $g$. However, for each edge $(v_i, v_{i+1})$,
 $(g(v_i), g(v_{i+1}))$ is an arc on the circle of length $\frac{r}{2}$ along the clockwise direction. 
 
Recall that the core of a graph $G$ is a smallest subgraph $H$ of $G$ to which $G$ admits a homomorphism. If $(G, \sigma)$ is a signed graph and $H$ is a subgraph of $G$, then we denote by $(H, \sigma)$ the signed subgraph of $(G, \sigma)$, where $\sigma$ in $(H, \sigma)$ is considered to be the restriction of $\sigma$ to $E(H)$.  
We define the \emph{sp-core} of a signed graph $(G, \sigma)$ to be a smallest signed subgraph $(H, \sigma)$ such that $(G, \sigma)$ admits an edge-sign preserving homomorphism  to $(H, \sigma)$.  
The {\em switching core} of a signed graph $(G, \sigma)$ is a smallest signed subgraph $(H, \sigma)$ such that $(G, \sigma)$ admits a  switching homomorphism to $(H, \sigma)$. That the sp-core and the switching core of a finite signed graph is unique up to isomorphism and thus the well-definiteness is shown in \cite{NRS15}. 

It follows from the definition that the switching core of $(G, \sigma)$ is isomorphic to a signed subgraph of a sp-core of $(G, \sigma)$. 

\begin{lemma}
	\label{lem-switchcoreofKr}
	Assume $r=\frac{p}{q}$ is a rational, $p$ is an even integer and with respect to this condition $\frac{p}{q}$ is in its simplest form. Then $\hat{K}_{p;q}^s$ is the unique switching core of $K_r^s$.
\end{lemma}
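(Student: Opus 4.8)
The plan is to establish three facts: (i) $\hat{K}_{p;q}^s$ is a signed subgraph of $K_r^s$ and $K_r^s \swto \hat{K}_{p;q}^s$, so $K_r^s$ and $\hat{K}_{p;q}^s$ are switching-homomorphically equivalent; (ii) $\hat{K}_{p;q}^s$ admits no switching homomorphism onto a proper signed subgraph of itself; and (iii) (i) and (ii) together force every switching core of $K_r^s$ to be isomorphic to $\hat{K}_{p;q}^s$, which is itself one. For (i), note that $\hat{K}_{p;q}^s$ is by definition an induced signed subgraph of $K_{p;q}^s$, which is a signed subgraph of $K_r^s$ (recorded in the text), so $\hat{K}_{p;q}^s \subseteq K_r^s$ and hence $\hat{K}_{p;q}^s \spto K_r^s$. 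Conversely, $K_r^s \spto K_{p;q}^s$ is recorded in the text, and switching $K_{p;q}^s$ at $\{\tfrac p2,\dots,p-1\}$ followed by the folding $x\mapsto x$ (for $x<\tfrac p2$), $x\mapsto x-\tfrac p2$ (for $x\ge\tfrac p2$) is an edge-sign preserving homomorphism onto $\hat{K}_{p;q}^s$ — this is exactly the construction recalled just before Definition~\ref{def-signhomo} — so $K_{p;q}^s \swto \hat{K}_{p;q}^s$, whence $K_r^s\swto\hat{K}_{p;q}^s$. By the switching-homomorphism version of Lemma~\ref{lem:No-HomByCircularChromatic}, $\chi_c(\hat{K}_{p;q}^s)=\chi_c(K_r^s)=\tfrac pq$, and by hypothesis $\tfrac pq$ is here written with the least possible even numerator.

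The heart of the proof is (ii). Write $m=\tfrac p2=|V(\hat{K}_{p;q}^s)|$. If $m=1$ (that is, $\tfrac pq=2$), then $\hat{K}_{p;q}^s$ is a single vertex with a negative loop and plainly does not switching-map to its only proper subgraph; so assume $m\ge 2$, in which case $\tfrac pq>2$. Suppose for contradiction $\hat{K}_{p;q}^s\swto(H,\pi)$ with $(H,\pi)$ a proper signed subgraph of $\hat{K}_{p;q}^s$. From $(H,\pi)\subseteq\hat{K}_{p;q}^s$ together with $\hat{K}_{p;q}^s\swto(H,\pi)$, Lemma~\ref{lem:No-HomByCircularChromatic} gives $\chi_c(H,\pi)=\tfrac pq$; since $\tfrac pq>2$, $H$ is not a forest, so by Proposition~\ref{prop-st} we have $\tfrac pq=\tfrac{2(s+t)}{2a+t}$ where $s+t$ is the length of a cycle of $H$, hence $s+t\le|V(H)|$. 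As $p$ is the least even numerator of $\tfrac pq$, the even numerator $2(s+t)$ of this representation satisfies $2(s+t)\ge p$, so $|V(H)|\ge s+t\ge m$; being a subgraph of an $m$-vertex graph, $(H,\pi)$ then has $V(H)=V(\hat{K}_{p;q}^s)$, and being proper it has $|E(H)|<|E(\hat{K}_{p;q}^s)|$. Fix a switching $(\hat{K}_{p;q}^s)'$ of $\hat{K}_{p;q}^s$ with an edge-sign preserving homomorphism $\varphi$ onto $(H,\pi)$. If $\varphi$ were not surjective on vertices, the image of $\varphi$ would span fewer than $m$ vertices and $\hat{K}_{p;q}^s$ would switching-map to the subgraph of $\hat{K}_{p;q}^s$ induced by it, contradicting what we just proved; so $\varphi$ is a bijection on vertices. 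Now $\hat{K}_{p;q}^s$, being a subgraph of $K_r^s$, has at most one positive and at most one negative edge between any two vertices, and exactly one negative and no positive loop at each vertex; switching preserves all of this, so distinct edges of $(\hat{K}_{p;q}^s)'$ have distinct endpoint-pairs or opposite signs. Since $\varphi$ is vertex-bijective and sign-preserving, $\varphi$ is injective on edges, whence $|E(\hat{K}_{p;q}^s)|=|E((\hat{K}_{p;q}^s)')|\le|E(H)|<|E(\hat{K}_{p;q}^s)|$, a contradiction. Thus $\hat{K}_{p;q}^s$ is a core; in particular every switching endomorphism of $\hat{K}_{p;q}^s$ is a (vertex- and edge-bijective) automorphism.

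For (iii): by (i), $K_r^s$ switching-maps to a copy of $\hat{K}_{p;q}^s$ inside it, which has $m$ vertices; and no signed subgraph of $K_r^s$ on fewer than $m$ vertices can be a target of a switching homomorphism from $K_r^s$, since such a subgraph $(H',\pi')$ would satisfy $\hat{K}_{p;q}^s\swto(H',\pi')$ (through $K_r^s$, using $\hat{K}_{p;q}^s\subseteq K_r^s$) and $(H',\pi')\swto\hat{K}_{p;q}^s$, making the round trip an endomorphism — hence automorphism — of the core $\hat{K}_{p;q}^s$, forcing $|V(H')|\ge m$. Hence any switching core $(C,\pi)$ of $K_r^s$ has exactly $m$ vertices, is switching-equivalent to $\hat{K}_{p;q}^s$, and is itself a core (a switching homomorphism of $(C,\pi)$ onto a proper subgraph would, composed with $K_r^s\swto(C,\pi)$, contradict minimality of $(C,\pi)$). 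The standard core argument now applies: writing $\hat{K}_{p;q}^s\overset{\alpha}{\swto}(C,\pi)\overset{\beta}{\swto}\hat{K}_{p;q}^s$, both $\beta\alpha$ and $\alpha\beta$ are automorphisms, so $\alpha,\beta$ are vertex-bijections, hence (by the edge-counting of (ii), valid since $(C,\pi)\subseteq K_r^s$) edge-bijections, so $(C,\pi)\cong\hat{K}_{p;q}^s$. Since the copy of $\hat{K}_{p;q}^s$ in $K_r^s$ is itself a core, it is a smallest such subgraph, i.e. the switching core of $K_r^s$, and it is unique up to isomorphism.

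I expect the main difficulty to lie in (ii): first, squeezing the vertex bound $|V(H)|\ge m$ out of Proposition~\ref{prop-st}, which is where the ``simplest form with $p$ even'' hypothesis is genuinely used (it is exactly what makes $2(s+t)\ge p$); and second, ruling out deletion of an \emph{edge}, where the point is that $\hat{K}_{p;q}^s$, and every switching of it, carries at most one edge of each sign between a given pair of vertices and exactly one negative (and no positive) loop at each vertex, so that a vertex-bijective sign-preserving homomorphism is automatically edge-injective and the edge count cannot drop. A minor subtlety in (iii) is that $K_r^s$ is infinite, so the uniqueness of switching cores cannot be quoted directly from the literature; instead one argues uniqueness by hand through the finite core $\hat{K}_{p;q}^s$, as above.
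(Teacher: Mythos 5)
Your proof is correct and its central mechanism matches the paper's: reduce to showing $\hat{K}_{p;q}^s$ is a switching core, deduce $\chi_c(H,\sigma)=\frac{p}{q}$ for any candidate proper subgraph $(H,\sigma)$ with $\hat{K}_{p;q}^s\swto(H,\sigma)$, and use the tight-cycle formula of Proposition~\ref{prop-st} together with the ``simplest form with $p$ even'' hypothesis to force a cycle of length at least $\frac{p}{2}$ inside $H$. Two differences are worth flagging. First, a cosmetic one: you obtain $2(s+t)\ge p$ in one line from the minimality of the even numerator, whereas the paper splits into two parity cases for $q$ and argues divisibility ($p\mid 2l$ or $\frac{p}{2}\mid l$); both routes give $l\ge\frac{p}{2}$.

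Second, and more substantively: the paper's proof stops at $l\ge\frac{p}{2}$ and declares ``a contradiction'' from $(H,\sigma)$ being a proper subgraph, but since $|V(\hat{K}_{p;q}^s)|=\frac{p}{2}$, this only forces $V(H)=V(\hat{K}_{p;q}^s)$; it does not on its own rule out $H$ being a proper \emph{spanning} subgraph, which can perfectly well contain a Hamiltonian cycle. Your argument that the homomorphism $\varphi$ must be a vertex bijection, and that a vertex-bijective sign-preserving map between switchings of $\hat{K}_{p;q}^s$ (which carry at most one edge of each sign per pair and exactly one negative loop per vertex) is necessarily edge-injective, supplies exactly the missing step, forcing $|E(H)|\ge|E(\hat{K}_{p;q}^s)|$ and hence $H=\hat{K}_{p;q}^s$. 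Your part~(iii) is also a genuine addition: the paper invokes the uniqueness of switching cores established in \cite{NRS15}, which is stated there for \emph{finite} signed graphs, while $K_r^s$ is infinite; your hand-crafted round-trip argument through the finite core $\hat{K}_{p;q}^s$ is a correct way to cover this. In sum: same route as the paper, but you have identified and filled a real gap at the spanning-subgraph step and handled the infinitude of $K_r^s$ explicitly.
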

\begin{proof}
	Since $\hat{K}_{p;q}^s$ is a subgraph of $K_r^s$ and $K_r^s \swto \hat{K}_{p;q}^s$, it suffices to show that $\hat{K}_{p;q}^s$ is a switching core, i.e., it is not switching homomorphic to any of its proper signed subgraphs. 
	
	Assume to the contrary that there is a switching homomorphism of $\hat{K}_{p;q}^s$ to a proper signed subgraph, say $(H, \sigma)$. As $(H, \sigma) \swto \hat{K}^s_{p;q}$ and $ \hat{K}^s_{p;q} \swto (H, \sigma)$, we have $\chi_c(H, \sigma)= \chi_c(\hat{K}^s_{p;q})=\frac{p}{q}$. 
	
	Let $\phi$ be a circular $\frac{p}{q}$-coloring of $(H, \sigma)$. By Corollary~\ref{coro:TightCycle}, there is a tight cycle $C$ with respect to $\phi$. Assume the length of $C$ is $l$. Since $\frac{p}{q}$ is in its simplest form, beside a possible factor of $2$, we consider two cases: $q$ is odd, then $p\mid 2l$, which implies that $l\geq \frac{p}{2}$; or $q$ is even then $\frac{p}{2}$ must be an odd number, thus $\frac{p}{2}\mid l$, hence, $l\geq \frac{p}{2}$. But $C$ is a cycle in $(H, \sigma)$, which is a proper subgraph of $\hat{K}_{p;q}^s$, a contradiction.
\end{proof}

\begin{lemma}
	\label{lem-coreofKr}
	Assume $r=\frac{p}{q}$ is a rational, $p$ is an even integer and with respect to this condition $\frac{p}{q}$ is in its simplest form. Then $K_{p;q}^s$ is the unique sp-core of $K_r^s$.
\end{lemma}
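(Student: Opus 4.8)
The plan is to reduce the statement to the assertion that $K_{p;q}^s$ is an sp-core, and then to analyze edge-sign preserving endomorphisms of $K_{p;q}^s$ by restricting them to the subgraph of positive edges. Recall from the discussion preceding Lemma~\ref{lem-homo} that $K_{p;q}^s$ is a finite subgraph of $K_r^s$ and that $K_r^s \spto K_{p;q}^s$; hence $K_{p;q}^s$ and $K_r^s$ are sp-equivalent and so have the same sp-core, and it suffices to show that $K_{p;q}^s$ is an sp-core. By finiteness this amounts to showing that every edge-sign preserving endomorphism $f$ of $K_{p;q}^s$ is an automorphism. The hypothesis (``$p$ even, and $p/q$ in simplest form subject to this'') is equivalent to $\gcd(p,q)\in\{1,2\}$, with $p/2$ odd in the second case; I would treat the two possibilities separately.

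When $\gcd(p,q)=1$ this is quick. The positive edges of $K_{p;q}^s$ induce the circular clique $K_{p;q}$, which is then a core, so $f$ restricted to the positive edges is an automorphism of $K_{p;q}$. If $q\ge 2$, such an automorphism is a rotation or reflection of $\mathbb{Z}_p$, so it preserves $d_{\pod p}$ and therefore also the negative edges and negative loops of $K_{p;q}^s$; thus $f\in\mathrm{Aut}(K_{p;q}^s)$. If $q=1$ (so $r=p$ and $K_{p;q}=K_p$), then $f$ is only known to be a permutation of $\mathbb{Z}_p$; but as $f$ must send every negative edge to a negative edge, $f$ sends non-antipodal pairs to non-antipodal pairs, and since $f$ is a bijection this forces $f$ to permute the antipodal pairs, whence again $f\in\mathrm{Aut}(K_p^s)$.

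The case $\gcd(p,q)=2$ is the substance of the proof; write $p=2a$, $q=2b$ with $a$ odd and $\gcd(a,b)=1$. Here $K_{2a;2b}$ is not a core (its core is $K_{a;b}$), so the previous argument fails. Suppose $f$ is not an automorphism; replacing $f$ by a suitable power, we may assume $f$ is a retraction onto a proper induced signed subgraph $(H,\sigma)$ of $K_{2a;2b}^s$. Since $v\mapsto v+1$ is an automorphism of $K_{2a;2b}^s$, the graph is vertex-transitive, so (as in the unsigned case) its sp-core is vertex-transitive and its order divides $2a$; since the sp-core also contains the switching core of $K_r^s$, which by Lemma~\ref{lem-switchcoreofKr} is $\hat{K}_{p;q}^s$ on $a$ vertices, the order of $(H,\sigma)$ is $a$ or $2a$, and $2a$ is impossible because a retraction onto a spanning subgraph is the identity. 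So $|V(H)|=a$. Restricting to positive edges, $H^+$ is a retract of $K_{2a;2b}$ on $a$ vertices and hence an induced copy of $K_{a;b}$; inspecting the necessarily Hamiltonian tight cycle of such a copy, one sees that its vertices form a $\pm 2b$-arithmetic progression of length $a$ in $\mathbb{Z}_{2a}$, hence one of the two parity classes, and after a rotation we may take $V(H)=\{0,2,\dots,2a-2\}$. Finally, for any odd vertex $v$ the negative edges joining $v$ to the even vertices at distance at most $a-2b$ from $v$ force $f(v)$ into the intersection of the corresponding balls of radius $a-2b$ in $\mathbb{Z}_{2a}$; this intersection is $\{v\}$, contradicting that $f(v)$ is even. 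This contradiction rules out $|V(H)|=a$ as well, so $f$ is an automorphism and $K_{p;q}^s$ is an sp-core; uniqueness is the uniqueness of the sp-core of the finite graph $K_{p;q}^s$.

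I expect the genuine difficulty to be concentrated in the last step of the $\gcd(p,q)=2$ case — pinning down the intersection of those balls of radius $a-2b$. The clean computation (two balls whose centres are $2(a-2b)$ apart meet in a single point) is valid when $a-2b<a/2$, i.e. $r<4$; for larger $r$ the balls wrap around and one must bring in more of the defining edges of $K_{2a;2b}^s$, or argue through the description of $K_{p;q}^s$ as the signed double cover $\widetilde{\hat{K}_{p;q}^s}$ of the switching core (which is connected because $\hat{K}_{p;q}^s$ carries a negative loop at every vertex), a reformulation that converts sp-homomorphisms into $K_{p;q}^s$ into switching homomorphisms into $\hat{K}_{p;q}^s$. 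The $\gcd(p,q)=1$ case, by contrast, is essentially immediate from standard facts about circular cliques and their automorphism groups.
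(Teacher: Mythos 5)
Your proposal takes a genuinely different route from the paper, and it contains a gap that you yourself flag: the ball-intersection argument at the end of the $\gcd(p,q)=2$ case is only valid for $r<4$, and you leave the case $r\ge 4$ unresolved, merely gesturing at an alternative via the double-cover description. So as written the proof is incomplete, and the hard case is precisely the one you defer. There are also some steps that would need independent justification before the argument is complete even for $r<4$: that the sp-core of a vertex-transitive signed graph is vertex-transitive with order dividing $|V|$ (known for graphs, but the signed analogue is not in this paper), and that a retract of $K_{2a;2b}$ on $a$ vertices is forced to be an induced $K_{a;b}$ whose vertex set is an arithmetic progression.

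The paper's proof avoids the $\gcd$ case split and the positive-edge core analysis altogether. It assumes the sp-core $(H,\sigma)$ of $K_{p;q}^s$ is a proper subgraph with retraction $\varphi$, notes that $\varphi$ is in particular a switching homomorphism so $(H,\sigma)$ must contain the switching core $\hat{K}_{p;q}^s$ from Lemma~\ref{lem-switchcoreofKr}, and then exploits the crucial structural fact that each vertex $u$ of $\hat{K}_{p;q}^s$ lifts to an antipodal pair $u_1,u_2=u_1+\frac{p}{2}$ in $K_{p;q}^s$ joined by a \emph{positive} edge (because $q\le\frac{p}{2}\le p-q$). This positive edge forces $\varphi(u_1)\ne\varphi(u_2)$, and further identifications $\varphi(u_i)=\varphi(v_j)$ for distinct $u,v\in\hat{K}_{p;q}^s$ would contradict $\hat{K}_{p;q}^s$ being a switching core; hence $\varphi$ is injective and $(H,\sigma)=K_{p;q}^s$. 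This is precisely the ``double cover of the switching core'' reformulation you suggest in your closing sentence as a way to repair the gap --- so you identified the right tool but did not carry it through. Your $\gcd(p,q)=1$ analysis is a correct but unnecessary shortcut; the antipodal-positive-edge observation handles all cases uniformly.
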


\begin{proof}
As $K^s_r\spto K_{p;q}^s$, it is enough to prove that $K^s_{p;q}$ is a sp-core. 
Let $(H, \sigma)$ be the sp-core of $K^s_{p;q}$ which is a proper subgraph and let $\varphi$ be an edge-sign preserving homomorphism of $K^s_{p;q}$ to $(H, \sigma)$. Since any edge-sign preserving homomorphism is, in particular, a switching homomorphism and by Lemma~\ref{lem-switchcoreofKr}, $\hat{K}^s_{p;q}$ is a subgraph of $(H, \sigma)$. Observe that for each vertex $u$ of $\hat{K}^s_{p;q}$ there are two corresponding vertices $u_1$ and $u_2$ of $K^s_{p;q}$ such that a switching at $u_1$ gives $u_2$. 
Furthermore, there exists a positive edge $u_1u_2$ in $K^s_{p;q}$. So $\varphi(u_1)\neq \varphi(u_2)$. Moreover $\varphi(v_i)\neq \varphi(u_j)$, for any $i,j\in \{1,2\}$ and for any other vertex $v$ of $\hat{K}^s_{p;q}$, as otherwise we have an edge-sign preserving homomorphism  of $\hat{K}^s_{p;q}$ to its proper subgraph by mapping $u$ to $v$. It is a contradiction. 
\end{proof}

 \section{Circular chromatic number vs. signed circular chromatic number}
 
  The following lemma follows from the definitions.
  
 \begin{lemma}
 	\label{lem-circ-signedcirc}
 	For any integers $p,q$, satisfying $p \ge 2q \ge 2$, for any $i,j \in [p]$, $ij$ is an edge of $K_{p;q}$ if and only if $ij$ is both a positive edge and a negative edge of $\hat{K}_{2p;q}^s$.
 \end{lemma}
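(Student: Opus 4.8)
The plan is to prove the statement by simply unravelling the three definitions involved ($K_{p;q}$, $K^s_{2p;q}$, and the induced subgraph $\hat K^s_{2p;q}$) and observing that, once we restrict attention to vertices in $[p]=\{0,1,\dots,p-1\}$, two of the defining inequalities become vacuous or automatically satisfied, so the remaining conditions collapse to the edge condition of $K_{p;q}$.

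First I would record the key numerical observation: for $i,j\in[p]$ we have $0\le |i-j|\le p-1$. Since $p\ge 2q\ge 2$, we have $q\le p/2\le p-1$, hence $2p-q\ge p+1>p-1\ge |i-j|$; and since $q\ge 1$, we have $p+q\ge p+1>p-1\ge |i-j|$. Thus, on the vertex set $[p]$, the constraint ``$|i-j|\le 2p-q$'' holds automatically, and the disjunct ``$|i-j|\ge \tfrac{2p}{2}+q=p+q$'' never holds.

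Next I would feed this into the definitions. Recall $\hat K^s_{2p;q}$ is the signed subgraph of $K^s_{2p;q}$ induced by $\{0,1,\dots,\tfrac{2p}{2}-1\}=[p]$. By definition of $K^s_{2p;q}$, for $i,j\in[p]$ the pair $ij$ is a positive edge of $\hat K^s_{2p;q}$ iff $q\le |i-j|\le 2p-q$, which by the observation above is equivalent to $|i-j|\ge q$; and $ij$ is a negative edge of $\hat K^s_{2p;q}$ iff $|i-j|\le \tfrac{2p}{2}-q=p-q$ or $|i-j|\ge p+q$, which by the observation above is equivalent to $|i-j|\le p-q$. Hence $ij$ is \emph{both} a positive and a negative edge of $\hat K^s_{2p;q}$ iff $q\le |i-j|\le p-q$. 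On the other hand, by definition $ij$ is an edge of $K_{p;q}$ iff $q\le |i-j|\le p-q$. Comparing the two conditions completes the proof.

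There is essentially no obstacle here beyond keeping track of which inequalities are trivially satisfied on the restricted vertex set; the only point that requires care is verifying the two ``degenerate'' inequalities ($|i-j|\le 2p-q$ and $|i-j|\ge p+q$) using $p\ge 2q\ge 2$, as done above, so that the positive-edge and negative-edge conditions simplify cleanly.
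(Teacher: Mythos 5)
Your proof is correct, and it is essentially the argument the paper has in mind: the paper states the lemma "follows from the definitions" without further detail, and your write-up is exactly the careful unwinding of the definitions of $K_{p;q}$, $K^s_{2p;q}$, and $\hat K^s_{2p;q}$, with the key observation that restricting to vertices in $[p]$ makes the upper bound in the positive-edge condition and the second disjunct in the negative-edge condition vacuous.
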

  
 \begin{corollary}
  	\label{cor-double}
  	For any simple graph $G$, let $(G', \tau)$ be obtained from $G$ by replacing each edge of $G$ by a positive edge and a negative edge. Then $\chi_c(G', \tau) = 2 \chi_c(G)$.
 \end{corollary}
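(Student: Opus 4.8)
The plan is to prove the two inequalities $\chi_c(G',\tau)\le 2\chi_c(G)$ and $\chi_c(G',\tau)\ge 2\chi_c(G)$ separately, working throughout with the $(p,q)$-coloring and homomorphism descriptions of $\chi_c$ developed in Section~2; we assume $G$ has at least one edge.

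For the upper bound I would start from an optimal coloring of $G$. Write $\chi_c(G)=\frac pq$ with integers $p\ge 2q\ge 2$ and let $c\colon V(G)\to [p]$ be a homomorphism to $K_{p;q}$, i.e.\ a $(p,q)$-coloring of $(G,+)$. The key point is that the \emph{same} vertex map is an edge-sign-preserving homomorphism $(G',\tau)\spto \hat K^s_{2p;q}$: each edge $uv$ of $G$ is replaced in $(G',\tau)$ by a positive copy and a negative copy on $\{u,v\}$, and since $c(u)c(v)\in E(K_{p;q})$, Lemma~\ref{lem-circ-signedcirc} tells us that $c(u)c(v)$ is simultaneously a positive and a negative edge of $\hat K^s_{2p;q}$, so both copies are mapped correctly. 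As $\hat K^s_{2p;q}$ is a signed subgraph of $K^s_{2p;q}$, this yields $(G',\tau)\spto K^s_{2p;q}$, hence by Lemma~\ref{lem:No-HomByCircularChromatic} and $\chi_c(K^s_{p';q'})=\frac{p'}{q'}$ (valid since $2p$ is even),
\[
\chi_c(G',\tau)\le \chi_c(K^s_{2p;q})=\frac{2p}{q}=2\chi_c(G).
\]

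For the lower bound I would ``fold'' an optimal coloring of $(G',\tau)$ onto half the circle. By Corollary~\ref{coro-min} fix an even integer $p'$ and a $(p',q')$-coloring $f\colon V(G')\to [p']$ with $\chi_c(G',\tau)=\frac{p'}{q'}$. For every edge $uv$ of $G$ the positive copy forces $d_{\pod{p'}}(f(u),f(v))\ge q'$ and the negative copy forces $d_{\pod{p'}}(f(u),\overbar{f(v)})\ge q'$, that is, $q'\le d_{\pod{p'}}(f(u),f(v))\le \frac{p'}{2}-q'$; in particular $p'\ge 4q'$. Define $g\colon V(G)\to\{0,1,\dots,\frac{p'}{2}-1\}$ by $g(v)\equiv f(v)\pmod{p'/2}$. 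Since $\frac{p'}{2}$ divides $p'$, the two $[p']$-preimages of $g(v)$ are exactly $f(v)$ and $\overbar{f(v)}$, so
\[
d_{\pod{p'/2}}(g(u),g(v))=\min\bigl\{d_{\pod{p'}}(f(u),f(v)),\,d_{\pod{p'}}(f(u),\overbar{f(v)})\bigr\}\ge q'
\]
for every edge $uv$ of $G$. Thus $g$ is a $(\frac{p'}{2},q')$-coloring of $G$ (admissible because $\frac{p'}{2}\ge 2q'$), which gives $\chi_c(G)\le \frac{p'/2}{q'}=\frac12\chi_c(G',\tau)$. Combining the two inequalities proves the corollary.

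The step that deserves the most care is the folding identity above: checking $d_{\pod{p'/2}}(g(u),g(v))=\min\{d_{\pod{p'}}(f(u),f(v)),d_{\pod{p'}}(f(u),\overbar{f(v)})\}$ relies on the fact that $p'/2$ divides $p'$, so that the antipodal involution on $C^{p'}$ is precisely the nontrivial deck transformation of the double cover $C^{p'}\to C^{p'/2}$; the remaining points (evenness of the numerator in the signed setting, the inequality $p'\ge 4q'$) are routine bookkeeping. Alternatively, one could avoid Corollary~\ref{coro-min} altogether and argue directly with the infinite cliques $K^s_r$, using the embedding $[0,r)\hookrightarrow[0,2r)$ for the upper bound and the folding map $[0,2r)\to[0,r)$, $x\mapsto x\bmod r$, for the lower bound.
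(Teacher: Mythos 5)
Your proof is correct, and the upper bound is exactly the argument the paper intends: apply Lemma~\ref{lem-circ-signedcirc} to convert a homomorphism $G\to K_{p;q}$ into an edge-sign-preserving homomorphism $(G',\tau)\spto \hat K^s_{2p;q}\subseteq K^s_{2p;q}$, then invoke $\chi_c(K^s_{2p;q})=\frac{2p}{q}$. For the lower bound the paper leaves the argument implicit (the corollary is stated as a direct consequence of Lemma~\ref{lem-circ-signedcirc}); the intended route is presumably to run the same lemma in reverse, noting that $(G',\tau)$ is invariant under switching so that $(G',\tau)\swto \hat K^s_{p';q'}$ forces $G\to K_{p'/2;q'}$. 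Your folding argument $g(v)=f(v)\bmod p'/2$ is a concrete unpacking of that same step: reduction modulo $p'/2$ is exactly the quotient that collapses $K^s_{p';q'}$ onto $\hat K^s_{p';q'}$, and your distance identity is what Lemma~\ref{lem-circ-signedcirc} encodes combinatorially. So the two routes are equivalent; your version verifies the key equality by hand rather than citing the lemma a second time, which makes the evenness hypothesis on $p'$ and the bound $p'\ge 4q'$ visibly do their work.

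One small omission worth flagging: you note ``we assume $G$ has at least one edge,'' which is the right caveat, since for an edgeless $G$ one has $\chi_c(G)=\chi_c(G',\tau)=1$ and the identity fails. The paper's blanket convention $r\ge 2$ implicitly covers this, but it is good that you made it explicit.
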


For a graph $G$ and an arbitrary signature $\sigma$, with the definition of $(G', \tau)$ given in the previous corollary, we have $(G, \sigma) \subset (G',\tau)$, thus
 
\begin{corollary}\label{prop-double}
 For every graph $G$, $\chi_c^s(G) \le 2 \chi_c(G)$.
\end{corollary}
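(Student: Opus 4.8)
The plan is to deduce this immediately from Corollary~\ref{cor-double} together with the monotonicity of $\chi_c$ under edge-sign preserving homomorphisms. Fix a graph $G$ and let $(G', \tau)$ be the signed graph obtained from $G$ by replacing each edge by a parallel pair consisting of one positive and one negative edge, exactly as in Corollary~\ref{cor-double}; that corollary gives $\chi_c(G', \tau) = 2\chi_c(G)$.

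Next, let $\sigma$ be an arbitrary signature of $G$. For each edge $uv \in E(G)$, the signed graph $(G', \tau)$ contains an edge joining $u$ and $v$ whose sign equals $\sigma(uv)$ (the positive copy if $\sigma(uv)=+$, the negative copy if $\sigma(uv)=-$). Hence the identity map on $V(G)$, together with this choice of edge images, realizes $(G, \sigma)$ as a signed subgraph of $(G', \tau)$, so $(G, \sigma) \spto (G', \tau)$. By Lemma~\ref{lem:No-HomByCircularChromatic}, $\chi_c(G, \sigma) \le \chi_c(G', \tau) = 2\chi_c(G)$. Since $\sigma$ was arbitrary, taking the maximum over all signatures of $G$ gives $\chi_c^s(G) = \max_\sigma \chi_c(G, \sigma) \le 2\chi_c(G)$.

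There is essentially no obstacle here: the only point to verify is that the edge selection described above is indeed an edge-sign preserving homomorphism, which is immediate from the construction of $(G', \tau)$, and then the inequality is just an application of Lemma~\ref{lem:No-HomByCircularChromatic}. All the real content has already been carried out in Corollary~\ref{cor-double}, and hence in Lemma~\ref{lem-circ-signedcirc}.
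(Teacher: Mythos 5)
Your proof is correct and matches the paper's argument exactly: both proceed by observing that for any signature $\sigma$, the signed graph $(G,\sigma)$ is a signed subgraph of the doubled graph $(G',\tau)$, hence $(G,\sigma)\spto(G',\tau)$, and then combine Lemma~\ref{lem:No-HomByCircularChromatic} with Corollary~\ref{cor-double}. You simply spell out the subgraph-inclusion-as-homomorphism step that the paper leaves implicit.
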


 As adding or deleting negative loops does not affect the circular chromatic number, the signed  graph $(G, \sigma)$ obtained from $K_{p;q}$ by replacing each edge with a pair of positive and negative edges has circular chromatic number $\frac{2p}{q}$. So Corollary~\ref{prop-double} is tight. However, this signed graph has girth 2, i.e., has parallel edges.  
 The following result shows that the bound in Corollary~\ref{prop-double} is also tight for graphs of large girth. 
 
 \begin{theorem}
 	\label{thm-largegirth}
 	For any integers $k, g \ge 2$, for any $\epsilon > 0$, there is a graph $G$ of girth at least $g$ satisfying that $\chi(G)=k$ and $\chi_c^s(G) > 2k- \epsilon$. 
 \end{theorem}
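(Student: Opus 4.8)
The plan has three parts. The upper bound $\chi_c^s(G)\le 2\chi_c(G)\le 2\chi(G)=2k$ is Corollary~\ref{prop-double}, and since we may assume $\epsilon<2$, any $G$ with $\chi_c^s(G)>2k-\epsilon$ has $\chi(G)>k-1$; so $\chi(G)=k$ will be automatic once $G$ is $k$-colorable. Hence it is enough, for every $g\ge 3$ and every small $\epsilon>0$, to build a $k$-colorable graph $G$ of girth at least $g$ carrying some signature $\sigma$ with $\chi_c(G,\sigma)>2k-\epsilon$. I would make the coloring condition usable by turning it into a single forbidden homomorphism. Fix a rational $r^{*}=\frac{p^{*}}{q^{*}}\in(2k-\epsilon,2k)$ with $p^{*}$ even and $\frac{p^{*}}{q^{*}}$ in lowest terms. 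By Corollary~\ref{coro-min} the infimum defining $\chi_c(G,\sigma)$ is attained at some $\frac{p_0}{q_0}$ with $p_0\le 2|V(G)|$; so if $\chi_c(G,\sigma)<r^{*}$ then $(G,\sigma)\swto\hat K^{s}_{p_0;q_0}\swto\hat K^{s}_{p^{*};q^{*}}$, using Lemma~\ref{lem-pq2}, the switching form of Lemma~\ref{lem-rr'}, and transitivity of $\swto$. Therefore $(G,\sigma)\not\swto\hat K^{s}_{p^{*};q^{*}}$ already forces $\chi_c(G,\sigma)\ge r^{*}>2k-\epsilon$, and the theorem reduces to: \emph{for every $g$ there is a $k$-colorable graph $G$ of girth at least $g$ with a signature $\sigma$ such that $(G,\sigma)\not\swto\hat K^{s}_{p^{*};q^{*}}$} --- and $\hat K^{s}_{p^{*};q^{*}}$ is a single finite signed graph, on $p^{*}/2$ vertices, depending only on $k$ and $\epsilon$.

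That $2k$ is the right value is already explained by Corollary~\ref{cor-double}: if $G_{0}$ has girth at least $g$, $\chi(G_{0})=k$, and $\chi_c(G_{0})$ as close to $k$ as we wish (such $G_{0}$ exist by the known constructions of large-girth graphs with prescribed circular chromatic number), then the signed graph $(G_{0}',\tau)$ obtained by replacing each edge with a positive and a negative edge has $\chi_c(G_{0}',\tau)=2\chi_c(G_{0})$, already within $\epsilon$ of $2k$; its only defect is girth $2$. To get a genuinely large-girth example I would work directly with the reduced statement by an Erd\H{o}s-type sparse-incidence construction: take a random graph on $n$ vertices with a suitably tuned constant average degree, delete one vertex from each of its (in expectation few) cycles of length less than $g$ to raise the girth above $g$, keep the density low enough to preserve $k$-colorability, and exploit that $\hat K^{s}_{p^{*};q^{*}}$ is a \emph{fixed} finite signed graph to bound the expected number of switching homomorphisms from the random graph into it; a first-moment estimate then gives, with positive probability, such a graph together with a signature admitting no switching homomorphism to $\hat K^{s}_{p^{*};q^{*}}$. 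This is the signed analogue of the fact that graphs of large girth realize every circular chromatic number, and the argument should parallel that one.

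The crux is this last step, and two points make it delicate. First, the obstruction cannot be localized: a positive--negative digon cannot be imitated by a rigid two-terminal gadget of large girth (such a gadget extends almost every boundary coloring), so the obstacle to a switching homomorphism into $\hat K^{s}_{p^{*};q^{*}}$ must be created globally --- in particular one may not finish by passing to a $k$-chromatic subgraph, since that could destroy the forbidden homomorphism. Second, the all-negative signature of \emph{every} graph does admit a switching homomorphism to $\hat K^{s}_{p^{*};q^{*}}$, which has a negative loop at every vertex, and --- as $\chi_c(C_{-2n})=\frac{4n}{2n-1}\to 2$ shows --- a single negative cycle is far too weak; so the construction and the first-moment count must be arranged so as to single out a specific non-balanced switching class of $G$ and certify that it fails to map. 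Reconciling this with the simultaneous requirements of large girth and $k$-colorability is where the real difficulty lies.
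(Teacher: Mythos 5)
You correctly reduce the theorem to exhibiting, for each $g$, a $k$-colorable graph $G$ of girth at least $g$ with a signature $\sigma$ for which $(G,\sigma)$ admits no switching homomorphism to the fixed finite target $\hat K^{s}_{p^{*};q^{*}}$; this reduction, via Corollary~\ref{prop-double}, Corollary~\ref{coro-min}, Lemmas~\ref{lem-rr'} and~\ref{lem-pq2} and transitivity, is sound, and you are also right that doubling every edge as in Corollary~\ref{cor-double} only gives girth~$2$. But what remains is precisely the entire content of the theorem, and your proposal does not close it: you sketch a random-graph first-moment argument and then say yourself that making it compatible with both large girth and $\chi(G)\le k$ ``is where the real difficulty lies.'' That worry is justified and is not just a matter of tuning constants. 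In $\hat K^{s}_{p^{*};q^{*}}$ the overwhelming majority of ordered pairs of vertices are joined by \emph{both} a positive and a negative edge (a $1-\Theta(1/k)$ fraction, since $p^{*}/q^{*}\approx 2k$), so a randomly signed constraint at a randomly mapped edge is automatically satisfied most of the time; a back-of-envelope first-moment estimate then forces average degree on the order of $k\log k$ before the expected number of switching homomorphisms drops below one, and a random graph of that density already has chromatic number well above $k$. So the reduction is correct, but the construction --- the heart of the proof --- is missing, and the probabilistic plan as stated does not appear to be repairable without a genuinely new idea.

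The paper's proof takes an entirely different, explicit route. It takes a bipartite $k$-augmented $(2kp)$-ary tree of girth at least $2kg$ (a $(k,q,g)$-graph, via Lemma~\ref{lem-kqggraph}) together with a standard edge-labeling of the underlying tree. Any putative $(2kp,p+1)$-coloring $f$ of the vertices determines a canonical $f$-path from the root to a leaf $v$; the $k$ augmenting edges at $v$ select $k$ ancestors, and a pigeonhole count on length-$(p+1)$ intervals of colours shows that two of those ancestors must receive colours at circular distance at most $p$, either directly or antipodally. Crucially, \emph{which} pair clashes, and in which of the two ways, is determined in advance by the tree labeling alone, not by $f$. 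One then places a single edge $e_v$ per leaf between exactly that predicted pair of ancestors, with the predicted sign. Proper $k$-colourability of the resulting graph is immediate from the level-mod-$k$ colouring of the tree, and girth at least $g$ follows because each $e_v$ is shadowed by a path of length at most $2k$ in the augmented tree. This $f$-path derandomization is the ingredient your sketch lacks: it is what lets the construction control colourability, girth, and the failure of every $(2kp,p+1)$-colouring simultaneously.
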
 
 
 The proof of Theorem \ref{thm-largegirth} uses the concept of augmented tree introduced in \cite{ABKWZ}. A \emph{complete $k$-ary tree} is a rooted tree in which each non-leaf vertex has $k$ children and all the leaves are of the same level (the level of a vertex $v$ is the distance from $v$ to the root).  For a leaf $v$ of $T$, let $P_v$ be the unique path in $T$ from the root to $v$. Vertices in $P_v -\{v\}$ are {\em ancestors} of $v$. 
An \emph{$q$-augmented $k$-ary tree} is obtained from a complete $k$-ary tree by adding, for each leaf $v$, $q$ edges connecting $v$ to $q$ of its ancestors. These $q$ edges are called the {\em augmenting edges} from $v$.
 For positive integers $k,q,g$, a $(k,q,g)$-graph is a $q$-augmented $k$-ary tree which is bipartite and has girth at least $g$.
 The following result was proved in \cite{ABKWZ}.
 
 \begin{lemma}
 	\label{lem-kqggraph} For any positive integers $k,q,g \ge 2$, there exists a $(k,q,g)$-graph.
 \end{lemma}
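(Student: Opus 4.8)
The plan is to take a complete $k$-ary tree $T$ of very large depth $L$ and to specify, for each leaf $v$, the set $A_v$ of $q$ ancestors of $v$ that receive an augmenting edge, so that the resulting $q$-augmented tree $T^{+}$ is bipartite and has no short cycle. Since a bipartite graph has even girth I may replace $g$ by $g+1$ and assume $g$ is even; and since for $g'\ge g$ every $(k,q,g')$-graph is also a $(k,q,g)$-graph, I may assume $g$ is as large as convenient.

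Bipartiteness is the easy half. Color each vertex of $T$ by the parity of its level; a tree edge always joins the two color classes, and an augmenting edge from a leaf $v$ (level $L$) to an ancestor at level $\ell$ joins them iff $L-\ell$ is odd. So I require every ancestor in $A_v$ to have level $\equiv L+1\pmod 2$; then the parity $2$-coloring is proper in $T^{+}$, which is therefore bipartite.

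For the girth I first record a structural fact: in any $q$-augmented tree, the vertex $w$ of smallest level on a cycle $C$ is the non-leaf endpoint of an augmenting edge of $C$. Indeed, if both edges of $C$ at $w$ were tree edges they would go to two children of $w$, and $C-w$ would be a path between two different subtrees hanging from $w$; but no tree edge and no augmenting edge leaves such a subtree except towards $w$ or an ancestor of $w$, the latter contradicting minimality of $\mathrm{level}(w)$. Consequently every cycle of $T^{+}$ contains an augmenting target, and in fact $C$ lies entirely inside the subtree rooted at $w$. I now arrange that every ancestor used lies at level at most $L-g$ (possible by taking $L$ huge). This kills every cycle using exactly one augmenting edge, since such a cycle is that edge plus the tree path back, of length $1+(L-\ell)\ge g+1$. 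For a cycle using two augmenting edges $e=vu$, $e'=v'u'$ at levels $\ell,\ell'\le L-g$: if $v=v'$ the cycle has length at least $2+|\ell-\ell'|$; if $v\ne v'$ then, as $C$ must pass through the least common ancestor of $v,v'$ at some level $m$, the cycle has length at least $2+2(L-m)+|\ell-\ell'|$ when $u,u'$ are both common ancestors of $v,v'$, while if some target is not a common ancestor then $m<L-g$ and already $2(L-m)>g$. So it suffices to choose the sets $A_v$ so that (i) the $q$ levels in each $A_v$ are pairwise at least $g$ apart, and (ii) whenever $v,v'$ have least common ancestor at level $m>L-g/2$, every level of $A_v$ and every level of $A_{v'}$ differ by at least $g$. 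I realize this by fixing $q$ ``zones'' of consecutive levels inside $[1,L-g]$, pairwise at least $g$ apart, each zone of size at least $g\cdot k^{g}$; in zone $i$ I send $v$ to the ancestor at level $s_i+g\cdot\nu(v)$, where $\nu(v)\in\{0,\dots,k^{g}-1\}$ encodes the last $g$ coordinates of the root-to-$v$ address of $v$ via a fixed bijection, and $s_i\equiv L+1\pmod 2$. Two leaves whose least common ancestor is at level $m>L-g/2$ differ in those last $g$ coordinates, hence within each zone are sent to levels differing by at least $g$; for all other pairs $v\ne v'$ one has $m\le L-g/2$ and $2(L-m)\ge g$ already forbids a short two-augmenting-edge cycle, and $L$ is taken large enough to accommodate the zones.

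The step I expect to be the real obstacle is extending the girth verification to cycles using three or more augmenting edges. The guiding intuition is that each additional augmenting edge forces an extra descent-and-ascent within the tree, and the ``smallest level'' observation still confines the cycle to a subtree of depth more than $g$ that it must cross through a common ancestor at level $\le L-g$; but turning this into an airtight lower bound on the cycle length seems to require an induction on the number of augmenting edges together with a careful case distinction according to which augmenting targets are, and are not, common ancestors of the leaves involved. (A purely probabilistic construction looks harder to push through here, since the number of potential short cycles in $T$ is exponential in $L$ while an augmenting edge is present only with polynomially small probability, so the Local Lemma does not apply directly; this is why I prefer the explicit, address-driven choice of the $A_v$.) Once the casework is organized this way, fixing the three parameters — the zone size, the suffix length, and the depth $L$ — so that all the displayed inequalities hold simultaneously is routine.
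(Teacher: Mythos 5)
The paper does not prove this lemma: it cites Alon, Kostochka, Reiniger, West, and Zhu \cite{ABKWZ}, so there is no in-paper argument to compare with. Your proposal instead attempts an explicit, self-contained construction, and the skeleton is sound: the parity trick gives bipartiteness, the observation that the lowest-level vertex of any cycle must be the non-leaf endpoint of an augmenting edge (so the cycle is confined to the subtree rooted there) is correct, and your analysis of cycles using one or two augmenting edges, while somewhat loosely stated, can be made rigorous.

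There is, however, a genuine gap, and it is the one you flag yourself: cycles using three or more augmenting edges are not handled, and calling that casework routine is not justified. Cycles with an odd number of augmenting edges are actually the easy case (a parity count of leaf/non-leaf transitions around the cycle forces some tree path from a leaf down to a target at level $\le L-g$, hence of length $\ge g$), but for an even number $t\ge 4$ the cycle alternates between ``augmenting pairs'' of leaves (whose targets are joined by a non-leaf tree path) and ``leaf pairs'' (joined by a leaf-to-leaf tree path), and ruling out a short cycle requires showing the non-leaf tree paths cannot all be short at once. This hinges on the ultrametric constraint among the LCA levels of the participating leaves together with the fact that consecutive augmenting pairs around the cycle have targets at levels at least $g$ apart; neither ingredient appears in your sketch, and your condition (ii), being phrased only for pairs of leaves with a high least common ancestor, is too local to deliver it. Your suffix-driven choice of the sets $A_v$ does, as far as I can see, actually work, but the verification for $t\ge 4$ is markedly more delicate than the two-edge case, so as written the proof is incomplete precisely at the step you identify as the obstacle.
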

 
 Assume $T$ is a complete $k$-ary tree. A \emph{standard labeling} of the edges of $T$ is a labeling $\phi$ of the edges of $T$ such that for each non-leaf vertex $v$, for each $i \in \{1,2,\ldots, k\}$, there is one edge from $v$ to one of its child   labeled by $i$. Given a $k$-coloring $f: V(T) \to [k]$ of the vertices of $T$ (which does not need to be proper), the $f$-path $P_f = (v_1, v_2, \ldots, v_m)$ of $T$ is the path from the root vertex $v_1$ to a leaf $v_m$ of $T$ so that for each $i=1,2,\ldots, m-1$, $f(v_i) = \phi(v_iv_{i+1})$. 
 
 \medskip
 \noindent
 {\bf Proof of Theorem \ref{thm-largegirth}}
  
  Assume $k, g \ge 2$ are   integers. 
  We shall prove that for any integer $p$, there is a   graph $G$ for which the followings hold:
  \begin{enumerate}
  	\item $G$ has  girth at least $g$ and chromatic number at most $k$.
  	\item There is a signature $\sigma$ of $G$ such that $(G, \sigma)$ is not $(2kp, (p+1))$-colorable.
  \end{enumerate}
  
    Let $H$ be a $(2kp, k, 2kg )$-graph with underline tree $T$.  Let $\phi$ be a standard $2kp$-labeling of the edges of  $T$. 
  For $v \in V(T)$, denote by $\ell(v)$ the level of $v$, i.e., the distance from $v$ to the root vertex in $T$. Let $\theta(v) = \ell(v) \pmod{k}$. 
  
  For each leaf $v$ of $T$, 
  let $u_{v,1}, u_{v,2}, \ldots, u_{v, k}$ be the vertices on $P_v$ that are connected to $v$ by augmenting edges. Let $u'_{v,i} \in P_v$ be the closest descendant of $u_{v,i}$ with $\theta(u'_{v,i}) = i$ and let $e_{v,i}$ be the edge connecting $u'_{v,i}$ to its child on $P_v$. 
  
  Let $s_{v,i} = \phi(e_{v,i})$ and let 
  $$A_{v,i} = \{s_{v,i}, s_{v,i}+1, \ldots, s_{v,i}+p \}, \ B_{v,i} = \{a+kp: a \in A_{v,i}\}, 
  \ C_{v,i} = A_{v,i} \cup B_{v,i}.$$
  The addition above are carried out modulo $2kp$.
  
 As $|C_{v,i}| =2(p+1)$ and  $\cup_{i=1}^k C_{v,i} \subseteq [2kp]$, there exist distinct indices $i,j$ such that $C_{v,i} \cap C_{v,j} \ne \emptyset$. 
  
  Note that $B_{v,i}$ is a $kp$-shift of $A_{v,i}$. So if $A_{v,i} \cap A_{v,j} \ne \emptyset$, then $B_{v,i} \cap B_{v,j} \ne \emptyset$. In this case, 
  $$   d_{\pod{2kp}}(  \phi(e_{v,i}), \phi(e_{v,j})) \le p.$$
  Otherwise $A_{v,i} \cap B_{v,j} \ne \emptyset$ (and hence  $B_{v,i} \cap A_{v,j} \ne \emptyset$) and  $$ d_{\pod{2kp}}(  \phi(e_{v,i}), \overbar{\phi(e_{v,j}) })\le p.$$

  Let $L$ be the set of leaves of $T$. For each $v \in L$, we define one edge $e_v$ on $V(T)$ as follows:
  \begin{itemize}
  	\item If $d_{\pod{2kp}}(  \phi(e_{v,i}), \phi(e_{v,j})) \le p$, then let $e_v$ be a positive edge connecting $u'_{v,i}$ and $u'_{v,j}$.
  	\item If $d_{\pod{2kp}}(  \phi(e_{v,i}), \overbar{\phi(e_{v,j})}) \le p$, then let $e_v$ be a negative edge connecting $u'_{v,i}$ and $u'_{v,j}$.
  \end{itemize}
  Let $(G, \sigma)$ be the signed graph with vertex set $V(T)$ and with edge set $\{e_v: v \in L\}$, where the signs of the edges are defined as above.
  We shall show that $(G, \sigma)$ has the desired properties.
  
  First observe that $\theta$ is a proper $k$-coloring of $G$. So 
  $G$   has chromatic number at most $k$.
  
  Next we show that $G$ has girth at least $g$.
  For each edge $e_v = u'_{v,i}u'_{v,j}$ of $G$, let $B_v$ be the path which is the union of the subpath of $P_v$ from   $u'_{v,i}$ to $u_{v,i}$ and the path $u_{v,i}vu_{v,j}$ and the subpath of $P_v$ from $u_{v,j}$ to $u'_{v,j}$. Then $B_v$ has length at most $2k$. If $C$ is a cycle in $G$, then replace each edge $e_v$ of $C$ by the path $B_v$, we obtain a cycle in $H$. As $H$ has girth at least $2kg$, we conclude that $C$ has length at least $g$ and hence $G$ has girth at least $g$. 
  
  Finally, we show that $(G, \sigma)$ is not circular $(2kp, p+1)$-colorable.  Assume $f$ is a 
  $(2kp, p+1)$-colorable of $(G, \sigma)$. As $f$ is a $2kp$-coloring of the vertices of $T$, there is a unique $f$-path $P_v$. Assume $e_v = u'_{v,i}u'_{v,j}$. It follows from the definition of $f$-path that $f(u'_{v,i}) = \phi(e_{v,i})$ and $f(u'_{v,j}) = \phi(e_{v,j})$. It follows from the definition of $e_v$ that if $e_v$ is a positive edge, then 
  $d_{\pod{2kp}}(\phi(e_{v,i}), \phi(e_{v,j})) \le p$. If $e_v$ is a negative edge, then 
  $d_{\pod{2kp}}(\phi(e_{v,i}), \overbar{\phi(e_{v,j})}) \le p$. This is in contrary to the assumption that $f$ is   a circular $(2kp, p+1)$-coloring of $(G, \sigma)$.
  \qed
  
 Remark: The graph constructed above is shown to have chromatic number at most $k$. However, since $ \frac{2kp}{p+1} < \chi_c(G, \sigma) \le 2 \chi(G)$, we conclude that $\chi(G) = k$  when $p+1 \ge 2k$.   It is not known whether there is a finite $k$-chromatic graph of girth at least $g$ and with $\chi_c^s(G) =2k$. Also it is unknown whether for every rational $\frac{p}{q}$ and integer $g$ and any $\epsilon > 0$, there is a graph $G$ with $\chi_c(G) \le  \frac{p}{q}$ and $\chi_c^s(G) > \frac{2p}{q} - \epsilon$.

 A graph $G$ is called {\em $k$-critical} if $\chi(G)=k$ and for any proper subgraph $H$ of $G$, $\chi(H)=k-1$. 
 The following result about circular chromatic number of critical graphs of large girth was proved in \cite{Z01}.  
 
 \begin{theorem}
 	\label{thm-critical-circ}
 	For any integer $k \ge 3$ and $\epsilon > 0$, there is an integer $g$ such that any $k$-crtical graph of girth at least $g$ has circular chromatic number at most $k-1+\epsilon$.
 \end{theorem}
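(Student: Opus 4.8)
The plan is to use the classical orientation characterisation of the circular chromatic number due to Minty (see \cite{Z01}): for a graph $G$ and a real $r\ge 2$, one has $\chi_c(G)\le r$ if and only if $G$ admits an orientation $D$ such that every cycle $C$ satisfies $\min(|C^+|,|C^-|)\ge |C|/r$, where $C^+$ and $C^-$ are the sets of forward and backward arcs of $C$ relative to a fixed traversal of $C$. So, given $k\ge 3$ and $\epsilon>0$, the goal becomes to build such an orientation. We may assume $0<\epsilon<1$, since otherwise $\chi_c(G)\le\chi(G)=k\le k-1+\epsilon$ is immediate. Set $r=k-1+\epsilon$ (note $r-2=k-3+\epsilon>0$), and let $g$ be any integer strictly larger than $\frac{k(k-1+\epsilon)}{k-3+\epsilon}$. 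Let $G$ be a $k$-critical graph of girth at least $g$.

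First I would fix an arbitrary vertex $v$. Since $G$ is $k$-critical, $G-v$ is a proper subgraph, hence $(k-1)$-chromatic; fix a proper colouring $c\colon V(G)\setminus\{v\}\to\{0,1,\dots,k-2\}$. I orient every edge $xy$ with $x,y\ne v$ from $x$ to $y$ whenever $c(x)<c(y)$ (well defined since $c(x)\ne c(y)$ along an edge), and orient the edges at $v$ arbitrarily. Now I would verify the Minty condition cycle by cycle. If a cycle $C$ avoids $v$, then along $C$ the colours form a cyclic sequence in $\{0,\dots,k-2\}$ with consecutive entries distinct, so a maximal run of forward arcs is a strictly increasing run of colours and thus has length at most $k-2$ (the $k-1$ available colours bound the length of a strictly increasing sequence), and likewise for backward arcs. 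Hence the $|C^-|$ backward arcs split $C$ into $|C^-|$ forward runs each of length $\le k-2$, giving $|C^+|\le (k-2)|C^-|$ and so $|C|\le (k-1)|C^-|$; symmetrically $|C|\le (k-1)|C^+|$. Therefore $\min(|C^+|,|C^-|)\ge |C|/(k-1)>|C|/r$.

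The remaining, more delicate, case is a cycle $C$ through $v$. Write $C=v\,u\,P\,w\,v$, where $P$ is a $u$--$w$ path inside $G-v$ with $|P|=|C|-2\ge g-2$. Along $P$, if $a$ is the number of colour-increasing steps and $d$ the number of colour-decreasing steps, then $a+d=|C|-2$ and $|a-d|=|c(w)-c(u)|\le k-2$, so both $a$ and $d$ are at least $\frac{|C|-k}{2}$. Since the two edges of $C$ at $v$ contribute one unit each to $|C^+|+|C^-|$, we have $|C^+|\ge a$ and $|C^-|\ge d$, whence $\min(|C^+|,|C^-|)\ge\frac{|C|-k}{2}$. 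The choice of $g$ ensures that $|C|\ge g$ forces $\frac{|C|-k}{2}\ge |C|/r$ (this is exactly the inequality $|C|(r-2)\ge rk$), so the Minty condition holds for $C$ as well; hence $\chi_c(G)\le r=k-1+\epsilon$.

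The crux is the second case: the argument only controls cycles through $v$ when they are long, and this is what forces $g$ to grow — roughly like $\frac{k^2}{k-3+\epsilon}$, and in particular like $1/\epsilon$ when $k=3$. The place where one genuinely uses more than $\chi(G)\le k$ is the bound $|a-d|\le k-2$, which relies on colouring $G-v$ with only $k-1$ colours — precisely what criticality provides; a $k$-colouring of $G$ itself would give only $\chi_c(G)\le k$. The rest (existence of $v$, well-definedness of the orientation, the run-length count, and checking that the bound $\min(|C^+|,|C^-|)\ge\frac{|C|-k}{2}$ survives the two extra arcs at $v$) is routine.
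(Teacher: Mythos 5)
The paper does not prove Theorem~\ref{thm-critical-circ}; it cites the result to Zhu's survey~\cite{Z01}, so there is no in-paper proof to compare against, and I will evaluate your argument on its own terms.

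Your overall strategy---invoking the Minty/Goddyn--Tarsi--Zhang orientation characterisation of $\chi_c$, fixing a vertex $v$, orienting $G-v$ according to a proper $(k-1)$-colouring $c$ (which exists by criticality), and treating cycles avoiding $v$ and cycles through $v$ separately---is a reasonable route, and your treatment of cycles avoiding $v$ is correct. However, the analysis of cycles through $v$ contains a genuine error that invalidates both the key inequality and the choice of $g$.

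You assert that if $a$ and $d$ denote the numbers of colour-increasing and colour-decreasing steps along the $u$--$w$ path $P$ in $G-v$, then $|a-d|=|c(w)-c(u)|\le k-2$. This is false. A single colour-increasing step may raise the colour by more than one, so $c(w)-c(u)$ equals the difference between the \emph{total} increase and the \emph{total} decrease along $P$, not the difference $a-d$ between the \emph{numbers} of increasing and decreasing steps. For instance, with $k=4$ and colours $\{0,1,2\}$, a path coloured $0,1,2,0,1,2,\dots,0,1,2,0$ of length $3m$ has $a=2m$ and $d=m$, so $a-d=m$ grows without bound even though $c(w)-c(u)=0$. Consequently the bound $\min(a,d)\ge\frac{|C|-k}{2}$ does not hold.

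What is true, by the same run-length argument you use for cycles avoiding $v$, is that every maximal increasing (or decreasing) run of $P$ has at most $k-2$ steps, and since increasing and decreasing runs alternate one gets $\min(a,d)\ge\frac{|P|-(k-2)}{k-1}=\frac{|C|-k}{k-1}$. Feeding this into the Minty inequality $\min(|C^+|,|C^-|)\ge|C|/r$ with $r=k-1+\epsilon$ forces $|C|\ge\frac{k(k-1+\epsilon)}{\epsilon}$, so the girth threshold must be taken larger than $\frac{k(k-1+\epsilon)}{\epsilon}$---growing like $k^2/\epsilon$---rather than your $\frac{k(k-1+\epsilon)}{k-3+\epsilon}$. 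The two thresholds coincide only when $k=3$: there $k-2=1$, every step changes the colour by exactly $\pm1$, and your identity $|a-d|=|c(w)-c(u)|$ does hold, which is presumably what led you astray. In short, the strategy is salvageable, but for every $k\ge4$ the claimed bound and the resulting choice of $g$ are wrong and must be replaced by the run-length estimate above.
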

 
 As a consequence of Theorem \ref{thm-critical-circ} and Corollary \ref{prop-double}, we know that for any integer $k \ge 3$ and $\epsilon > 0$, there is an integer $g$ such that any $k$-critical graph $G$ of girth at least $g$ has signed circular chromatic number at most $2k-2+\epsilon$. However, this bound is not tight. The following proposition follows from Proposition \ref{prop-0free}.
 	
 	\begin{proposition}
 		\label{prop-critical}
 		If $G$ is a $k$-critical graph, then $\chi^s_c(G) \le 2k-2$.
 	\end{proposition}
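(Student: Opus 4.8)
The plan is to reduce the claim to the $0$-free setting via Proposition~\ref{prop-0free} and then cash in criticality. Since $\chi_c^s(G)$ is a maximum over the finitely many signatures of $G$, it suffices to show $\chi_c(G,\sigma)\le 2k-2$ for an arbitrary signature $\sigma$. By Proposition~\ref{prop-0free} applied with $k-1$ in place of $k$, the signed graph $(G,\sigma)$ is $(2(k-1),1)$-colorable — and hence $\chi_c(G,\sigma)\le 2(k-1)=2k-2$ — provided we can switch $(G,\sigma)$ at some vertex set $A$ so that the positive edges of the resulting signature $\sigma'$ induce a $(k-1)$-colorable graph. So everything comes down to producing such an $A$.

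Here is where criticality enters. We may assume $k\ge 2$ (for $k=1$ the definition of $k$-critical forces $G$ to be edgeless, a degenerate case outside the intended range). A $k$-critical graph with $k\ge 2$ has at least one edge, and, crucially, any subgraph of $G$ that omits at least one edge is a proper subgraph of $G$ and is therefore $(k-1)$-colorable by the definition of $k$-critical. Consequently it is enough to arrange that $\sigma'$ has \emph{at least one negative edge}: then the subgraph of $G$ formed by the positive edges of $(G,\sigma')$ omits at least one edge, so it is $(k-1)$-colorable, and we are done.

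Finally, such a $\sigma'$ is trivially available. If $\sigma$ itself has a negative edge, take $A=\emptyset$. Otherwise all edges of $(G,\sigma)$ are positive; picking any edge $uv$ of $G$ (which exists since $k\ge 2$) and switching at $A=\{u\}$ makes every edge incident to $u$ negative, so $\sigma'$ has a negative edge. Assembling the three steps gives $\chi_c(G,\sigma)\le 2k-2$ for every $\sigma$, hence $\chi_c^s(G)\le 2k-2$. I do not expect a real obstacle here: the only points needing care are invoking Proposition~\ref{prop-0free} with the shifted parameter $2(k-1)$ and disposing of the degenerate value $k=1$.
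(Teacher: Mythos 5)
Your proof is correct and uses essentially the same argument as the paper: apply Proposition~\ref{prop-0free} to reduce the bound to showing the positive-edge subgraph (after a possible switching) is $(k-1)$-colorable, then invoke criticality. The only cosmetic difference is that the paper treats the all-positive signature directly via $\chi_c(G,+)=\chi_c(G)\le\chi(G)=k\le 2k-2$, while you unify the cases by switching at one vertex to manufacture a negative edge; both are fine.
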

 	\begin{proof}
  Let $\sigma$ be a signature on $G$. If $(G, \sigma) = (G, +)$, then $\chi_c(G, \sigma) \le \chi(G, \sigma)=\chi(G)=k$.
  If $\sigma(e)=-$ for some edge $e$, then the subgraph of $G$ induced by positive edges has chromatic number at most $k-1$. Hence $\chi_c(G, \sigma) \le 2(k-1)$.
  \end{proof}

  \section{Signed indicator}
  \label{sec:signedindicator}
  
In the study of coloring and homomorphism of graphs, using gadgets to construct new graphs from old ones is a fruitful tool. In this section, we explore the same idea for signed graph coloring.

  \begin{definition}
  	A {\em signed indicator} $\mathcal{I}$ is a triple $\mathcal{I} =(\Gamma, u,v)$ such that $\Gamma$ is a signed graph and $u, v$ are two distinct vertices of $\Gamma$. 
  \end{definition}
 
 \begin{definition}
 	 Assume $\Omega$ is a signed graph,   $\mathcal{I}=(\Gamma, u,v)$ is a signed indicator and $e=xy$ is an  (either positive or negative) edge of $\Omega$. By {\em replacing $e$ with a copy of $\mathcal{I}$}, we mean the following operation: Take the disjoint union of $\Omega$ and $\mathcal{I}$, delete the edge $e$ from $\Omega$, identify $x$ with $u$ and identify $y$ with $v$. 
 \end{definition}

  There is a subtle issue in the above definition. An edge $e=xy$ is an unordered pair. So we can write it as $e=yx$ as well. However, by identifying $y$ with $u$ and identifying $x$ with $v$, the resulting signed graph is different from the one as defined above. To avoid such confusion, it is safer to first orient the edges of $\Omega$ and then replace the directed edge $e$ with $\mathcal{I}$. However, for our usage in this paper, the difference does not affect our discussion, so we just say  replace the edge $e$ with $\mathcal{I}$. 
  
  \begin{definition}
         For a graph $G$ and a signed indicator $\mathcal{I}$, we denote by $G(\mathcal{I})$ the signed graph obtained from $G$ by replacing each edge with a copy of $\mathcal{I}$.
  	 Assume $\Omega$ is a signed graph, and $\mathcal{I}_+$ and $\mathcal{I}_-$ are two signed indicators. We denote by $\Omega(\mathcal{I}_+, \mathcal{I}_-)$ the signed graph obtained from $\Omega$ by replacing each positive edge   with a copy $\mathcal{I}_+$ and replacing each negative edge with a copy of $\mathcal {I}_-$.   
  \end{definition}

\begin{definition}
  	  Assume $\mathcal{I}=(\Gamma,u,v)$ is a signed indicator and $r \ge 2$ is a real number. 
  	  For $a, b \in [0,r)$, we say the color pair $(a,b)$ is {\em feasible for $\mathcal{I}$} (with respect to $r$) if    there is a circular $r$-coloring $\phi$ of $\Gamma$ such that $\phi(u)=a$ and $\phi(v)=b$.
\end{definition}
  
  Note that if $(a, b)$ is feasible for $\mathcal{I}$, then for any $t \in [0,r)$, $(a+t, b+t)$ and $(-a, -b)$ are also feasible for $\mathcal{I}$. Here the calculation is modulo $r$. Thus if we know feasible pairs of the form $(0,b)$ for $b \in [0, \frac{r}{2}]$, then we know all the feasible pairs.

\begin{definition}
Assume $\mathcal{I}=(\Gamma,u,v)$ is a signed indicator and $r \ge 2$ is a real number. Let $$Z(\mathcal{I}, r) = \{b \in [0, \dfrac{r}{2}] : (0,b) \text{ is feasible for $\mathcal{I}$ with respect to $r$ } \}.$$
\end{definition}

  Observe that for $\mathcal{I}=(\Gamma, u,v)$, $Z(\mathcal{I},r) \ne \emptyset$ if and only if $\chi_c(\Gamma) \le r$. One useful interpretation of $Z(\mathcal{I},r)$ is that this is the set of possible distances (in $C^r$) between the two colors assigned to $u$ and $v$ in a circular $r$-coloring of $\Gamma$. 
  
Let the sign of a path $P$ in $(G, \sigma)$ be the product of the signs of the edges of $P$.
  
  \begin{example}
  	\label{ex-gadget}
  	If $\Gamma$ is a positive $2$-path connecting $u$ and $v$, and $\mathcal{I}=(\Gamma,u,v)$, then for any $\epsilon$, $0<\epsilon <1 $, and $r=4-2\epsilon$, $$Z(\mathcal{I}, r) = [0, 2-2\epsilon]=[0, \dfrac{r}{2} - \epsilon].$$
  	If $\Gamma'$ is a negative $2$-path connecting $u$ and $v$, and $\mathcal{I}'=(\Gamma',u,v)$,  then for any $\epsilon$, $0<\epsilon <1 $, and $r=4-2\epsilon$, $$Z(\mathcal{I}', r) = [\epsilon, \dfrac{r}{2}].$$
  	If $\Gamma''$ consists of a negative $2$-path and a positive 2-path connecting $u$ and $v$, and $\mathcal{I}''=(\Gamma'',u,v)$,  then for any $\epsilon$, $0<\epsilon <1 $, and $r=4-2\epsilon$, $$Z(\mathcal{I}'', r) = [\epsilon, \dfrac{r}{2} - \epsilon].$$
  \end{example}
    
  \begin{lemma}
  	\label{lem-indicator}
  	Assume $\mathcal{I}=(\Gamma,u,v)$ is a signed indicator, $r \ge 2$ is a real number and 
  	  $$Z(\mathcal{I}, r) =[t,  \dfrac{r}{2} - t]$$
  	for some $0< t < \frac{r}{4}$. Then for any graph $G$, 
  	$$\chi_c(G) = \frac{\chi_c(G(\mathcal{I}))}{2t}.$$ 
  \end{lemma}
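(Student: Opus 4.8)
The plan is to reduce the identity to Corollary~\ref{cor-double} (replacing every edge of a graph by a positive and a negative edge doubles the circular chromatic number) by a change of scale of the colour circle. Set $s:=\chi_c(G)$; the goal is $\chi_c(G(\mathcal{I}))=2ts$.

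First I translate a circular colouring of $G(\mathcal{I})$ into a constraint on $V(G)$ alone. Since the internal vertices of each inserted copy of $\mathcal{I}$ are incident to no other edge, a map $c\colon V(G)\to C^{R}$ (identifying $V(G)$ with the terminals of the copies) extends to a circular $R$-colouring of $G(\mathcal{I})$ if and only if for every edge $xy\in E(G)$ the pair $(c(x),c(y))$ is feasible for $\mathcal{I}$ with respect to $R$; and by the translation and reflection invariance of feasibility observed just before the definition of $Z(\mathcal{I},r)$, this holds exactly when $d_{\pod R}(c(x),c(y))\in Z(\mathcal{I},R)$. Hence $G(\mathcal{I})$ is circular $R$-colourable iff there is $c\colon V(G)\to C^{R}$ with $d_{\pod R}(c(x),c(y))\in Z(\mathcal{I},R)$ for every edge of $G$. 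I also need the shape of $Z(\mathcal{I},R)$: multiplying a circular $\rho$-colouring of $\Gamma$ by a factor $\lambda\ge1$ gives a circular $\lambda\rho$-colouring (each edge-distance is scaled by $\lambda\ge1$, and antipodes map to antipodes, just as in the scaling maps $K^{s}_{\rho}\spto K^{s}_{\lambda\rho}$ used earlier in the paper), so $Z(\mathcal{I},\lambda r)\supseteq\lambda\,Z(\mathcal{I},r)=\bigl[\lambda t,\tfrac{\lambda r}{2}-\lambda t\bigr]$; combined with the compression idea of Lemma~\ref{lem1} and the hypothesis, this should yield $Z(\mathcal{I},R)=\bigl[t,\tfrac R2-t\bigr]$ for every circumference $R$ relevant below. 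Establishing this equality, with both endpoints controlled, is the technical heart of the argument.

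Granting it, rescale the colour circle by $\tfrac1t$. A map $c\colon V(G)\to C^{R}$ with all edge-distances in $\bigl[t,\tfrac R2-t\bigr]$ is literally the same as a map $c'\colon V(G)\to C^{R/t}$ with $1\le d_{\pod{R/t}}(c'(x),c'(y))\le\tfrac{R}{2t}-1$ for every edge $xy$; recalling that the upper inequality is equivalent to $d_{\pod{R/t}}(c'(x),\overbar{c'(y)})\ge1$, this says exactly that $c'$ is a circular $(R/t)$-colouring of the signed graph $(G',\tau)$ obtained from $G$ by replacing each edge with a positive edge and a negative edge. Combining with the previous paragraph, $G(\mathcal{I})$ is circular $R$-colourable if and only if $(G',\tau)$ is circular $(R/t)$-colourable. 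Taking infima over $R$, $\chi_c(G(\mathcal{I}))=t\cdot\chi_c(G',\tau)$, and Corollary~\ref{cor-double} gives $\chi_c(G',\tau)=2\chi_c(G)=2s$; hence $\chi_c(G(\mathcal{I}))=2ts$, i.e.\ $\chi_c(G)=\chi_c(G(\mathcal{I}))/(2t)$.

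The main obstacle is the middle step, namely passing from the hypothesis (which concerns the single circumference $r$) to the whole family of circumferences used in the rescaling. One half of the needed interval identity comes for free from scaling colourings up; the other half, together with the lower bound $\chi_c(G(\mathcal{I}))\ge 2ts$ (which amounts to excluding circular $R$-colourings of $G(\mathcal{I})$ for every $R<2ts$), is where care is required: one must ensure that the lower endpoint $t$, which governs the positive-edge side of the constraint, and the upper endpoint $\tfrac R2-t$, which governs the negative-edge side, do not drift as $R$ varies. Once $Z(\mathcal{I},R)$ is understood, everything else is bookkeeping with the scaling maps between colour circles already developed in the paper.
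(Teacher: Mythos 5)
Your proposal takes a genuinely different route from the paper's. The paper's proof scales colorings by $1/t$ at the single threshold $(r,\,r':=r/(2t))$: given a circular $r'$-coloring $f$ of $G$, the map $g=tf$ satisfies $d_{\pod{r}}(g(x),g(y))\in[t,\,r/2-t]=Z(\mathcal{I},r)$ for every edge $xy$, hence $g$ extends to a circular $r$-coloring of $G(\mathcal{I})$; conversely, starting from a circular $r$-coloring of $G(\mathcal{I})$ and switching so that all colors on $V(G)$ lie in $[0,r/2)$, dividing by $t$ yields a circular $r'$-coloring of $G$. This proves $\chi_c(G)\le r/(2t)$ if and only if $\chi_c(G(\mathcal{I}))\le r$, using only the hypothesis about the one circumference $r$ and no information about $Z(\mathcal{I},R)$ for $R\ne r$.

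Your route, in contrast, aims to determine $Z(\mathcal{I},R)$ for all relevant $R$ and then invoke Corollary~\ref{cor-double}. The step you flag as the technical heart --- that $Z(\mathcal{I},R)=[t,\,R/2-t]$ with the \emph{same} $t$ for every relevant $R$ --- is in fact false. For the indicator $\mathcal{I}''$ of Example~\ref{ex-gadget} (a positive $2$-path plus a negative $2$-path from $u$ to $v$), a direct computation gives $Z(\mathcal{I}'',R)=[2-R/2,\ R-2]$ for $8/3\le R<4$; the left endpoint $2-R/2$ drifts with $R$ and equals $\epsilon$ precisely when $R=4-2\epsilon$, which matches Example~\ref{ex-gadget} but contradicts $R$-independence. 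Your own scaling observation $Z(\mathcal{I},\lambda r)\supseteq\lambda Z(\mathcal{I},r)=[\lambda t,\,\lambda r/2-\lambda t]$ already points to this: both endpoints scale by $\lambda$, not just the upper one. Since your reduction to Corollary~\ref{cor-double} hinges precisely on the constancy of the lower endpoint across all circumferences $R$, this gap cannot be closed along the proposed lines; the argument should instead proceed at the single threshold $r$ as the paper does, using only what the hypothesis gives directly.
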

  
  \begin{proof}
  	Let $r' = \frac{r}{2t}$. If $\chi_c(G) \le r'$ and $f$ is a circular $r'$-coloring of $G$, then $g: V(G) \to [0,r)$ defined as $g(x) = tf(x)$ satisfies the condition that for any edge $e=xy$ of $G$,  $$t \le d_{\pod{r}}(g(x),g(y)) \le \frac{r}{2} - t.$$
  	So $d_{\pod{r}}(g(x),g(y)) \in Z(\mathcal{I},r)$, and the mapping $g$ can be extended to a circular $r$-coloring of the copy of $\Gamma$ that was used to replace $e$. 
  	So $g$ can be extended to a circular $r$-coloring of $G(\mathcal{I})$.
  	
  	Conversely, assume $\chi_c(G(\mathcal{I})) \le r$. Let $g$ be a circular $r$-coloring of $G(\mathcal{I})$.  By vertex switching, we may assume that $g(x) \in [0, \frac{r}{2})$ for every vertex $x$ of $G(\mathcal{I})$. 
  	Then for any edge $e=xy$ of $G$, $d_{\pod{r}}(g(x), g(y)) \in Z(\Gamma, r)$, i.e., 
  	$t \le d_{\pod{r}}(g(x), g(y)) \le  \frac{r}{2} -t$.
  	Let $f: V(G) \to [0,r')$  be defined as $f(x) = \frac{1}{t}g(x)$. Then for any edge $e=xy$ of $G$, $1 \le |f(x)-f(y)| \le r'-1$. Hence $f$ is a circular $r'$-coloring of $G$. 
  \end{proof}

A similar proof implies the following:

\begin{lemma}
  \label{lem-signedindicator}
  Assume $\mathcal{I}_+$ and $\mathcal{I}_-$ are indicators, $r \ge 2$ is a real number and 
  	$$Z(\mathcal{I}_+, r) =[t,  \dfrac{r}{2}], Z(\mathcal{I}_-, r) =[0,   \dfrac{r}{2}-t]$$
  	for some $0< t < \frac{r}{2}$.
  	Then for any signed graph $\Omega$, 
  	$$\chi_c(\Omega) = \frac{\chi_c(\Omega(\mathcal{I}_+, \mathcal{I}_-))}{t}.$$ 
\end{lemma}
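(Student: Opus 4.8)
The plan is to mimic the proof of Lemma~\ref{lem-indicator}, but keep track of the signs of edges of $\Omega$ rather than forgetting them. Let $r' = \frac{r}{t}$. The claim is a two-way implication: $\Omega$ is circular $r'$-colorable if and only if $\Omega(\mathcal{I}_+,\mathcal{I}_-)$ is circular $r$-colorable; taking infima over $r$ of the appropriate shape then yields the stated equality $\chi_c(\Omega) = \chi_c(\Omega(\mathcal{I}_+,\mathcal{I}_-))/t$.

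For the forward direction, suppose $f: V(\Omega) \to [0,r')$ is a circular $r'$-coloring of $\Omega$. Define $g(x) = t f(x) \in [0,r)$ on $V(\Omega)$, viewed as a subset of $V(\Omega(\mathcal{I}_+,\mathcal{I}_-))$. For a positive edge $e = xy$ of $\Omega$ we have $1 \le |f(x)-f(y)| \le r'-1$, so $t \le d_{\pod{r}}(g(x),g(y)) \le \frac{r}{2}$, i.e.\ $d_{\pod{r}}(g(x),g(y)) \in Z(\mathcal{I}_+,r)$. For a negative edge $e=xy$ of $\Omega$ the circular $r'$-coloring constraint is $d_{\pod{r'}}(f(x),f(y)) \le \frac{r'}{2}-1$, which after scaling gives $d_{\pod{r}}(g(x),g(y)) \le \frac{r}{2}-t$, i.e.\ it lies in $Z(\mathcal{I}_-,r)$. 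In either case, since $(0,b)$ feasible for $\mathcal{I}_\pm$ implies (by the translation/reflection invariance noted after the definition of feasibility) that \emph{every} pair at distance $b$ is feasible, the partial coloring $g$ extends to a circular $r$-coloring of the copy of $\mathcal{I}_+$ (resp.\ $\mathcal{I}_-$) that replaced $e$. Doing this independently on every copy gives a circular $r$-coloring of $\Omega(\mathcal{I}_+,\mathcal{I}_-)$.

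For the reverse direction, let $g$ be a circular $r$-coloring of $\Omega(\mathcal{I}_+,\mathcal{I}_-)$. Here I cannot freely switch to push all colors into $[0,\frac{r}{2})$, because switching at a vertex of $\Omega$ changes the signs of the edges of $\Omega$ incident to it, hence changes which indicator sits where; so I work directly. For a positive edge $e=xy$ of $\Omega$, restricting $g$ to the copy of $\mathcal{I}_+$ that replaced $e$ shows $(g(x),g(y))$ is feasible for $\mathcal{I}_+$, hence by translating by $-g(x)$ and using $Z(\mathcal{I}_+,r) = [t,\frac{r}{2}]$ we get $t \le d_{\pod{r}}(g(x),g(y)) \le \frac{r}{2}$. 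For a negative edge, similarly $d_{\pod{r}}(g(x),g(y)) \le \frac{r}{2}-t$. Now set $f(x) = \frac{1}{t}g(x) \in [0,r')$ for $x \in V(\Omega)$. For a positive edge $xy$ of $\Omega$: $1 \le d_{\pod{r'}}(f(x),f(y)) \le \frac{r'}{2}$, and since $d_{\pod{r'}} \le \frac{r'}{2}$ always holds, this is exactly $d_{\pod{r'}}(f(x),f(y)) \ge 1$. For a negative edge $xy$ of $\Omega$: $d_{\pod{r'}}(f(x),f(y)) \le \frac{r'}{2}-1$, which is exactly the negative-edge constraint. Hence $f$ is a circular $r'$-coloring of $\Omega$.

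Finally, to pass from these equivalences at each fixed $r$ to the claimed equality of infima, one observes that as $r$ ranges over the admissible values (say rationals $\frac{p}{q}$ with $p$ even), $r' = \frac{r}{t}$ ranges correspondingly, and $\chi_c(\Omega) = \inf r'$ over those $r'$ for which $\Omega$ is circular $r'$-colorable while $\chi_c(\Omega(\mathcal{I}_+,\mathcal{I}_-)) = \inf r$ over those $r$ for which $\Omega(\mathcal{I}_+,\mathcal{I}_-)$ is circular $r$-colorable; the bijection $r \leftrightarrow r/t$ between the two colorable-sets gives $\chi_c(\Omega(\mathcal{I}_+,\mathcal{I}_-)) = t\,\chi_c(\Omega)$, provided the hypothesis $Z(\mathcal{I}_\pm,r)$ has the stated form holds at the relevant $r$. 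The one delicate point — and the place a careful writeup must be explicit — is exactly this last issue: the hypothesis on $Z(\mathcal{I}_+,r)$ and $Z(\mathcal{I}_-,r)$ is stated for a single $r$, so strictly the identity is an equality of the two circular chromatic numbers at that scale rather than a statement requiring the $Z$-condition for all $r$; mirroring the phrasing of Lemma~\ref{lem-indicator}, I would state and use it in the same spirit, noting that in the intended applications (Example~\ref{ex-gadget}) the condition does hold on a whole interval of $r$, which is what makes the infimum manipulation legitimate. The main obstacle is thus not any hard computation but the bookkeeping that switching is \emph{not} available on $\Omega$'s vertices in the reverse direction, so one must extract the distance bounds directly from feasibility and the translation-invariance of $Z$.
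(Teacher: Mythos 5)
Your proof is correct and takes essentially the approach the paper intends: the paper omits a proof, saying only that ``a similar proof'' to Lemma~\ref{lem-indicator} gives this result, and your argument is exactly that adaptation. The one place you deliberately diverge is in the reverse direction, where Lemma~\ref{lem-indicator}'s proof first switches to push every color into $[0,\frac{r}{2})$ while you instead extract $d_{\pod{r}}(g(x),g(y))\in Z(\mathcal{I}_\pm,r)$ directly from feasibility together with the translation/negation invariance of the feasible set; this is cleaner, although for the record switching at a subset of $V(\Omega(\mathcal{I}_+,\mathcal{I}_-))$ would also work, since switching at exactly one endpoint of an indicator copy reflects its $Z$-set about $\frac{r}{4}$, and the hypothesis $Z(\mathcal{I}_+,r)=[t,\frac{r}{2}]$, $Z(\mathcal{I}_-,r)=[0,\frac{r}{2}-t]$ makes those two reflections of each other, so the bookkeeping is internally consistent. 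Your closing caveat about the single-$r$ hypothesis versus the stated equality of infima is a fair observation about an informality already present in the statement of Lemma~\ref{lem-indicator}, and your resolution (the $Z$-condition holds over a range of $r$ in all intended uses) matches how the paper actually invokes these lemmas.
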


  \begin{corollary}
  	Let $\mathcal{I} = (\Gamma, u,v)$ be the indicator, where $\Gamma$ consists of a positive $2$-path and a negative 2-path connecting $u$ and $v$. Then for any graph $G$, $$\chi_c(G(\mathcal{I})) = 4-\frac{4}{\chi_c(G)+1}.$$
  \end{corollary}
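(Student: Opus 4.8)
The plan is to apply Lemma~\ref{lem-indicator} to this indicator, reading off $Z(\mathcal{I},r)$ from Example~\ref{ex-gadget}. Indeed $\Gamma$ is exactly the signed graph $\Gamma''$ of that example, so for $r\in(2,4)$, on writing $r=4-2\epsilon$ we have
$$Z(\mathcal{I},r)=\Bigl[\epsilon,\ \tfrac r2-\epsilon\Bigr]=\Bigl[t,\ \tfrac r2-t\Bigr],\qquad t:=2-\tfrac r2 .$$
In particular $Z(\mathcal{I},r)=\emptyset$ for $2\le r<\tfrac83$ while $Z(\mathcal{I},\tfrac83)=\{\tfrac23\}$, so $\chi_c(\Gamma)=\tfrac83$. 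The hypothesis of Lemma~\ref{lem-indicator} is $0<t<\tfrac r4$, i.e.\ $r\in(\tfrac83,4)$; moreover its proof — which goes through also at $t=\tfrac r4$, i.e.\ $r\in[\tfrac83,4)$ — establishes not merely an equality but the biconditional
$$\chi_c\bigl(G(\mathcal{I})\bigr)\le r\quad\Longleftrightarrow\quad \chi_c(G)\le\frac{r}{2t}=\frac{r}{4-r},$$
the two directions coming from the rescalings $g=t\cdot f$ and $f=t^{-1}g$ (the latter after a switching that sends the colours of the branch vertices into $[0,\tfrac r2)$).

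Next I would invert this relation. The function $\rho(r)=\dfrac{r}{4-r}$ is a strictly increasing bijection of $[\tfrac83,4)$ onto $[2,\infty)$, with $\rho^{-1}(s)=\dfrac{4s}{s+1}=4-\dfrac{4}{s+1}$. Set $s:=\chi_c(G)\ge 2$ and $r^\ast:=\rho^{-1}(s)\in[\tfrac83,4)$. Applying the biconditional at $r=r^\ast$ gives $\chi_c(G(\mathcal{I}))\le r^\ast$ (since $\rho(r^\ast)=s$); and for every $r<r^\ast$ we have $\chi_c(G(\mathcal{I}))>r$ — this is the biconditional when $r\in[\tfrac83,r^\ast)$ (there $\rho(r)<s$), and it follows from $\chi_c(G(\mathcal{I}))\ge\chi_c(\Gamma)=\tfrac83$ when $r<\tfrac83$. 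Hence $\chi_c(G(\mathcal{I}))=r^\ast=4-\dfrac{4}{\chi_c(G)+1}$, as desired. (Alternatively, for $\chi_c(G)>2$ one may first note $\chi_c(G(\mathcal{I}))\in(\tfrac83,4)$ — the bound $<4$ from the biconditional at $r$ close to $4$, using $\chi_c(G)<\infty$, and the bound $>\tfrac83$ from the embedded copy of $\Gamma$ — and then apply Lemma~\ref{lem-indicator} directly at $r=\chi_c(G(\mathcal{I}))$ and solve $\chi_c(G)=\tfrac{\chi_c(G(\mathcal{I}))}{4-\chi_c(G(\mathcal{I}))}$.)

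The delicate part is not any single computation but the bookkeeping at the two endpoints of $[\tfrac83,4)$: one must read the conclusion of Lemma~\ref{lem-indicator} as the above biconditional before it is legitimate to solve for $\chi_c(G(\mathcal{I}))$; one must confine $\chi_c(G(\mathcal{I}))$ to $[\tfrac83,4)$ in advance (the upper bound requiring $\chi_c(G)<\infty$, the lower bound being free from the copy of $\Gamma$ inside $G(\mathcal{I})$) before invoking the lemma at a particular $r$; and the endpoint $\chi_c(G)=2$, where $t=\tfrac r4$ and Lemma~\ref{lem-indicator} as literally stated does not apply, must be handled by combining $\chi_c(\Gamma)=\tfrac83$ with the biconditional at $r=\tfrac83$, exactly as above. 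Everything else reduces to inverting the strictly increasing map $\rho$.
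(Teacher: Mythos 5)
Your proof is correct and follows essentially the same route as the paper's: combine Example~\ref{ex-gadget} (which gives $Z(\mathcal{I},r)=[\epsilon,\tfrac r2-\epsilon]$ for $r=4-2\epsilon$) with Lemma~\ref{lem-indicator}, choosing $r$ so that $r/(2t)=\chi_c(G)$. The paper's proof is terse --- it sets $\epsilon=2/(r'+1)$ with $r'=\chi_c(G)$, notes $r'=r/(2\epsilon)$, and declares that the conclusion follows from Lemma~\ref{lem-indicator}.

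Your write-up makes two small but genuine points of care explicit that the paper glosses over. First, you correctly observe that the literal conclusion of Lemma~\ref{lem-indicator}, $\chi_c(G)=\chi_c(G(\mathcal{I}))/(2t)$, cannot be read as an identity that holds for every $r$ in the admissible range (since $t$ varies with $r$ while the two chromatic numbers are fixed); what the lemma's proof actually establishes is the biconditional $\chi_c(G)\le r/(2t)\iff\chi_c(G(\mathcal{I}))\le r$, and one must then invert the monotone map $r\mapsto r/(2t(r))$ to extract the equality. Second, you flag the endpoint $\chi_c(G)=2$ (hence $r=\tfrac83$, $t=\tfrac r4$), where the hypothesis $0<t<\tfrac r4$ of Lemma~\ref{lem-indicator} literally fails, and you correctly note that the lemma's proof nonetheless goes through at $t=\tfrac r4$, which together with $\chi_c(\Gamma)=\tfrac83$ closes the case. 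This boundary case is not addressed in the paper, so your version is slightly more complete; otherwise the two proofs coincide.
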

  \begin{proof}
  	Let $\epsilon = \frac{2}{r'+1}$ and $r=4-2\epsilon$. By Example \ref{ex-gadget}, $Z(\mathcal{I}, r) = [\epsilon, \frac{r}{2} - \epsilon]$. Note that $r' = \frac{r}{2\epsilon}$.
  	The conclusion follows from Lemma \ref{lem-indicator}. 
 \end{proof}
   
We note that $G(\mathcal{I})$ here is the same as $S(G)$ defined in \cite{NRS15}. In \cite{NRS15}, it is shown that by using $S(G)$ construction and the graph homomorphism, the chromatic number of graphs are captured by switching homomorphisms of signed bipartite graphs. This corollary shows that furthermore $\chi_c(S(G))$ also determines $\chi_c(G)$.

   \begin{definition}
   		Let  $\Gamma_1$ be  a   positive 2-path connecting $u_1$ and $v_1$. For $i \ge 2$,  
   		\begin{itemize}
   			\item if $i$ is even, then let $\Gamma_{i}$ be obtained from $\Gamma_{i-1}$ by 
   			\begin{itemize}
   				\item adding 
   				two vertices $u_{i}, v_{i}$,
   				\item connecting $u_{i}$ to $u_{i-1}$ by a positive edge, $u_{i}$ to $v_{i-1}$ by a negative edge,
   				\item connecting  $v_{i}$ to $u_{i-1}$ by a negative edge, $v_{i}$ to $v_{i-1}$ by a positive edge;
   			\end{itemize} 
   			\item if $i$ is odd, then let $\Gamma_{i}$ be obtained from $\Gamma_{i-1}$ by 
   				\begin{itemize}
   					\item adding 
   					two vertices $u_{i}, v_{i}$,
   					\item connecting each of $u_{i}$ and $v_{i}$ to each of $u_{i-1}$ and  $v_{i-1}$ by a positive edge.
   				\end{itemize} 
   		\end{itemize}
   	\end{definition}
	
For example, $\Gamma_{3}, \Gamma_{4}$ and $\Gamma_{5}$ are illustrated in Figure~\ref{fig:Gamma3} \ref{fig:Gamma4} \ref{fig:Gamma5} respectively.

\begin{figure}[htbp]
	\centering
	\begin{minipage}{.3\textwidth}
		\centering
		\begin{tikzpicture}
		[scale=.3]
\draw[rotate=0] (0, 0) node[line width=0mm, circle, fill, inner sep=1.5pt](x0){};
\draw[rotate=0] (-2, 0) node[line width=0mm, circle, fill, inner sep=1.5pt, label=left: $u_1$](u1){};
\draw[rotate=0] (2, 0) node[line width=0mm, circle, fill, inner sep=1.5pt, label=right: $v_1$](v1){};
\draw[rotate=0] (0, 2) node[line width=0mm, circle, fill, inner sep=1.5pt, label=above: $u_2$](u2){};
\draw[rotate=0] (0, -2) node[line width=0mm, circle, fill, inner sep=1.5pt, label=below: $v_2$](v2){};
\draw[rotate=0] (-5, 0) node[line width=0mm, circle, fill, inner sep=1.5pt, label=left: $u_3$](u3){};
\draw[rotate=0] (5, 0) node[line width=0mm, circle, fill, inner sep=1.5pt, label=right: $v_3$](v3){};

\draw  [dashed, line width=0.6mm, red] (u2) -- (v1);
\draw  [dashed, line width=0.6mm, red] (v2) -- (u1);
\draw  [line width=0.6mm, blue] (u2) -- (u1) -- (x0) -- (v1) -- (v2);
\draw  [line width=0.6mm, blue] (u2) -- (v3) -- (v2) -- (u3) -- (u2);
		
		\end{tikzpicture}
		\caption{$\Gamma_3$}
		\label{fig:Gamma3}     
	\end{minipage}
	\begin{minipage}{.3\textwidth}
		\centering
		\begin{tikzpicture}
		[scale=.3]
\draw[rotate=0] (0, 0) node[line width=0mm, circle, fill, inner sep=1.5pt](x0){};
\draw[rotate=0] (-2, 0) node[line width=0mm, circle, fill, inner sep=1.5pt, label=left: $u_1$](u1){};
\draw[rotate=0] (2, 0) node[line width=0mm, circle, fill, inner sep=1.5pt, label=right: $v_1$](v1){};
\draw[rotate=0] (0, 2) node[line width=0mm, circle, fill, inner sep=1.5pt, label=above: $u_2$](u2){};
\draw[rotate=0] (0, -2) node[line width=0mm, circle, fill, inner sep=1.5pt, label=below: $v_2$](v2){};
\draw[rotate=0] (-5, 0) node[line width=0mm, circle, fill, inner sep=1.5pt, label=left: $u_3$](u3){};
\draw[rotate=0] (5, 0) node[line width=0mm, circle, fill, inner sep=1.5pt, label=right: $v_3$](v3){};
\draw[rotate=0] (0, 5) node[line width=0mm, circle, fill, inner sep=1.5pt, label=above: $u_4$](u4){};
\draw[rotate=0] (0, -5) node[line width=0mm, circle, fill, inner sep=1.5pt, label=below: $v_4$](v4){};

\draw  [dashed, line width=0.6mm, red] (u2) -- (v1);
\draw  [dashed, line width=0.6mm, red] (v2) -- (u1);
\draw  [line width=0.6mm, blue] (u2) -- (u1) -- (x0) -- (v1) -- (v2);
\draw  [line width=0.6mm, blue] (u2) -- (v3) -- (v2) -- (u3) -- (u2);
\draw  [dashed, line width=0.6mm, red] (u4) -- (v3);
\draw  [dashed, line width=0.6mm, red] (v4) -- (u3);
\draw  [line width=0.6mm, blue] (u4) -- (u3);
\draw  [line width=0.6mm, blue] (v4) -- (v3);
	\end{tikzpicture}
	\caption{$\Gamma_4$}
	\label{fig:Gamma4}
	\end{minipage}
\begin{minipage}{.3\textwidth}
		\centering
		\begin{tikzpicture}
		[scale=.3]
\draw (0, 0) node[line width=0mm, circle, fill, inner sep=1.5pt](x0){};
\draw (-2, 0) node[line width=0mm, circle, fill, inner sep=1.5pt, label=left: $u_1$](u1){};
\draw[rotate=0] (2, 0) node[line width=0mm, circle, fill, inner sep=1.5pt, label=right: $v_1$](v1){};
\draw[rotate=0] (0, 2) node[line width=0mm, circle, fill, inner sep=1.5pt, label=above: $u_2$](u2){};
\draw[rotate=0] (0, -2) node[line width=0mm, circle, fill, inner sep=1.5pt, label=below: $v_2$](v2){};
\draw[rotate=0] (-5, 0) node[line width=0mm, circle, fill, inner sep=1.5pt, label=left: $u_3$](u3){};
\draw[rotate=0] (5, 0) node[line width=0mm, circle, fill, inner sep=1.5pt, label=right: $v_3$](v3){};
\draw[rotate=0] (0, 5) node[line width=0mm, circle, fill, inner sep=1.5pt, label=above: $u_4$](u4){};
\draw[rotate=0] (0, -5) node[line width=0mm, circle, fill, inner sep=1.5pt, label=below: $v_4$](v4){};
\draw (-8, 0) node[line width=0mm, circle, fill, inner sep=1.5pt, label=left: $u_5$](u5){};
\draw (8,0) node[line width=0mm, circle, fill, inner sep=1.5pt, label=right: $v_5$](v5){};

\draw  [dashed, line width=0.6mm, red] (u2) -- (v1);
\draw  [dashed, line width=0.6mm, red] (v2) -- (u1);
\draw  [line width=0.6mm, blue] (u2) -- (u1) -- (x0) -- (v1) -- (v2);
\draw  [line width=0.6mm, blue] (u2) -- (v3) -- (v2) -- (u3) -- (u2);
\draw  [dashed, line width=0.6mm, red] (u4) -- (v3);
\draw  [dashed, line width=0.6mm, red] (v4) -- (u3);
\draw  [line width=0.6mm, blue] (u4) -- (u3);
\draw  [line width=0.6mm, blue] (v4) -- (v3);
\draw  [line width=0.6mm, blue] (u5) -- (u4) -- (v5) -- (v4) -- (u5);
	\end{tikzpicture}
	\caption{$\Gamma_5$}
	\label{fig:Gamma5}
	\end{minipage}
\end{figure}

   	\begin{lemma}
   		\label{lem-gammai}
   		Assume $i \ge 1$, $0< \epsilon < \frac{1}{i}$ and $r=4-2\epsilon$. The followings hold:
   		\begin{itemize} 
   			\item If $i$ is odd, then  $$Z(\mathcal{I}_{i}, r) =[0,   \frac{r}{2}-i \epsilon].$$ 
   	                 \item If $i$ is even, then $$Z(\mathcal{I}_{i},r)   = [i \epsilon, \frac{r}{2}].$$
   \end{itemize}
   	\end{lemma}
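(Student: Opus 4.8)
The plan is an induction on $i$ driven by the recursive structure of the $\Gamma_i$. Fix $r=4-2\epsilon$, so $\frac r2=2-\epsilon$ and $\frac r2-1=1-\epsilon$; thus in a circular $r$-coloring a positive edge forces circular distance $\ge 1$ between the colors of its ends, while a negative edge forces circular distance $\le 1-\epsilon$. The base case $i=1$ is exactly the first assertion of Example~\ref{ex-gadget}, namely $Z(\mathcal{I}_1,r)=[0,2-2\epsilon]=[0,\frac r2-\epsilon]$. For the inductive step note that $\epsilon<\frac1i$ implies $\epsilon<\frac1{i-1}$, so the statement for $\mathcal{I}_{i-1}$ (with the same $r$) is available.

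I would first record two observations. (a) By the translation-and-negation symmetry of feasible pairs noted after the definition of feasibility, a color pair $(x,y)$ is feasible for $\mathcal{I}_{i-1}$ with respect to $r$ if and only if $d_{\pod{r}}(x,y)\in Z(\mathcal{I}_{i-1},r)$; in particular a circular $r$-coloring of $\Gamma_{i-1}$ realizing a prescribed distance $\delta$ between the colors of $u_{i-1}$ and $v_{i-1}$ exists exactly when $\delta\in Z(\mathcal{I}_{i-1},r)$. (b) Structurally, $\Gamma_i$ is $\Gamma_{i-1}$ with two new vertices $u_i,v_i$ added, each adjacent only to $u_{i-1}$ and $v_{i-1}$, and with no edge $u_iv_i$. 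Hence a circular $r$-coloring of $\Gamma_i$ is the same data as a circular $r$-coloring $\phi$ of $\Gamma_{i-1}$ together with a choice of $\phi(u_i)$ in a set $P_u$ and $\phi(v_i)$ in a set $P_v$, where $P_u,P_v$ depend only on $a:=\phi(u_{i-1})$, $b:=\phi(v_{i-1})$ and the parity of $i$. Combining (a) and (b),
$$Z(\mathcal{I}_i,r)=\bigcup_{\delta\in Z(\mathcal{I}_{i-1},r)}\bigl\{\,d_{\pod{r}}(x,y)\;:\;x\in P_u,\ y\in P_v\,\bigr\},$$
and for each fixed $\delta$ we may, after rotating and reflecting $C^r$, take $a=0$ and $b=\delta$.

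The computational heart is to evaluate $P_u$, $P_v$ and the resulting distance set in each parity. If $i$ is odd then $i-1$ is even and, by induction, $Z(\mathcal{I}_{i-1},r)=[(i-1)\epsilon,\frac r2]$; both $u_i$ and $v_i$ are joined positively to $a$ and to $b$, so $P_u=P_v=\{x:d_{\pod{r}}(x,0)\ge1,\ d_{\pod{r}}(x,\delta)\ge1\}$. Since $\delta\le\frac r2<2$ the two forbidden open arcs of length $2$ overlap, so this set is the closed arc $[\delta+1,\,r-1]$ of length $2-2\epsilon-\delta$, nonempty exactly when $\delta\le 2-2\epsilon$. As two points of an arc of length $L\le\frac r2$ realize exactly the distances in $[0,L]$, for an admissible $\delta$ the distance set is $[0,\,2-2\epsilon-\delta]$; letting $\delta$ range over the admissible set $[(i-1)\epsilon,\,2-2\epsilon]$ gives the union $[0,\,2-2\epsilon-(i-1)\epsilon]=[0,\,\frac r2-i\epsilon]$. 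If $i$ is even then $Z(\mathcal{I}_{i-1},r)=[0,\frac r2-(i-1)\epsilon]=[0,2-i\epsilon]$; here $u_i$ is positive to $a$ and negative to $b$, and $v_i$ is negative to $a$ and positive to $b$. A direct computation with $a=0$, $b=\delta$ gives $P_u=[1,\,\delta+1-\epsilon]$ and $P_v=[-1+\epsilon,\,\delta-1]$, each an arc of length $\delta-\epsilon$, hence nonempty exactly when $\delta\ge\epsilon$. Lifting $x\in P_u$, $y\in P_v$ to representatives on the line, $x-y$ ranges over $[\,2-\delta,\ \delta+2-2\epsilon\,]$, an interval symmetric about $\frac r2$ with half-width $\delta-\epsilon$; since it lies in $(0,r)$ one has $d_{\pod{r}}(x,y)=\min(x-y,\,r-(x-y))$, which sweeps exactly $[\,\frac r2-(\delta-\epsilon),\ \frac r2\,]$. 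Letting $\delta$ run over the admissible set $[\epsilon,\,2-i\epsilon]$ gives the union $[\,\frac r2-(2-i\epsilon-\epsilon),\ \frac r2\,]=[\,i\epsilon,\ \frac r2\,]$. In both parities this is the claimed value of $Z(\mathcal{I}_i,r)$, completing the induction; the hypothesis $0<\epsilon<\frac1i$ is used only to guarantee $(i+1)\epsilon<2$ and $i\epsilon<1<\frac r2$, which makes the admissible $\delta$-ranges nonempty and all displayed intervals genuine subintervals of $[0,\frac r2]$.

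I expect the main obstacle to be the bookkeeping in the even case: identifying $P_u$ and $P_v$ as arcs while correctly handling the wrap-around on $C^r$ (in particular when $\delta$ is close to $2$, so that $\delta-1$ may have either sign), and checking that the interval swept by $x-y$ is \emph{exactly} symmetric about $\frac r2$ — it is this symmetry that forces the distance set to close up to $[i\epsilon,\frac r2]$ rather than to something strictly smaller. A secondary point is that values $\delta\in Z(\mathcal{I}_{i-1},r)$ with $\delta<\epsilon$ (even case) or $\delta>2-2\epsilon$ (odd case) admit no extension and contribute nothing, so the relevant index set is the admissible range above rather than all of $Z(\mathcal{I}_{i-1},r)$.
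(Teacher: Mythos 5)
Your proof is correct and takes essentially the same approach as the paper: an induction on $i$ that, for each parity, computes the set of distances achievable between $\phi(u_i)$ and $\phi(v_i)$ given that $d_{\pod r}(\phi(u_{i-1}),\phi(v_{i-1}))$ ranges over $Z(\mathcal{I}_{i-1},r)$. Your per-$\delta$ decomposition and the explicit identification of $P_u,P_v$ as arcs (with the symmetry-about-$\tfrac r2$ observation in the even case) is a more carefully spelled-out rendering of the same computation the paper carries out more tersely.
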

   	
   		\begin{proof} We prove the lemma by induction on $i$. For $i=1$, this is trivial and observed in Example \ref{ex-gadget}.
   			
   			Assume $i \ge 2$ and the lemma holds for $i' < i$. 
   			
   			\noindent
   			{\bf Case 1} $i$ is even.

   			 Assume $\phi$ is a circular $r$-coloring of $\Gamma_i$   with $\phi(u_{i-1})=0$.  
   			   As $Z(\mathcal{I}_{i-1},r) = [0, \frac{r}{2}-(i-1)\epsilon]$, we may assume that $\phi(v_{i-1}) \in [0, \frac{r}{2}-(i-1)  \epsilon]$.
   			 
   			 The possible colors for $u_i$  are $[1, \frac{r}{2}+1-i \epsilon]$, and the possible colors for $v_i$ are 
   			 $[3-\epsilon, r) \cup [0, 1-i \epsilon]$. 
   			 So the possible distances between $\phi(u_i)$ and $\phi(v_i)$ are $[ i\epsilon, \frac{r}{2}]$, i.e., $$Z(\mathcal{I}_i, r) = [  i\epsilon, \frac{r}{2}].$$
   			 
   				\noindent
   				{\bf Case 2} $i$ is odd.
   				 
   			   Assume $\phi$ is a circular $r$-coloring of $\Gamma_i$ with $\phi(u_{i-1})=0$.  
   			   As $Z(\mathcal{I}_{i-1},r) = [(i-1)\epsilon, \frac{r}{2}]$, we may assume that $\phi(v_{i-1}) \in [(i-1)\epsilon, \frac{r}{2}]$.
   			   
   			   The possible colors for $u_i$ and $v_i$ are $[1+  (i-1) \epsilon, 3-2\epsilon]$. So the possible distances between $\phi(u_i)$ and $\phi(v_i)$ are $[0, 2-(i+1)\epsilon] =[0, \frac{r}{2} - i\epsilon]$, i.e., $$Z(\mathcal{I}_{i},r) = [0, \frac{r}{2}- i \epsilon].$$ 
\end{proof}

   	\begin{corollary}
   			\label{cor-bsp}
   			For any $\epsilon > 0$, there is a signed bipartite planar simple graph $\Gamma$ with 
   			$\chi_c(\Gamma) > 4-2\epsilon$.
   	\end{corollary}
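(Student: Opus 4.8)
The plan is to produce, for every integer $m\ge 1$, a signed bipartite planar simple graph $\Gamma^{(m)}$ with $\chi_c(\Gamma^{(m)})=4-\frac{4}{4m+1}$. Since $4-\frac{4}{4m+1}\to 4$, given $\epsilon>0$ it then suffices to take $m>\frac1{2\epsilon}$ and set $\Gamma:=\Gamma^{(m)}$. Concretely, $\Gamma^{(m)}$ will be $K_4(\mathcal I_{2m})$: take $K_4$ and replace each of its edges by a copy of the indicator $\mathcal I_{2m}=(\Gamma_{2m},u_{2m},v_{2m})$ from Lemma~\ref{lem-gammai}.

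\textbf{Interval data and the reduction.} First I would assemble the feasible sets. By Lemma~\ref{lem-gammai}, for $r=4-2\epsilon$ with $0<\epsilon<\frac1{2m}$ we have $Z(\mathcal I_{2m},r)=[\,2m\epsilon,\tfrac r2\,]$. Secondly, switching $\mathcal I_{2m}$ at one of its terminals gives an indicator $\overline{\mathcal I}_{2m}$ with $Z(\overline{\mathcal I}_{2m},r)=\tfrac r2-Z(\mathcal I_{2m},r)=[\,0,\tfrac r2-2m\epsilon\,]$, because switching at a terminal sends a feasible pair $(a,b)$ to the feasible pair $(\overline a,b)$ and $d_{\pod r}(\overline a,b)=\tfrac r2-d_{\pod r}(a,b)$. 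Writing $t=2m\epsilon$ (so $0<t<\tfrac r2$ for $m\ge1$), the pair $(\mathcal I_+,\mathcal I_-)=(\mathcal I_{2m},\overline{\mathcal I}_{2m})$ satisfies the hypotheses of Lemma~\ref{lem-signedindicator}. I would then apply that lemma with base signed graph $\Omega=(K_4,+)$; since $\Omega$ has no negative edge, $\Omega(\mathcal I_+,\mathcal I_-)=K_4(\mathcal I_{2m})=\Gamma^{(m)}$, and $\chi_c(\Omega)=\chi_c(K_4)=4$. Running the argument of Lemma~\ref{lem-signedindicator} scale by scale (as in the proof of the corollary that follows it, and using Corollary~\ref{coro-min} to make the infimum a minimum), $\Gamma^{(m)}$ is circular $(4-2\epsilon)$-colorable if and only if $(K_4,+)$ is circular $\frac{4-2\epsilon}{2m\epsilon}$-colorable, i.e. if and only if $\frac{4-2\epsilon}{2m\epsilon}\ge 4$, i.e. $\epsilon\le\frac{2}{4m+1}$. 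Since $\frac2{4m+1}<\frac1{2m}$, this threshold is interior to the admissible range, and hence $\chi_c(\Gamma^{(m)})=4-2\cdot\frac2{4m+1}=4-\frac4{4m+1}$.

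\textbf{Structural checks.} It remains to see that $\Gamma^{(m)}$ is bipartite, planar, and simple. In each $\Gamma_i$ the two terminals $u_i,v_i$ lie in the same class of its (unique) bipartition, so substituting $\mathcal I_{2m}$ for the edges of $K_4$ forces all four branch vertices into one class and creates no odd closed walk; thus $\Gamma^{(m)}$ is bipartite even though $K_4$ is not. Since $\Gamma_{2m}$ admits a plane drawing with $u_{2m}$ and $v_{2m}$ on a common (outer) face (cf. Figure~\ref{fig:Gamma4}), one may replace each edge of a plane $K_4$ by a copy of that drawing inside a thin tube, which keeps the graph planar. Finally $\Gamma_{2m}$ is simple and contains no edge $u_{2m}v_{2m}$ (the two vertices introduced at an even step are non‑adjacent), so the identifications produce neither loops nor parallel edges, and $\Gamma^{(m)}$ is simple. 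Taking $m>\frac1{2\epsilon}$ then yields $\chi_c(\Gamma^{(m)})=4-\frac4{4m+1}>4-2\epsilon$, proving the corollary.

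\textbf{Main obstacle.} The delicate point is keeping the admissible ranges aligned: one must check that the threshold scale $r=4-\frac4{4m+1}$ at which $\chi_c(\Gamma^{(m)})$ is pinned down still lies in the interval $(4-\frac1m,4)$ on which Lemma~\ref{lem-gammai} describes $Z(\mathcal I_{2m},r)$. This is precisely why the base graph must have circular chromatic number at least $4$, forcing the use of $K_4$ (a smaller base, e.g.\ $K_3$, would push the threshold out of range). A secondary point that needs attention is fixing, once and for all, a plane embedding of $\Gamma_{2m}$ with both terminals on one face, so that the edge‑substitution is genuinely planarity‑preserving, and confirming that gluing the six copies of $\mathcal I_{2m}$ along the vertices of $K_4$ introduces no loop or multi‑edge.
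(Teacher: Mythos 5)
Your proof is correct, but it follows a genuinely different route from the paper's. The paper's argument is simpler and more local: it takes the disjoint union of $\Gamma_{2i-1}$ and $\Gamma_{2i}$, identifies the two terminal pairs into $u'_i,v'_i$, and observes directly that for $r=4-2\epsilon$ and suitable $i$ the two feasible sets $Z(\mathcal I_{2i-1},r)=[0,\tfrac r2-(2i-1)\epsilon]$ and $Z(\mathcal I_{2i},r)=[2i\epsilon,\tfrac r2]$ are disjoint, so there is no color pair $(0,b)$ at all and the glued graph $\Gamma'_i$ admits no circular $r$-coloring. No base graph, no homomorphism composition, and only two copies of the gadget are needed. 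Your construction instead composes the indicator $\mathcal I_{2m}$ with $K_4$ and invokes the indicator-substitution machinery (Lemma~\ref{lem-signedindicator}, or more directly Corollary~\ref{cor-i2i}, which you could have cited to shortcut the ``scale by scale'' discussion and the artificial introduction of $\mathcal I_-$) to pin down $\chi_c\bigl(K_4(\mathcal I_{2m})\bigr)=4-\tfrac{4}{4m+1}$ exactly. This is more work, needs the additional check that the threshold $\epsilon=\tfrac{2}{4m+1}$ lies inside the admissible range $(0,\tfrac1{2m})$ of Lemma~\ref{lem-gammai}, and requires verifying planarity and simplicity of six glued copies rather than two; on the other hand, it yields the exact circular chromatic number of the example, not just a lower bound. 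Your structural checks (bipartiteness because both terminals of $\Gamma_{2m}$ lie in the same bipartition class, planarity via a drawing of $\Gamma_{2m}$ with both terminals on a common face, simplicity because $u_{2m}v_{2m}$ is a non-edge) are all sound, though somewhat more involved than what the two-piece gadget of the paper requires. Both approaches prove the corollary.
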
	 
   	\begin{proof}
   		Let $\frac{1}{2\epsilon}< i < \frac{1}{\epsilon}$. Let $\Gamma'_i$ be obtained from the disjoint union of $\Gamma_{2i-1}$ and $\Gamma_{2i}$ by identifying   $u_{2i-1}$ in $\Gamma_{2i-1}$ and $u_{2i}$ in  $\Gamma_{2i}$ into a single vertex $u'_i$, and identifying $v_{2i-1}$ in $\Gamma_{2i-1}$ and $v_{2i}$ in  $\Gamma_{2i}$ into a single vertex $v'_i$.
   		It follows from the construction that $\Gamma'_i$ is a signed bipartite planar simple graph.
   		
   		Let $\mathcal{I}'_i = (\Gamma'_i, u'_i, v'_i)$. Then for $r = 4 - 2\epsilon$, $\frac{r}{2}-(2i-1)\epsilon = 2-2i\epsilon < 2i\epsilon$. Hence $$Z(\mathcal{I}'_i,r) = Z(\mathcal{I}_{2i-1},r) \cap Z(\mathcal{I}_{2i},r) = [0, \dfrac{r}{2}-(2i-1)\epsilon] \cap [2i\epsilon, \dfrac{r}{2}] = \emptyset.$$
   		So $\Gamma'_i$ is not circular $r$-colorable.
   	\end{proof} 
   		
   \begin{corollary}
   	\label{cor-i2i}
   	If $i=2k$, then for any graph $G$, 
   	$$\chi_c(G(\mathcal{I}_i)) = \frac{4k\chi_c(G)}{k\chi_c(G)+1}=4-\frac{4}{k\chi_c(G)+1}.$$
   \end{corollary}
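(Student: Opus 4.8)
The plan is to read off $Z(\mathcal{I}_{2k},r)$ from Lemma~\ref{lem-gammai} and then run a rescaling argument exactly as in the proof of Lemma~\ref{lem-indicator}, exploiting the special feature of the even case: the gadget $\mathcal{I}_{2k}$ constrains its two terminals only by a \emph{lower} bound on their distance in $C^{r}$. Concretely, for $r=4-2\epsilon$ with $\epsilon$ small, Lemma~\ref{lem-gammai} gives $Z(\mathcal{I}_{2k},r)=[2k\epsilon,\tfrac{r}{2}]$; since $d_{\pod{r}}(a,b)\le \tfrac{r}{2}$ for all $a,b$, a pair $(a,b)$ is feasible for $\mathcal{I}_{2k}$ with respect to $r$ if and only if $d_{\pod{r}}(a,b)\ge 2k\epsilon$. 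Hence a circular $r$-colouring of $G(\mathcal{I}_{2k})$ is nothing but a map $g\colon V(G)\to C^{r}$ with $d_{\pod{r}}(g(x),g(y))\ge 2k\epsilon$ on every edge, and after dividing all colours by $2k\epsilon$ this is precisely a circular $\tfrac{r}{2k\epsilon}$-colouring of $G$.

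I would then carry this out in four steps. First: for $r=4-2\epsilon$ in the admissible range for Lemma~\ref{lem-gammai}, $G(\mathcal{I}_{2k})$ is circular $r$-colourable $\iff$ $\chi_c(G)\le \tfrac{r}{2k\epsilon}=\tfrac{r}{k(4-r)}$. Second: observe that $r\mapsto\tfrac{r}{k(4-r)}$ is a continuous strictly increasing bijection of $[\tfrac{8k}{2k+1},4)$ onto $[2,\infty)$, so solving $\tfrac{r}{k(4-r)}=\chi_c(G)$ yields the unique value $r^{*}=\tfrac{4k\chi_c(G)}{k\chi_c(G)+1}=4-\tfrac{4}{k\chi_c(G)+1}$, which lies in $[\tfrac{8k}{2k+1},4)$ because $\chi_c(G)\ge 2$. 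Third: by the biconditional, $G(\mathcal{I}_{2k})$ is circular $r$-colourable for every $r\in[r^{*},4)$ and for no $r\in[\tfrac{8k}{2k+1},r^{*})$. Fourth: combine this with the monotonicity of circular colourability (colourable for all $r$ at least any attained value, by Corollary~\ref{coro-min}; and, for $r<\tfrac{8k}{2k+1}$, not colourable since, when $G$ has an edge, $\Gamma_{2k}\cong K_2(\mathcal{I}_{2k})$ is a subgraph of $G(\mathcal{I}_{2k})$ and one checks $\chi_c(\Gamma_{2k})=\tfrac{8k}{2k+1}$, e.g.\ from $Z(\mathcal{I}_{2k},r)=\emptyset$ for $\epsilon>\tfrac{2}{2k+1}$) to conclude $\chi_c(G(\mathcal{I}_{2k}))=r^{*}$, which is the assertion.

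The delicate point — and the only place real care is needed — is Step~1: one must know that $Z(\mathcal{I}_{2k},r)=[2k\epsilon,\tfrac{r}{2}]$ is valid at the critical modulus $r^{*}$, i.e.\ that $\epsilon^{*}=\tfrac{2}{k\chi_c(G)+1}$ lies in the range of applicability of Lemma~\ref{lem-gammai}. From $\chi_c(G)\ge 2$ one gets $\epsilon^{*}\le\tfrac{2}{2k+1}$, and inspecting the proof of Lemma~\ref{lem-gammai} shows that the interval identity in fact holds for all $0<\epsilon\le\tfrac{2}{2k+1}$ (degenerating to $\{\tfrac{r}{2}\}$ at the right endpoint, which is exactly the case $\chi_c(G)=2$), so no gap arises. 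If instead one quotes Lemma~\ref{lem-gammai} only in its literal range $0<\epsilon<\tfrac{1}{2k}$, then the argument above gives the result for $\chi_c(G)>4-\tfrac{1}{k}$ and the remaining range $2\le\chi_c(G)\le 4-\tfrac{1}{k}$ must be treated separately: for the upper bound $\chi_c(G(\mathcal{I}_{2k}))\le r^{*}$, take a circular $\chi_c(G)$-colouring $f$ of $G$ and set $g=2k\epsilon^{*}f$, so that along every edge $d_{\pod{r^{*}}}(g(x),g(y))\ge 2k\epsilon^{*}$, and verify this value lies in $Z(\mathcal{I}_{2k},r^{*})$ (at $\chi_c(G)=2$ this forces $g(y)=\overbar{g(x)}$, i.e.\ the two colour classes go to antipodal points); the matching lower bound comes once more from the subgraph $\Gamma_{2k}$. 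Everything else is routine substitution with $r=4-2\epsilon$.
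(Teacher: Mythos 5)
Your argument is correct and, at its core, it runs the same rescaling machinery the paper invokes; you have essentially made the paper's two-line proof precise. Two points of comparison are worth noting. The paper's proof cites Lemma~\ref{lem-indicator}, whose hypothesis requires $Z(\mathcal{I},r)$ to have the symmetric shape $[t,\frac{r}{2}-t]$; for $i=2k$ one instead gets the one-sided shape $[2k\epsilon,\frac{r}{2}]$, so the lemma that literally applies is Lemma~\ref{lem-signedindicator} (take $\Omega=(G,+)$ with $\mathcal{I}_+=\mathcal{I}_{2k}$, giving $\chi_c(G)=\chi_c(G(\mathcal{I}_{2k}))/t$ with $t=k(4-r)$), or one simply re-derives the rescaling as you do. You have also correctly identified a genuine gap that the paper elides: Lemma~\ref{lem-gammai} is stated only for $0<\epsilon<\frac{1}{2k}$, yet the critical value $\epsilon^*=\frac{2}{k\chi_c(G)+1}$ exceeds $\frac{1}{2k}$ whenever $\chi_c(G)\le 4-\frac{1}{k}$, which covers the whole range $2\le\chi_c(G)\le 4-\frac{1}{k}$. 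Your primary fix, extending the interval identity of Lemma~\ref{lem-gammai} to $0<\epsilon\le\frac{2}{2k+1}$ by tracing its induction (degenerating to the singleton $\{\frac{r}{2}\}$ at the right endpoint, which is exactly the $\chi_c(G)=2$ case), is the right move and closes the gap.

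The fallback you sketch in the last few sentences, however, does not work as written. For $2<\chi_c(G)\le 4-\frac{1}{k}$ you assert that ``the matching lower bound comes once more from the subgraph $\Gamma_{2k}$'', but that subgraph only forces $\chi_c(G(\mathcal{I}_{2k}))\ge\frac{8k}{2k+1}$, which is strictly smaller than $r^*=4-\frac{4}{k\chi_c(G)+1}$ as soon as $\chi_c(G)>2$. To get the sharp lower bound $\chi_c(G(\mathcal{I}_{2k}))\ge r^*$ in that range one needs the inclusion $Z(\mathcal{I}_{2k},r)\subseteq[2k\epsilon,\frac{r}{2}]$ at the relevant modulus, which is precisely the extension of Lemma~\ref{lem-gammai} the fallback was trying to avoid. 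So discard the fallback and keep the primary argument; it is complete and is, in substance, the corrected and fully spelled-out version of what the paper's one-line proof intended.
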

  \begin{proof}
  	By Lemma~\ref{lem-gammai}, $Z(\mathcal{I}_i) = [i\epsilon, \frac{r}{2}]$, where $i\epsilon = k(4-r)$. The conclusion follows from Lemma \ref{lem-indicator}.
  \end{proof}

\section{Circular chromatic number of signed  graph classes}\label{sec:GraphClasses}

We have shown that $\chi_c^s(G) \le 2 \chi_c(G)$ and this bound is tight even for graphs $G$ of large girth. However, when restricted to some natural families of graphs, the upper bound can be improved.

Given a class $\mathcal{C}$ of signed graphs we define $\chi_c(\mathcal{C})=\sup\{\chi_c(G,\sigma): (G,\sigma) \in \mathcal{C} \}$. In light of Corollary~\ref{cor-double} and the fact that negative loops do not affect the circular chromatic number, we shall restrict to signed graphs with no negative digons and no loops, i.e., the underlying graphs are simple graphs.  

 We denote by 
 \begin{itemize}
 	\item $\mathcal{SD}_d$ the class of signed $d$-degenerate simple graphs,
 	\item  $\mathcal{SSP}$ the class of signed series parallel simple graphs,
 	\item $\mathcal{O}$   the class of signed outer planar  simple graphs,
 	\item $\mathcal{SBP}$ the class of signed bipartite planar simple graphs,
 	\item  $\mathcal{SP}$ the class of signed planar simple graphs.
 \end{itemize}

 \begin{proposition}\label{prop-d-degenerated}
 For any positive integer $d$, $\chi_c(\mathcal{SD}_d)= 2\floor{\dfrac{d}{2}}+2$.
 \end{proposition}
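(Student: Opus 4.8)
The plan is to prove the two inequalities $\chi_c(\mathcal{SD}_d)\le 2\floor{d/2}+2$ and $\chi_c(\mathcal{SD}_d)\ge 2\floor{d/2}+2$ separately. For the upper bound, I would first argue that it suffices to show that every signed $d$-degenerate simple graph $(G,\sigma)$ admits a $(p,q)$-coloring with $\frac pq=2\floor{d/2}+2$; writing $m=\floor{d/2}$, so that $2m+2$ is the target, the natural candidate is $q=1$ and $p=2m+2$, i.e.\ a $0$-free $(2m+2)$-coloring of $(G,\sigma)$. I would proceed by induction on $|V(G)|$ using a degeneracy ordering: pick a vertex $v$ of degree at most $d$, color $(G-v,\sigma)$ by induction, and then count the colors forbidden at $v$. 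Each neighbor $w$ of $v$ forbids one color, namely $\sigma(vw)f(w)$, from the palette $\{\pm1,\dots,\pm(m+1)\}$ of size $2m+2$; since $v$ has at most $d\le 2m+1$ neighbors, at least one color remains, completing the induction. This shows $\chi_c(G,\sigma)\le 2m+2=2\floor{d/2}+2$ for every signature, hence $\chi_c(\mathcal{SD}_d)\le 2\floor{d/2}+2$. One should double-check the parity bookkeeping: when $d$ is even, $d=2m$ and $2m+2$ colors against $2m$ neighbors leaves room; when $d$ is odd, $d=2m+1$ and $2m+2$ colors against $2m+1$ neighbors still leaves exactly one, so the bound $2\floor{d/2}+2$ (rather than $d+1$) is what falls out.

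For the lower bound I need, for every $\epsilon>0$, a signed $d$-degenerate simple graph with circular chromatic number exceeding $2\floor{d/2}+2-\epsilon$. Here I would invoke the indicator machinery of Section~\ref{sec:signedindicator}. The idea is to take a graph $G$ of high circular chromatic number that is itself $c$-degenerate for small $c$ (for instance, using Theorem~\ref{thm-largegirth} or the fact that there are $2$-degenerate, indeed series-parallel-free but sparse, graphs of circular chromatic number arbitrarily close to any prescribed value — more robustly, one can use graphs $G$ with $\chi_c(G)$ close to $\floor{d/2}+1$ that are $d'$-degenerate for suitable $d'$), and then apply the indicator $\mathcal{I}_i$ with $i=2k$ from Corollary~\ref{cor-i2i}, which satisfies $\chi_c(G(\mathcal{I}_i))=4-\frac{4}{k\chi_c(G)+1}$. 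This only reaches values below $4$, so for $d\ge 4$ I instead need indicators that push the value up to $2\floor{d/2}+2$; the cleanest route is to start from $K_{p;q}$ with $\frac pq$ close to $\floor{d/2}+1$ and replace each edge by a pair of a positive and a negative edge (Corollary~\ref{cor-double}), giving circular chromatic number $2\cdot\frac pq$ close to $2\floor{d/2}+2$ — but that destroys simplicity (it creates negative digons). So the real work is to realize the ``doubling'' effect within a simple $d$-degenerate graph, which is exactly what a well-chosen indicator $\mathcal{I}$ with $Z(\mathcal{I},r)=[t,\frac r2-t]$ does via Lemma~\ref{lem-indicator}: choosing $\mathcal{I}$ so that $G(\mathcal{I})$ is $d$-degenerate and simple, and $t$ small, one gets $\chi_c(G(\mathcal{I}))=\chi_c(G)/2t$ arbitrarily close to $2\floor{d/2}+2$ when $\chi_c(G)$ is close to $\floor{d/2}+1$ and $G$ is chosen $(d{-}1)$-degenerate or so.

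The main obstacle is the lower bound construction: I must exhibit a concrete indicator (or family of indicators) $\mathcal{I}$ such that (i) replacing edges of a suitable base graph $G$ by $\mathcal{I}$ keeps the result $d$-degenerate and simple, and (ii) $Z(\mathcal{I},r)$ is a symmetric interval $[t,\frac r2-t]$ with $t\to 0$ as the parameter grows, and (iii) the base graph $G$ can simultaneously be taken $d$-degenerate (or with degeneracy absorbed into the indicator gadget) with $\chi_c(G)\to \floor{d/2}+1$. The parity split is again delicate: for $d$ even, $\floor{d/2}+1=d/2+1$ and the base graph should have $\chi_c$ near $d/2+1$, which is an integer, so $G=K_{d/2+1}$ works and I need only control the degeneracy after substitution; for $d$ odd, $\floor{d/2}+1=(d+1)/2$ and I want $\chi_c(G)$ close to but below $(d+1)/2$, forcing use of circular cliques $K_{p;q}$ with non-integer ratio, whose degeneracy must be checked against $d$. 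Once a gadget realizing the needed $Z$-interval and preserving $d$-degeneracy is in hand, Lemma~\ref{lem-indicator} finishes the lower bound, matching the upper bound and giving $\chi_c(\mathcal{SD}_d)=2\floor{d/2}+2$.
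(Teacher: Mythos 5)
Your upper bound is correct and is essentially the paper's argument: with palette $\{\pm 1,\dots,\pm(m+1)\}$ where $m=\lfloor d/2\rfloor$, each already-colored neighbor forbids exactly one color (either $f(w)$ or its negation/antipodal depending on the sign), and a degeneracy ordering guarantees a free color since $2m+2\ge d+1$. The paper phrases it as a greedy edge-sign-preserving homomorphism into $K^s_{2\lfloor d/2\rfloor+2}$, which is the same count.

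The lower bound in your proposal, however, is genuinely incomplete. You correctly diagnose that Corollary~\ref{cor-i2i} cannot exceed $4$ and that doubling via $K_{p;q}$ plus Corollary~\ref{cor-double} destroys simplicity, but the indicator route you then sketch is never instantiated, and it faces a structural obstacle you don't address: the indicators $\Gamma_i$ of Section~\ref{sec:signedindicator} only give $Z(\mathcal{I},r)=[t,\frac r2-t]$ for $r<4$, so no choice of $\Gamma_i$ can push $\chi_c$ anywhere near $2\lfloor d/2\rfloor+2$ when $d\ge 4$. You would need entirely new gadgets, and you would additionally have to control the degeneracy of $G(\mathcal{I})$ after substitution, which you leave as an acknowledged open step. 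So the proposal ends in a promissory note rather than a proof.

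The paper's lower bound takes a different, concrete route and does not use indicators at all for $d\ge 3$. For $d$ odd, $(K_{d+1},+)$ already gives $\chi_c=d+1=2\lfloor d/2\rfloor+2$. For $d=2$, the bipartite planar graphs of Corollary~\ref{cor-bsp} (which are $2$-degenerate) suffice. For $d$ even and $d\ge 4$, the paper constructs an explicit signed $d$-degenerate graph $\Omega_d$: take $(K_d,+)$ on $x_1,\dots,x_d$, and for each pair $i<j$ add a new vertex $y_{i,j}$ joined negatively to $x_i,x_j$ and positively to all other $x_k$; each $y_{i,j}$ has degree $d$ and deleting them all leaves $K_d$, so $\Omega_d$ is $d$-degenerate, and a direct geometric argument on $C^r$ shows $\chi_c(\Omega_d)=d+2$. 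If you want to salvage your approach you would need to produce a gadget of that power; absent that, the lower bound is the missing piece.
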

 
 \begin{proof}
  	First we show that every $(G, \sigma)\in \mathcal{SD}_d$ admits a circular $(2\floor{\frac{d}{2}}+2)$-coloring. Equivalently, $(G, \sigma)$ admits an edge-sign preserving homomorphism to $K^s_{2\floor{\frac{d}{2}}+2}$ whose vertices are labelled $0,1, \ldots, 2\floor{\frac{d}{2}}+1$ in a cyclic order. Recall that in $K^s_{2\floor{\frac{d}{2}}+2}$ between any pair of vertices $x_i, x_j$ there are both positive and negative edges, unless $i=j$ or $i=j+\floor{\frac{d}{2}}+1$. When $i=j$, there is a negative loop but no positive loop; when $i=j+\floor{\frac{d}{2}}+1$, $x_ix_j$ is a positive edge but not a negative edge. Thus, given a vertex $u$ of $(G, \sigma)$ and a partial mapping $\phi$ of $(G, \sigma)$ to $K^s_{2\floor{\frac{d}{2}}+2}$, if at most $d$ neighbors of $u$ are already colored, then $\phi$ can be extended to $u$. This now can be applied on the ordering of vertices of $G$ which is a witness of $G$ being $d$-degenerate.
	
	To prove that the upper bound is tight, we consider three cases. For $d=2$, the signed graphs built in Corollary~\ref{cor-bsp} are all $2$-degenerate and the claim of this corollary is that the limit of their circular chromatic number is $4$. For odd integer $d$, this bound is tight by considering the signed complete graphs $(K_{d+1}, +)$. For even integer $d\geq 4$, we now construct a $d$-degenerate graph $G$ together with a signature $\sigma$ such that $\chi_c(G, \sigma)=d+2$.

	Define a signed graph $\Omega_d$ as follows. Take $(K_d, +)$ whose vertices are labelled $x_1, x_2, \ldots, x_d$. For each pair $i,j \in [d]$ ($ i\neq j$), we add a vertex $y_{i,j}$ and join it to $x_i$, $x_j$ with negative edges, and to all the other $x_k$'s with positive edges. Since each $y_{i,j}$ is of degree $d$ and after removing all of them we are left with a $K_d$, we have $\Omega_d\in \mathcal{SD}_d$. We claim that $\chi_c(\Omega_d)=d+2$.

	Assume this is not true and   $\varphi$ is a circular $r$-coloring of $\Omega_d$ and $r < d+2$.   Without loss of generality, we may assume that $\varphi(x_1), \varphi(x_2), \ldots, \varphi(x_d)$ are cyclicly ordered on $C^r$ in a clockwise orientation. Furthermore, we may also assume that $\varphi(x_1), \varphi(x_2)$ has the maximum distance among all the pairs $\varphi(x_i), \varphi(x_{i+1})$ where the addition of the index is taken $\pmod{2d}$. As the distance between each consecutive pair $\varphi(x_i), \varphi(x_{i+1})$ is at least $1$, it follows that, except for $x_1,x_2$, $d_{\pod r}(\varphi(x_i), \varphi(x_{i+1})) < 2$. We will now show that there is no possible choice for $y_{1,1+\frac{d}{2}}$. A point between $\varphi(x_i)$ and $\varphi(x_{i+1})$ for $i\in \{2, 3, \ldots,\frac{d}{2}-1\}\cup \{ \frac{d}{2}+2,\ldots,  d-1\}$ is at distance less than $1$ from one of the two and cannot be the color of $y_{1,1+\frac{d}{2}}$ because $x_iy_{1,1+\frac{d}{2}}$, $x_{i+1}y_{1,1+\frac{d}{2}}$ are both positive edges.   If $\varphi(y_{1,1+\frac{d}{2}})\in [\varphi(x_1), \varphi(x_2)]$, then we show that $d_{\pod r}(\varphi(y_{1,1+\frac{d}{2}}), \varphi(x_{1+\frac{d}{2}}) \geq \frac{d}{2}$, which is a contradiction because $y_{1,1+\frac{d}{2}}x_{1+\frac{d}{2}}$ is a negative edge.   To see this, we consider clockwise and anti-clockwise distances of $\varphi(y_{1,1+\frac{d}{2}})$ and $\varphi(x_{1+\frac{d}{2}})$. On the anti-clockwise direction, $(\varphi(y_{1,1+\frac{d}{2}}), \varphi(x_{1+\frac{d}{2}}))$ contains $\frac{d}{2}$ intervals of the form $(x_i, x_{i+1})$, each of which is of length at least $1$. On the clockwise direction, first of all, $y_{1,1+\frac{d}{2}}x_2$ is a positive edge which means $d_{\pod r}(\varphi(y_{1,1+\frac{d}{2}}), \varphi(x_2)) \geq 1$, and, furthermore, $(\varphi(y_{1,1+\frac{d}{2}}), \varphi(x_{1+\frac{d}{2}}))$ contains $\frac{d}{2}-1$ intervals of form $(x_i, x_{i+1})$ (for $i\in \{2, 3, \ldots, \frac{d}{2}\}$). If $\varphi(y_{1,1+\frac{d}{2}})\in [\varphi(x_d), \varphi(x_1)]$, then the same argument shows that $d_{\pod r}(\varphi(y_{1,1+\frac{d}{2}}), \varphi(x_{1+\frac{d}{2}})) \geq \frac{d}{2}$. If $\varphi(y_{1,1+\frac{d}{2}})\in [\varphi(x_{\frac d2}), \varphi(x_{\frac d2 +1})]$ or $\varphi(y_{1,1+\frac{d}{2}})\in [\varphi(x_{\frac d2 +1}), \varphi(x_{\frac d2 +2})]$, then  $d_{\pod r}(\varphi(y_{1,1+\frac{d}{2}}), \varphi(x_1))\geq \frac{d}{2}$, which is a contradiction as  $y_{1,1+\frac{d}{2}}x_1$ is a negative edge.
 \end{proof}

It follows from Proposition \ref{prop-d-degenerated} that $\chi_c(G, \sigma)\leq 2\floor{\dfrac{\Delta(G)}{2}}+2$. 
  
It was proved in \cite{NRS15}  that every simple signed $K_4$-minor-free graph $(G, \sigma)$ admits a switching homomorphism to the signed Paley graph $SPal_5$, depicted in Figure \ref{fig:Paley}. It is easy to check that $SPal_5$ is a signed subgraph of $K^s_{10:3}$. Hence we have $$\chi_c(\mathcal{SO}) \le \chi_c(\mathcal{SSP}) \le   \dfrac{10}{3}.$$ 
  
\begin{figure}[htbp]
	\centering
	\begin{minipage}{.4\textwidth}
		\centering
		\begin{tikzpicture}
		[scale=.4]
\foreach \i in {1}
		{
			\draw[rotate=0] (0, 5) node[line width=0mm, circle, fill, inner sep=1.5pt, label=above: $\i$ ](\i){};
		}
		
		\foreach \i in {3}
		{
			\draw[rotate=72*4] (0, 5) node[line width=0mm, circle, fill, inner sep=1.5pt, label=right: $\i$ ](\i){};
		}
		
		\foreach \i in {5}
		{
			\draw[rotate=72*3] (0, 5) node[line width=0mm, circle, fill, inner sep=1.5pt, label={below right}: $\i$ ](\i){};
		}
		
		\foreach \i in {7}
		{
			\draw[rotate=72*2] (0, 5) node[line width=0mm, circle, fill, inner sep=1.5pt, label={below left}: $\i$ ](\i){};
		}
		
		\foreach \i in {9}
		{
			\draw[rotate=72] (0, 5) node[line width=0mm, circle, fill, inner sep=1.5pt, label=left: $\i$ ](\i){};
		}
		
		\foreach \i/\j in {1/5, 3/7, 5/9, 1/7, 3/9}
		{
			\draw  [line width=0.6mm, blue] (\i) -- (\j);
		}
		
		\foreach \i/\j in {1/3, 3/5, 5/7, 7/9, 1/9}
		{
			\draw  [dashed, line width=0.6mm, red] (\i) -- (\j);
		}
		\end{tikzpicture}
		\caption{The signed Paley graph}
		\label{fig:Paley}     
	\end{minipage}
	\begin{minipage}{.4\textwidth}
		\centering
		\begin{tikzpicture}
		[scale=.5]
	\foreach \i/\j in {1/y, 2/x, 3/z}
	{
		\draw[rotate=120*(\i-1)] (0,2.2) node[line width=0mm, circle, fill, inner sep=1.8pt, label=above: $\j$ ](\j){};
	}
	\foreach \i/\j in {1/b, 3/a, 5/c}
	{
		\draw[rotate=60*(\i)] (0,4.4) node[line width=0mm, circle, fill, inner sep=1.8pt, label=above: $\j$ ](\j){};
	}
	
	\foreach \i/\j in {y/b,x/a,z/c}
	{
		\draw  [line width=0.6mm, blue] (\i) -- (\j);
	}
	
	\foreach \i/\j in {y/x,y/z,y/c,x/b,x/z,z/a}
	{
		\draw  [dashed, line width=0.6mm, red] (\i) -- (\j);
	}
	
	\end{tikzpicture}
	\caption{$(F, \sigma)$}
	\label{fig:F}
	\end{minipage}
\end{figure}

We shall prove the following result.

\begin{theorem}
	\label{thm-OSP}   $\chi_c( \mathcal{SSP})=\chi_c( \mathcal{SO})=\dfrac{10}{3}$.
\end{theorem}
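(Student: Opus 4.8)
The inequalities $\chi_c(\mathcal{SO})\le\chi_c(\mathcal{SSP})\le\frac{10}{3}$ are already in hand: every simple $K_4$-minor-free (equivalently, series-parallel) signed graph admits a switching homomorphism to $SPal_5$, which is a signed subgraph of $K^s_{10;3}$, and $\mathcal{SO}\subseteq\mathcal{SSP}$ since outerplanar graphs have no $K_4$-minor. So the whole content of the theorem is to produce a member of $\mathcal{SO}$ of circular chromatic number $\frac{10}{3}$; I claim the signed graph $(F,\sigma)$ of Figure~\ref{fig:F} is such a graph. Its underlying graph is the triangulated hexagon — a maximal outerplanar graph, hence $K_4$-minor-free — with ``central'' triangle on $\{x,y,z\}$ and ``ears'' $a,b,c$, so $F$ is a simple outerplanar graph and $(F,\sigma)\in\mathcal{SO}\subseteq\mathcal{SSP}$. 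In $(F,\sigma)$ the triangle $xyz$ is all-negative, while each ear is joined to one vertex of the triangle by a positive edge and to another by a negative edge: $yb,xa,zc$ are positive and $xb,za,yc$ are negative. In particular $\chi_c(F,\sigma)\le\frac{10}{3}$ (one can also see this directly: assigning $0,1,2,4,8,9$ to $x,y,z,a,b,c$ is a $(10,3)$-coloring). It remains to prove $\chi_c(F,\sigma)\ge\frac{10}{3}$.

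My plan is to combine Proposition~\ref{prop-st} with one discrete computation. First I would classify the cycles of $F$ by the pair $(s,t)$ counting their positive and negative edges. The three positive edges $yb,xa,zc$ form a perfect matching of $F$, so a cycle using $k$ of them has length at least $2k$; carrying out the (short) case analysis shows that the possible types are exactly $(0,3)$ (the central triangle), $(1,2)$ (the three ear triangles), $(1,3)$ (three $4$-cycles), $(2,3)$ (the $5$-cycles), and $(3,3)$ (the unique Hamilton cycle on all six vertices). By Proposition~\ref{prop-st}, $\chi_c(F,\sigma)=\frac{2(s+t)}{2a+t}$ for one of these pairs and some integer $a\ge0$; running over the list, the only values of $\frac{2(s+t)}{2a+t}$ that exceed $3$ are $\frac{10}{3}$ (from $(2,3)$, $a=0$) and $4$ (from $(3,3)$, $a=0$). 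Since $\chi_c(F,\sigma)\le\frac{10}{3}$, it therefore suffices to prove that $\chi_c(F,\sigma)>3$.

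To establish $\chi_c(F,\sigma)>3$ I would show that $(F,\sigma)$ has no $(6,2)$-coloring; by the equivalence of $(6,2)$-colorings with circular $3$-colorings (Lemma~\ref{lem-homo} and the surrounding discussion), this is exactly $\chi_c(F,\sigma)>3$. In a $(6,2)$-coloring $f$, a positive edge forces its two colours to be at modulo-$6$ distance at least $2$, whereas a negative edge forces them to be at modulo-$6$ distance at most $1$, because $d_{\pod{6}}(i,j+3)\ge2$ if and only if $d_{\pod{6}}(i,j)\le1$. Now three points of $\mathbb{Z}_6$ that are pairwise at distance at most $1$ cannot all be distinct, so the all-negative triangle $xyz$ forces two of $f(x),f(y),f(z)$ to coincide. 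If $f(x)=f(y)$, then the ear $b$ — joined to $y$ positively and to $x$ negatively — would need a colour at distance $\ge2$ from $f(y)$ and at distance $\le1$ from $f(x)=f(y)$, which is impossible; the symmetric obstructions rule out $f(y)=f(z)$ (via ear $c$) and $f(z)=f(x)$ (via ear $a$). Hence $(F,\sigma)$ has no $(6,2)$-coloring, so $\chi_c(F,\sigma)>3$, and with the previous paragraph $\chi_c(F,\sigma)=\frac{10}{3}$. This gives $\frac{10}{3}\le\chi_c(\mathcal{SO})\le\chi_c(\mathcal{SSP})\le\frac{10}{3}$, and moreover shows the suprema are attained, by $(F,\sigma)$.

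The step that needs attention rather than ideas is confirming that the list of cycle-types $(s,t)$ is complete, so that no value of $\frac{2(s+t)}{2a+t}$ strictly between $3$ and $\frac{10}{3}$ is missed; the observation that the three positive edges form a perfect matching keeps this bookkeeping short. One could instead try to rule out circular $r$-colorings of $(F,\sigma)$ directly for every $r<\frac{10}{3}$ by a geometric argument on $C^r$, but that is messier; the route above replaces it by Proposition~\ref{prop-st} together with a single rational check at $r=3$.
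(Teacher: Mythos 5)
Your proof is correct and takes essentially the same approach as the paper: both use Proposition~\ref{prop-st} to reduce to showing $\chi_c(F,\sigma)\neq 3$, and both then observe that the all-negative triangle $xyz$ forces an identification of two of its vertices, which clashes with the positive/negative pair of edges at the corresponding ear. The only difference is cosmetic: you phrase the ruling-out of $3$ as the nonexistence of a $(6,2)$-coloring via a pigeonhole on $\mathbb{Z}_6$, while the paper phrases it as the nonexistence of a switching homomorphism to $\hat{K}^s_{6;2}$ and the absence of short negative closed walks in that target; by Lemma~\ref{lem-pq2} these are the same thing, and the underlying combinatorial obstruction is identical.
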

 \begin{proof}
	It suffices to show that $\chi_c(F, \sigma)= \frac{10}{3}$  for the signed outer planar simple graph $(F, \sigma)$ of Figure \ref{fig:F}. 
	
	Since $(F, \sigma)$ contains a positive triangle as a subgraph, its circular chromatic number is at least $3$. By the formula of the tight cycle the only possible values are $3$ and $\frac{10}{3}$. It remains to show that this graph does not admit a circular 3-coloring, that is to say, $(F, \sigma)$ does not admit a switching homomorphism to $\hat{K}^s_{6;2}$. Note that $\hat{K}^s_{6;2}$  is equivalent to a positive triangle, with each vertex incident to a negative loop. If $\phi$ is a switching homomorphism of $(F, \sigma)$ to $ \hat{K}^s_{6;2}$, then at least one negative edge of the negative triangle $xyz$ is mapped to a negative loop, because in $\hat{K}^s_{6;2}$ every negative closed walk contains a negative loop. Whichever edge of $xyz$ is mapped to a negative loop, its two end vertices are identified and the resulting signed graph has a negative cycle of length $2$. But $\hat{K}^s_{6;2}$ contains no negative even closed walk of length $2$, a contradiction. Hence $\chi_c(F, \sigma) = \frac{10}{3}$.    
\end{proof}
 
In Section~\ref{sec:signedindicator}, we have seen that $\chi_c(\mathcal{SBP})=4$. However, we do not know if there is a signed bipartite planar simple graph reaching the bound $4$. Further improvement based on the length of the shortest negative cycle is given in the forthcoming work \cite{NPW20}.

Next we consider the circular chromatic number of signed planar simple graphs. 
Since planar simple graphs are $5$-degenerate, by Proposition \ref{prop-d-degenerated}, we have $\chi_c(\mathcal{SP})\leq 6$. 
It was conjectured in \cite{MRS16} that every planar simple graph admits a $0$-free $4$-coloring. If the conjecture was true, it would have implied the best possible bound of 4 for the circular chromatic number of signed planar simple graphs. However, this conjecture was disproved in \cite{KN20} using a dual notion. A direct proof of a counterexample is given in \cite{NP20}. Extending this construction, we build a signed planar simple graph whose circular chromatic number is $4+\frac{2}{3}$.

\begin{theorem}\label{thm:PlanarGraph}
 $\chi_c( \mathcal{SP})\ge 4+\dfrac{2}{3}$. 
\end{theorem}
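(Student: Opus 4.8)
The plan is to exhibit an explicit signed planar simple graph $(G,\sigma)$ together with a proof that it admits no circular $r$-coloring for any $r<4+\frac23$, i.e.\ no $(14,3)$-coloring after clearing denominators (note $4+\frac23=\frac{14}{3}$, and $14$ is even, so by Corollary~\ref{coro-min} and the formula of Proposition~\ref{prop-st} it suffices to rule out a $(14,3)$-coloring). The starting point is the Kardos--Narboni planar graph (equivalently the construction of \cite{NP20}) which certifies $\chi_c^s>4$; I would first recall that graph and its key local gadget, then augment it so as to push the obstruction from ``no $(8,1)$-coloring'' up to ``no $(14,3)$-coloring.'' Concretely, I would build $G$ by taking several copies of a planar ``forcing'' gadget — a signed planar piece $(\Gamma,u,v)$ whose feasible set $Z(\mathcal I,\tfrac{14}{3})$ is a single short interval (or a union of short intervals) around the antipodal-or-equal positions — and combining them around a central structure so that the feasibility constraints become jointly unsatisfiable, exactly as in Corollary~\ref{cor-bsp} but now at the value $\frac{14}{3}$ rather than $4$.

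The key steps, in order, would be: (1) isolate from the Kardos--Narboni / \cite{NP20} counterexample the planar gadget that forces two specified vertices $u,v$ to receive colors at distance in some controlled set when $r$ is just below $\frac{14}{3}$; compute $Z(\mathcal I,r)$ for this gadget for $r=\frac{14}{3}-\delta$ using the tight-path/tight-cycle analysis of Section~3 (Lemma~\ref{lem1}, Proposition~\ref{prop-st}) — i.e.\ track how far a tight path of prescribed signs can stretch on $C^r$. (2) Stack or series-combine copies of this gadget (as in the $\Gamma_i$ construction of Lemma~\ref{lem-gammai}) so that the resulting indicator's feasible interval shrinks and its endpoints straddle the ``forbidden zone,'' forcing $Z=\emptyset$ at $r=\frac{14}{3}$. (3) Verify planarity and simplicity are preserved under these combinations — the gadgets are glued only at the two terminal vertices, which keeps planarity if each gadget is outerplanar-like with $u,v$ on the outer face. (4) Conclude, via Lemma~\ref{lem-homo} / Proposition~\ref{prop-st}, that since $(14,3)$ is not achievable and the next smaller admissible value of $\frac{p}{q}$ on $\le |V(G)|$ vertices is strictly below $\frac{14}{3}$, we get $\chi_c(G,\sigma)\ge 4+\frac23$, hence $\chi_c(\mathcal{SP})\ge 4+\frac23$.

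The main obstacle I anticipate is step (1)–(2): determining the feasible-pair sets $Z(\mathcal I,r)$ of the planar gadgets \emph{precisely} at the target radius. Unlike the bipartite case of Section~6, where the gadgets $\Gamma_i$ had clean closed-form feasible intervals $[i\epsilon,\frac r2]$ or $[0,\frac r2-i\epsilon]$, a planar non-bipartite gadget forcing the value $\frac{14}{3}$ will have a feasible set that is a union of several intervals, and one must argue that after the series composition these unions still fail to cover the antipodal position — a careful but finite interval-arithmetic computation on $C^{14/3}$. A secondary difficulty is bookkeeping planarity while identifying many terminal vertex pairs; I would handle this by drawing the final graph explicitly (as the paper does for $\Gamma_3,\Gamma_4,\Gamma_5$) and checking the outer-face condition gadget by gadget. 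Switching-equivalence (Proposition~\ref{prop-switch}) can be used freely to normalize colorings to lie in $[0,\frac r2)$, which simplifies the interval analysis throughout.
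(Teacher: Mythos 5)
Your high-level strategy (build a planar indicator $\mathcal{I}=(\Gamma,u,v)$, analyze its feasible set $Z(\mathcal{I},r)$ for $r$ just below $\tfrac{14}{3}$, and combine copies to produce an unsatisfiable system) is pointed in the right direction, and you correctly identify that the hard part is computing $Z$ precisely for a non-bipartite planar gadget. But there are two genuine gaps.

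First, the gadget itself is the entire content of the proof, and you have not constructed it. The paper's gadget is built from a signed version of the Wenger graph (the dual of the Tutte fragment), with each of its four negative facial triangles completed to a ``mini-gadget'' $(T,\pi)$; the crucial fact (Lemma~\ref{lem:gadget}) is that in any circular $r$-coloring with $4\le r<\tfrac{14}{3}$, the distinguished vertices satisfy $\ell_{\phi;u,v}\ge\tfrac{4}{9}$. Establishing this requires a long and delicate case analysis (all of Section~7), which cannot be reduced to ``a careful but finite interval-arithmetic computation''; the key difficulty is that, unlike the bipartite $\Gamma_i$ gadgets, this feasible set is not a clean closed-form interval in a parameter $\epsilon$, and the argument must explicitly track the positions of the five Wenger vertices $x_1,\dots,x_5$, the apex $w$, and the auxiliary vertices $z,t$ across many subcases.

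Second, your proposed combination step — stacking copies in series/parallel à la Lemma~\ref{lem-gammai} until $Z=\emptyset$ at $r=\tfrac{14}{3}$ — is not what the paper does, and it would face an obstacle you do not address. Series composition of indicators typically \emph{enlarges} the feasible set (it behaves like a Minkowski sum of the $Z$'s), so only parallel composition shrinks $Z$; but parallel composition of non-outerplanar planar gadgets risks destroying planarity, and even then you would need to engineer two gadgets whose feasible sets are disjoint inside $[0,\tfrac r2]$. The paper sidesteps all of this with a much lighter trick: it does \emph{not} make $Z$ empty. Adding a negative edge $uv$ to $\tilde W$ gives an indicator $\mathcal{I}$ with $Z(\mathcal{I},r)\subseteq[\tfrac49,\tfrac r2-1]$, and then $\Omega=K_4(\mathcal{I})$ is used: four points on $C^r$ whose six pairwise distances all lie in $[\tfrac49,\tfrac r2-1]$ cannot exist when $r<\tfrac{14}{3}$, because ordering them cyclically as $v_1,v_2,v_3,v_4$ forces both arcs between $v_1$ and $v_4$ to exceed $\tfrac r2-1$ (one arc by $3\cdot\tfrac49=\tfrac43$, the other by $r-2(\tfrac r2-1)=2$). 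This $K_4$-based global argument is what lets the gadget constraint be comparatively weak, and it is absent from your plan.
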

  We shall construct a signed planar simple graph $\Omega$ with $\chi_c(\Omega) =4+\frac{2}{3}$.  The construction is broken down into construction of certain gadgets.   
Similar to the gadget of \cite{KN20}, we start with a mini-gadget depicted in Figure~\ref{fig:minigadget} and state its circular coloring property in Lemma~\ref{lem:mingadget}.

\begin{figure}[htbp]
    \centering 
     \begin{minipage}{.4\textwidth}
    \centering
 \begin{tikzpicture}
	[scale=.5]	
   \foreach \i/\j in {1/a}
  {
    \draw[rotate=60*(\i)] (0,1.2) node[line width=0mm, circle, fill, inner sep=1.5pt, label=left: $\j$ ](\j){};
  }
   \foreach \i/\j in {3/b}
  {
    \draw[rotate=60*(\i)] (0,1.2) node[line width=0mm, circle, fill, inner sep=1.5pt, label=below: $\j$ ](\j){};
  }
   \foreach \i/\j in {5/c}
  {
    \draw[rotate=60*(\i)] (0,1.2) node[line width=0mm, circle, fill, inner sep=1.5pt, label=right: $\j$ ](\j){};
  }
  \foreach \i/\j in {1/x}
  {
    \draw[rotate=120*(\i)] (0,5) node[line width=0mm, circle, fill, inner sep=1.5pt, label=left: $\j$ ](\j){};
  }
  \foreach \i/\j in {2/y}
  {
    \draw[rotate=120*(\i)] (0,5) node[line width=0mm, circle, fill, inner sep=1.5pt, label=right: $\j$ ](\j){};
  }
\foreach \i/\j in {3/z}
  {
    \draw[rotate=120*(\i)] (0,5) node[line width=0mm, circle, fill, inner sep=1.5pt, label=above: $\j$ ](\j){};
  }

 \foreach \i/\j in {a/b, a/c, b/c, a/x, b/y, c/z}
  {
    \draw  [line width=0.6mm, blue] (\i) -- (\j);
  }
  
   \foreach \i/\j in {y/z,z/x,x/y,a/z,b/x,c/y}
  {
    \draw  [dashed, line width=0.6mm, red] (\i) -- (\j);
  }
  
        \end{tikzpicture}
        \caption{Mini-gadget $(T, \pi)$}
            \label{fig:minigadget}
 \end{minipage}
   \begin{minipage}{.4\textwidth}
    \centering
    	\begin{tikzpicture}
	[scale=.4]
	\begin{scope}[scale=.5] 
	\draw[rotate=0] (0,0)  node[line width=0mm, circle, fill, inner sep=1.5pt, label=right: $w$ ](w){};
	\draw[rotate=0] (0,9)  node[line width=0mm, circle, fill, inner sep=1.5pt, label=left: $x_1$ ](x1){};
	\draw[rotate=72] (0,9)  node[line width=0mm, circle, fill, inner sep=1.5pt, label=left: $x_2$ ](x2){};
	\draw[rotate=144] (0,9)  node[line width=0mm, circle, fill, inner sep=1.5pt, label={below left}: $x_3$ ](x3){};
	\draw[rotate=216] (0,9)  node[line width=0mm, circle, fill, inner sep=1.5pt, label={below right}: $x_4$ ](x4){};
	\draw[rotate=288] (0,9) node[line width=0mm, circle, fill, inner sep=1.5pt, label=right: $x_5$ ](x5){};
	\draw[rotate=108] (0,16)  node[line width=0mm, circle, fill, inner sep=1.5pt, label=left: $z$ ](z){};
	\draw[rotate=252] (0,16) node[line width=0mm, circle, fill, inner sep=1.5pt, label=right: $t$ ](t){};
	\draw[rotate=0] (0,18)  node[line width=0mm, circle, fill, inner sep=1.5pt, label=above: $u$ ](u){};
	\draw[rotate=180] (0,15)  node[line width=0mm, circle, fill, inner sep=1.5pt, label=below: $v$ ](v){};

	\draw [line width=0.6mm, blue] (w) -- (x1);
	\draw [line width=0.6mm, blue] (w) -- (x2);
	\draw [line width=0.6mm, blue] (w) -- (x3);
	\draw [line width=0.6mm, blue] (w) -- (x4);
	\draw [line width=0.6mm, blue] (w) -- (x5);	
	\draw [line width=0.6mm, blue] (x5) -- (x1);
	\draw [line width=0.6mm, blue] (x1) -- (x2);
	\draw [line width=0.6mm, blue] (x2) -- (x3);
	\draw [line width=0.6mm, blue] (x3) -- (x4);
	\draw [line width=0.6mm, blue] (x4) -- (x5);	
	\draw [dashed, line width=0.6mm, red] (z) -- (x2);
	\draw [line width=0.6mm, blue] (z) -- (x3);
	\draw [dashed, line width=0.6mm, red] (t) -- (x4);
	\draw [line width=0.6mm, blue] (t) -- (x5);
	\draw [bend left=12, line width=0.6mm, blue] (v) to (z);
	\draw [bend right=12, line width=0.6mm, blue] (v) to (t);
	\draw [line width=0.6mm, blue] (v) -- (x3);
	\draw [dashed, line width=0.6mm, red] (v) -- (x4);
	\draw [line width=0.6mm, blue] (u) -- (x1);
	\draw [line width=0.6mm, blue] (u) -- (x2);
	\draw [dashed, line width=0.6mm, red] (u) -- (x5);
	\draw [bend left=18, dashed, line width=0.6mm, red] (u) to (t);
	\draw [bend right=18, dashed, line width=0.6mm, red] (u) to (z);

	\draw[rotate=-15] (0,10)  node[circle, draw=black!0, inner sep=0mm, minimum size=0mm] (){\color{red} $-$};
	\draw[rotate=108] (0,10)  node[circle, draw=black!0, inner sep=0mm, minimum size=0mm] (){\color{red} $-$};
	\draw[rotate=180] (0,10)  node[circle, draw=black!0, inner sep=0mm, minimum size=0mm] (){\color{red} $-$};
	\draw[rotate=252] (0,10)  node[circle, draw=black!0, inner sep=0mm, minimum size=0mm] (){\color{red} $-$};
	\end{scope}
	
	\end{tikzpicture}
	\caption{A signed Wenger Graph}
	\label{fig:WengerGraph}
 \end{minipage}
\end{figure}

 \begin{definition}
 	Let $r$ be a positive real number. Let $\phi$ be a mapping of a set $\{v_1,v_2,\ldots, v_k\}$ of points (or vertices of a graph) to $C^r$. We denote by $I_{\phi; v_1,v_2,\ldots, v_{q}, \overbar{v}_{q+1},\ldots, \overbar{v}_k}$ an interval of minimum length which contains $\{\phi(v_i):  i=1,2,\ldots, q\} \cup \{\overbar{\phi(v_i)}: i=q+1, \ldots, k  \}$ and by $\ell_{\phi; v_1,v_2,\ldots, v_q, \overbar{v}_{q+1},\ldots, \overbar{v}_k}$ the length of this interval.
 \end{definition}
 
Note that the minimality of the length implies that the two end points of the interval $I_{\phi; v_1,v_2,\ldots, v_{q}, \overbar{v}_{q+1},\ldots, \overbar{v}_k}$ are in $\{\phi(v_i):  i=1,2,\ldots, q\}\cup \{\overbar{\phi(v_i)}: i=q+1, \ldots, k  \}$

\begin{lemma}\label{lem:mingadget}
Assume $\phi$ is a circular $(4+\alpha)$-coloring of the signed graph $(T, \pi)$ of Figure~\ref{fig:minigadget} with $0\leq \alpha < 2$. Then  $\ell_{\phi;x,y,z} \in [1-\frac{\alpha}{2}, 1+\frac{\alpha}{2}]$. Moreover, for any $t_1, t_2, t_3$ with $\max\{d_{\pod r}(t_i,t_j): i,j \in \{1,2,3\}\} \in [1-\frac{\alpha}{2}, 1+\frac{\alpha}{2}]$, there exists a circular $r$-coloring $\phi$ of $(T, \pi)$ such that $\phi(x)=t_1, \phi(y)=t_2, \phi(z)=t_3$.
\end{lemma}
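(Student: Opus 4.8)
The plan is to treat the three assertions separately, writing $r=4+\alpha$ throughout and recalling that on a negative edge $uv$ the condition $d_{\pod{r}}(\phi(u),\overbar{\phi(v)})\ge 1$ is equivalent to $d_{\pod{r}}(\phi(u),\phi(v))\le \frac r2-1=1+\frac\alpha2$. For the upper bound $\ell_{\phi;x,y,z}\le 1+\frac\alpha2$ I would use only that $xyz$ is a negative triangle: its three edges force the three pairwise distances among $\phi(x),\phi(y),\phi(z)$ to be at most $1+\frac\alpha2$, and since $\alpha<2$ we have $1+\frac\alpha2<\frac r3$. I would then invoke the elementary fact that three points of $C^r$ pairwise within a distance $d<\frac r3$ lie in an arc whose length equals their maximum pairwise distance: looking at the three gaps between consecutive points, if all gaps were $\le\frac r2$ the pairwise distances would be exactly those gaps and would sum to $r>3d$, impossible; hence exactly one gap exceeds $\frac r2$, and the complementary arc is the required one. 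This gives $\ell_{\phi;x,y,z}=\max_{i,j}d_{\pod{r}}\le 1+\frac\alpha2$.

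For the lower bound $\ell_{\phi;x,y,z}\ge 1-\frac\alpha2$, which is the heart of the lemma, I would argue by contradiction. Assume $\ell_{\phi;x,y,z}<1-\frac\alpha2$ and, after rotating $C^r$, that $\phi(x),\phi(y),\phi(z)\in[\ell,m]$ with $m-\ell=\ell_{\phi;x,y,z}<1-\frac\alpha2$. Look at vertex $a$: the positive edge $ax$ gives $\phi(a)\notin(\phi(x)-1,\phi(x)+1)$ while the negative edge $az$ gives $\phi(a)\in[\phi(z)-1-\frac\alpha2,\ \phi(z)+1+\frac\alpha2]$. Since $|\phi(x)-\phi(z)|\le m-\ell<1$, deleting the length‑$2$ arc centred at $\phi(x)$ from the length‑$(2+\alpha)$ arc centred at $\phi(z)$ leaves at most two arcs, $R_a=[\phi(x)+1,\ \phi(z)+1+\frac\alpha2]$ and $L_a=[\phi(z)-1-\frac\alpha2,\ \phi(x)-1]$, so $\phi(a)\in R_a\cup L_a$. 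The cyclic symmetry of $(T,\pi)$ gives the same for $b$ (positive neighbour $y$, negative neighbour $x$) and $c$ (positive neighbour $z$, negative neighbour $y$). The point is the bookkeeping: $R_a\cup R_b\cup R_c\subseteq[\ell+1,\ m+1+\frac\alpha2]$ and $L_a\cup L_b\cup L_c\subseteq[\ell-1-\frac\alpha2,\ m-1]$, two arcs each of length $(m-\ell)+\frac\alpha2<1$. Since $abc$ is a positive triangle, no two of $\phi(a),\phi(b),\phi(c)$ can lie in a common arc of length $<1$; but each of the three lies in the ``right'' arc or the ``left'' arc, so by pigeonhole two lie in the same one — a contradiction.

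For the ``moreover'' statement I would first reduce to a normal form. The map cyclically rotating $(x,y,z)$ and $(a,b,c)$ is an automorphism of $(T,\pi)$, so realizability of a prescribed triple $(t_1,t_2,t_3)$ on $(x,y,z)$ is invariant under cyclic rotation of the triple; it is also invariant under rotating $C^r$ and under the global negation $\phi\mapsto-\phi$ (which reverses the cyclic orientation). Using these symmetries, together with the fact (from the gap analysis above) that exactly one of the three gaps among $t_1,t_2,t_3$ exceeds $\frac r2$, I may assume $t_1=0$, $t_3=\ell':=\max_{i,j}d_{\pod{r}}(t_i,t_j)\in[1-\frac\alpha2,1+\frac\alpha2]$, and $t_2=\tau$ with $0\le\tau\le\ell'$. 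Then I would set $\phi(a)=\max\{\ell',1\}$, $\phi(b)\equiv-1-\frac\alpha2$, and $\phi(c)\equiv\ell'-1\pmod r$, and verify the twelve edge conditions directly: the negative triangle on $\{0,\tau,\ell'\}$ holds because $\ell'\le 1+\frac\alpha2=\frac r2-1$; for $a$ one gets $d_{\pod{r}}(\phi(a),0)\ge1$ and $d_{\pod{r}}(\phi(a),\ell')\le 1+\frac\alpha2$; for $b$, $d_{\pod{r}}(\phi(b),0)=1+\frac\alpha2$ and $d_{\pod{r}}(\phi(b),\tau)\ge1$; for $c$, $d_{\pod{r}}(\phi(c),\ell')=1$ and $d_{\pod{r}}(\phi(c),\tau)\le1$; and for the positive triangle $d_{\pod{r}}(\phi(a),\phi(b))\ge1$, $d_{\pod{r}}(\phi(a),\phi(c))\ge1$, and $d_{\pod{r}}(\phi(b),\phi(c))=\ell'+\frac\alpha2\ge1$ — the single place where the hypothesis $\ell'\ge 1-\frac\alpha2$ is used.

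The steps I expect to be most delicate are, in the lower bound, correctly describing the set that survives after removing a vertex's forbidden arc and confirming that all three ``right'' sub‑arcs (resp. all three ``left'' sub‑arcs) lie inside a common arc of length $(m-\ell)+\frac\alpha2<1$ — everything hinges on this length count; and, in the ``moreover'' part, checking that the one explicit placement of $\phi(a),\phi(b),\phi(c)$ works uniformly over all $\tau\in[0,\ell']$ and in both regimes $\ell'\le 1$ and $\ell'>1$ (which is exactly why $\phi(a)$ is taken to be a maximum). The boundary case $\alpha=0$, where the two sub‑arcs degenerate to single points and $\ell_{\phi;x,y,z}$ is forced to equal $1$, should fall out of the same computations, but I would keep an eye on it.
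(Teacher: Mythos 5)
Your proof is correct and follows essentially the same three-step strategy as the paper: the negative triangle $xyz$ forces $\ell_{\phi;x,y,z}\le 1+\frac{\alpha}{2}$; if $\ell_{\phi;x,y,z}<1-\frac{\alpha}{2}$ then each of $a,b,c$ is trapped in the union of two arcs of length $<1$, which contradicts the positive triangle $abc$; and an explicit coloring realizes any admissible triple. The differences are purely cosmetic — you prove the upper bound via the generic ``three pairwise-close points on $C^r$ lie in a short arc'' observation where the paper tracks the antipodal of $\phi(y)$, and you collapse the paper's two case constructions for the converse into the single formula $\phi(a)=\max\{\ell',1\}$, $\phi(b)=3+\frac{\alpha}{2}$, $\phi(c)=\ell'-1$.
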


\begin{proof}
Let $r=4+\alpha$ and let $\phi$ be a circular $r$-coloring of $(T, \pi)$. Without loss of generality we may assume that $\phi(x)$, $\phi(y)$ and $\phi(z)$ are on $C^r$ in the clockwise order, and assume the interval $[\phi(z), \phi(x)]$ is a   longest interval among $[\phi(x), \phi(y)]$, $[\phi(y), \phi(z)]$ and $[\phi(z), \phi(x)]$. Thus  $I_{\phi;x,y,z}=[\phi(x), \phi(z)]$.
We first claim that $[\phi(z), \phi(x)]$ contains $\overbar{\phi(y)}$. Otherwise, either $[ \overbar{\phi(y)}, \phi(y)]$ or $[\phi(y), \overbar{\phi(y)}]$ which is of length $\frac{r}{2}$, is included in either $(\phi(x), \phi(y)]$ or $[\phi(y), \phi(z))$. This is a contradiction as $[\phi(z), \phi(x)]$ is longest among the three. As $\overbar{\phi(y)}$ is contained in $[\phi(z), \phi(x)]$, and as $y$ is adjacent to both $z$ and $x$ with a negative edge, we conclude that $[\phi(z), \phi(x)]$ is of length at least 2. On the other hand, since $z$ and $x$ are adjacent with a negative edge, one of the two intervals, $[\phi(z), \phi(x)]$ or $[\phi(x), \phi(z)]$ is of length at most $\frac{r}{2}-1=1+\frac{\alpha}{2}$. As $\alpha<2$, the only option is that  $[\phi(x), \phi(z)]$ is of length at most $1+\frac{\alpha}{2}$.

For the other direction, assume $\ell_{\phi;x,y,z} < 1 - \frac{\alpha}{2}$, say $I_{\phi;x,y,z}=[0, \beta]$ for some $\beta < 1 - \frac{\alpha}{2}$. Each of $a,b,c$ is joined by a positive edge and a negative edge to vertices in $x,y,z$. This implies that $\phi(a), \phi(b), \phi(c) \in [1, 1+\beta+\frac{\alpha}{2}] \cup [3+\frac{\alpha}{2}, 3+\alpha+\beta]$. As each  of the intervals $[1, 1+\beta+\frac{\alpha}{2}]$ and $[3+\frac{\alpha}{2}, 3+\alpha+\beta]$ has length strictly smaller than $1$, two of the vertices $a,b,c$ are colored by colors of distance less than $1$ in $C^r$. But $abc$ is a triangle with three positive edges, a contradiction.

For the ``moreover" part, without loss of generality, we assume that $t_3=0, t_1\in [1-\frac{\alpha}{2}, 1+\frac{\alpha}{2}], t_2\in [0, t_1]$.   If $t_1\in [1-\frac{\alpha}{2}, 1]$, then let $\phi(a)=3+\frac{\alpha}{2}$, $\phi(b)=2$ and $\phi(c)=1$; if $t_1\in [1, 1+\frac{\alpha}{2}]$, then let $\phi(a)=3+\frac{\alpha}{2}$, $\phi(b)=2+\frac{\alpha}{2}$ and $\phi(c)=1$. It is straightforward to verify that $\phi$ is a circular $r$-coloring of $(T, \pi)$.
\end{proof}

By taking $\alpha=\frac{2}{3}-\epsilon$ and a switching at the vertex $z$, we have the following formulation of the lemma which we will use frequently.
 
\begin{corollary}\label{coro:mini-gadget}
Let $(T, \pi')$ be a signed graph obtained from $(T, \pi)$ by a switching at the vertex $z$, and let $\phi$ be a circular $(\frac{14}{3}-\epsilon)$-coloring of $(T, \pi')$ where $0< \epsilon<\frac{2}{3}$. Then $\ell_{\phi; x,y,\bar{z}} \in [\frac{2}{3}+\frac{\epsilon}{2},\frac{4}{3}-\frac{\epsilon}{2}]$.
\end{corollary}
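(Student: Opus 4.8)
The plan is to read this off from Lemma~\ref{lem:mingadget} by combining it with the switching correspondence of Proposition~\ref{prop-switch}. First I would set $\alpha = \tfrac{2}{3} - \epsilon$. Since $0 < \epsilon < \tfrac{2}{3}$ we have $0 < \alpha < \tfrac{2}{3} < 2$, and $r := 4 + \alpha = \tfrac{14}{3} - \epsilon$ is exactly the circumference in the hypothesis. A one-line computation gives
\[
\Bigl[1 - \tfrac{\alpha}{2},\ 1 + \tfrac{\alpha}{2}\Bigr] = \Bigl[\tfrac{2}{3} + \tfrac{\epsilon}{2},\ \tfrac{4}{3} - \tfrac{\epsilon}{2}\Bigr],
\]
so the target interval in the corollary is precisely the interval furnished by Lemma~\ref{lem:mingadget} at this value of $\alpha$.

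Next I would transport the given coloring back across the switch. Since $(T,\pi)$ is obtained from $(T,\pi')$ by switching at the single vertex $z$, Proposition~\ref{prop-switch} turns the circular $r$-coloring $\phi$ of $(T,\pi')$ into a circular $r$-coloring $\hat\phi$ of $(T,\pi)$ with $\hat\phi(z) = \phi(z) + \tfrac{r}{2} = \overbar{\phi(z)}$ and $\hat\phi(w) = \phi(w)$ for every other vertex $w$; in particular $\hat\phi(x) = \phi(x)$ and $\hat\phi(y) = \phi(y)$. Applying Lemma~\ref{lem:mingadget} to $\hat\phi$ then gives $\ell_{\hat\phi;x,y,z} \in [1 - \tfrac{\alpha}{2},\ 1 + \tfrac{\alpha}{2}]$.

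Finally I would match the two interval-length symbols. By definition $I_{\hat\phi;x,y,z}$ is a shortest arc of $C^r$ containing $\{\hat\phi(x),\hat\phi(y),\hat\phi(z)\} = \{\phi(x),\phi(y),\overbar{\phi(z)}\}$, and this is exactly the point set whose shortest enclosing arc defines $I_{\phi;x,y,\bar z}$. Hence $\ell_{\phi;x,y,\bar z} = \ell_{\hat\phi;x,y,z}$, which lies in $[\tfrac{2}{3}+\tfrac{\epsilon}{2},\ \tfrac{4}{3}-\tfrac{\epsilon}{2}]$, as claimed.

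I do not expect a genuine obstacle here: the corollary is essentially a cosmetic restatement of Lemma~\ref{lem:mingadget} specialized to the value $\alpha = \tfrac{2}{3}-\epsilon$ that will be convenient in the construction behind Theorem~\ref{thm:PlanarGraph}. The only point that warrants a moment's care is the bookkeeping that switching at $z$ replaces $\phi(z)$ by its antipodal, so that the constraint on the triple $\{x,y,z\}$ in $(T,\pi)$ becomes a constraint on $\{x,y,\bar z\}$ in $(T,\pi')$; this is precisely why the statement is phrased with $\bar z$, and it is handled entirely by Proposition~\ref{prop-switch}.
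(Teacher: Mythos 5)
Your proposal is correct and matches the paper's own (implicit) argument exactly: the paper simply states that the corollary follows by taking $\alpha=\frac{2}{3}-\epsilon$ in Lemma~\ref{lem:mingadget} and applying a switching at $z$, which is precisely the bookkeeping you carry out via Proposition~\ref{prop-switch}. No gaps.
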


We define $\tilde{W}$ to be the signed graph obtained from signed Wenger graph of Figure~\ref{fig:WengerGraph} by completing each of the four negative facial triangles to a switching of the mini-gadget of Figure~\ref{fig:minigadget}. Next we show that $\tilde{W}$ has a property similar to signed indicators, more precisely: 

\begin{lemma}\label{lem:gadget}
Let $r=\dfrac{14}{3}-\epsilon$ with $0<\epsilon\leq \dfrac{2}{3}$. For any circular $r$-coloring $\phi$ of $\tilde{W}$, $\ell_{\phi; u,v}\geq \dfrac{4}{9}$.
\end{lemma}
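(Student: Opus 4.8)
The plan is to argue by contradiction: suppose $r=\frac{14}{3}-\epsilon$ with $0<\epsilon\le\frac23$, that $\phi$ is a circular $r$-coloring of $\tilde W$, and that $\ell_{\phi;u,v}<\frac49$; translating all colours we may assume $\phi(u)=0$, so that $d_{\pod{r}}(\phi(u),\phi(v))<\frac49$ on $C^r$. The first step is to collect the constraints $\phi$ must obey. From the signed Wenger graph of Figure~\ref{fig:WengerGraph}: the positive edges $wx_1,\dots,wx_5$; the positive $5$-cycle $x_1x_2x_3x_4x_5$; the positive edges $ux_1,ux_2,vx_3,vz,vt,zx_3,tx_5$; and the negative edges $ux_5,uz,ut,zx_2,tx_4,vx_4$. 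In addition, applying Corollary~\ref{coro:mini-gadget} to the four negative facial triangles $ux_1x_5$, $tx_4x_5$, $vx_3x_4$, $zx_2x_3$ (in each of which the unique negative edge is the one named $xy$ in that corollary, so the barred vertex is the one opposite that edge), each of the triples
$$\{\phi(u),\phi(x_5),\overbar{\phi(x_1)}\},\quad \{\phi(t),\phi(x_4),\overbar{\phi(x_5)}\},\quad \{\phi(v),\phi(x_4),\overbar{\phi(x_3)}\},\quad \{\phi(z),\phi(x_2),\overbar{\phi(x_3)}\}$$
lies in an arc of $C^r$ of length in $[\frac23+\frac\epsilon2,\ \frac43-\frac\epsilon2]$, and by the ``moreover'' clause of Lemma~\ref{lem:mingadget} these are the only extra restrictions the four mini-gadgets impose. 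It is worth highlighting the identity $\frac r2-1=\frac43-\frac\epsilon2$: the upper end of each gadget window coincides with the distance bound a negative edge forces, which is exactly why $\frac{14}{3}$ is the critical circumference and why essentially every inequality below will have to be tight.

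The heart of the proof is to propagate these constraints around $\tilde W$. I would start from $\phi(u)=0$ and $\phi(v)$ near $0$, use the wheel edges $wx_i$ together with the positive $5$-cycle to force $\phi(x_1),\dots,\phi(x_5)$ into an alternating configuration (two tight clusters on $C^r$), then use the gadgets at $ux_1x_5$ and $vx_3x_4$ together with the positive edges $ux_1,ux_2,vx_3$ to locate these clusters relative to $\{0,\phi(v)\}$, and then the edges $uz,vz,ut,vt$ to locate $\phi(z)$ and $\phi(t)$. Feeding these positions into the two remaining gadgets at $zx_2x_3$ and $tx_4x_5$ and using the edges $zx_2,zx_3,tx_4,tx_5$, one shows that $\phi(z)$ and $\phi(x_2)$ must lie on a common side of $\phi(u)$, and likewise $\phi(t)$ and $\phi(x_4)$; this cuts the cyclic orders of $x_1,\dots,x_5$ and of $u,v,z,t$ down to a bounded number of cases. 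I would finish by checking that in each of these cases $d_{\pod{r}}(\phi(u),\phi(v))\ge\frac49$, contradicting the assumption.

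I expect the case analysis to be the main obstacle. Every bound in play is tight up to an additive $O(\epsilon)$, so the estimates must be carried out exactly and the slack $\frac49$ is consumed precisely: assuming a larger gap between $\phi(u)$ and $\phi(v)$ would leave the windows mutually compatible and produce no contradiction. There is also a genuine bookkeeping hazard in fixing consistent orientation and ``side of $0$'' conventions on $C^r$ and in matching Corollary~\ref{coro:mini-gadget} --- in particular which of the three triangle vertices is barred --- to each of the four negative triangles; getting these right from the outset is what keeps the subsequent case analysis tractable.
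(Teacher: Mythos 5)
Your setup is correct and matches the paper's: after translating so $\phi(u)=0$ and assuming $\ell_{\phi;u,v}<\frac{4}{9}$, the relevant constraints are the sign-preserving edge conditions together with, via Corollary~\ref{coro:mini-gadget}, an interval constraint $\ell_{\phi;\cdot,\cdot,\cdot}\in[\frac{2}{3}+\frac{\epsilon}{2},\frac{4}{3}-\frac{\epsilon}{2}]$ for each of the four mini-gadget triples, with the barred vertex correctly taken to be the one opposite the unique negative edge of each facial triangle. You are also right that $\frac{r}{2}-1=\frac{4}{3}-\frac{\epsilon}{2}$ and that, by the ``moreover'' clause of Lemma~\ref{lem:mingadget}, the gadgets impose no constraint beyond this interval condition.

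However, the proposal stops exactly where the proof begins. Everything after ``The heart of the proof is to propagate these constraints\dots'' is a plan in the conditional mood (``I would start from\dots'', ``one shows that\dots'', ``I would finish by checking\dots a bounded number of cases''), and the tight estimates you explicitly identify as necessary are never carried out. This is not a routine verification to defer: the case analysis \emph{is} the proof, and indeed it occupies the entire Section~7 of the paper. Moreover, the picture you sketch does not match what the analysis actually produces. Rather than first forcing $x_1,\dots,x_5$ into ``two tight clusters'' via the wheel, the paper first constrains $\phi(z),\phi(t)$ from their edges to $u$ and $v$ (equation~(1)), then proves an intermediate lemma excluding $\phi(w)$ from two intervals (Lemma~\ref{lem:w}, itself a multi-case argument driven by the $zx_2x_3$ gadget and the positive edges $x_2w,x_3w$), and only then branches into eight cases on the positions of $\phi(w),\phi(z),\phi(t)$. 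In each case the five $x_i$'s land in five essentially disjoint short intervals around $C^r$, and the contradiction comes either from an unsatisfiable mini-gadget window or from a positive $5$-cycle through $w$ and four of the $x_i$'s that cannot fit in circumference $\frac{14}{3}-\epsilon<5$. Without working out these (or equivalent) estimates, nothing in the proposal establishes the claimed inequality.
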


The proof of Lemma \ref{lem:gadget} is long, and we leave it to the next section. Let $\Gamma$ be obtained from $\tilde{W}$ by adding a negative edge $uv$. Let ${\mathcal I} = (\Gamma, u,v)$.  It follows from Lemma \ref{lem:gadget} that for $4 \le r < \frac {14}{3}$, $$({\mathcal I}, r) \subseteq [\dfrac 49, \dfrac r2 -1].$$

\begin{theorem}
	\label{thm:planar}
	Let $\Omega = K_4({\mathcal I})$. Then $\Omega$ is a signed planar simple graph with $\chi_c(\Omega) = \frac{14}{3}$.  
\end{theorem}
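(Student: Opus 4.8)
The plan is to first check that $\Omega$ is planar and simple, and then to establish the two inequalities $\chi_c(\Omega)\ge \frac{14}{3}$ and $\chi_c(\Omega)\le \frac{14}{3}$. For planarity: the signed Wenger graph of Figure~\ref{fig:WengerGraph} is drawn in the plane with each of its four negative facial triangles bounding an empty triangular face, and the mini-gadget of Figure~\ref{fig:minigadget} is planar with $x,y,z$ on its outer face, so $\tilde{W}$ is planar; since $u$ and $v$ then lie on a common face of $\tilde{W}$, the graph $\Gamma=\tilde{W}+uv$ is planar with $u,v$ on its outer face, and replacing each edge of the planar graph $K_4$ by a copy of $\Gamma$ drawn in a thin neighbourhood of that edge (terminals at the endpoints) keeps planarity. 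Simpleness holds because $\tilde{W}$ is simple and has no edge $uv$, and distinct copies of $\Gamma$ meet only in the (pairwise distinct) vertices of $K_4$, so no loop or parallel edge is created.

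For the lower bound I would suppose $\phi$ is a circular $r$-coloring of $\Omega$ with $r<\frac{14}{3}$ and derive a contradiction. First, $r\ge 4$: the all-positive $5$-wheel with hub $w$ and rim $x_1x_2x_3x_4x_5$ is a subgraph of $\tilde{W}\subseteq\Gamma\subseteq\Omega$ and $\chi_c(W_5)=4$ (in a circular $r$-coloring the five rim vertices are confined to the arc of length $r-2$ missing the forbidden unit interval around the hub's color, which for $r<4$ is too short to hold a $5$-cycle). Hence Lemma~\ref{lem:gadget}, applied with $\epsilon=\frac{14}{3}-r\in(0,\frac{2}{3}]$, together with the negative edge $uv$ of $\Gamma$, gives $Z(\mathcal{I},r)\subseteq[\frac{4}{9},\frac{r}{2}-1]$. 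For each of the six edges $x_ix_j$ of $K_4$, the copy of $\Gamma$ that replaced it is circularly $r$-colored by $\phi$, so the pair $(\phi(x_i),\phi(x_j))$ is feasible for $\mathcal{I}$; since feasibility is invariant under common translation and under negation, $d_{\pod{r}}(\phi(x_i),\phi(x_j))\in Z(\mathcal{I},r)$. Thus $\phi(x_1),\dots,\phi(x_4)$ are four points of $C^r$ all of whose pairwise distances lie in $[\frac{4}{9},\frac{r}{2}-1]$, and it remains to show that no such four points exist when $r<\frac{14}{3}$.

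For that four-point estimate I would write the four points in cyclic order with consecutive gaps $a,b,c,d$, so $a+b+c+d=r$. If every gap is $<\frac{r}{2}$, the four ``side'' distances equal $a,b,c,d$ and the two ``diagonal'' distances are $\min(a+b,c+d)$ and $\min(b+c,d+a)$; each being at most $\frac{r}{2}-1$ forces the complementary sum to be at least $\frac{r}{2}+1$, and adding a large sum from each complementary pair (these share exactly one gap $x$ and leave exactly one gap $w$ in neither) yields $x+(r-w)\ge r+2$, i.e.\ $x\ge w+2\ge\frac{4}{9}+2$; as $x<\frac{r}{2}$ this forces $r>\frac{44}{9}>\frac{14}{3}$, a contradiction. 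Otherwise exactly one gap, say $a$, satisfies $a\ge\frac{r}{2}$; then $b,c,d$ are three of the side distances, so $b,c,d\ge\frac{4}{9}$, while $b+c+d$ is the side distance of the pair separated by $a$, so $b+c+d\le\frac{r}{2}-1$; hence $\frac{4}{3}\le b+c+d\le\frac{r}{2}-1$ and $r\ge\frac{14}{3}$, again a contradiction (the two diagonal constraints are not binding in this case). Therefore $\chi_c(\Omega)\ge\frac{14}{3}$.

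For the upper bound I would color the four vertices of $K_4$ on $C^{14/3}$ by $0,\frac{10}{3},\frac{34}{9},\frac{38}{9}$; the six pairwise distances are then $\frac{4}{3}$ (once), $\frac{8}{9}$ (twice) and $\frac{4}{9}$ (three times), all at most $\frac{r}{2}-1=\frac{4}{3}$ for $r=\frac{14}{3}$. It then suffices to realize each of the values $\frac{4}{9},\frac{8}{9},\frac{4}{3}$ as $d_{\pod{r}}(\phi(u),\phi(v))$ for some circular $\frac{14}{3}$-coloring $\phi$ of $\Gamma$, and to glue the six resulting colorings along the copies of $\Gamma$. This is the constructive companion of Lemma~\ref{lem:gadget}: one assigns colors to the Wenger graph attaining these boundary values of $\ell_{\phi;u,v}$ and extends over each of the four mini-gadgets via the ``moreover'' part of Corollary~\ref{coro:mini-gadget}. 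I expect this last step to be the main obstacle, since Lemma~\ref{lem:gadget} as stated provides only the inclusion $Z(\mathcal{I},r)\subseteq[\frac{4}{9},\frac{r}{2}-1]$: one must explicitly produce $\frac{14}{3}$-colorings of $\Gamma$ realizing the extreme value $\frac{4}{9}$ (as well as $\frac{8}{9}$ and $\frac{4}{3}$), carefully threading the mini-gadget constraints through all of $\tilde{W}$; by contrast the four-point estimate is elementary once the case split is in place.
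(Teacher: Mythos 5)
Your lower bound argument is correct and is a valid alternative to the paper's: after reducing to pairwise distances in $[\frac{4}{9},\frac{r}{2}-1]$ among $\phi(v_1),\dots,\phi(v_4)$ exactly as the paper does (via Lemma~\ref{lem:gadget} and the negative edge $uv$), you split on whether some consecutive gap reaches $\frac{r}{2}$ and derive $r\ge\frac{14}{3}$ in either case. The paper avoids the case split: it fixes the two ``inner'' diagonals to be the short arcs (justified by a symmetry you can realize as a rotation by two positions composed with a reflection), and then observes that both arcs between $\phi(v_1)$ and $\phi(v_4)$ have length exceeding $\frac{r}{2}-1$, giving an immediate contradiction. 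Both are elementary; the paper's is shorter, yours is perhaps more mechanical.

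The upper bound, however, has a genuine gap, and the route you chose makes it unnecessarily hard. You color the $K_4$ vertices by $0,\frac{10}{3},\frac{34}{9},\frac{38}{9}$ so that the pairwise distances are $\frac{4}{9},\frac{8}{9},\frac{4}{3}$, and then you would need to exhibit circular $\frac{14}{3}$-colorings of $\Gamma$ realizing each of these three distances between $u$ and $v$ --- you yourself flag this as ``the main obstacle'' and do not carry it out, so the proof is incomplete as written. In fact this is the hardest possible way to go: you are trying to hit the lower edge $\frac{4}{9}$ of the interval that Lemma~\ref{lem:gadget} guarantees for $r<\frac{14}{3}$. But Lemma~\ref{lem:gadget} requires $\epsilon>0$, so at $r=\frac{14}{3}$ it gives no constraint at all, and the paper simply colors all four $K_4$ vertices with the \emph{same} color. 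Concretely, the paper gives an explicit $\frac{14}{3}$-coloring of $\Gamma$ with $\phi(u)=\phi(v)=0$ (spelling out the colors of $w,x_1,\dots,x_5,z,t$ and checking that each of the four negative facial triangles falls within the feasible range of Lemma~\ref{lem:mingadget} so the mini-gadgets extend), and then the coloring of $K_4({\mathcal I})$ assigns $0$ to every $v_i$ and uses that single $\Gamma$-coloring on every copy. To repair your proof you should replace your color choice by the constant coloring and supply the explicit extension over $\tilde W$ and its four mini-gadgets; realizing the three nonzero distances you proposed is not needed and would require substantially more verification.

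Two smaller remarks. Your planarity/simpleness paragraph is fine and fills in something the paper leaves implicit. Your justification that one may take $r\ge 4$ (via the all-positive wheel $W_5\subseteq\tilde W$ having $\chi_c=4$) is correct but unnecessary: if $\chi_c(\Omega)<\frac{14}{3}$, one can always scale a coloring up to any $r$ with $\max(4,\chi_c(\Omega))\le r<\frac{14}{3}$, which is what the paper silently does.
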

\begin{proof}
  First we show that $\Omega$ admits a circular $\frac{14}{3}$-coloring. 

	For $r = \frac{14}{3}$, there is a circular $r$-coloring $\phi$ of $\Gamma$ with $\phi(u)=\phi(v)$, defined as  $\phi(u)=\phi(v)=0$, $\phi(w)=3$, $\phi(x_1)=2$, $\phi(x_2)=1$, $\phi(x_3)=2$, $\phi(x_4)=\frac{1}{3}$, $\phi(x_5)=4$ and $\phi(z)=\phi(t)=1$. 
	 
	 We observe that each of the four negative triangles satisfies the conditions of Lemma~\ref{lem:mingadget}, and that the coloring of its vertices can be extended to the inner part of the mini-gadget. 
	 
	 Let $v_1,v_2,v_3,v_4$ be the 4 vertices of $K_4$.   
	 Then there is a circular $\frac{14}{3}$-coloring $\phi$ of $K_4({\cal I})$ with $\phi(v_i)= 0$ for $i=1,2,3,4$.  So $\chi_c(\Omega) \le  \frac{14}{3}$.   
	 
	 It remains to show that $\chi_c(\Omega) \ge  \frac{14}{3}$. Assume to the contrary that $\chi_c(\Omega)    <\frac{14}{3}$, let  $\phi$ be a circular $r$-coloring of $\Omega$ for some $4\le r < \frac{14}{3}$ (for the purpose of applying Lemma \ref{lem:gadget}, we assume $r \ge 4$).  Without loss of generality, assume $\phi(v_1)$, $\phi(v_2)$, $\phi(v_3)$ and $\phi(v_4)$ are on $C^r$ in this cyclic order. 
	  
	  As $({\mathcal I},r) \subseteq [ \frac 49, \frac r2 -1]$, we know that for any $1 \le i < j \le 4$, $$\frac 49 \le d_{\pod r}(\phi(v_i), \phi(v_j) ) \le \frac r2 -1.$$
	  By symmetry, we may assume $d_{\pod r}( \phi(v_1), \phi(v_3)) = \ell([\phi(v_1), \phi(v_3) ])$ and $d_{\pod r}( \phi(v_2), \phi(v_4)) = \ell([\phi(v_2), \phi(v_4) ])$. 
	  	
	  Hence   
	  $$\ell ([\phi(v_1), \phi(v_4) ]) = \ell([\phi(v_1), \phi(v_2) ]) +\ell([\phi(v_2), \phi(v_3) ]) +\ell([\phi(v_3), \phi(v_4) ])   \ge 3 \times \frac 49 = \frac 43 >  \frac r2 -1,$$
	  and 
	  $$\ell([\phi(v_4), \phi(v_1)]) \ge r -  (\ell([\phi(v_1), \phi(v_3) ]) +  \ell([\phi(v_2), \phi(v_4) ])) \ge 2   >\frac r2 -1.$$ 
	  This implies that $d_{\pod r}(\phi(v_1), \phi(v_4)) > \frac r2 -1$, a contradiction.  
\end{proof}

\section{Proof of Lemma \ref{lem:gadget}}

Assume to the contrary that $\phi$ is a circular $r$-coloring of $\tilde{W}$ with $\ell_{\phi; u,v} =\eta < \frac{4}{9}$. Without loss of generality, we   assume that $\phi(u)=0$ and $\phi(v)=\eta$.  Since each of $\phi(z)$ and $\phi(t)$ is of distance at least $1$ from both $\overbar{\phi(u)}$ and $\phi(v)$,  we have:

\begin{equation}\label{equ:z} \phi(z), \phi(t) \in \begin{cases} [1+\eta,\dfrac{4}{3}-\dfrac{\epsilon}{2}] \cup [\dfrac{10}{3}-\dfrac{\epsilon}{2}, \dfrac{11}{3}+\eta-\epsilon],  & \text{if } \eta \leq \dfrac{1}{3}-\dfrac{\epsilon}{2}, \cr  
  [\dfrac{10}{3}-\dfrac{\epsilon}{2}, \dfrac{11}{3}+\eta-\epsilon] ,  & \text{otherwise.}
  \end{cases}\end{equation}

\begin{lemma}   
\label{lem:w}
	 $\phi(w)\not\in (\dfrac{5}{3}-\epsilon, 3+\eta) \cup  (4-\dfrac{3\epsilon}{2}, \dfrac{2}{3}+\eta+\dfrac{\epsilon}{2})$.
\end{lemma}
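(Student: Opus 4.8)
The plan is to derive a contradiction by showing that if $\phi(w)$ lands in one of the forbidden intervals, then there is no valid choice of colors for the vertices $x_1,\dots,x_5$ that are adjacent to $w$ (all via positive edges) and also adjacent to $u$, $v$, $z$, $t$ via a prescribed mix of positive and negative edges. The key structural fact is that $w$ together with $x_1,\dots,x_5$ forms a ``positive wheel'' $W_5$: the five spokes $wx_i$ are positive and the rim $x_1x_2x_3x_4x_5x_1$ is a positive $5$-cycle. So once $\phi(w)$ is fixed, each $\phi(x_i)$ must avoid the open interval of length $2$ centered at $\phi(w)$, i.e.\ it lies in a ``far arc'' of length $r-2 = \frac{8}{3}-\epsilon$ opposite $\phi(w)$; and within that far arc the five points $\phi(x_1),\dots,\phi(x_5)$ must be placeable so that cyclically consecutive ones are at distance $\geq 1$. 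Since $5\cdot 1 = 5 > r$, the rim cannot wrap all the way around, so the $\phi(x_i)$ are forced into a short sub-arc; combined with the adjacency constraints to $u,v,z,t$ (whose colors are pinned down near $0$, near $\eta$, or in the narrow windows of \eqref{equ:z}), this over-constrains the system.

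First I would fix notation: using the already-computed locations $\phi(u)=0$, $\phi(v)=\eta$ with $\eta<\frac49$, and the windows for $\phi(z),\phi(t)$ from \eqref{equ:z}. Then I would translate each of the two claims separately. For the first, suppose $\phi(w)\in(\frac53-\epsilon,\,3+\eta)$. The far arc for the $x_i$'s is then (roughly) the complement, an arc of length $\frac83-\epsilon$ running from about $1+\eta$ up through $0$ to about $\frac23-\epsilon$ (indices mod $r$). Since the rim is a positive $C_5$ and $5>r$, the five rim colors must in fact fit inside a sub-arc of length $\leq \lfloor$ something $\rfloor$ — more precisely, a positive $C_5$ needs total ``winding'' an integer multiple of $r$, which with $4\le r<\frac{14}{3}$ forces winding exactly $r$ once around is impossible inside an arc of length $<r$, hence the rim colors span an arc of length exactly... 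I would pin this down by the tight-cycle analysis (Proposition~\ref{prop-st}): a positive $C_5$ inside a circular $r$-coloring with $r<5$ must be ``stretched'' so its colors lie in an arc of length at least $5-r$ but, being confined to the far arc of length $\frac83-\epsilon<3$, there is very little room. Then I use that $u$ is joined to $x_1,x_2$ positively and to $x_5,z,t$ negatively (and $v$ to $x_3$ positively, $x_4$ negatively, $z,t$ positively), to show the $x_i$ cannot simultaneously be $\geq 1$ from $\phi(u)=0$'s antipodal and close to the right places.

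The cleanest route, I expect, is the second (``moreover'') half of Lemma~\ref{lem:mingadget} / Corollary~\ref{coro:mini-gadget} played in reverse: the four completed mini-gadgets force $\ell_{\phi;x,y,\bar z}\in[\frac23+\frac\epsilon2,\frac43-\frac\epsilon2]$ for each of the four triangles of the Wenger graph, and these four ``$\ell$-constraints,'' together with the wheel constraint around $w$, should already contradict $\phi(w)$ being in the stated ranges. So the actual proof is: assume $\phi(w)$ is in a forbidden interval; locate the far arc; use the rim-$C_5$ plus $w$-spokes to confine $\phi(x_1),\dots,\phi(x_5)$ to nearly forced positions (up to the slack $\epsilon$ and $\eta$); then check that the negative/positive edges from $u,v$ to the $x_i$ and the windows \eqref{equ:z} for $z,t$ become infeasible — a routine but careful interval chase.

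The main obstacle I anticipate is the bookkeeping: there are two forbidden intervals to rule out, five rim vertices whose cyclic order on $C^r$ must be tracked, and everything is parametrized by the two small quantities $\epsilon\in(0,\frac23]$ and $\eta<\frac49$, so the interval arithmetic (mod $r=\frac{14}{3}-\epsilon$) is delicate and one must be careful about which arc is ``short'' and which endpoints are attained. In particular, showing that the positive $C_5$ on the rim genuinely cannot ``spread out'' enough inside the far arc — and quantifying exactly how tightly it is packed — is the crux; I would isolate that as a sub-claim (``in any circular $r$-coloring with $r<\frac{14}{3}$, the colors of a positive $5$-cycle all lie in an arc of length at most $\frac83-\epsilon$, hence cannot all avoid the length-$2$ neighborhood of $\phi(w)$ unless $\phi(w)$ is outside the forbidden windows'') and verify it first, so that the rest reduces to a finite case check over the cyclic orderings of $x_1,\dots,x_5$.
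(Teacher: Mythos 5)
Your proposal is a plan rather than a proof, and it hinges on a sub-claim that is false as stated. You assert, as the crux, that ``in any circular $r$-coloring with $r<\frac{14}{3}$, the colors of a positive $5$-cycle all lie in an arc of length at most $\frac83-\epsilon$.'' This is not true: for $r=4$, the positive $C_5$ colored $0,1,2,3,1$ has its colors spread over the entire circle. What is true (and what you correctly note earlier) is that the positive spokes $wx_i$ force every $\phi(x_i)$ into the far arc of length $r-2=\frac83-\epsilon$ opposite $\phi(w)$; it is that adjacency constraint, not any intrinsic property of a positive $5$-cycle, that does the confining. The surrounding discussion of ``winding'' of the rim is confused and cannot be repaired in the form written.

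Beyond the error, your approach is globally different from the paper's. You propose reasoning about the whole wheel $W_5$ (all five $x_i$, all four mini-gadgets, the far arc), whereas the paper's proof is surgically local to one corner of the Wenger graph. Writing $\delta=\phi(w)$, the paper uses only the positive edges $ux_2, wx_2, vx_3, wx_3$ to pin $\phi(x_2),\phi(x_3)$ into the candidate intervals of \eqref{equ:x_2} and \eqref{equ:x_3}, then derives Claim \ref{claim-phiz} relating the active case to $\phi(z)$ via the edges $zx_2$ (negative) and $zx_3$ (positive), and finally applies Corollary~\ref{coro:mini-gadget} to $\ell_{\phi;\bar x_3,x_2,z}$ to reach a contradiction in each of three sub-ranges of $\delta$. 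Vertices $x_1,x_4,x_5$ and the other mini-gadgets play no role in excluding the first interval. For the second interval the paper does not redo the computation, as you implicitly would; instead it notes that $x_1$ is unused, so switching at $\{w,x_2,x_3,x_4,x_5\}$ gives an isomorphic copy (with $x_4,x_5$ replacing $x_2,x_3$) in which $\phi(w)$ is replaced by $\overbar{\phi(w)}$, and the second forbidden interval is exactly the antipodal image of the first. Your plan does not anticipate this symmetry, does not actually carry out the interval chase, and the global-wheel route is not obviously viable once the false $C_5$ sub-claim is removed.
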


\begin{proof}
Let $\phi(w)=\delta$. First we show that $\delta \not\in (\frac{5}{3}-\epsilon, 3+\eta)$. Assume to the contrary that $$ \delta\in (\frac{5}{3}-\epsilon, 3+\eta).$$ As $x_2$ is joined to $u$ and $w$ by positive edges,

\begin{equation}\label{equ:x_2}
	 \phi(x_2) \in  \begin{cases}
[1, \delta-1], & \text{ if } \delta >  \dfrac{8}{3}-\epsilon, \cr  
[\delta+1, \dfrac{11}{3}-\epsilon] & \text{ if   } \delta < 2, \cr
[1, \delta-1] \cup [\delta+1, \dfrac{11}{3}-\epsilon], & \text{ if } 2 \le  \delta \le  \dfrac{8}{3}-\epsilon.
\end{cases}
\end{equation}

\begin{equation}\label{equ:x_3} \phi(x_3) \in  \begin{cases}
[1+\eta, \delta-1] & \text{ if } \delta > \dfrac{8}{3}+\eta-\epsilon, \cr
[\delta+1, \dfrac{11}{3}+\eta-\epsilon] & \text{ if } \delta < 2+\eta, \cr
[1+\eta, \delta-1] \cup [\delta+1, \dfrac{11}{3}+\eta-\epsilon] & \text{ if } 2+ \eta \le \delta \le \dfrac{8}{3}+\eta-\epsilon.  
\end{cases}
\end{equation}

For a depiction of these cases, see Figure~\ref{fig:claim7.2}. 

 \begin{figure}[ht]
    \centering 
       \begin{tikzpicture}
	[scale=.5]
	\draw [line width=.05mm, black] (0,0) circle (4cm);
	\draw[rotate=0] (0,4)  node[line width=0mm, circle, fill, inner sep=1.2pt, label=below: \scriptsize{$0$} ] {};

         \draw [black, line width=.6mm, domain=-168:-228] plot ({3.8*cos(\x)}, {3.8*sin(\x)}) node [left] { \tiny{${\phi(z)}$} };
         \draw [black, line width=.6mm, domain=-14:-9] plot ({3.8*cos(\x)}, {3.8*sin(\x)}) node [left] { \tiny{${\phi(z)}$} };

	\draw [black, line width=.3mm, domain=-9:-66] plot ({3.6*cos(\x)}, {3.6*sin(\x)}) node [left] { \tiny{${\phi(x_3)}$} };
	\draw [black, line width=.3mm, domain=-140:-228] plot ({3.6*cos(\x)}, {3.6*sin(\x)}) node [right] { \tiny{${\phi(x_3)}$} };
	
	\draw [black, line width=.3mm, domain=12:-66] plot ({4.2*cos(\x)}, {4.2*sin(\x)}) node [right] { \tiny{$\phi(x_2)$} };
	\draw [black, line width=.3mm, domain=-140:-202] plot ({4.2*cos(\x)}, {4.2*sin(\x)}) node [left] { \tiny{$\phi(x_2)$} };
	\end{tikzpicture}
            \caption{A sketch of locating $\phi(z)$, $\phi(x_2)$ and $\phi(x_3)$ on $C^r$}
            \label{fig:claim7.2} 
\end{figure}
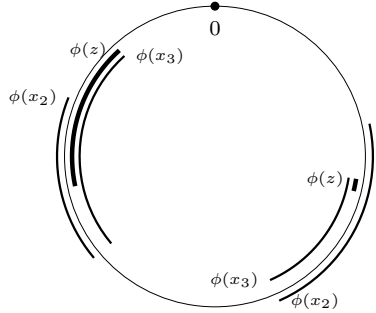

\begin{claim} \label{claim-phiz}
	The following restrictions on the value of $\phi(z)$ hold:
	  \begin{itemize}
	  	\item[I.] If $\delta < 3$ and $\phi(x_2) \in  [1, \delta-1]$, then $\eta \leq \frac{1}{3}-\frac{\epsilon}{2}$ and 
	  	$\phi(z)\in [1+\eta,\frac{4}{3}-\frac{\epsilon}{2}]$.  
	  	\item[II.] If $\phi(x_2) \in [\delta+1, \frac{11}{3}-\epsilon] $, then $ \phi(z) \in [\frac{10}{3}-\frac{\epsilon}{2}, \frac{11}{3}+\eta-\epsilon]$.  
	  	\item[III.] If $\phi(x_3)\in [1+\eta, \delta-1]$, then  $\phi(z)\in [\frac{10}{3}-\frac{\epsilon}{2}, \frac{11}{3}+\eta-\epsilon]$.  
	  	\item[IV.]  If $\delta> \frac{5}{3}+\eta-\epsilon$ and $\phi(x_3)\in [\delta+1, \frac{11}{3}+\eta-\epsilon]$, then $\eta \leq \frac{1}{3}-\frac{\epsilon}{2}$ and $\phi(z)\in [1+\eta, \frac{4}{3}-\frac{\epsilon}{2}]$. 
	  \end{itemize}
\end{claim}

\begin{proof*}
\medskip
[I]  Assume to the contrary (by \ref{equ:z}) that $ \phi(z) \in [\frac{10}{3}-\frac{\epsilon}{2}, \frac{11}{3}+\eta-\epsilon]$ and $\phi(x_2) \in  [1, \delta-1]$.  Then  $$d_{\pod r}( \phi(x_2), \phi(z)) \ge \min \{\frac{10}{3}-\frac{\epsilon}{2} - (\delta -1), \frac{14}{3}- \epsilon +1- (\frac{11}{3}+\eta-\epsilon ) \} > \frac{4}{3}-\frac{\epsilon}{2},$$   contradicting the fact that $x_2z$ is a negative edge.

\medskip
 [II]  Assume to the contrary (by \ref{equ:z}) that   $\phi(z) \in [1+\eta,\frac{4}{3}-\frac{\epsilon}{2}]$ and $\eta \le \frac 13 - \frac{\epsilon}{2}$.  
Then  $$d_{\pod r}( \phi(x_2), \phi(z)) \ge \min \{2+\eta, \delta -\frac{1}{3}+\frac{\epsilon}{2} \} > \frac{4}{3}-\frac{\epsilon}{2},$$   contradicting the fact that $x_2z$ is a negative edge.

\medskip
[III]  Assume to the contrary (by   \ref{equ:z}) that $\phi(z)  \in [1+\eta,\frac{4}{3}-\frac{\epsilon}{2}]$,  $\phi(x_3)\in [1+\eta, \delta-1]$ and hence $\delta \geq 2+\eta$. As $\delta \in (\frac{5}{3}-\epsilon, 3+\eta)$,  
 $$d_{\pod r}( \phi(x_3), \phi(z)) \le \delta-1 - (1+\eta) < 1,$$   contradicting the fact that $x_3z$ is a positive edge.

\medskip
[IV]   
Assume to the contrary (by \ref{equ:z}) that $\phi(z)\in [\frac{10}{3}-\frac{\epsilon}{2}, \frac{11}{3}+\eta-\epsilon]$. As  $\delta> \frac{5}{3}+\eta-\epsilon$,  
$$d_{\pod r}( \phi(x_3), \phi(z)) \le \frac{11}{3}+\eta-\epsilon - (\delta+1)   < 1,$$   contradicting the fact that $x_3z$ is a positive edge.

This completes the proof of Claim \ref{claim-phiz}.
 	\end{proof*}
 
To complete the proof of Lemma \ref{lem:w},   we partition the interval $(\frac{5}{3}-\epsilon, 3+\eta)$ into three parts and consider three cases depending on to which part $\delta$ belongs.

\medskip
\noindent
{\bf Case (i)}  $\delta \in (\dfrac{5}{3}-\epsilon, 2+\eta)$.

As   $\delta < 2+\eta$, by \ref{equ:x_3},  $\phi(x_3) \in [\delta+1, \frac{11}{3}+\eta-\epsilon]$. Thus $\overbar{\phi(x_3)}\in [\delta-\frac{4}{3}+\frac{\epsilon}{2}, \frac{4}{3}+\eta-\frac{\epsilon}{2}]$. 

By \ref{equ:x_2}, $\phi(x_2) \in [1, \delta-1] \cup [\delta+1, \frac{11}{3}-\epsilon]$. 

\medskip
\noindent
{\bf Subcase (i-1)} $\phi(x_2)\in [1, \delta-1]$ and hence  (by \ref{equ:x_2}) $\delta \ge 2$.

 \begin{figure}[ht]
    \centering 
       \begin{tikzpicture}
	[scale=.5]
	\draw [line width=.05mm, black] (0,0) circle (4cm);
	\draw[rotate=0] (0,4)  node[line width=0mm, circle, fill, inner sep=1.2pt, label=below: \scriptsize{$0$} ] {};

         \draw [black, line width=.6mm, domain=-9:-14] plot ({3.8*cos(\x)}, {3.8*sin(\x)}) node [left] { \tiny{${\phi(z)}$} };
	\draw [black, line width=.3mm, domain=-28:40] plot ({4.4*cos(\x)}, {4.4*sin(\x)}) node [right] { \tiny{${\overbar{\phi(x_3)}}$} };
	\draw [black, line width=.3mm, domain=-8:12] plot ({4.2*cos(\x)}, {4.2*sin(\x)}) node [left] { \tiny{$\phi(x_2)$} };
	\end{tikzpicture}
            \caption{Subcase (i-1): Restrictions on the negative triangle $x_3x_2z$.} 
            \label{fig:subcase(i-1)}
\end{figure}
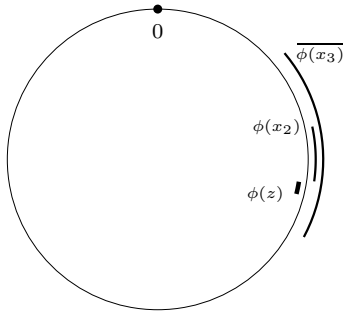

As $\delta < 2+ \eta < 3$, by [I],    $\phi(z)\in [1+\eta, \frac{4}{3}-\frac{\epsilon}{2}]$ and  $\eta\leq \frac{1}{3}-\frac{\epsilon}{2}$. Hence $\delta<2+\eta\leq \frac{7}{3}-\frac{\epsilon}{2}$. 

Consider the interval $I_{\phi; \bar{x}_3, x_2, z}$, see Figure~\ref{fig:subcase(i-1)}. If $\overbar{\phi(x_3)}$ is the starting point of this interval, then since  $\delta -1 < \frac{4}{3}-\frac{\epsilon}{2}$,  we have
$$[\overbar{\phi(x_3)}, \phi(z)]\subseteq [\delta-\frac{4}{3}+\frac{\epsilon}{2}, \frac{4}{3}-\frac{\epsilon}{2}].$$
If the starting of  $I_{\phi; \bar{x}_3, x_2, z}$ is   $\phi(x_2)$ or $\phi(z)$, then since $\delta -1 < \frac{4}{3}-\frac{\epsilon}{2}$, we have 
$$[\phi(x_2), \overbar{\phi(x_3)}]\subseteq [1, \frac{4}{3}+\eta-\frac{\epsilon}{2}].$$
In either case,   $I_{\phi; \bar{x}_3, x_2, z}$ has length at most $\frac{2}{3}-\epsilon$, contrary to Corollary \ref{coro:mini-gadget}. 

\medskip
\noindent
{\bf Subcase (i-2)} $\phi(x_2)\in [\delta+1, \frac{11}{3}-\epsilon]$. 

By  [II], $\phi(z)\in [\frac{10}{3}-\frac{\epsilon}{2}, \frac{11}{3}+\eta-\epsilon]$. 
Note that  $\ell([\frac{10}{3}-\frac{\epsilon}{2}, \frac{11}{3}+\eta-\epsilon]) =   \frac{1}{3}+\eta-\frac{\epsilon}{2}< 1$. 
Since $d_{\pod r}(\phi(x_3), \phi(z)) \ge 1$ (as $x_3z$ is a positive edge) and $ \phi(x_3)\in [\delta+1, \frac{11}{3}+\eta-\epsilon]$, we conclude that  $\delta \leq  \frac{5}{3}+\eta-\epsilon$ and $$ \phi(x_3)\in [\delta+1, \frac{8}{3}+\eta-\epsilon].$$
This implies that  $I_{\phi;x_3,x_2}\subseteq [\delta+1, \frac{11}{3}-\epsilon]$.
As $\delta > \frac 53 -\epsilon$,   $\ell([\delta+1, \frac{11}{3}-\epsilon]) < 1$, contrary to the fact that  $x_2x_3$ is a positive edge. 

\medskip
\noindent
{\bf Case (ii)} $\delta \in [2+\eta, \dfrac{8}{3}+\eta-\epsilon]$.

Depending on the ranges of $\phi(x_2)$ and $\phi(x_3)$, we consider four cases. 

\medskip
\noindent
{\bf Subcase (ii-1)} $\phi(x_2)\in [1, \delta-1]$ and $\phi(x_3)\in [1+\eta, \delta-1]$.

By [III], $\phi(z)\in [\frac{10}{3}-\frac{\epsilon}{2}, \frac{11}{3}+\eta-\epsilon]$. 
  
   As $\phi(x_2), \phi(x_3) \in [1, \delta-1]$, $\ell( [1+\eta, \delta-1]) <1$ and   $x_2x_3$ is a positive edge, we have $\delta \geq 3$ and $\phi(x_2)\in [1, \delta-2]$. However, the distance of points in $ [\frac{10}{3}-\frac{\epsilon}{2}, \frac{11}{3}+\eta-\epsilon]$ and $ [1, \delta-2]$ is at least $2-\eta$ which is strictly larger than $\frac{4}{3}-\frac{\epsilon}{2}$, contradicting that $x_2z$ is a negative edge.

\medskip
\noindent
{\bf Subcase (ii-2)} $\phi(x_2)\in [1, \delta-1]$ and $\phi(x_3)\in [\delta+1, \frac{11}{3}+\eta-\epsilon]$. ($\overbar{\phi(x_3)}\in [\delta-\frac{4}{3}+\frac{\epsilon}{2}, \frac{4}{3}+\eta-\frac{\epsilon}{2}]$)

%

By [IV],   $\phi(z)\in [1+\eta, \frac{4}{3}-\frac{\epsilon}{2}]$ and by \ref{equ:z}, $\eta \leq \frac{1}{3}-\frac{\epsilon}{2}$.

 Note that the interval  $I_{\phi;\bar{x}_3, x_2,z}$ is  one of the following intervals:
 \[ 
 [\overbar{\phi(x_3)}, \phi(z)]  \subseteq  [\delta-\frac{4}{3}+\frac{\epsilon}{2},  \frac{4}{3}-\frac{\epsilon}{2}], \ 
 [\overbar{\phi(x_3)}, \phi(x_2)]
 \subseteq  [\delta-\frac{4}{3}+\frac{\epsilon}{2}, \delta -1], \ [\phi(z), \phi(x_2)] \subseteq  [1+\eta, \delta-1],
 \]
\[ [\phi(z), \overbar{\phi(x_3)}]  \subseteq [1+\eta, \frac{4}{3}+\eta-\frac{\epsilon}{2}], \
[\phi(x_2), \overbar{\phi(x_3)}], [\phi(x_2), {\phi(z)}] \subseteq   [1, \frac{4}{3}+\eta-\frac{\epsilon}{2}].
\]  

 All the above intervals have lengths at most $\frac{2}{3}-\epsilon$, implying that $\ell_{\phi; \bar{x}_3, x_2, z} < \frac{2}{3}+\frac{\epsilon}{2}$, this contradicts Corollary~\ref{coro:mini-gadget}.

\medskip
\noindent
{\bf Subcase (ii-3)} 
$\phi(x_2)\in [\delta+1, \frac{11}{3}-\epsilon]$ and $\phi(x_3)\in [1+\eta, \delta-1]$.

%
%

By  \ref{equ:x_2}, $\delta\leq \frac{8}{3}-\epsilon$, and 
 by [III], $\phi(z)\in [\frac{10}{3}-\frac{\epsilon}{2}, \frac{11}{3}+\eta-\epsilon]$. 
Observe  that $\overbar{\phi(x_3)} \in [\frac{10}{3}+\eta-\frac{\epsilon}{2}, \frac{4}{3}+\delta-\frac{\epsilon}{2}]$.
So $I_{\phi; \bar{x}_3, x_2, z}$ is one of the following intervals:

$$[\overbar{\phi(x_3)}, \phi(z)], [\overbar{\phi(x_3)}, \phi(x_2)] \subseteq[\frac{10}{3}+\eta-\frac{\epsilon}{2},  \frac{11}{3}+\eta-\epsilon], \
[\phi(x_2), \overbar{\phi(x_3)}] \subseteq [\delta+1, \frac{4}{3}+\delta-\frac{\epsilon}{2}], $$
$$[\phi(x_2), {\phi(z)}]\subseteq [\delta+1,  \frac{11}{3}+\eta-\epsilon], 
\ [\phi(z), \phi(x_2)]\subseteq [\frac{10}{3}-\frac{\epsilon}{2}, \frac{11}{3}-\epsilon] \
\text{and} \ [\phi(z), \overbar{\phi(x_3)}] \subseteq [\frac{10}{3}-\frac{\epsilon}{2}, \frac{4}{3}+\delta-\frac{\epsilon}{2}].$$

Thus the $\ell_{\phi; \bar{x}_3, x_2, z} <\frac{2}{3}-\epsilon$, contradicting Corollary~\ref{coro:mini-gadget}.

 \medskip
 \noindent
 {\bf Subcase (ii-4)} 
$\phi(x_2) \in [\delta+1, \frac {11}{3}-\epsilon]$ and $\phi(x_3) \in [\delta+1, \frac{11}{3}+\eta - \epsilon]$.

The interval $[\delta+1, \frac{11}{3}+\eta-\epsilon]$  has length at most $\frac{2}{3}-\epsilon < 1$. This contradicts the fact that  $x_2x_3$ is a positive edge.
 
\medskip
\noindent
{\bf Case (iii)} $\delta \in (\dfrac{8}{3}+\eta-\epsilon, 3+\eta)$.

As $\delta>  \frac{8}{3}+\eta-\epsilon\geq  \frac{8}{3}-\epsilon$,  by \ref{equ:x_2} and \ref{equ:x_3}, $\phi(x_2)\in [1, \delta-1]$ and  $\phi(x_3)\in [1+\eta, \delta-1]$. 

As $\ell([1+\eta, \delta-1]) < 1$  and $d_{\pod r}(\phi(x_2),
\phi(x_3)) \ge 1$, we conclude that $\delta \geq 3$ and   $\phi(x_2)\in [1, \delta-2]$.
As $x_2z$ is a negative edge, and the distance between the intervals $[\frac{10}{3}-\frac{\epsilon}{2}, \frac{11}{3}+\eta-\epsilon]$ and $[1, \delta-2]$ is strictly larger than $\frac{4}{3}-\frac{\epsilon}{2}$, we know that $\phi(z) \notin [\frac{10}{3}-\frac{\epsilon}{2}, \frac{11}{3}+\eta-\epsilon]$. By \ref{equ:z}, $\phi(z)\in [1+\eta, \frac{4}{3}-\frac{\epsilon}{2}]$ and  $\eta \leq \frac{1}{3}-\frac{\epsilon}{2}$.
This implies that  $\phi(z)$ and $\phi(x_3)$ are both in $[1+\eta, \delta-1]$. However, $\delta <3+\eta$, so $\ell([1+\eta, \delta-1] ) < 1$, contradicting  the fact that $x_3z$ is a positive edge.

This completes the proof that $\phi(w)\notin (\frac{5}{3}-\epsilon, 3+\eta)$.

We observe that in this proof, vertex $x_1$ played no role. In other words, the conclusion   holds for the signed subgraph induced on $G\setminus x_1$. In this subgraph a switching at $U=\{w,x_2,x_3,x_4, x_5\}$ results in an isomorphic copy where $x_4$ and $x_5$ play the role of $x_2$ and $x_3$. Thus for the mapping  $\phi'$ defined as $\phi'(v)= \phi(v)$ for 
$v \in V(\tilde{W}) - U$ and $\phi'(v)= \overbar{\phi(v)}$ for $v \in U$, we have $\phi'(w) \notin (\frac{5}{3}-\epsilon, 3+\eta)$. Hence  $\phi(w) \not\in (4-\frac{3\epsilon}{2}, \frac{2}{3}+\eta+\frac{\epsilon}{2})$. 
\end{proof}

If $\eta> 1-\frac{3\epsilon}{2}$, then by the Lemma \ref{lem:w}, we have no choice for $\phi(w)$. Thus we assume in the rest of the proof that $\eta\leq 1-\frac{3\epsilon}{2}$ and   \[ \phi(w)\in [3+\eta, 4-\dfrac{3\epsilon}{2}]\cup [\dfrac{2}{3}+\eta+\dfrac{\epsilon}{2}, \dfrac{5}{3}-\epsilon].\]
 
 The two cases will be consider separately.

\medskip
\noindent
{\bf Case A.} $ \phi(w)\in [3+\eta, 4-\frac{3\epsilon}{2}]$.

As $ux_5$ is a negative edge and $\phi(u)=0$, we have
$\phi(x_5) \in [\frac{10}{3}-\frac{\epsilon} 2, \frac 43 - \frac {\epsilon}2 ]$.
As $\ell( [3+\eta, 4-\frac{3\epsilon}{2}]) < 1$,  and $x_5w$ is a positive edge,  we conclude that $3+\eta, \phi(w), \phi(x_5), \frac43 - \frac{\epsilon}2$ occur in this cyclic order. This implies that $$\phi(x_5)\in [4+\eta, \frac{4}{3}-\frac{\epsilon}{2}].$$ 

For $i=1,2,3,4$,  by considering the edges between
$x_i$ and $u,v,w$, similar arguments as above lead to the following restrictions on the value of $\phi(x_i)$: \[ \phi(x_1)\in [1, 3-\frac{3\epsilon}{2}], \ \phi(x_2)\in [1, 3-\frac{3\epsilon}{2}], \ \phi(x_3)\in [1+\eta, 3-\frac{3\epsilon}{2}], \ \phi(x_4)\in [4+\eta, \frac{4}{3}+\eta-\frac{\epsilon}{2}].  \] 

By \ref{equ:z}, based on the choices of $\phi(z)$ and $\phi(t)$, we consider four cases.

 \medskip
 \noindent
 {\bf Case A-1}  $\eta \le \dfrac 13 - \dfrac {\epsilon}{2}$ and  $\phi(z), \phi(t)\in [1+\eta,\dfrac{4}{3}-\dfrac{\epsilon}{2}]$. 
 
 \begin{figure}[ht]
  \centering 
    \begin{minipage}{.48\textwidth}
    \centering 
       \begin{tikzpicture}
	[scale=.5]
	\draw [line width=.05mm, black] (0,0) circle (4cm);
	\draw[rotate=0] (0,4)  node[line width=0mm, circle, fill, inner sep=1.2pt, label=above: \scriptsize{$0$} ] {};

\draw [black, line width=.6mm, domain=-9:-14] plot ({3.4*cos(\x)}, {3.4*sin(\x)}) node [left] { \tiny{${\phi(t), \phi(z)}$} };

	\draw [black, line width=.3mm, domain=64:120] plot ({3.6*cos(\x)}, {3.6*sin(\x)}) node [below] { \tiny{${\phi(x_5)}$} };
	\draw [gray, dotted, line width=.3mm, domain=-14:64] plot ({3.6*cos(\x)}, {3.6*sin(\x)}) node [below]{};
	
	\draw [black, line width=.3mm, domain=40:-36] plot ({3.8*cos(\x)}, {3.8*sin(\x)}) node [left] { \tiny{${\phi(x_4)}$} };
	\draw [gray, dotted, line width=.3mm, domain=120:40] plot ({3.8*cos(\x)}, {3.8*sin(\x)}) node [left]{};
	
	\draw [black, line width=.3mm, domain=-140:-78] plot ({4.2*cos(\x)}, {4.2*sin(\x)}) node [above left] { \tiny{${\phi(x_3)}$} };
	\draw [gray, dotted, line width=.3mm, domain=-78:-9] plot ({4.2*cos(\x)}, {4.2*sin(\x)}) node [left] {};
	
	\draw [black, line width=.3mm, domain=-62:13] plot ({4.6*cos(\x)}, {4.6*sin(\x)}) node [above] { \tiny{$\phi(x_2)$} };
	\draw [gray, dotted, line width=.3mm, domain=-62:-140] plot ({4.6*cos(\x)}, {4.6*sin(\x)}) node [right] {};
	
	\draw [black, line width=.3mm, domain=-65:-140] plot ({4.4*cos(\x)}, {4.4*sin(\x)}) node [left] { \tiny{$\phi(x_1)$} };
	\draw [gray, dotted, line width=.3mm, domain=-65:13] plot ({4.4*cos(\x)}, {4.4*sin(\x)}) node [below] {};
	\end{tikzpicture}
            \caption{Case A-1: Updating ranges of $\phi(x_i)$'s}
            \label{fig:caseA-1}
            \end{minipage}
             \begin{minipage}{.48\textwidth}
    \centering 
       \begin{tikzpicture}
	[scale=.5]
	\draw [line width=.06mm, black] (0,0) circle (4cm);
	 \draw[rotate=0] (0,4)  node[line width=0mm, circle, fill, inner sep=1.2pt, label=below: \scriptsize{$0$} ] {};
	 \draw [black, line width=.3mm, domain=198:138] plot ({4.2*cos(\x)}, {4.2*sin(\x)}) node [left] { \tiny{$\phi(w)$} };
         \draw [black, line width=.3mm, domain=120:64] plot ({4.2*cos(\x)}, {4.2*sin(\x)}) node [right] { \tiny{${\phi(x_5)}$} };
         \draw [black, line width=.3mm, domain=-65:-140] plot ({4.2*cos(\x)}, {4.2*sin(\x)}) node [left] { \tiny{$\phi(x_1)$} };
	\end{tikzpicture}
            \caption{Case A-1: Restrictions on $x_1x_5u$}
            \label{fig:caseA-1.2}
            \end{minipage}
\end{figure}

We will update the ranges of $\phi(x_i)$'s as depicted in Figure~\ref{fig:caseA-1}. In this figure the range of each $\phi(x_i)$ is shown as an interval partitioned to two parts. The full interval represents the  restriction we have started with. We then show that the dotted part of the interval is not available for $\phi(x_i)$, thus updating the range to the solid part of the interval.

As $\ell([1+\eta,\frac{4}{3}-\frac{\epsilon}{2}]) < 1$, $ \phi(x_3)\in [1+\eta, 3-\frac{3\epsilon}{2}]$ and $zx_3$ is a positive edge, the points $1+\eta, \phi(z), \phi(x_3), 3-\frac{3\epsilon}{2}$ occur in $C^r$ in this cyclic order. This implies that  $$\phi(x_3)\in [2+\eta, 3-\frac{3\epsilon}{2}].$$

As $\ell([2+\eta, 3-\frac{3\epsilon}{2}] ) < 1$, $\phi(x_2)\in [1, 3-\frac{3\epsilon}{2}]$ and $x_2x_3$ is a positive edge, the points $1, \phi(x_2), \phi(x_3), 3-\frac{3\epsilon}{2}$ occurs in $C^r$ in this cyclic order. This implies that    $$\phi(x_2)\in [1, 2-\frac{3\epsilon}{2}].$$

By considering the positive edges $x_5t$ and then $x_4x_5$, similar arguments show that $$\phi(x_5)\in [4+\eta, \frac{1}{3}-\frac{\epsilon}{2}] \text{ and } \phi(x_4)\in [\frac{1}{3}+\eta+\epsilon, \frac{4}{3}+\eta-\frac{\epsilon}{2}].$$

Considering the positive edge $x_1x_2$ and the range of $\phi(x_2)$ given above,  a similar argument shows that $$\phi(x_1)\in [2, 3-\frac{3\epsilon}{2}] \text{ and hence } \overbar{\phi(x_1)}\in [\frac{13}{3}-\frac{\epsilon}{2}, \frac{2}{3}-\epsilon].$$

Now consider the negative triangle $x_1x_5u$.  
If $I_{\phi;\bar{x}_1,x_5, u} = [\overbar{\phi(x_1)}, \phi(x_5)]$, then $I_{\phi;\bar{x}_1,x_5, u} \subseteq [\frac{13}{3}-\frac{\epsilon}{2}, \frac{1}{3}-\frac{\epsilon}{2}]$ but $\ell([\frac{13}{3}-\frac{\epsilon}{2}, \frac{1}{3}-\frac{\epsilon}{2}]) =\frac{2}{3}-\epsilon <\frac{2}{3}+\frac{\epsilon}{2}$, contrary to Corollary \ref{coro:mini-gadget}.

Also $\phi(u)=0$ cannot be an end point of the interval $I_{\phi;\bar{x}_1,x_5, u}$, as $0$ is at distance less than $\frac{2}{3}+\frac{\epsilon}{2}$ from each of the four end points of the intervals that are the ranges of $\overbar{\phi(x_1)}$ and $\phi(x_5)$.

Thus $I_{\phi;\bar{x}_1,x_5, u}= [\phi(x_5), \overbar{\phi(x_1)}]$. By  Corollary~\ref{coro:mini-gadget},   $\ell([\phi(x_5), \overbar{\phi(x_1)}]) \ge \frac{2}{3}+\frac{\epsilon}{2}$. Thus   $$\ell([\phi(x_1), \phi(x_5)]) =  \frac{r}{2}-\ell([\phi(x_5), \overbar{\phi(x_1)}]) \le  \frac{5}{3}-\epsilon.$$
As  $\phi(x_1)\in [2, 3-\frac{3\epsilon}{2}]$, $\phi(w)\in [3+\eta, 4-\frac{3\epsilon}{2}]$ and $\phi(x_5)\in [4+\eta, \frac{1}{3}-\frac{\epsilon}{2}]$, we conclude that $\phi(w) \in [\phi(x_1), \phi(x_5)]$ (see Figure~\ref{fig:caseA-1.2}).  Since $wx_1$ and $wx_5$ are positive edges,  we have 
$$2 \le \ell([\phi(x_1), \phi(w)]) + \ell([\phi(w), \phi(x_5)]) = \ell([\phi(x_1), \phi(x_5)]) \le  \frac{5}{3}-\epsilon,$$ a contradiction.

\medskip
\noindent
{\bf Case A-2} $\eta \le \dfrac 13 -\dfrac {\epsilon}{2}$, $\phi(z)\in [1+\eta,\dfrac{4}{3}-\dfrac{\epsilon}{2}]$  and $\phi(t)\in [\dfrac{10}{3}-\dfrac{\epsilon}{2}, \dfrac{11}{3}+\eta-\epsilon]$. 

\medskip
The proof is similar to the previous case. 
The  positive edge $zx_3$ and the negative edge $tx_4$ further restrict the ranges of  $\phi(x_3)$, $\phi(x_4)$. Then,   the new  ranges of $\phi(x_3)$ and $\phi(x_5)$, together with the positive edges $x_3x_2$ and $x_4x_5$ further restrict the range of $\phi(x_2)$, $\phi(x_5)$. As the computations are very similar to the previous case, we just list the conclusion  of this argument:
 \[ \phi(x_3)\in [2+\eta, 3-\frac{3\epsilon}{2}], \ \phi(x_2)\in [1, 2-\frac{3\epsilon}{2}], \ \phi(x_5)\in [\frac{1}{3}+\eta+\epsilon, \frac{4}{3}-\frac{\epsilon}{2}] \ \text{and}\  \phi(x_4)\in [4+\eta, \frac{1}{3}-\frac{\epsilon}{2}]. \]

Next we consider the negative triangle $vx_3x_4$.  As  
  \[ \overbar{\phi(x_3)}\in [\frac{13}{3}+\eta-\frac{\epsilon}{2}, \frac{2}{3}-\epsilon], \ \phi(x_4)\in [4+\eta, \frac{1}{3}-\frac{\epsilon}{2}] \ \text{and} \ \phi(v)=\eta.\]
Similar analysis as in the previous case shows that $I_{\phi; \bar{x}_3, x_4, v}=[\phi(x_4), \overbar{\phi(x_3)}]$ and $\ell( \phi(x_4), \overbar{\phi(x_3)}) \geq \frac{2}{3}+\frac{\epsilon}{2}$. This means that $\ell([\phi(x_3), \phi(x_4)]) <2$. A similar argument shows that  $\phi(w) \in [\phi(x_3), \phi(x_4)]$. As $x_3w, x_4w$ are positive edges, we have $$2 \le \ell([\phi(x_3), \phi(w)]) + \ell([\phi(w), \phi(x_4)]) = \ell([\phi(x_3), \phi(x_4)]) < 2,$$ a contradiction.

\medskip
\noindent
{\bf Case A-3} $\eta \le \dfrac {1}{3} -\dfrac {\epsilon}{2}$,  $\phi(z)\in [\dfrac{10}{3}-\dfrac{\epsilon}{2}, \dfrac{11}{3}+\eta-\epsilon]$  and $\phi(t)\in [1+\eta, \dfrac{4}{3}-\dfrac{\epsilon}{2}]$. 

 \begin{figure}[ht]
  \centering 
    \begin{minipage}{.48\textwidth}
    \centering 
       \begin{tikzpicture}
	[scale=.5]
	\draw [line width=.06mm, black] (0,0) circle (4cm);
	\draw[rotate=0] (0,4)  node[line width=0mm, circle, fill, inner sep=1.2pt, label=above: \scriptsize{$0$} ] {};

\draw [black, line width=.6mm, domain=-168:-218] plot ({4.2*cos(\x)}, {4.2*sin(\x)}) node [left] { \tiny{${\phi(z)}$} };
\draw [black, line width=.6mm, domain=-9:-14] plot ({3.4*cos(\x)}, {3.4*sin(\x)}) node [left] { \tiny{${\phi(t)}$} };

	\draw [black, line width=.3mm, domain=64:120] plot ({3.6*cos(\x)}, {3.6*sin(\x)}) node [below] { \tiny{${\phi(x_5)}$} };
	\draw [gray, dotted, line width=.3mm, domain=-14:64] plot ({3.6*cos(\x)}, {3.6*sin(\x)}) node [below]{};
	
	\draw [black, line width=.3mm, domain=40:-36] plot ({3.8*cos(\x)}, {3.8*sin(\x)}) node [left] { \tiny{${\phi(x_4)}$} };
	\draw [gray, dotted, line width=.3mm, domain=120:40] plot ({3.8*cos(\x)}, {3.8*sin(\x)}) node [left]{};
	
	\draw [black, line width=.3mm, domain=-61:-9] plot ({4.2*cos(\x)}, {4.2*sin(\x)}) node [right] { \tiny{${\phi(x_3)}$} };
	\draw [gray, dotted, line width=.3mm, domain=-140:-59] plot ({4.2*cos(\x)}, {4.2*sin(\x)}) node [right] {};
	
	\draw [black, line width=.3mm, domain=-69:-140] plot ({4.4*cos(\x)}, {4.4*sin(\x)}) node [left] { \tiny{$\phi(x_2)$} };
	\draw [gray, dotted, line width=.3mm, domain=-69:13] plot ({4.4*cos(\x)}, {4.4*sin(\x)}) node [right] {};
	\end{tikzpicture}
            \caption{Case A-3: Updating ranges of $\phi(x_i)$'s}
            \label{fig:caseA-3}
            \end{minipage}
             \begin{minipage}{.48\textwidth}
    \centering 
       \begin{tikzpicture}
	[scale=.5]
	\draw [line width=.06mm, black] (0,0) circle (4cm);
	 \draw[rotate=0] (0,4)  node[line width=0mm, circle, fill, inner sep=1.2pt, label=below: \scriptsize{$0$} ] {};
	 \draw [black, line width=.3mm, domain=138:198] plot ({4.2*cos(\x)}, {4.2*sin(\x)}) node [left] { \tiny{$I_w$} };
\draw [black, line width=.3mm, domain=64:120] plot ({4.2*cos(\x)}, {4.2*sin(\x)}) node [left] { \tiny{$I_5$} };
	\draw [black, line width=.3mm, domain=-36:40] plot ({4.2*cos(\x)}, {4.2*sin(\x)}) node [right] { \tiny{$I_4$} };
	\draw [black, line width=.3mm, domain=-61:-9] plot ({4.4*cos(\x)}, {4.4*sin(\x)}) node [right] { \tiny{$I_3$} };
	\draw [black, line width=.3mm, domain=-140:-69] plot ({4.2*cos(\x)}, {4.2*sin(\x)}) node [below] { \tiny{$I_2$} };
	\end{tikzpicture}
            \caption{Case A-3: Restrictions on $wx_5x_4x_3x_2$}
            \label{fig:caseA-3.2}
            \end{minipage}
\end{figure}

We will update the ranges of $\phi(x_2), \dots, \phi(x_5)$ as depicted in Figure~\ref{fig:caseA-3}.

Recall that $\phi(x_2)\in [1, 3-\frac{3\epsilon}{2}]$ and $\phi(x_5)\in [4+\eta, \frac{4}{3}-\frac{\epsilon}{2}]$. If $\phi(x_2) \in [1,2)$, then $d_{\pod r}( \phi(x_2), \phi(z)) > \frac{4}{3}- \frac{\epsilon}{2}$, contrary to the fact that  $x_2z$ is a negative edge. Thus  
$$\phi(x_2)\in [2, 3-\dfrac{3\epsilon}{2}]:=I_2.$$
If $ \phi(x_5)\in  (\frac{1}{3}-\frac{\epsilon}{2}, \frac{4}{3}-\frac{\epsilon}{2}]$, then  $d_{\pod r}( \phi(x_5), \phi(t)) < 1$, contrary to the fact that  $x_5t$ is a positive edge. Therefore $$ \phi(x_5)\in [4+\eta, \frac{1}{3}-\frac{\epsilon}{2}]:=I_5.$$

Note that $\ell(I_5 ) < 1$. As $\phi(x_4)\in [4+\eta, \frac{4}{3}+\eta-\frac{\epsilon}{2}]$ and  $d_{\pod r}(\phi(x_5), \phi(x_4)) \ge 1$ ($x_4x_5$ is a positive edge), we conclude that 
the points $4+\eta,  \phi(x_5), \phi(x_4), \frac{4}{3}+\eta-\frac{\epsilon}{2}$ occurs in $C^r$ in this cyclic order and  
$$\phi(x_4)\in [\dfrac{1}{3}+\eta+\epsilon, \dfrac{4}{3}+\eta-\dfrac{\epsilon}{2}]:=I_4.$$
Similarly, $\ell(I_2) < 1$, and $x_2x_3$ is a positive edge. Recall that $\phi(x_3)\in [1+\eta, 3-\frac{3\epsilon}{2}]$. Thus the points $2, \phi(x_2), \phi(x_3)$ occurs in $C^r$ in this cyclic order. Hence $$\phi(x_3)\in [1+\eta, 2-\dfrac{3\epsilon}{2}]:=I_3.$$

Finally recall that  $$\phi(w)\in [3+\eta, 4-\frac{3\epsilon}{2}]:=I_w.$$
The intervals $I_w$, $I_5$, $I_4$, $I_3$, $I_2$ are each of length less than 1,  and except for $I_3$ and $I_4$ there is no intersection among them (see Figure~\ref{fig:caseA-3.2}). 
Since $\ell(I_3) < 1$, we have $\phi(x_4) \notin I_3$ (because $x_3x_4$ is a positive edge). 
Thus the points  $\phi(w),\phi(x_5), \phi(x_4), \phi(x_3), \phi(x_2)$ occur in this cyclic order.  As $C^r$ is of length $\frac{14}{3}-\epsilon$, 
the colors of some two consecutive vertices of the $5$-cycle $wx_5x_4x_3x_2$ is less than 1, but all the edges of this cycle are positive. This is a contradiction.

\medskip
\noindent
{\bf Case A-4}  $\phi(z), \phi(t)\in [\dfrac{10}{3}-\dfrac{\epsilon}{2}, \dfrac{11}{3}+\eta-\epsilon]$.

\medskip
Similarly, we obtain that $$ \phi(x_2)\in [2, 3-\frac{3\epsilon}{2}]:=I_2, \ \ \phi(x_4)\in [4+\eta, \frac{1}{3}+\eta-\frac{\epsilon}{2}]:=I_4,$$ 
$$ \phi(x_5)\in [\frac{1}{3}+\eta+\epsilon, \frac{4}{3}-\frac{\epsilon}{2}]:=I_5, \ \ \phi(x_1)\in [\frac{4}{3}+\eta+\epsilon, 2-\frac{3\epsilon}{2}]:=I_1.$$
The points $\phi(w)$, $\phi(x_4)$, $\phi(x_5)$, $\phi(x_1)$ and $\phi(x_2)$ occur in $C^r$ in this cyclic order. As all the edges of the 5-cycle $wx_4x_5x_1x_2$ are positive, this is a contradiction.

\medskip
\noindent 
{\bf Case B.} $\phi(w)\in [\dfrac{2}{3}+\eta+\dfrac{\epsilon}{2}, \dfrac{5}{3}-\epsilon]$.

Similarly, by considering the edges between each of $x_i$'s and vertices $u, v, w$, we have that 
$$\phi(x_1), \phi(x_2)\in [\dfrac{5}{3}+\eta+\dfrac{\epsilon}{2}, \frac{11}{3}-\epsilon], 
\phi(x_3)\in [\dfrac{5}{3}+\eta+\dfrac{\epsilon}{2}, \frac{11}{3}+\eta-\epsilon], $$
$$\phi(x_4)\in [\dfrac{10}{3}+\eta-\dfrac{\epsilon}{2}, \dfrac{2}{3}-\epsilon]\ \text{and} \ \phi(x_5)\in [\dfrac{10}{3}-\dfrac{\epsilon}{2}, \dfrac{2}{3}-\epsilon].$$ 
Based on the choices of $\phi(z)$ and $\phi(t)$, we have four sub-cases to discuss.

\medskip
\noindent 
{\bf Case B-1 } $\phi(z), \phi(t)\in [\dfrac{10}{3}-\dfrac{\epsilon}{2}, \dfrac{11}{3}+\eta-\epsilon]$.

 \begin{figure}[ht]
  \centering 
    \begin{minipage}{.48\textwidth}
    \centering 
       \begin{tikzpicture}
	[scale=.5]
	\draw [line width=.05mm, black] (0,0) circle (4cm);
	\draw[rotate=0] (0,4)  node[line width=0mm, circle, fill, inner sep=1.2pt, label=above: \scriptsize{$0$} ] {};

        \draw [black, line width=.6mm, domain=-168:-218] plot ({4.2*cos(\x)}, {4.2*sin(\x)}) node [above left] { \tiny{${\phi(t),\phi(z)}$} };

	\draw [black, line width=.3mm, domain=94:38] plot ({3.6*cos(\x)}, {3.6*sin(\x)}) node [below] { \tiny{${\phi(x_5)}$} };
	\draw [gray, dotted, line width=.3mm, domain=94:192] plot ({3.6*cos(\x)}, {3.6*sin(\x)}) node [below]{};
	
	\draw [black, line width=.3mm, domain=112:172] plot ({3.8*cos(\x)}, {3.8*sin(\x)}) node [right] { \tiny{${\phi(x_4)}$} };
	\draw [gray, dotted, line width=.3mm, domain=38:112] plot ({3.8*cos(\x)}, {3.8*sin(\x)}) node [left]{};
	
	\draw [black, line width=.3mm, domain=-118:-58] plot ({4.8*cos(\x)}, {4.8*sin(\x)}) node [below] { \tiny{${\phi(x_3)}$} };
	\draw [gray, dotted, line width=.3mm, domain=-214:-118] plot ({4.8*cos(\x)}, {4.8*sin(\x)}) node [left] {};
	
	\draw [black, line width=.3mm, domain=-137:-196] plot ({4.6*cos(\x)}, {4.6*sin(\x)}) node [left] { \tiny{$\phi(x_2)$} };
	\draw [gray, dotted, line width=.3mm, domain=-137:-58] plot ({4.6*cos(\x)}, {4.6*sin(\x)}) node [right] {};
	
	\draw [black, line width=.3mm, domain=-118:-58] plot ({4.4*cos(\x)}, {4.4*sin(\x)}) node [right] { \tiny{$\phi(x_1)$} };
	\draw [gray, dotted, line width=.3mm, domain=-118:-196] plot ({4.4*cos(\x)}, {4.4*sin(\x)}) node [below] {};
	\end{tikzpicture}
            \caption{Case B-1: Updating ranges of $\phi(x_i)$'s}
            \label{fig:caseB-1}
            \end{minipage}
             \begin{minipage}{.48\textwidth}
    \centering 
       \begin{tikzpicture}
	[scale=.5]
	\draw [line width=.06mm, black] (0,0) circle (4cm);
	 \draw[rotate=0] (0,4)  node[line width=0mm, circle, fill, inner sep=1.2pt, label=below: \scriptsize{$0$} ] {};
	 \draw [black, line width=.3mm, domain=20:-40] plot ({4.2*cos(\x)}, {4.2*sin(\x)}) node [right] { \tiny{$\phi(w)$} };
         \draw [black, line width=.3mm, domain=94:38] plot ({4.2*cos(\x)}, {4.2*sin(\x)}) node [right] { \tiny{${\phi(x_5)}$} };
         \draw [black, line width=.3mm, domain=-58:-118] plot ({4.2*cos(\x)}, {4.2*sin(\x)}) node [left] { \tiny{$\phi(x_1)$} };
	\end{tikzpicture}
            \caption{Case B-1: Restrictions on $x_1x_5u$}
            \label{fig:caseB-1.2}
            \end{minipage}
\end{figure}

We will update the ranges of $\phi(x_i)$'s as depicted in Figure~\ref{fig:caseB-1}.

The positive edges $zx_3$ and $tx_5$ further restrict the ranges of $\phi(x_3)$ and $\phi(x_5)$. Then the new ranges of $\phi(x_3)$ and $\phi(x_5)$, through the positive edges $x_3x_2$ and $x_5x_4$, further restrict the ranges of $\phi(x_2)$ and $\phi(x_4)$. By similar computation as previous cases, we have 
$$\phi(x_3)\in [\dfrac{5}{3}+\eta+\dfrac{\epsilon}{2}, \dfrac{8}{3}-\epsilon], \phi(x_2)\in  [\dfrac{8}{3}+\eta+\dfrac{\epsilon}{2}, \frac{11}{3}-\epsilon], \phi(x_5)\in [\dfrac{13}{3}+\eta-\dfrac{\epsilon}{2}, \dfrac{2}{3}-\epsilon] \text{ and } \phi(x_4)\in [\dfrac{10}{3}+\eta-\dfrac{\epsilon}{2}, \dfrac{13}{3}-2\epsilon].$$ 
Considering the positive edge $x_1x_2$ and the range of $\phi(x_2)$ given above, we obtain that $$\phi(x_1)\in [\dfrac{5}{3}+\eta+\dfrac{\epsilon}{2}, \dfrac{8}{3}-\epsilon].$$
Next we consider the negative triangle $x_1x_5u$. As $\overbar{\phi(x_1)}\in [4+\eta, \frac{1}{3}-\frac{\epsilon}{2}]$, $\phi(x_5)\in [\frac{13}{3}+\eta-\frac{\epsilon}{2}, \frac{2}{3}-\epsilon]$ and $\phi(u)=0$, similar analysis shows that $I_{\phi;\bar{x}_1,x_5,u}=[\overbar{\phi(x_1)}, \phi(x_5)]$ and $\ell([\overbar{\phi(x_1)}, \phi(x_5)]) \geq \frac{2}{3}+\frac{\epsilon}{2}$. It implies that $\ell([\phi(x_5), \phi(x_1)])=\frac{5}{3}-\frac{\epsilon}{2}<2$. We observe that $\phi(w)\in [\phi(x_5), \phi(x_1)]$ (see Figure~\ref{fig:caseB-1.2}) and since $x_5w, x_1w$ are both positive edges, we have that 
\[2\leq \ell([\phi(x_5, w)])+\ell(\phi(w), \phi(x_1))=\ell([\phi(x_5), \phi(x_1)])<2,
\] a contradiction.

\medskip
\noindent 
{\bf Case B-2 } $\eta \le \dfrac 13 -\dfrac {\epsilon}{2}$,  $\phi(z)\in [\dfrac{10}{3}-\dfrac{\epsilon}{2}, \dfrac{11}{3}+\eta-\epsilon]$ and $\phi(t)\in [1+\eta, \dfrac{4}{3}-\dfrac{\epsilon}{2}]$.

\medskip
The positive edge $zx_3$ and the negative edge $tx_4$ further restrict the ranges of $\phi(x_3)$ and $\phi(x_4)$ respectively. Then the new ranges of $\phi(x_3)$ and $\phi(x_4)$, through the positive edges $x_3x_2$ and $x_4x_5$, further restrict the ranges of $\phi(x_2)$ and $\phi(x_5)$. By similar computation as previous cases, we have 
$$\phi(x_3)\in [\dfrac{5}{3}+\eta+\dfrac{\epsilon}{2}, \dfrac{8}{3}-\epsilon],\phi(x_2)\in [\dfrac{8}{3}+\eta+\dfrac{\epsilon}{2}, \frac{11}{3}-\epsilon],\phi(x_4)\in [\dfrac{13}{3}+\eta-\dfrac{\epsilon}{2}, \dfrac{2}{3}-\epsilon]  \text{ and } \phi(x_5)\in [\dfrac{10}{3}-\dfrac{\epsilon}{2}, \dfrac{13}{3}-2\epsilon].$$
Next we consider the negative triangle $x_3x_4v$. As $\overbar{\phi(x_3)}\in [4+\eta, \frac{1}{3}-\frac{\epsilon}{2}]$, $\phi(x_4)\in [\frac{13}{3}+\eta-\frac{\epsilon}{2}, \frac{2}{3}-\epsilon]$ and $\phi(v)=\eta$, similar analysis shows that $I_{\phi;\bar{x}_3,x_4,v}=[\overbar{\phi(x_3)}, \phi(x_4)]$ and $\ell([\overbar{\phi(x_3)},  \phi(x_4)]) \geq \frac{2}{3}+\frac{\epsilon}{2}$. This means that $\ell([\phi(x_4), \phi(x_3)])<2$. We observe that $\phi(w)\in [\phi(x_4), \phi(x_3)]$ and as $x_4w, x_3w$ are both positive edges, we have that \[2\leq \ell([\phi(x_4, w)])+\ell(\phi(w), \phi(x_3))=\ell([\phi(x_4), \phi(x_3)])<2,
\] a contradiction.

\medskip
\noindent 
{\bf Case B-3 } $\eta \le \dfrac {1}{3}- \dfrac {\epsilon}{2}$ and  $\phi(z), \phi(t)\in [1+\eta,\dfrac{4}{3}-\dfrac{\epsilon}{2}]$. 

 \begin{figure}[ht]
  \centering 
    \begin{minipage}{.48\textwidth}
    \centering 
       \begin{tikzpicture}
	[scale=.5]
        \draw [line width=.05mm, black] (0,0) circle (4cm);
	\draw[rotate=0] (0,4)  node[line width=0mm, circle, fill, inner sep=1.2pt, label=above: \scriptsize{$0$} ] {};

        \draw [black, line width=.6mm, domain=-9:-14] plot ({4.2*cos(\x)}, {4.2*sin(\x)}) node [right] { \tiny{${\phi(t),\phi(z)}$} };

	\draw [black, line width=.3mm, domain=114:192] plot ({3.6*cos(\x)}, {3.6*sin(\x)}) node [right] { \tiny{${\phi(x_5)}$} };
	\draw [gray, dotted, line width=.3mm, domain=114:38] plot ({3.6*cos(\x)}, {3.6*sin(\x)}) node [below]{};
	
	\draw [black, line width=.3mm, domain=94:38] plot ({3.8*cos(\x)}, {3.8*sin(\x)}) node [right] { \tiny{${\phi(x_4)}$} };
	\draw [gray, dotted, line width=.3mm, domain=94:172] plot ({3.8*cos(\x)}, {3.8*sin(\x)}) node [left]{};
	
	\draw [black, line width=.3mm, domain=-214:-137] plot ({4.6*cos(\x)}, {4.6*sin(\x)}) node [below] { \tiny{${\phi(x_3)}$} };
	\draw [gray, dotted, line width=.3mm, domain=-137:-58] plot ({4.6*cos(\x)}, {4.6*sin(\x)}) node [left] {};
	
	\draw [black, line width=.3mm, domain=-118:-58] plot ({4.4*cos(\x)}, {4.4*sin(\x)}) node [right] { \tiny{$\phi(x_2)$} };
	\draw [gray, dotted, line width=.3mm, domain=-118:-196] plot ({4.4*cos(\x)}, {4.4*sin(\x)}) node [right] {};
	
	\draw [black, line width=.3mm, domain=-167:-137] plot ({4.2*cos(\x)}, {4.2*sin(\x)}) node [right] { \tiny{$\phi(x_1)$} };
	\draw [gray, dotted, line width=.3mm, domain=-137:-58] plot ({4.2*cos(\x)}, {4.2*sin(\x)}) node [below] {};
	\draw [gray, dotted, line width=.3mm, domain=-167:-196] plot ({4.2*cos(\x)}, {4.2*sin(\x)}) node [below] {};
	\end{tikzpicture}
            \caption{Case B-3: Updating ranges of $\phi(x_i)$'s}
            \label{fig:caseB-3}
            \end{minipage}
             \begin{minipage}{.48\textwidth}
    \centering 
       \begin{tikzpicture}
	[scale=.5]
	\draw [line width=.06mm, black] (0,0) circle (4cm);
	 \draw[rotate=0] (0,4)  node[line width=0mm, circle, fill, inner sep=1.2pt, label=below: \scriptsize{$0$} ] {};
	\draw [black, line width=.3mm, domain=20:-40] plot ({4.2*cos(\x)}, {4.2*sin(\x)}) node [right] { \tiny{$I_w$} };
	\draw [black, line width=.3mm, domain=-58:-118] plot ({4.2*cos(\x)}, {4.2*sin(\x)}) node [left] { \tiny{$I_2$} };
	\draw [black, line width=.3mm, domain=-137:-167] plot ({4.2*cos(\x)}, {4.2*sin(\x)}) node [left] { \tiny{$I_1$} };
	\draw [black, line width=.3mm, domain=192:114] plot ({4.2*cos(\x)}, {4.2*sin(\x)}) node [above] { \tiny{$I_5$} };
	\draw [black, line width=.3mm, domain=94:38] plot ({4.2*cos(\x)}, {4.2*sin(\x)}) node [right] { \tiny{$I_4$} };
	\end{tikzpicture}
            \caption{Case B-3: Restrictions on $wx_5x_4x_3x_2$}
            \label{fig:caseB-3.2}
            \end{minipage}
\end{figure}

We will update the ranges of $\phi(x_i)$'s as depicted in Figure~\ref{fig:caseB-3}.

Recall that  $\phi(x_2)\in [\frac{5}{3}+\eta+\frac{\epsilon}{2}, \frac{11}{3}-\epsilon]$ and $\phi(x_4)\in [\frac{10}{3}+\eta-\frac{\epsilon}{2}, \frac{2}{3}-\epsilon]$. 

If $\phi(x_2) \in (\frac{8}{3}-\epsilon, \frac{11}{3}-\epsilon]$, then $d_{\pod r}( \phi(x_2), \phi(z)) > \frac{4}{3}- \frac{\epsilon}{2}$, contrary to the fact that $x_2z$ is a negative edge. Thus  $$\phi(x_2)\in [\dfrac{5}{3}+\eta+\dfrac{\epsilon}{2}, \dfrac{8}{3}-\epsilon]:=I_2.$$
If $ \phi(x_4)\in [\frac{10}{3}+\eta-\frac{\epsilon}{2}, \frac{13}{3}+\eta-\frac{\epsilon}{2})$, then  $d_{\pod r}( \phi(x_4), \phi(t)) > \frac{4}{3}- \frac{\epsilon}{2}$, contrary to the fact that  $x_4t$ is a negative edge. Therefore $$ \phi(x_4)\in [\dfrac{13}{3}+\eta-\dfrac{\epsilon}{2}, \dfrac{2}{3}-\epsilon]:=I_4.$$

Note that $\ell(I_4) < 1$. As $\phi(x_5)\in [\frac{10}{3}-\frac{\epsilon}{2}, \frac{2}{3}-\epsilon]$ and $d_{\pod r}(\phi(x_5), \phi(x_4)) \ge 1$ ($x_4x_5$ is a positive edge), we conclude that $$\phi(x_5)\in [\dfrac{10}{3}-\dfrac{\epsilon}{2}, \dfrac{13}{3}-2\epsilon]:=I_5.$$
Similarly, $\ell(I_2) < 1$, and $x_2x_3$ is a positive edge. Recall that $\phi(x_3)\in [\frac{5}{3}+\eta+\frac{\epsilon}{2}, \frac{11}{3}+\eta-\epsilon]$. Thus $$\phi(x_3)\in [\dfrac{8}{3}+\eta+\dfrac{\epsilon}{2}, \frac{11}{3}+\eta-\epsilon]:=I_3.$$

By the restriction from the positive edges $x_1x_2$ and $x_1x_5$ and the new range $I_2, I_5$, we have $$\phi(x_1)\in [\dfrac{8}{3}+\eta+\dfrac{\epsilon}{2}, \dfrac{10}{3}-2\epsilon]:=I_1.$$ 

Finally recall that  $$\phi(w)\in [\dfrac{2}{3}+\eta+\dfrac{\epsilon}{2}, \dfrac{5}{3}-\epsilon]:=I_w.$$
The intervals $I_w$, $I_2$, $I_1$, $I_5$, $I_4$ are each of length less than $1$,  and there is no intersection among them. (see Figure~\ref{fig:caseB-3.2})  As $C^r$ is of length $\frac{14}{3}-\epsilon$, the colors of some two consecutive vertices of the $5$-cycle $wx_2x_1x_5x_4$ is less than $1$, but all the  edges of this cycle are positive. That is a contradiction.

\medskip
\noindent 
{\bf Case B-4 } $\eta \le \dfrac 13 -\dfrac {\epsilon}{2}$, $\phi(z)\in [1+\eta,\dfrac{4}{3}-\dfrac{\epsilon}{2}]$  and $\phi(t)\in [\dfrac{10}{3}-\dfrac{\epsilon}{2}, \dfrac{11}{3}+\eta-\epsilon]$. 

\medskip
Similarly we obtain that 
$$\phi(x_2)\in [\dfrac{5}{3}+\eta+\dfrac{\epsilon}{2}, \dfrac{8}{3}-\epsilon]:=I_2, \phi(x_3)\in [\dfrac{8}{3}+\eta+\dfrac{\epsilon}{2}, \frac{11}{3}+\eta-\epsilon]:=I_3, $$ 
$$\phi(x_4)\in [\dfrac{10}{3}+\eta-\dfrac{\epsilon}{2}, \dfrac{13}{3}-2\epsilon]:=I_4 \text{ and } \phi(x_5)\in [\dfrac{13}{3}+\eta-\dfrac{\epsilon}{2}, \dfrac{2}{3}-\epsilon]:=I_5.$$ Recall that $$\phi(w)\in [\dfrac{2}{3}+\eta+\dfrac{\epsilon}{2}, \dfrac{5}{3}-\epsilon]:=I_w.$$

The intervals $I_w$, $I_2$, $I_3$, $I_4$, $I_5$ are each of length less than $1$, and except for $I_3$ and $I_4$ there is no intersection among them. As $\ell(I_4) < 1$, $\phi(x_3) \notin I_4$ (since $x_3x_4$ is a positive edge).
That is again a contradiction because of the  $5$-cycle $wx_2x_3x_4x_5$ all whose edges are positive.
 
 This completes the proof of Lemma \ref{lem:gadget}.

\section{  Questions and Remarks}\label{sec:Remarks}

A notion of a circular coloring of signed graphs was introduced in \cite{KS18}. It is different from the definition in this paper essentially because the concept of ``antipodal" points are defined differently. Both definitions use points on a circle as colors (the discrete version in \cite{KS18} uses $Z_k$ as colors, and we can view elements of $Z_k$ as points uniformly distributed on a circle). In \cite{KS18}, a fixed diameter of the circle is chosen, and the antipodal of a point is obtained by flipping the circle along the chosen diameter.  Thus for such a coloring, the colors are not symmetric. In particular, for each of the   two end points of the chosen diameter, its antipodal is itself.  In some sense, the definition in \cite{KS18} more faithfully extends the coloring of signed graphs that allows $0$ (as opposed to $0$-free coloring) introduced by Zaslavsky, where $0$ is a special color, whose antipodal is $0$ itself. We consider the speciality of a certain color  to be an undesirable feature. A  circular object should be invariant under rotation. In this sense, the circular coloring of signed graphs in this paper   more faithfully extends the circular coloring of graphs.

The circular coloring of graphs has been studied extensively in the literature. Many of the results and problems  on circular coloring of graphs would be interesting in the framework of signed graphs. We list some specific problems below and believe that there are many more interesting   problems.

\subsection{Jaeger-Zhang conjecture and extensions}\label{Jarger-ZhangExtension}

For a positive integer $k$, we have $\chi_c(C_{-2k})=\frac{4k}{2k-1}$. On the other hand, while for a negative odd cycle $C_{-(2k+1)}$ we have $\chi_c(C_{-(2k+1)})=2$, for the positive odd cycle $C_{+(2k+1)}$ we have we have $\chi_c(C_{+(2k+1)})=\chi_c(C_{2k+1})=\frac{2k+1}{k}$. These two facts can be stated uniformly by the following definition.

Given $ij \in \mathbb{Z}_2$, we say a closed walk $W$ of a signed graph $(G,\sigma)$ is of type $ij$ if the number of negative edges of $W$ (counting multiplicity) is congruent to $i \pmod{2}$, and 
the   total number of edges  (counting multiplicity) is congruent to   $j \pmod{2}$. For $ij \in \mathbb{Z}_2$ we define $g_{ij}(G, \sigma)$ to be the length of a shortest closed walk of type $ij$ in $(G,\sigma)$, setting it to be $\infty$ if there is no such a walk (see \cite{NSZ20} for corresponding no-homomorphism lemma and relation to coloring and homomorphism).

Let $C_l^{o+}$ be signed cycle of length $l$ where the number of positive edges is odd. Then   $\chi_c(C_l^{o+})=\dfrac{2l}{l-1}$. It is a well-known fact that a homomorphism of a graph onto an odd cycle gives an upper on its circular chromatic number.  
	The following theorem, whose proof we leave the the reader, is an extension of this fact.

\begin{theorem}
		Given a positive integer $l$ and a signed graph $(G, \sigma)$ satisfying $g_{ij}(G, \sigma)\geq g_{ij}(C_l^{o+})$ for $ij\in \mathbb{Z}^2_2$, we have $\chi_c(G, \sigma)\leq \dfrac{2l}{l-1}$ if and only if $(G, \sigma) \swto C_{l}^{o+}$.
\end{theorem}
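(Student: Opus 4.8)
The plan is to mimic the classical proof that a graph $G$ satisfies $\chi_c(G)\le \frac{2l}{l-1}$ if and only if $G$ admits a homomorphism to the odd cycle $C_{2k+1}$ (here $l=2k+1$ in the classical case, but we allow all $l$), transported to the signed setting via the switching-homomorphism machinery already developed, in particular Lemma~\ref{lem-homo}, Lemma~\ref{lem-pq2}, Lemma~\ref{lem:No-HomByCircularChromatic}, Proposition~\ref{prop-homocycle}, and the tight-cycle results (Corollary~\ref{coro:TightCycle}, Lemma~\ref{lem:Circular-TightCycle}, Proposition~\ref{prop-st}). The ``if'' direction is the easy half: if $(G,\sigma)\swto C_l^{o+}$ then by Lemma~\ref{lem:No-HomByCircularChromatic} (restated for switching homomorphisms, as the excerpt notes is legitimate) we get $\chi_c(G,\sigma)\le \chi_c(C_l^{o+})=\frac{2l}{l-1}$, using the stated computation of $\chi_c(C_l^{o+})$. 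No girth hypothesis is needed here.

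For the ``only if'' direction, assume $\chi_c(G,\sigma)\le \frac{2l}{l-1}$ and $g_{ij}(G,\sigma)\ge g_{ij}(C_l^{o+})$ for all $ij\in\mathbb Z_2^2$. First I would record the relevant values $g_{ij}(C_l^{o+})$: a shortest closed walk of a given type in $C_l^{o+}$ is obtained by traversing the cycle an appropriate number of times (and possibly backtracking an edge), so $g_{11}(C_l^{o+})=l$ when $l$ is odd, $g_{10}=l$ when $l$ is even, and the other three entries are $2$ (a back-and-forth along one edge, of the correct parity type $00$, together with $2l$ and $l+l$ walks) — this bookkeeping needs to be done carefully but is routine. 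The hypothesis then says $(G,\sigma)$ has no short closed walk of the ``forbidden'' parity type, exactly the obstruction one needs. Now take a circular $r$-coloring $\phi$ of $(G,\sigma)$ with $r=\chi_c(G,\sigma)\le\frac{2l}{l-1}$; by Corollary~\ref{coro-min} we may take $r$ to be attained. Build the partial orientation $D_\phi(G,\sigma)$ of tight arcs. The idea is to use $\phi$ to produce a switching homomorphism to $C_l^{o+}$: after a suitable global switching (as in the paragraph following Proposition~\ref{prop-st}, sending colors into $[0,\frac r2)$ and making every tight arc a clockwise step of length $1$), partition the circle $C^{r}$ into $l$ arcs and map each vertex to the arc containing its color; the parity/type constraints guaranteed by $g_{ij}$ will ensure adjacent vertices land in ``consecutive'' arcs with the correct sign, yielding the desired map. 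By Proposition~\ref{prop-homocycle} it suffices to check that this vertex map preserves adjacency and sends every closed walk of $(G,\sigma)$ to a closed walk of $C_l^{o+}$ of the same sign; the sign of the image walk is forced by how many times it wraps around $C^r$, which is controlled by $r\le\frac{2l}{l-1}$ and the girth-type hypothesis.

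The main obstacle I anticipate is the ``only if'' direction's core combinatorial step: showing that the arc-partition map is well defined and homomorphic, i.e., that no edge of $(G,\sigma)$ has both endpoints colored inside one arc or in non-consecutive arcs, and that the induced signs match. This is where the hypothesis $g_{ij}(G,\sigma)\ge g_{ij}(C_l^{o+})$ must be used in full force: one argues that a violating edge, or a closed walk whose image has the wrong winding number (hence wrong sign), would force a short closed walk of the forbidden type in $(G,\sigma)$. Making this rigorous likely requires analyzing tight cycles via Proposition~\ref{prop-st} (every such cycle has length and type constrained by $r=\frac{2(s+t)}{2a+t}$) and checking that when $r$ is exactly $\frac{2l}{l-1}$ the only tight cycles are ``copies'' of $C_l^{o+}$, while if $r<\frac{2l}{l-1}$ one can first reduce $r$ via Lemma~\ref{lem1}. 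Since the authors explicitly say they ``leave the proof to the reader,'' I would present this as the natural adaptation of the classical argument, flag the $g_{ij}$ bookkeeping and the winding-number/sign compatibility as the two points requiring care, and not grind through every parity case.
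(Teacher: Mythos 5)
The paper does not actually supply a proof of this theorem (it is explicitly ``left to the reader''), so I am assessing your outline on its own merits against what a correct proof must do.

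Your treatment of the ``if'' direction is fine. For the ``only if'' direction, your high-level plan (take a circular $\frac{2l}{l-1}$-coloring, switch colors into $[0,\frac r2)$, partition the half-circle into $l$ arcs of length $\frac{1}{l-1}$) is the natural one, and indeed it produces precisely a $(2l,l-1)$-coloring, i.e.\ an edge-sign preserving homomorphism to $\hat{K}^s_{2l;l-1}$. But there are two genuine gaps.

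First, your computation of $g_{ij}(C_l^{o+})$ is wrong in a way that matters. Since every closed walk of $C_l^{o+}$ is sign-preserved under switching and $C_l^{o+}$ is switching-equivalent to $(C_l,+)$ when $l$ is odd and to $C_{-l}$ when $l$ is even, one gets for $l$ odd $g_{00}=2$, $g_{01}=l$, $g_{10}=g_{11}=\infty$, and for $l$ even $g_{00}=2$, $g_{10}=l$, $g_{01}=g_{11}=\infty$; in particular two of the four entries are $\infty$, not $2$. The conditions $g_{ij}(G,\sigma)\ge\infty$ force $(G,\sigma)$ to be \emph{balanced} when $l$ is odd, and $G$ to be \emph{bipartite} when $l$ is even. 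Missing this hides the main structural lever of the proof. With it, the odd-$l$ case collapses to the classical statement: after switching to $(G,+)$ the hypothesis $\chi_c(G)\le\frac{2l}{l-1}=\frac{l}{(l-1)/2}$ gives $G\to K_{l;(l-1)/2}=C_l$ directly, with no further use of girth.

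Second, the arc map as you describe it does \emph{not} automatically avoid mapping a negative edge to a single arc. A negative edge $uv$ only satisfies $d_{\pod r}(\phi(u),\phi(v))\le\frac r2-1=\frac{1}{l-1}$, which is exactly the arc length, so $u,v$ can land in the same arc — this is precisely a negative loop of $\hat{K}^s_{2l;l-1}$, which does exist in the target $\hat{K}^s_{2l;l-1}$ but not in $C_l^{o+}$. Your proposed fix, that ``a violating edge \dots would force a short closed walk of the forbidden type,'' does not apply here: a single collapsed edge is not a closed walk, and indeed collapse happens even when $G$ is a tree, where there are no closed walks at all. So the loop-avoidance step needs a separate argument (for instance exploiting the balanced/bipartite structure from the corrected $g_{ij}$ bookkeeping, or making a global modification of the coloring), not a pointwise appeal to girth. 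As written, the ``only if'' direction is not complete, and the even-$l$ case in particular still requires real work.
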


The question of mapping planar graphs of odd girth large enough to $C_{2k+1}$ was shown by C.Q. Zhang (see \cite{KZ00} and \cite{Z02}) to be related to a conjecture of Jaeger in the theory of circular flow. A bipartite analogue of Jaeger-Zhang conjecture was introduced in \cite{NRS15} and studied in \cite{CNS20}, a first case of which is disproved in \cite{NPW20}. Thus we rather pose the following question:

\begin{problem}
Given a positive integer $l$, what is the smallest value $f(l)$ (with $f(\infty)=\infty$) such that for every signed planar graph $(G, \sigma)$ satisfying $g_{ij}(G, \sigma)\geq f(g_{ij}(C_l^{o+}))$ for all $ij\in \mathbb{Z}^2_2$, we have $\chi_c(G, \sigma)\leq \dfrac{2l}{l-1}$.
\end{problem}

That $f(3)=5$ is a restatement of the Gr\"otzsch theorem. That $f(4)=8$ is proved in \cite{NPW20}. For integers $l \geq 5$ it is known that $f(l)$ exists and is finite. Furthermore, $4k+1\leq f(2k+1)\leq 6k+1$ \cite{Z02, LTWZ13}, $f(5)\leq 11$ \cite{DP17} and $4k-2\leq f(2k)\leq 8k-2$ \cite{CNS20}.

\subsection{Hadwiger conjecture and extensions}\label{sec:minor}

One of the most intriguing conjectures in graph theory is the Hadwiger conjecture which tries to extend the four-color theorem. It claims that any graph without a $K_{k+1}$-minor is $k$-colorable. The case $k\leq 3$ of this conjecture is rather easy, but the case $k=4$ contains the four-color theorem. As the case $k+1$ would imply the case $k$, the difficulty of the conjecture only increases by $k$. Catlin \cite{Ca79} introduced a stronger version of the case $k=3$ which we restate below using the terminology of signed graphs and notion of circular coloring that we have introduced here. A signed graph $(H, \pi)$ is said to be a minor of $(G, \sigma)$ if it is obtained from $(G, \sigma)$ by a series of the following operations: 1. deleting vertices or edges, 2. contracting positive edge, 3. switching.

\begin{theorem}\label{thm:OddK_4}\cite{Ca79}
	If $(G,-)$ has no $(K_4, -)$-minor, then $\chi_c(G, +)\leq 3$. 
\end{theorem}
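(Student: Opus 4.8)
\textbf{Proof proposal for Theorem~\ref{thm:OddK_4}.}

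The statement asserts that if the signed graph $(G,-)$ (all edges negative) has no minor isomorphic to $(K_4,-)$, then $\chi_c(G,+)\le 3$; equivalently, $G$ is circular $3$-colorable, i.e.\ $\chi_c(G)\le 3$, which (since $\chi(G)=\lceil\chi_c(G)\rceil$) just says $\chi(G)\le 3$. So the content is: a graph $G$ whose associated all-negative signed graph has no $(K_4,-)$-minor is $3$-colorable. I would first translate the hypothesis into a purely structural statement about $G$. A minor of $(G,-)$ in the sense defined in Subsection~\ref{sec:minor} is obtained by deleting vertices/edges, \emph{contracting positive edges}, and \emph{switching}. Since $(G,-)$ has no positive edges, the only way to contract is to first switch at a vertex $v$ (turning all edges at $v$ positive) and then contract a now-positive edge; iterating, a minor of $(G,-)$ is obtained by choosing a set of vertex-disjoint connected subgraphs (branch sets), contracting each — after appropriate switchings within the branch set — and deleting the rest. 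The key point is that the sign of a cycle is a switching-and-contraction invariant in the right sense: when we contract a connected branch set $B$, a cycle of the minor through the contracted vertex corresponds to a closed walk in $G$ that traverses edges between branch sets plus a path inside each branch set, and by Theorem~\ref{prop:EquivalenceOfSignatures} the parity that matters is controlled by the parities of these pieces. In the all-negative graph $(G,-)$, the sign of a cycle $C$ is $(-1)^{|E(C)|}$, so negative cycles are exactly the odd cycles of $G$.

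The plan is therefore to show: $(G,-)$ has a $(K_4,-)$-minor if and only if $G$ has an \emph{odd-}$K_4$ minor, where an odd-$K_4$ means a subdivision of $K_4$ (as a topological minor, or equivalently, since $K_4$ has maximum degree $3$, a minor) in which each of the four triangle-facial cycles uses an odd number of edges. Indeed, $(K_4,-)$ has all four of its triangles negative (each triangle has $3$ edges), so a model of $(K_4,-)$ inside $(G,-)$ consists of four branch sets joined by six connecting paths, with every triangle of the resulting $K_4$-model being a \emph{negative} cycle of $(G,-)$ — i.e.\ an odd cycle of $G$. This is precisely the classical notion of an \emph{odd $K_4$}. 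Once this dictionary is in place, the theorem reduces to the known statement (this is exactly Catlin's theorem, and also follows from a structure theorem of Gerards, or can be derived from Dirac/Gallai-type results): \emph{a graph with no odd-$K_4$ minor is $3$-colorable}. I would then invoke or reprove that fact. The standard route: if $G$ has no odd $K_4$, one shows $G$ is ``odd-$K_4$-minor-free'', and such graphs admit a simple structural description — they can be built from edges, triangles, and odd cycles by $\le 2$-sums (clique-sums over at most two vertices), together with the observation that any graph with no odd $K_4$ that is $4$-edge-connected must be bipartite or very small; a $2$-cut / $3$-coloring induction then finishes it. Concretely, induct on $|V(G)|+|E(G)|$: handle the cases where $G$ is disconnected, has a cut vertex, or has a $2$-cut by combining $3$-colorings of the pieces (possibly after identifying the two cut vertices by a new edge, which is legitimate because adding that edge cannot create an odd $K_4$ if none was there — one must check this carefully). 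In the remaining $3$-connected case, one uses that a $3$-connected graph with no odd $K_4$ is either $K_4$-minor-free (hence series–parallel, hence $3$-colorable) — impossible to contain $K_4$ as a minor without containing an odd one unless... — or else is an odd-$K_4$, contradiction; the precise case analysis is where Catlin's argument lives.

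The main obstacle, as I see it, is the reduction step at $2$-cuts: when $G$ has a $2$-cut $\{a,b\}$ splitting $G$ into $G_1$ and $G_2$, to apply induction I want $3$-colorings of $G_i$ that agree on the pair $\{a,b\}$, which requires controlling whether $a$ and $b$ must receive the same or different colors — and this is governed by the parities of $a$–$b$ paths in each part, i.e.\ by signed/parity information. Making this bookkeeping precise (introducing an auxiliary edge or a ``parity constraint'' on $\{a,b\}$ and showing the constrained problem is still solvable, while no odd $K_4$ is created) is the delicate heart of the proof, and it is exactly the content of Catlin's original argument. A clean alternative, which I would mention, is to simply cite Catlin~\cite{Ca79} for the graph statement and restrict the new work to verifying the minor-correspondence dictionary above; since the theorem is attributed to Catlin in the excerpt, the intended proof is almost certainly this translation plus citation. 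I would also double-check the edge case $g_{10}(G,-)=\infty$, i.e.\ $G$ bipartite, where the claim is trivial ($\chi_c(G,+)\le 2<3$), and the case where $(G,-)$ itself is switching-equivalent to an all-positive graph, to make sure the minor operations are being applied with the right orientation conventions.
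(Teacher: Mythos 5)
Your proposal matches the paper's approach: the paper gives no proof of this theorem, explicitly presenting it as a restatement of Catlin's 1979 result in signed-graph terminology, so the intended content is precisely the dictionary you identify (a $(K_4,-)$-minor of $(G,-)$ is the same as an odd-$K_4$ minor of $G$, because negative cycles of $(G,-)$ are exactly the odd cycles of $G$, all four triangles of $(K_4,-)$ are negative, and the paper's minor operations preserve signs of closed walks) together with a citation of \cite{Ca79}. Your attempt to additionally sketch Catlin's internal induction is not required and, as you concede, leaves the $2$-cut parity bookkeeping unfinished, but your recognition that translation-plus-citation is the correct reading of the statement is exactly right.
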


A possible strengthen of Catlin's result was proposed independently by B. Gerard and P. Seymour. This conjecture, which is stronger than the Hadwiger conjecture, is known as the Odd-Hadwiger conjecture and using the development in this work can be restated as follows.

\begin{conjecture}[Odd-Hadwiger]\label{conj:Odd-Hadwiger}
	If a signed graph $(G, -)$ has no $(K_{k+1}, -)$-minor, then $\chi_c(G, +)\leq k$. 
\end{conjecture}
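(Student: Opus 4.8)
The plan is to first translate the statement out of the language of circular colouring and signed minors. Since $k$ is an integer, $\chi_c(G,+)=\chi_c(G)\le k$ is the same as $\chi(G)\le k$ (recall $\chi(G)=\lceil\chi_c(G)\rceil$ and that circular and ordinary chromatic number coincide at integers). On the other side, unwinding the three permitted minor operations applied to $(G,-)$ — delete a vertex or edge, switch, contract a \emph{positive} edge — shows that $(G,-)$ has a $(K_{k+1},-)$-minor precisely when $G$ has an \emph{odd $K_{k+1}$-minor} in the classical sense: there are vertex-disjoint connected branch sets $V_1,\dots,V_{k+1}$, each $G[V_i]$ bipartite with a fixed bipartition $c_i$ (so that switching the ``odd'' side makes $G[V_i]$ all-positive-connected), and for every $i\ne j$ an edge between $V_i$ and $V_j$ whose two endpoints lie on the same side of their respective bipartitions (so that switching turns it negative). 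Thus the statement to be proved reads: \emph{every graph with no odd $K_{k+1}$-minor is $k$-colourable}, which is the Odd-Hadwiger conjecture of Gerards and Seymour recalled above.

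Second, I would dispose of the small cases, which are known. For $k=1$ the hypothesis forces $G$ to be edgeless. For $k=2$, a graph with no odd $K_3$-minor is bipartite: any odd cycle $C_{2m+1}$ already yields an odd $K_3$-minor (split it into three connected branch sets $\{0\}$, $\{1,\dots,m\}$, $\{m+1,\dots,2m\}$ and check that the three required connecting edges can be made monochromatic for a suitable proper $2$-colouring of the two paths), so $G$ is $2$-colourable. The case $k=3$ is exactly Catlin's theorem, stated here as Theorem~\ref{thm:OddK_4} (again using $\chi_c(G,+)=\chi(G)$), which we may invoke as a black box. So all the content lies in $k\ge 4$.

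Third, for the inductive step I would take a minimal counterexample: a graph $G$ with no odd $K_{k+1}$-minor, $\chi(G)\ge k+1$, minimizing $|V(G)|+|E(G)|$. Standard reductions then show $G$ is $(k+1)$-critical, hence $2$-connected with $\delta(G)\ge k$, and has no clique cutset (a clique separator would let one glue $k$-colourings of the two sides). One would next try to force higher connectivity and, in the spirit of the treatment of the $K_6$ case of Hadwiger's conjecture by Robertson, Seymour and Thomas and of the odd-minor structure theory of Geelen, Gerards, Reed, Seymour and Vetta, either locate the required odd $K_{k+1}$-minor directly via linkage and rooted-minor arguments — now dragging along a global $2$-colouring of all branch sets and insisting that every connecting edge be monochromatic — or derive a contradiction with criticality. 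Compared with ordinary Hadwiger the additional burden is purely the parity bookkeeping: each tree one grows inside a branch set must remain properly $2$-colourable, and each of the $\binom{k+1}{2}$ connections must be realized on the correct side, which is where the classical minor-finding lemmas must be refined.

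The main obstacle is that the conjecture is genuinely out of reach in full generality: any $K_{k+1}$-minor-free graph is in particular odd-$K_{k+1}$-minor-free, so Odd-Hadwiger implies Hadwiger's conjecture, which is open for every $k\ge 6$; and already for $k=4$ a proof would, via this detour, reprove the Four Colour Theorem. Moreover the parity constraints also make the \emph{approximate} targets harder — the best general bound known is that odd-$K_{k+1}$-minor-free graphs are $O(k\sqrt{\log k})$-colourable (Geelen, Gerards, Reed, Seymour and Vetta, with subsequent improvements of the constant), still a $\sqrt{\log k}$ factor away from $k$. Consequently a realistic near-term goal is not the conjecture itself but an extension of Catlin's argument to $k=4$, or sharper approximate bounds; the clean signed-graph reformulation above is mainly valuable as a bookkeeping device for organizing such attempts.
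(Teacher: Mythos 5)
This statement is labelled a \emph{conjecture} in the paper, not a theorem, and the paper offers no proof of it — it is recorded precisely because it is the Odd-Hadwiger conjecture of Gerards and Seymour restated in the signed-graph language developed earlier in the paper. Your proposal correctly recognizes this: the unwinding of the signed-minor operations (delete, switch, contract a positive edge) into the classical notion of an odd $K_{k+1}$-minor is accurate, and the equivalence $\chi_c(G,+)\le k \iff \chi(G)\le k$ for integer $k$ is the right reduction, so the signed statement is indeed equivalent to the usual Odd-Hadwiger conjecture. Your treatment of $k\le 3$ (edgeless, bipartite, Catlin's theorem as cited by the paper as Theorem~\ref{thm:OddK_4}) is also correct, and your honest conclusion — that for $k\ge 4$ the conjecture subsumes Hadwiger's conjecture and the Four Colour Theorem and is genuinely open, with $O(k\sqrt{\log k})$ being the best known general bound — matches what the paper itself says in the surrounding text (cf.\ the remark after Theorem~\ref{thm:Circular-MaxX(G+)} citing \cite{GGRSV09}). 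In short, there is nothing to compare against: the paper does not claim a proof, and you were right not to pretend to supply one; your contribution here is a correct translation and a correct survey of its status.
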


To generalize this, one may ask:

\begin{problem}
\label{pro:oddminor}
	Assuming $(G, \sigma)$ has no $(K_{k+1}, -)$-minor, what is the best upper bound on $\chi_c(G, -\sigma)$?
\end{problem}

Observe that $\hat{K}^s_{2k}$ is the signed graph whose vertices are $1,2, \ldots k$ where each pair of distinct vertices are adjacent by both a negative edge and a positive edge, and each vertex has a negative loop. It follows from the structure of these signed graphs, in an edge-sign preserving mapping of a signed graph $(G,\sigma)$ to $\hat{K}^s_{2k}$, negative edges introduce no restriction, while vertices connected by a positive edge cannot be mapped to a same vertex. In other words, any such a mapping is a proper $k$-coloring of the subgraph $G_{\sigma}^+$ induced by the set of positive edges of $(G, \sigma)$. Recall that a switching homomorphism of $(G, \sigma)$ to $\hat{K}^s_{2k}$ is to find a signature $\sigma'$ equivalent to $\sigma$ and an edge-sign preserving homomorphism of $(G, \sigma')$ to $\hat{K}^s_{2k}$. Therefore, based on the following definition we have the next theorem. We define $$\chi_{+}(G, \sigma)=\min_{\sigma'\equiv\sigma}\{\chi(G_{\sigma'}^+)\}.$$

\begin{theorem}\label{thm:Circular-MaxX(G+)}
	Given a signed graph $(G, \sigma)$, we have $$2\chi_+(G, \sigma)-2< \chi_c(G, \sigma)\leq 2\chi_{+}(G, \sigma).$$
\end{theorem}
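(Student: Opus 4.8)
The statement to prove is
$$2\chi_+(G,\sigma)-2 < \chi_c(G,\sigma) \le 2\chi_+(G,\sigma),$$
and the plan is to derive it from the homomorphism picture that was set up just before the statement, namely that a switching homomorphism of $(G,\sigma)$ to $\hat K^s_{2k}$ is exactly a proper $k$-coloring of $G^+_{\sigma'}$ for some signature $\sigma'\equiv\sigma$.

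For the upper bound, set $k=\chi_+(G,\sigma)$. By definition of $\chi_+$ there is a signature $\sigma'\equiv\sigma$ with $\chi(G^+_{\sigma'})\le k$; a proper $k$-coloring of $G^+_{\sigma'}$ is precisely an edge-sign preserving homomorphism of $(G,\sigma')$ to $\hat K^s_{2k}$ (the negative edges impose no constraint because every pair of vertices of $\hat K^s_{2k}$, and every vertex via its negative loop, carries a negative edge). Hence $(G,\sigma)\swto\hat K^s_{2k}=\hat K^s_{2k;1}$, so by Lemma~\ref{lem-pq2} (and Lemma~\ref{lem:No-HomByCircularChromatic} in its switching-homomorphism form) we get $\chi_c(G,\sigma)\le \frac{2k}{1}=2k=2\chi_+(G,\sigma)$.

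For the lower bound, suppose toward a contradiction that $\chi_c(G,\sigma)\le 2k-2$ where $k=\chi_+(G,\sigma)$; I must produce a signature $\sigma'\equiv\sigma$ with $\chi(G^+_{\sigma'})\le k-1$, contradicting minimality in the definition of $\chi_+$. Since $\chi_c(G,\sigma)\le 2k-2$, by Lemma~\ref{lem-pq2} there is a signature $\sigma'\equiv\sigma$ admitting an edge-sign preserving homomorphism to $\hat K^s_{p;q}$ with $\frac pq\le 2k-2$, i.e. $p\le (2k-2)q$. I would then compose with Lemma~\ref{lem-rr'} to map into $\hat K^s_{2(k-1);1}=\hat K^s_{2k-2}$, using $\frac{2k-2}{2}$... wait, more carefully: $\hat K^s_{p;q}\swto \hat K^s_{(2k-2);1}$ whenever $\frac pq\le \frac{2k-2}{1}$, which holds. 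Unwinding the description of edge-sign preserving homomorphisms to $\hat K^s_{2k-2}$, such a homomorphism of $(G,\sigma')$ is a proper $(k-1)$-coloring of $G^+_{\sigma'}$, so $\chi(G^+_{\sigma'})\le k-1$ and $\chi_+(G,\sigma)\le k-1$, a contradiction. This proves $\chi_c(G,\sigma)>2k-2=2\chi_+(G,\sigma)-2$.

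\textbf{Main obstacle.} The arguments are essentially bookkeeping once one trusts the dictionary ``edge-sign preserving homomorphism to $\hat K^s_{2m}$ $=$ proper $m$-coloring of the positive subgraph'' together with ``switching homomorphism $=$ that dictionary after choosing the best equivalent signature.'' The one point that needs care is the \emph{strictness} of the lower bound: I need that $\chi_c(G,\sigma)\le 2k-2$ forces a homomorphism into $\hat K^s_{2k-2}$ rather than merely into $\hat K^s_{p;q}$ with $\frac pq$ strictly less than $2k-2$ but $p>2k-2$. This is handled by Lemma~\ref{lem-rr'} (stated for switching homomorphisms in the remark following Lemma~\ref{lem-pq2}): $\frac pq\le 2k-2$ gives $\hat K^s_{p;q}\swto\hat K^s_{(2k-2);1}$, and composing switching homomorphisms stays a switching homomorphism. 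Everything else is immediate from the definitions, so I expect no real difficulty beyond assembling these citations cleanly.
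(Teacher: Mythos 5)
Your proof is correct and follows the same route the paper intends: the discussion immediately preceding the theorem in the paper sets up exactly the dictionary ``switching homomorphism to $\hat K^s_{2m}$ $\Leftrightarrow$ proper $m$-coloring of $G^+_{\sigma'}$ for some $\sigma'\equiv\sigma$,'' and the theorem then follows by converting $\chi_c\le 2m \Leftrightarrow (G,\sigma)\swto \hat K^s_{2m} \Leftrightarrow \chi_+\le m$ and taking $m=\chi_+(G,\sigma)$ and $m=\chi_+(G,\sigma)-1$. Your handling of the strictness via Lemma~\ref{lem-rr'} and Corollary~\ref{coro-min} (the infimum is attained) is the right concern to flag and you resolve it correctly; the only tiny omission is the trivial case $\chi_+(G,\sigma)=1$, where $2k-2=0$ and the lower bound is vacuous, which is worth a one-line remark but is not a substantive gap.
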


 Let $f(k)$ be the answer to Problem~\ref{pro:oddminor}. By Theorem~\ref{thm:Circular-MaxX(G+)} one observes that if Conjecture~\ref{conj:Odd-Hadwiger} holds, then $f(k)\leq 2k$. Similarly, considering the result of \cite{GGRSV09} we have $f(k)=O(n\sqrt{\log n})$.
 
\subsection{Signed planar graphs}

Let $D$ be the signed graph on two vertices $u$ and $v$ which are adjacent by two edges: one positive, another negative. This graph normally referred to as \emph{digon}. It is mentioned that $\chi_c(D)=4$, moreover, given $r\geq 4$, if $\phi$ is a circular $r$-coloring of $D$ where $\phi(u)=0$, then simply by the definition we have $\phi(v) \in (1, \frac{r}{2}-1)$. Thus, by Lemma~\ref{lem-indicator}, when $D$ is viewed as an indicator, we have $\chi_c(G(D))=2\chi_c(G)$ where $G$ is a graph (not signed) (this is a restatement of Corollary~\ref{cor-double}). 
In particular, we have $\chi_c(K_4(D))=8$. Noting this is a signed planar mulitgraph and that, by the four-color theorem, every signed planar multigraph without a loop admits an edge-sign preserving homomorphism to it, we obtain $\chi_c(\mathcal{SPM}))=8$ where $\mathcal{SPM}$ denotes the class of signed planar multigraphs. Furthermore, we recall that a signed graph with a positive loop admits no circular coloring and that adding a negative loop to a vertex of a signed graphs does not affect its circular chromatic number. 

For the class of signed planar simple graphs, the upper bound of $6$ follows from the fact that these graphs are $5$-degenerate. With our definition of circular chromatic number and development in this work, one may restate a conjecture of \cite{MRS16} as to ``circular chromatic number of the class of signed planar simple graphs is 4''. However, this conjecture is recently disproved in \cite{KN20}. The first counterexample provided in \cite{KN20} is essentially the subgraph $K_3(\mathcal{I})$ of the signed graph of Theorem~\ref{thm:planar} (they become a same signed graph after a switching). The work of \cite{KN20} is based on the dual interpretation of the circular four-coloring of signed planar graphs. The examples build there then are based on non-hamiltonian cubic bridgeless planar graphs. The underlying graph of the signed graph of Figure~\ref{fig:WengerGraph} is the dual of Tutte fragment used to build the first example of a non-hamiltonian cubic bridgeless planar graph and referred to as Wenger graph in some literature. This graph itself is used as a building block in a number of coloring results. Noting that a connection to a list coloring problem and circular $4$-coloring (of signed planar simple) graphs was established by the 3rd author, \cite{Z17}, we refer to \cite{KV18} for recent use of this gadget in refuting a similar conjecture.

We note, furthermore, that since in Theorem~\ref{thm:planar} we give the exact value of the circular chromatic number of $K_4(\mathcal{I})$, one does not expect to improve the lower bound using this particular gadget. 

It remains an open problem to decide the exact value of the circular chromatic number of the class of signed planar simple graphs or to improve the bounds (of $\frac{14}{3}$ and $6$) from either direction.

\subsection{Girth and planarity}

Some of the questions mentioned above can be generalized in the following way:

Given an integer $l$ and a class $\mathcal{C}$ of signed graphs, such as signed planar graphs or signed $K_4$-minor-free graphs, what is the circular chromatic number of signed graphs in $\mathcal{C}$ whose underlying graphs have girth $l$?

As an example, a result of \cite{CNS20} implies that every signed planar graph of girth at least $10$ admits a switching homomorphism to the signed graph $(K_4, e)$ which is the signed graph on $K_4$ with one negative edge. As this signed graph has circular chromatic number $3$, we conclude that 

\begin{theorem}
	For the class $\mathcal{SP}_{g \ge 10}$ of signed planar graphs $\tilde{G}$ of girth at least 10, we have $\chi_c(\mathcal{SP}_{g \ge 10})\leq 3$.
\end{theorem}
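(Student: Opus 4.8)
The plan is to reduce the whole statement to computing the circular chromatic number of a single small signed graph, namely $(K_4,e)$, where $e$ denotes a negative edge of $K_4$. The key external ingredient is the result of \cite{CNS20} recalled just above: every signed planar graph whose underlying graph has girth at least $10$ admits a switching homomorphism to $(K_4,e)$. Granting this, the switching‑homomorphism version of Lemma~\ref{lem:No-HomByCircularChromatic} gives $\chi_c(\tilde G)\le\chi_c(K_4,e)$ for every $\tilde G\in\mathcal{SP}_{g\ge 10}$, so the theorem follows as soon as one shows $\chi_c(K_4,e)\le 3$ (in fact one expects $\chi_c(K_4,e)=3$).

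The second step is therefore to pin down $\chi_c(K_4,e)$. For the lower bound, deleting $e$ leaves the positive triangle as a subgraph of $(K_4,e)$, so $\chi_c(K_4,e)\ge\chi_c(K_3,+)=\chi_c(K_3)=3$. For the matching upper bound I would simply write down an explicit circular $3$‑colouring: labelling the vertices $x_1,x_2,x_3,x_4$ with $e=x_1x_2$, put $f(x_1)=f(x_2)=0$, $f(x_3)=1$, $f(x_4)=2$ on $C^3$; each of the five positive edges then joins colours at distance exactly $1$ in $C^3$, while the negative edge $x_1x_2$ satisfies $d_{\pod{3}}\bigl(f(x_1),\overbar{f(x_2)}\bigr)=d_{\pod{3}}(0,\tfrac{3}{2})=\tfrac{3}{2}\ge 1$. (An equivalent route to the upper bound: identifying the two endpoints of $e$ yields an edge‑sign preserving homomorphism of $(K_4,e)$ onto a positive triangle with a negative loop at each vertex, a signed graph switching equivalent to $\hat{K}_{6;2}^s$; since negative loops do not affect $\chi_c$, Lemma~\ref{lem:No-HomByCircularChromatic} gives $\chi_c(K_4,e)\le\chi_c(\hat{K}_{6;2}^s)=3$.) Either way one concludes $\chi_c(K_4,e)=3$, and hence $\chi_c(\mathcal{SP}_{g\ge 10})\le 3$.

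I do not expect any genuine difficulty inside this argument: once the homomorphism target $(K_4,e)$ is available, everything reduces to the short computation above together with monotonicity under (switching) homomorphisms. The substantive obstacle is entirely contained in the quoted input—the theorem of \cite{CNS20} that signed planar graphs of girth at least $10$ map to $(K_4,e)$ by a switching homomorphism—which rests on a discharging analysis of sparse planar graphs and is the genuinely hard part of the story; in this write‑up it is imported as a black box.
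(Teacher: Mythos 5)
Your proof is correct and follows exactly the same route the paper takes: import the switching homomorphism to $(K_4,e)$ from \cite{CNS20}, check that $\chi_c(K_4,e)=3$, and conclude by monotonicity under switching homomorphisms. The only difference is cosmetic — you spell out the verification that $\chi_c(K_4,e)=3$, which the paper asserts without detail, and your explicit circular $3$-colouring and the alternative route via $\hat{K}^s_{6;2}$ are both valid.
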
 

We do not know if this bound is tight. 

In a more refined version of the question one might be given three values of $l_{01}$, $l_{10}$ and $l_{11}$ and be asked for a best bound on circular chromatic number of signed graphs in $\mathcal{C}$ which satisfy $g_{ij}(G, \sigma)\geq l_{ij}$.

\subsection{Spectrum}

In the previous question one may also be asked for the full possible range of circular chromatic number of a given family of signed graphs. For example it is known \cite{HZ}  that a rational number $r$ is   the circular chromatic number of a non-trivial   $K_4$-minor-free graph if and only if $r \in [2, \frac 83] \cup \{3\}$. As for signed $K_4$-minor-free simple graphs we extended the upper bound to $\frac{10}{3}$, it remains an open question whether each rational number between $\frac{8}{3}$ and $\frac{10}{3}$ is the circular chromatic number of a $K_4$-minor-free signed simple graph. Spectrum of the circular chromatic number of series-parallel graphs of given girth and circular chromatic number of planar graphs were studied in \cite{Moser1997, PZ2002,PZ2004,Zhu1999,Zhu1999b}. Similar questions are interesting for signed planar graphs and other families of signed graphs.

{\bf Acknowledgement} This work is supported by the ANR (France) project HOSIGRA (ANR-17-CE40-0022), by NSFC (China), Grant numbers: NSFC 11971438,
is also supported by the 111 project of The Ministry of Education of China. And it has received funding from the European Union's Horizon 2020 research and innovation programme under the Marie Sklodowska-Curie grant agreement No 754362. We would like to thanks Y. Jiang, L.A. Pham and T. Shan for discussions.

\end{document}